\numberwithin{equation}{section}
\numberwithin{figure}{section}
\theoremstyle{plain}
\newtheorem{thm}{\protect\theoremname}
  \theoremstyle{plain}
  \newtheorem{lem}[thm]{\protect\lemmaname}
\theoremstyle{plain}
\newtheorem{cor}[thm]{\protect\corollaryname}
\theoremstyle{plain}
\newtheorem{prop}[thm]{\protect\propositionname}
\theoremstyle{plain}
\newtheorem{remark}[thm]{\protect\remarkname}
\providecommand{\lemmaname}{Lemma}
\providecommand{\theoremname}{Theorem}
\providecommand{\corollaryname}{Corollary}
\providecommand{\propositionname}{Proposition}
\providecommand{\remarkname}{Remark}
\DeclareSymbolFont{matha}{OML}{txmi}{m}{it}
\newcommand{\ef}{{v}}
\newsavebox\myboxA
\newsavebox\myboxB
\newlength\mylenA
\newcommand*\mybar[2][0.75]{%
    \sbox{\myboxA}{$\m@th#2$}%
    \setbox\myboxB\null
    \ht\myboxB=\ht\myboxA%
    \dp\myboxB=\dp\myboxA%
    \wd\myboxB=#1\wd\myboxA
    \sbox\myboxB{$\m@th\overline{\copy\myboxB}$}
    \setlength\mylenA{\the\wd\myboxA}
    \addtolength\mylenA{-\the\wd\myboxB}%
    \ifdim\wd\myboxB<\wd\myboxA%
       \rlap{\hskip 0.5\mylenA\usebox\myboxB}{\usebox\myboxA}%
    \else
        \hskip -0.5\mylenA\rlap{\usebox\myboxA}{\hskip 0.5\mylenA\usebox\myboxB}%
    \fi}
\newcommand{\oV}{{\text{\mybar{V}}}}
\newcommand{\oP}{\mybar{\mathcal{P}}}
\newcommand{\oPsi}{\mybar{\Psi}}
\newcommand{\nTAP}{\operatorname{TAP}}
\newcommand{\TAP}{\boldsymbol{\operatorname{T}}}
\newcommand{\oTAP}{\mybar{\boldsymbol{\operatorname{T}}}}
\newcommand{\la}{\langle}
\newcommand{\ra}{\rangle}
\newcommand{\e}{\mathbb{E}}
\newcommand{\p}{\mathbb{P}}
\newcommand{\Reals}{\mathbb{R}}
\global\long\def\bs{\boldsymbol{\sigma}}
\global\long\def\tbs{\tilde{\boldsymbol{\sigma}}}
\global\long\def\bb{\boldsymbol{b}}
\global\long\def\tbb{\tilde{\boldsymbol{b}}}
\global\long\def\br{\boldsymbol{\rho}}
\global\long\def\Es{E_{\star}}
\global\long\def\E{\mathbb{E}}
\global\long\def\P{\mathbb{P}}
\global\long\def\e{\mathbb{E}}
\global\long\def\p{\mathbb{P}}
\global\long\def\R{\mathbb{R}}
\global\long\def\indic{\mathbf{1}}
\global\long\def\ch{\cosh}
\global\long\def\supp{{\operatorname{supp}}}
\global\long\def\th{\tanh}
\global\long\def\PP{\mathcal{P}}
\newcommand{\eps}{{\varepsilon}}
\newcommand{\EA}{\operatorname{EA}}
\newcommand{\RS}{\operatorname{RS}}
\newcommand{\MM}{\mathcal{M}}
\begin{document}

\allowdisplaybreaks

		\title[Generalized TAP free energy]{The generalized TAP free energy}
		\author{Wei-Kuo Chen}\thanks{School of Mathematics, University of Minnesota. Email: wkchen@umn.edu. Partially supported by NSF grants DMS-16-42207 and DMS-17-52184.}
			\author{Dmitry Panchenko}\thanks{Department of Mathematics. University of Toronto. Email: panchenk@math.toronto.edu. Partially supported by NSERC}
			\author{Eliran Subag}\thanks{Courant Institute. Email: esubag@cims.nyu.edu. Supported by the Simons Foundation.}
		
		\begin{abstract} 
			We consider the mixed $p$-spin mean-field spin glass model with Ising spins and investigate its free energy in the spirit of the TAP approach, named after Thouless, Anderson, and Palmer \cite{TAP}. More precisely, we define and compute the generalized TAP correction, and establish the corresponding generalized TAP representation for the free energy. In connection with physicists' replica theory, we introduce the notion of generalized TAP states, which are the maximizers of the generalized TAP free energy, and show that their order parameters match the order parameter of the ancestor states in the Parisi ansatz. We compute the critical point equations of the  TAP free energy that generalize the classical TAP equations for pure states. Furthermore, we give an exact description of the region where the generalized TAP correction is replica symmetric, in which case it coincides with the classical TAP correction, and show that Plefka's condition is necessary for this to happen. In particular, our result shows that the generalized TAP correction is not always replica symmetric on the points corresponding to the  Edwards-Anderson parameter.
		\end{abstract}
		
	\maketitle		

		\section{Introduction}\label{sec1}
		

		How does a function on a high-dimensional space $\Reals^N$ (i.e. for large $N$) typically look like? For probabilists, ``typically'' means endowing some natural space of functions with a probability measure and understanding what occurs  with high probability. One natural family of functions consists of random homogeneous polynomial of degree $p\geq 1$ in the coordinates of $\bs=(\sigma_1,\ldots,\sigma_N)\in \R^N$,
		\begin{align}
			\label{hamp}
			H_{N,p}(\bs)=\frac{1}{N^{(p-1)/2}}\sum_{i_1,\ldots,i_p=1}^{N}g_{i_1,\ldots,i_p}\sigma_{i_1}\cdots\sigma_{i_p},
		\end{align}
		whose coefficients $g_{i_1,\ldots,i_p}$ are independent standard Gaussian variables. In this case, the domain is often restricted to the hypercube $\Sigma_N:=\{-1,+1\}^N$  or sphere $S_N:=\{\|\bs\|_2=\sqrt N\},$ depending on whether the motivation comes from a discrete or continuous setting; the scaling $N^{-(p-1)/2}$ in the definition of $H_{N,p}$ is chosen so that the maximum is typically of order $N$. More generally, assuming that the processes $H_{N,p}$ above are independent of each other for $p\geq 1,$ we will also consider their linear combinations
		\begin{align}
			\label{hamx}
			H_N(\bs)=\sum_{p\geq 1}\beta_p H_{N,p}(\bs),
		\end{align}
		for some  sequence $(\beta_p)_{p\geq 1}$ that decreases fast enough, for example, $\sum_{p\geq 1}2^p\beta_p^2<\infty.$ 
		
		In statistical physics, the random processes $H_N(\bs)$ are called spin glass models, or Hamiltonians. 
		A classical example is the Sherrington-Kirkpatrick (SK) model \cite{SK75}, defined by \eqref{hamp} with $p=2$ and $\bs\in \Sigma_N$. For general $p\geq 1$, $H_{N,p}(\bs)$ is called the pure $p$-spin model, and the linear combination $H_N(\bs)$ is called a mixed $p$-spin model. In this paper we will work with general mixed models with Ising spins, that is, when the domain of $H_N(\bs)$ is $\Sigma_N$. The spherical case $S_N$ will also be discussed occasionally to put things into a more general context.

		Going back to the question we started with, one may ask, for example, what is the maximal value of $H_N(\bs)$ over $\Sigma_N$ or $S_N$, or what is the structure of the set of all near maxima (on the right scale).
		More generally, one may wish to know what is the typical cardinality or volume of the set of points where $H_N(\bs)$ approximately takes a given value, or what is the structure of the same set. However, instead of tackling these questions directly, one often first studies ``smooth approximations'' of various quantities parametrized by the so-called  inverse-temperature parameter $\beta>0$ --- an idea common in statistical physics. For example, if for certainty we consider the Ising case $\Sigma_N$, relying on 		\eqref{eq:lvlsetvol} and \eqref{eq:lvlsetG} below,  instead of the cardinality of an approximate level  set and its geometric structure, one can first study the free energy 
		\begin{align}\label{add:eq0}
			F_N(\beta)&=\frac{1}{N}\log \sum_{\bs\in \Sigma_N}e^{\beta H_N(\bs)},
		\end{align}
and the Gibbs measure
		\begin{align}
			G_{N,\beta}(A)&=\frac{\sum_{\bs \in A}\exp (\beta H_N(\bs))}{ \sum_{\bs\in \Sigma_N}\exp (\beta H_N(\bs))}.
		\end{align}
In the spherical case, the summation is replaced by integration with respect to the Haar measure on $S_N$. It is well-known that the free energy concentrates around its expectation and that the expectation has a limit $F(\beta):=\lim_{N\to\infty}\e F_N(\beta)$ (\cite{GuerraToninelli}, \cite{Pan00}), which is differentiable in $\beta>0$ (\cite{TPMeasure, Talbook1,Talbook2}). Standard concentration of measure inequalities then imply that, asymptotically, the maximum of the Hamiltonian (also called the ground state energy, ignoring the minus sign) can be computed via the free energy as
		\begin{equation}
		\label{eq:GS}
		\Es:=\lim_{N\to\infty}\max_{\sigma\in\Sigma_N}\frac{H_N(\bs)}{N}=\lim_{\beta\to\infty}\frac{F(\beta)}{\beta}.
		\end{equation}
Moreover, it is known (\cite{AC18}) that given an energy level $E\in(0,\Es)$, if we choose $\beta$ so that $E= F'(\beta)$, the cardinality on the logarithmic scale of the corresponding approximate level set can be expressed via the free energy,
		\begin{equation}
		\label{eq:lvlsetvol}
		\lim_{N\to \infty}\frac1N\log \# \Big\{  \bs:  \Big|\frac1N H_N(\bs)- E \Big| < \eps \Big\} = F(\beta)-\beta F'(\beta)+O(\eps),
		\end{equation}
		and the Gibbs measure concentrates on the same set,
		\begin{equation}
		\label{eq:lvlsetG}
		\lim_{N\to \infty} G_{N,\beta}\Big\{  \bs:  \Big|\frac1N H_N(\bs)- E \Big| < \eps \Big\} = 1.
		\end{equation}
		In this paper, we will focus on results in the language of the free energies and Gibbs measures for all finite temperatures $\beta<\infty$ and in the follow-up paper \cite{CPSZT}, we translate these results to $\beta=\infty$ limit, which concerns near maximizers and their geometry.

	
		To motivate our main results and informally illustrate some of the ideas behind them, consider the following question. Can one identify (in some non-trivial way) points $m\in(-1,1)^N$ \emph{inside the cube} such that, for  small $\eps>0$,
		the \emph{narrow band} of configurations $\bs\in\Sigma_N$ close to the hyperplane perpendicular to $m,$ 
		\begin{equation}
		\label{eq:Band}
			B(m,\eps)=\Big\{\bs\in\Sigma_N:\,|R(\bs,m)-R(m,m)|=\frac{1}{N}|m\cdot (\bs-m)|<\eps \Big\},
		\end{equation}
contains a large number of points with some given energy $\frac1N H_N(\bs)\approx E$? 
As we mentioned above, studying this question means fixing $\beta$ as in \eqref{eq:lvlsetG} above and ``large number'' means that the Gibbs measure of the set of such points is not too small.
It turns out that, without additional structure, such points $m$ are too common to be interesting and, moreover, the ``measure'' of near maximizers in a band fluctuates too wildly to hope for a meaningful criterion. However, as was pointed out in \cite{SubagFEL}, if we add an additional constraint that there any many \emph{nearly orthogonal} directions $\bs-m$ inside the band with  $\frac1N H_N(\bs)\approx E$, then the answer is yes. In fact, for $E\in(0,\Es)$, such special points $m$ can be characterized through their energy $H_{N}(m)$ and location $m$ by checking that
		\begin{equation}
		\label{eq:H+T}
		\frac{\beta}{N} H_N(m)+\nTAP_\beta(\mu_m)\approx F(\beta),
		\end{equation}
		where $\nTAP_\beta(\mu_m)$, which we call the \emph{generalized TAP correction} (after Thouless, Anderson and Palmer), is a \emph{deterministic} function of 
		the empirical measure 
		\begin{equation}
		\mu_m=\frac1N\sum_{i\leq N}\delta_{m_i}.
		\label{eqEmpMu0}
		\end{equation} 	
		Moreover, with high probability over the choice of random coefficients in $H_N(\bs)$, this criterion can be applied \emph{simultaneously} to all $m\in(-1,1)^N$. As will be explained in the next section,  in addition to yielding such a surprisingly simple description of the ``special points'', the idea of looking at many nearly orthogonal directions has a clear motivation coming from the theory of spin glasses in physics.

		While (\ref{eq:H+T}) will be proved in the current work, in the follow-up work  \cite{CPSZT}  we will deal with the zero temperature $\beta=\infty$ analogue of these results, or, equivalently, the maximal energy value $E=\Es$. In this case, with appropriate deterministic function $\nTAP_\infty$, the special points $m$ whose bands have properties analogous to the above are characterized by 
		\begin{equation}
		\label{eq:H+T_GS}
		\frac{1}{N} H_N(m)+\nTAP_\infty(\mu_m)\approx \Es.
		\end{equation}

		In a recent paper \cite{SubagFEL}, a natural way to define 
		the correction $\nTAP_\beta(\mu_m)$ in the spherical-spin case $S_N$ was suggested, from which the following \emph{generalized TAP representation} follows rather quickly:
		for any $q\in[0,1)$ that belongs to the support of the so-called Parisi measure (see (\ref{eqParisiOrig}) below for definition), for large $N$,
		\begin{equation}
		\label{eq:TAPrep}
		F(\beta) \approx \max_{m:\,\|m\|^2=Nq}\Bigl(\frac{\beta}{N} H_N(m)+\nTAP_\beta(\mu_m)\Bigr).
		\end{equation} 
		While the definition in \cite{SubagFEL} also makes sense for models with Ising spins, it is not obvious at all that this correction can be computed explicitly for those models. In this paper, we introduce some new ideas to solve this problem, and explicitly express the correction via a Parisi-type \cite{Parisi79, Parisi80} variational formula. As a result, this yields the generalized TAP representation \eqref{eq:TAPrep} for the Ising case.

		Representations of the type \eqref{eq:H+T} started from the paper \cite{TAP}, where Thouless, Anderson and Palmer  derived (non-rigorously, using an expansion of  the partition function around the local magnetizations) a representation for the free energy of the SK model \cite{SK75} called the \emph{TAP free energy,}
		\begin{align}\label{eq-2}
			F_N(\beta)&\approx \frac{\beta}{N}H_N(m)-\frac{1}{N}\sum_{i=1}^N\Bigl(\frac{1+m_i}{2}\log \frac{1+m_i}{2}+\frac{1-m_i}{2}\log \frac{1-m_i}{2}\Bigr)+\frac{\beta^2}{2}\Bigl(1-\frac{\|m\|_2^2}{N}\Bigr)^2,
		\end{align}
		where $m\in [-1,1]^N$ is some critical point of the right-hand side, $\nabla \mathrm{RHS}=0$ or
		\begin{equation}
			\label{eq:TAPeq}
			m_i=\tanh\Bigl(\bigl(\beta\nabla H_N(m)\bigr)_{i}-2\beta^2 m_i\Bigl(1-\frac{\|m\|_2^2}{N}\Bigr)\Bigr),\quad \forall i\leq N.
		\end{equation}
		As the authors of \cite{TAP} explained, the problem of computing the free energy is then reduced to 
		finding the random solutions $m$ of \eqref{eq:TAPeq}, known as the \emph{TAP equations},  subject to a certain convergence condition proposed in \cite{TAP}, and applying \eqref{eq-2} to $m$ --- a problem ``not much easier'' than the original, in their own words. The representation \eqref{eq:TAPrep} we establish in this paper is a more general analogue of the TAP free energy representation (\ref{eq-2}), which is well-motivated and fully rigorous. 
		 Similarly, 
		 by computing the critical point equations for the right-hand side of \eqref{eq:TAPrep}, we will derive the \emph{generalized TAP equations} analogous to \eqref{eq:TAPeq}. As will be explained in the next section, the motivation comes from the picture that emerged in the subsequent work of physicists in the eighties.
		
		A few years after \cite{TAP}, a real breakthrough was made by Parisi in \cite{Parisi79, Parisi80}, who discovered the correct formula for the free energy by proposing a very special ansatz within the physicist's replica method. The Parisi solution, which was rather algebraic in nature, was reinterpreted in terms of the geometric structure of the Gibbs measure in the papers by M\'ezard, Parisi, Sourlas, Toulouse and Virasoro \cite{M1, M2}, where it was understood, for example, that the ultrametricity of the replica matrix corresponds to ultrametricity of the support of the Gibbs measure in the infinite-volume limit and that the Gibbs measure asymptotically splits into pure states, i.e., disjoint subsets whose structure is simple in an appropriate sense. The so-called order parameter in Parisi's solution, which also plays an important role in the current paper,  is a probability measure on $[0,1]$ called the Parisi measure;  for generic models (defined below) it coincides with the asymptotic law of the overlap $R(\bs^1,\bs^2):=\frac{1}{N}\sum_{i=1}^N\sigma_i^1\sigma_i^2$ of i.i.d. samples from the Gibbs measure.

		
		The connection between the Parisi ansatz and the  classical TAP free energy representation (\ref{eq-2}) was well-understood in the physics literature. In the setting of the SK model, some form of TAP equations for the ancestor states (see below) were derived by M\'ezard and Virasoro in \cite{M3}. However, rigorous mathematical results beyond the high temperature region (see, e.g., \cite{EB,chatt10,Talbook1}) started appearing only more recently. For example, it was confirmed in \cite{CPTAP17} that the TAP representation of the free energy holds at the level of pure states, that is, a formula of the form of \eqref{eq:TAPrep} holds for $q_{\EA}$, the right-most point in the support of the Parisi measure. Also, the M\'ezard-Virasoro equations \cite{M3} for mixed $p$-spin models were derived by Auffinger and Jagannath \cite{AufJag18,AJ19} (see also Remark \ref{LabRmkMV} below). Furthermore, recently, Belius and Kistler \cite{BelKist} developed a new method in the setting of the spherical $2$-spin model.	Lastly, the TAP representation was established at the level of pure states for the spherical pure $p$-spin models with $p\geq3$ and $\beta\gg 1$ in \cite{S17} and for some spherical mixed $p$-spin models in the 1-RSB regime with $\beta\gg 1$ by Ben Arous, Zeitouni and one of the authors in \cite{BSZ}. In the latter works \cite{S17,BSZ}, the calculations leading to the TAP representation also yielded certain explicit pure state decompositions, in which each state is centered around a local maximum of the Hamiltonian which also maximizes a certain free energy.  More generally, the barycenters, or the so-called local magnetizations, of the abstract pure states decompositions of Talagrand \cite{Talagrand10} and Jagannath \cite{Jagannath17} are approximate maximizers $m$ as in \eqref{eq:TAPrep}, which correspond to the rightmost point in the support of the Parisi measure. 

In this work, we mainly focused on positive temperature $\beta<\infty$ analysis, which deals with energy levels $\frac1N H_N(\bs)\approx  E$ strictly smaller than the ground state energy $\Es$.
However, the set of near maximizers $\frac1N H_N(\bs)\approx  \Es$  also has a rich and interesting geometry.
For example, it was understood, both in the physics literature \cite{MPV} and rigorously \cite{ChatBook,CHL, GEZ}, that $H_N$ has exponentially many (in $N$) near maximizers  that are nearly orthogonal to each other. For the spherical models, similar results about the highest critical points are known for the pure models \cite{ABAC,CLR03,CS95,Subag17,SZ17} and some mixed models, which are close to being pure \cite{ABAC,BSZ}.  

	As mentioned above, in  \cite{CPSZT}
we extend our analysis to the zero temperature case $\beta=\infty$.  One of the consequences of \cite{CPSZT} is that a large set of approximate maximizers of
$
\frac{1}{N} H_N(m)+\nTAP_\infty(\mu_m)
$, i.e., approximate generalized TAP solutions,
can be  arranged in a certain tree structure, whose root is the origin and leaves are points of $\Sigma_N$. Since $\nTAP_{\infty}(\mu_{\bs})$ is constant on $\Sigma_N$, the leaves approximately maximize $\frac1N H_N(\bs)$. 
This picture is particularly interesting when \emph{full replica symmetry breaking} (FRSB) occurs on the interval $[0,q_{\EA}]$, namely, the support of the Parisi measure is equal to the interval $[0,q_{\EA}]$ as $\beta$ tends to infinity, which is conjectured to be the case in the SK model, see \cite{MPV}. 
In this case, the normalized radii $\|m\|/\sqrt N$ of the inner vertices of the tree, which are points $m\in(-1,1)^N$, are asymptotically dense in $[0,1]$ and the tree is asymptotically continuous in an appropriate sense. 
In the spherical case, when the model is FRSB on $[0,q_{\EA}],$ similar insights from \cite{SubagFEL} inspired an optimization algorithm designed in \cite{ES18}, which outputs a configuration in $S_N$ that roughly maximizes $H_N(\bs)$ in polynomial time in $N$.
In the Ising case, Montanari \cite{AM18} achieved the same optimization result for the SK model by utilizing the {\it Approximate Message Passing} (AMP) algorithm based on the TAP equations.
Both algorithms start from the origin and iteratively move towards $S_N$ or $\Sigma_N$ using orthogonal updates, until reaching the approximate optimizer.
Montanari proved that his algorithm ends at an approximate TAP solution. In fact, we believe that in each iteration the algorithm jumps from one approximate TAP solution to another. An extension of the algorithm from \cite{AM18}, which optimizes models with FRSB on $[0,q_{\EA}]$ was constructed by El Alaoui, Montanari and Sellke in \cite{EMS20}. 	In addition to be useful in the optimizations of the mixed $p$-spin Hamilonians, the AMP algorithms driven by the TAP equations have also received great popularity in a number of Baysian inference problems, see, e.g., \cite{KKM+16,MR+16,MV+18,ZK+16}. Finally, we mention that  in another direction, when a certain overlap gap property holds, e.g. for the pure $p$-spin model with even $p\geq4$, it was proved  that a broad class of algorithms, such as Lipschitzian iteration schemes and low-degree methods fail to produce near ground states for $H_N(\bs)$ in polynomial time in $N,$ see \cite{GJ19,GJA20}. This property is expected to hold
generically if the model is not FRSB on $[0,q_{\EA}]$, for instance, when $\beta_2=0$ and $\beta_p>0$ for some $p\geq 3$ (see \cite{AC15.1,WKGPM,JT17}).

\section{\label{sec:results}Main results}
 
\subsection{The model.} 
In this paper we will consider the mixed $p$-spin Hamiltonian $H_N(\bs)$ defined in \eqref{hamx} with Ising spins, indexed by $\bs\in \Sigma_N = \{-1,+1\}^N$. The covariance of the Gaussian process $H_N(\bs)$ equals
		\begin{align}
		\e H_N(\bs^1)H_N(\bs^2)=N\xi\bigl(R(\bs^1,\bs^2)\bigr),
		\end{align} 
		where $R(\bs^1,\bs^2)=\frac{1}{N}\sum_{i=1}^N\sigma_i^1\sigma_i^2$ is called the overlap of $\bs^1$ and $\bs^2$, and where
		\begin{align}
		\xi(s)=\sum_{p\geq 1}\beta_p^2s^p.
		\end{align} 
In the Introduction we allowed the (random) free energy and Gibbs measure depend on an inverse-temperature parameter $\beta>0$. Of course, $\beta$ can be absorbed into the coefficients $\beta_p$. Hence, to simplify the notation, we redefine the 
free energy by
		\begin{align}
		F_N&=\frac{1}{N}\log \sum_{\bs\in \Sigma_N}\exp H_N(\bs),
		\end{align}
 and the Gibbs measure by
		\begin{align}
		G_N(\bs)&=\frac{\exp H_N(\bs)}{ \sum_{\bs\in \Sigma_N}\exp H_N(\bs)},
		\end{align}
and henceforth use these definitions which do not include $\beta$.
One can also add an external field term $h\sum_{i\leq N}\sigma_i$ to the Hamiltonian $H_N(\sigma)$, but, for simplicity of notation, we will usually omit it (see also Remark \ref{rmkMEF} below).		
		
		A special role will be played by the so called \emph{generic} mixed $p$-spin models that satisfy
\begin{align}
\mybar{\operatorname{Span}}\bigl\{x^p \,:\, \beta_p\not = 0\bigr\} = C\bigl([-1,1],\|\cdot\|_\infty\bigr),
\label{Generic}
\end{align}
which means that sufficiently many of the $p$-spin terms in the Hamiltonian (\ref{hamx}) are present in the model.
 
The limit of the free energy $F_N$ is given by the celebrated Parisi formula \cite{Parisi79, Parisi80} mentioned above, which was first proved in a seminal work of Talagrand in \cite{Tal03} (building upon a breakthrough by Guerra \cite{Guerra}), and later generalized to models with odd spin interactions in \cite{Pan00}. (The formula for $\Es$ in (\ref{eq:GS}) was derived in \cite{ChenAuffGSE}.) If $\mathcal{M}_{0,1}$ is the space of probability measures on $[0,1]$, for $\zeta\in \mathcal{M}_{0,1},$ let $\Phi_{\zeta}(t,x)$ be the solution of the Parisi PDE
\begin{equation}
\partial_t \Phi_{\zeta} = -\frac{\xi''(t)}{2}\Bigl(
\partial_{xx} \Phi_{\zeta} + \zeta(t)\bigl(\partial_x \Phi_{\zeta}\bigr)^2
\Bigr)
\label{ParisiPDEOrig}
\end{equation}
on $[0,1]\times \Reals$ with the boundary condition $\Phi_{\zeta}(1,x)=\log2\ch x.$ Here $\zeta(s):=\zeta([0,s]).$ Define the Parisi functional on $\mathcal{M}_{0,1}$ by
\begin{equation}
\PP(\zeta):= \Phi_{\zeta}(0,0)-\frac{1}{2}\int_0^{1}\!s\xi''(s)\zeta(s)\,ds.
\end{equation}
Then, the limit of the free energy is given by
\begin{equation}
\lim_{N\to\infty}\e F_N = 
\inf_{\zeta\in \mathcal{M}_{0,1}}\PP(\zeta).
\label{eqParisiOrig}
\end{equation}
The minimizer $\zeta_*$ is unique (see \cite{AC15}, also \cite{JT16}) and is called the \emph{Parisi measure}. The solution of the above PDE is usually constructed explicitly for discrete $\zeta$ and extended by continuity to all $\zeta$, but one can also show its uniqueness (see \cite{JT16}).

\subsection{Motivation via infinitary nature of the Parisi tree.}\label{sec2.2}
The Parisi ansatz, schematically depicted in Figure \ref{Fig1}, states that the Gibbs measure asymptotically decomposes into disjoint pure states, whose magnetizations (barycenters) are organized ultrametrically (see \cite{MPV}). For simplicity, we plotted only a finite-RSB scenario but, in principle, the overlap can take infinitely many values. 

The Parisi ansatz holds for any model that satisfies the Ghirlanda-Guerra identities (see \cite{ultrametricity}) and, in particular, it holds for generic mixed $p$-spin models (see Section 3.7 in \cite{SKmodel}). Since any mixed $p$-spin model can be approximated by generic models at the level of the free energy, all the results below will apply to non-generic models as well, and the Parisi ansatz for the generic models will be used as guiding our motivation.

\begin{figure}[t]
\centering
 \psfrag{QuM21}{ { $m$}}
 \psfrag{Que31}{ { $q$}}
 \psfrag{Que71}{ { $q_{EA}$}}
  \psfrag{Que53}{ { $1$}}
 \psfrag{Sigma43}{ { $\bs$}}
 \psfrag{AncestorS}{\small ancestor states}
 \psfrag{PureS}{\small pure states}
 \psfrag{ConfigS}{\small configurations}
 \hspace{-22mm}
 \includegraphics[width=0.8\columnwidth]{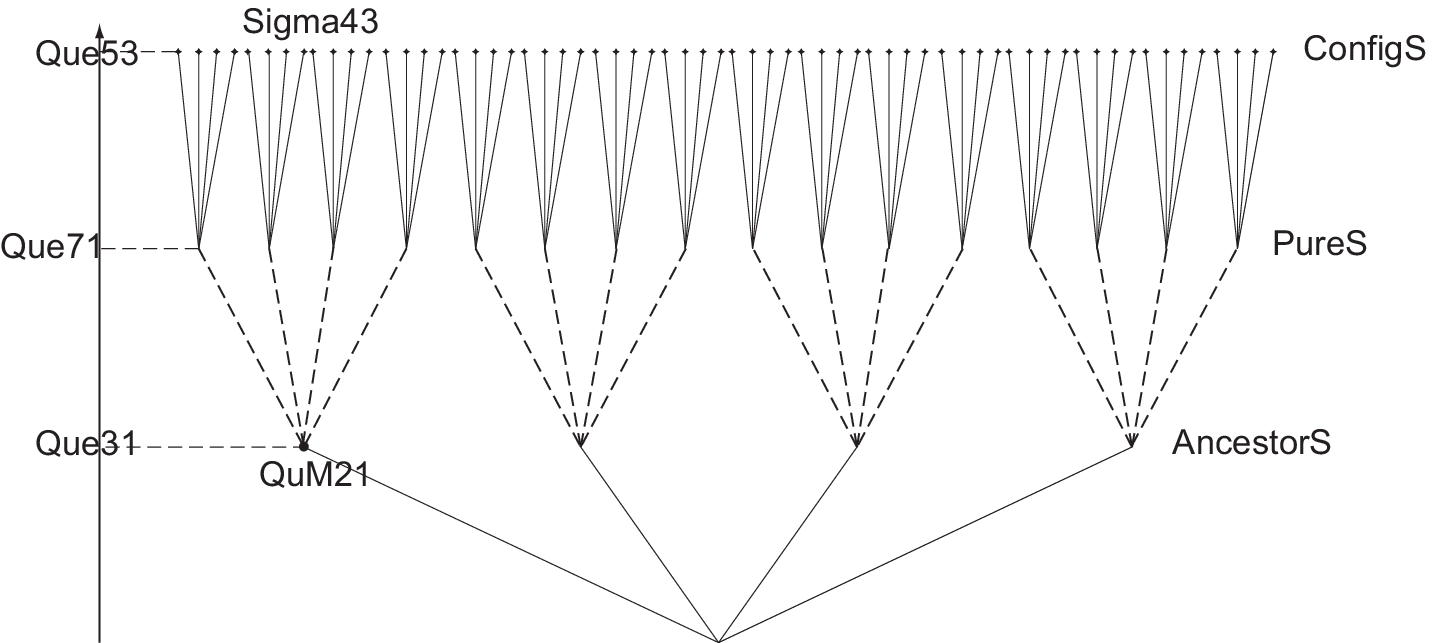}
 \caption{\label{Fig1} Ancestor state $m$ on the infinitary tree of states with self-overlap $\frac{1}{N}\|m\|^2=q\leq q_{EA}$.}
 \label{fig1}
 \end{figure}

Vertices in the tree in Figure \ref{Fig1} below the level of pure states are called \emph{ancestor states} and they represent branching points when clusters break into smaller subclusters as we zoom in on individual configurations. These ancestor states also have the physical meaning of points $m$ inside the cube
\begin{equation}
m \in [-1,1]^N.
\label{Mconstr}
\end{equation}
(In the spherical models the cube is replaced by the ball of radius $\sqrt{N}$.) The key feature of the Parisi ansatz is that this hierarchical tree of states is \emph{infinitary} in the thermodynamic limit, which means that at each branching point there are infinitely many edges (corresponding to subclusters of a bigger cluster). This infinitary property, in particular, means that all edges in this tree are orthogonal and, for example, a point $m$ corresponding to an ancestor state is perpendicular to $\bs-m$ for any configuration $\bs$ (on the scale $1/{N}$) coming from a pure state which is a descendant of $m$. This infinitary property of the Parisi ansatz was used implicitly or explicitly in many applications of ultrametricity (for example, in the proof of ultrametricity itself as well as chaos in temperature in  \cite{ultrametricity, PChaosT}, and in the proof of the synchronization mechanism in \cite{PMS15, panchenko2018}), and it is central to the main idea in \cite{SubagFEL} as well as the current paper, which we will explain next.
 
For $m$ as in (\ref{Mconstr}) and $\eps>0,$ we recall the definition of the band centered at $m$ from the introduction,
\begin{equation}
B(m,\eps)=\Big\{\bs\in\Sigma_N:\,|R(\bs,m)-R(m,m)|=\frac{1}{N}|m\cdot (\bs-m)|<\eps \Big\}.
\end{equation}
If $\eps\geq 2/\sqrt{N}$ then all the bands are non-empty, which can be seen, for example, from Bernstein's inequality: if $\bs$ comes from the product measure on $\Sigma_N$ with mean $m$ then
$$
\p\bigl(|m\cdot (\bs-m)|\geq N\eps\bigr)\leq e^{-\frac{N\eps^2}{1+\sqrt{N}\eps/3}}
\leq e^{-\frac{4}{1+2/3}}<1.
$$

Given $\delta>0$ and $n\geq 1,$ let us consider a set consisting of $n$ configurations in this narrow band $\bs^{1},\ldots,\bs^{n}\in B(m,\eps)$ such that all 
\begin{equation}
\tbs^i = \bs^i-m
\end{equation}
are almost orthogonal to each other,
\begin{equation}
\label{eq:Bn}
B_n(m,\eps,\delta)
= \Big\{ (\bs^{1},\ldots,\bs^{n})\in B(m,\eps)^n:\,\forall i\neq j,\,\,\big|R({\bs}^{i},{\bs}^{j})-R(m,m)\big|<\delta\Big\}.
\end{equation}
Here, when $n=1$, this is understood as $B_1(m,\eps,\delta)=B(m,\eps).$
Heuristically, if $m$ corresponds to an ancestor state then all the descendant pure states are in the band $B(m,\eps)$, so the band carries some non-negligible weight of the Gibbs measure. Moreover, by the infinitary nature of the tree, we can choose many nearly orthogonal configurations relative to $m$ with non-negligible Gibbs probability. This means that, for such $m$, the inequalities
\begin{equation}
F_N 
\geq \frac{1}{N}\log\sum_{B(m,\eps)}e^{H_{N}(\bs)}
\geq \frac{1}{nN}\log\sum_{B_{n}(m,\eps,\delta)}e^{\sum_{i=1}^{n}H_{N}(\bs^{i})}
\end{equation}
are approximate equalities. Let us introduce the quantity 
\begin{equation}
\label{eq:TAPn}
\nTAP_{N,n}(m,\eps,\delta) :=\frac{1}{nN}\log\sum_{B_{n}(m,\eps,\delta)}e^{\sum_{i=1}^{n}\big[H_{N}(\bs^{i})-H_N(m)\big]},
\end{equation}
which, for simplicity of notation, will often be written with $\eps$ and $\delta$ omitted,
\begin{equation}
\nTAP_{N,n}(m):=\nTAP_{N,n}(m,\eps)=\nTAP_{N,n}(m,\eps,\delta).
\end{equation}
Then the above inequalities can be rewritten as
\begin{equation}
F_N 
\geq \frac{H_N(m)}{N} + \nTAP_{N,1}(m,\eps)
\geq \frac{H_N(m)}{N} + \nTAP_{N,n}(m,\eps,\delta).
\end{equation}
Again, for the ancestor states with the self-overlap $\frac{1}{N}\|m\|^2=q$ corresponding to some $q$ in the support of the Parisi measure we expect these to be approximate equalities. Moreover, for $q$ in the support of the Parisi measure, one can show that such ancestor states exist, which will imply that
\begin{equation}
F_N \approx \max_{\frac{1}{N}\|m\|^2=q}\Bigl( \frac{H_N(m)}{N} + \nTAP_{N,n}(m,\eps,\delta)\Bigr).
\end{equation}
What do we gain by appealing to the infinitary nature of the tree of states in this way?

Given $a,b\in\Reals,$ let $\MM_{a,b}$ denote the space of probability measures on $[a,b]$, equipped with the topology of weak convergence. We will always implicitly identify a probability measure $\mu$ with its c.d.f. and, for simplicity of notation, write $\mu(x):=\mu((-\infty,x])$. For $\mu,\mu'\in \MM_{a,b}$, we will work with the metric
\begin{equation}
d_1(\mu,\mu')=\int_a^b\! |\mu(x)-\mu'(x)|\,dx,
\label{eqMd1}
\end{equation}
which metrizes weak convergence. We will keep the dependence of $d_1$ on $a$ and $b$ implicit.

Recall the definition \eqref{eqEmpMu0} of the empirical measure 
\begin{equation*}
	\mu_m=\frac1N\sum_{i\leq N}\delta_{m_i}\in \MM_{-1,1}
\end{equation*} 
of $m=(m_i)_{i\leq N}\in[-1,1]^N$. The key point will be that, for small $\eps$ and $\delta$ and large $n$, we can write, with high probability,
\begin{equation}
\nTAP_{N,n}(m,\eps,\delta) \approx \nTAP(\mu_m)
\label{eqUnif}
\end{equation}
\emph{uniformly over all $m$} in (\ref{Mconstr}), for some specific {non-random} functional $\nTAP\colon \MM_{-1,1}\to\Reals$. This functional will be our \emph{generalized TAP correction term} and
\begin{equation}
F_N \approx \max_{\frac{1}{N}\|m\|^2=q}\bigl( \frac{H_N(m)}{N} + \nTAP(\mu_m)\bigr)
\end{equation}
for $q$ in the support of the Parisi measure is the \emph{generalized TAP representation} of the free energy. Moreover, heuristically, the ancestor states $m$ in the Parisi tree are among the TAP states (near maximizers of the right hand side), and we will show that these states very much resemble the ancestor states. Let us describe the generalized TAP correction and state our main results precisely.

\subsection{Generalized TAP correction and representation}

For $\zeta\in \mathcal{M}_{0,1}$, recall the Parisi PDE solution $\Phi_{\zeta}(q,x)$ from \eqref{ParisiPDEOrig}. Denote the concave conjugate of $\Phi_\zeta(q,\cdot)$ by
\begin{equation}
\Lambda_\zeta(q,a):=\inf_{x\in\Reals}\Bigl(\Phi_\zeta(q,x)-ax\Bigr), \,\, a\in [-1,1].
\label{eqCCPar}
\end{equation}
It is well-known that $\Phi_\zeta(q,x)$ is a strictly convex function in $x$ and it goes to $\pm 1$ as $x\to\pm\infty$, see \cite{AC15}. Hence, for each $a\in (-1,1),$ the variational problem defined in  $\Lambda_\zeta(q,a)$ has a unique minimizer $\oPsi(q,a,\zeta)$, which satisfies
\begin{equation}
\partial_x \Phi_{\zeta}\bigl(q,\oPsi(q,a,\zeta)\bigr) = a,\,\, a\in (-1,1).
\label{eqDefPsiefO}
\end{equation}
With this notation, we can also write
\begin{equation}
\Lambda_\zeta(q,a)=\Phi_\zeta\bigl(q,\oPsi(q,a,\zeta)\bigr)-a\oPsi(q,a,\zeta), \,\, a\in (-1,1).
\label{eqCCParPsi}
\end{equation}
We will see that the infimum in (\ref{eqCCPar}) is finite for $a\in \{-1,1\}$, so the function $\Lambda_\zeta(q,a)$ is continuous on $[-1,1]$. Moreover, since $x\to\Phi_\zeta(q,x)$ is even, so is $a\to\Lambda_\zeta(q,a).$

For $\mu\in \MM_{-1,1}$ such that $\int\! a^2\, d\mu(a)=q\in [0,1]$, we define
\begin{equation}
\nTAP(\mu,\zeta):=
\int\! \Lambda_{\zeta}(q,a)\,d\mu(a)-\frac{1}{2}\int_q^{1}\!s\xi''(s)\zeta(s)\,ds.
\label{eqTAPfirstZeta}
\end{equation}
Note that, when $\mu=\delta_1$ (and $q=1$), the functional is identically equal to zero. Let
\begin{equation}
\nTAP(\mu):=
\inf_{\zeta\in \mathcal{M}_{0,1}} \nTAP(\mu,\zeta).
\label{eqTAPfirst}
\end{equation}
Notice that, for a fixed $q$, this definition depends only on the values of $\zeta(s)$ on the interval $[q,1]$. This means that we could, equivalently, write
\begin{equation}
\nTAP(\mu):=
\inf_{\zeta\in \mathcal{M}_{q,1}} \nTAP(\mu,\zeta),
\label{eqTAPfirstQ}
\end{equation}
where $\mathcal{M}_{q,1}$ is the space of probability distributions on $[q,1]$. We will show in Theorem \ref{lem:TAPnew} below that the infimum is achieved and the minimizer is unique in $\mathcal{M}_{q,1}$. Because of this, whenever we use the representation (\ref{eqTAPfirst}), it will be convenient to use the convention that we minimize over $\zeta\in \mathcal{M}_{0,1}$ such that 
\begin{equation}
\zeta(s)=0 \,\mbox{ for }\, s\in [0,q).
\label{ConventionQ}
\end{equation}
The following is our main result. Recall $\mu_m$ in (\ref{eqEmpMu0}).
\begin{thm}[Generalized TAP correction]
\label{thm:TAPcorrection}
For any $c, t>0$, if $\eps,\delta>0$ are small enough and $n\geq 1$ is large enough then, for large $N$,
\begin{equation}
\label{eq:TAPunifconv}
\p \Bigl( \forall m \in [-1,1]^N : \ \big|\nTAP_{N,n}(m,\eps,\delta)  -  \nTAP(\mu_m) \big| < t \Bigr)>1-e^{-cN}.
\end{equation}
\end{thm}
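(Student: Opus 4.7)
The plan is to first establish the pointwise limit $\nTAP_{N,n}(m,\eps,\delta)\to \nTAP(\mu_m)$ in probability for a fixed $m$, and then promote it to uniform control over $[-1,1]^N$ via Gaussian concentration and a net-plus-Lipschitz argument. For fixed $m$ with $\|m\|^2/N=q$, after translating $\tbs^i = \bs^i-m$, the band constraint forces each $\tbs^i$ to have squared norm $\approx N(1-q)$ and the pairwise constraint in $B_n(m,\eps,\delta)$ becomes near-orthogonality $\tbs^i\cdot\tbs^j\approx 0$. Moreover, each coordinate of $\tbs^i$ lives in $\{-1-m_k, 1-m_k\}$, so a priori behaves like an Ising spin with built-in mean $m_k$, while the Gaussian process $H_N(\bs^i)-H_N(m)$ has covariance determined by the overlaps $R(\bs^i,\bs^j)\in[q,1]$ on and off diagonal.

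The core pointwise computation should identify $\lim_N \e \nTAP_{N,n}(m,\eps,\delta)$ via a Parisi-type variational formula with order parameter $\zeta \in \mathcal{M}_{q,1}$. First I would obtain the upper bound $\nTAP(\mu_m,\zeta)$ by a Guerra-Toninelli interpolation on the $n$-replica system, where the boundary at level $q$ is modified to incorporate the constraints $R(\tbs^i,m)\approx 0$ via a Legendre-type conjugation; this is precisely how the concave conjugate $\Lambda_\zeta(q,a)$ of $\Phi_\zeta(q,\cdot)$ enters and gets integrated against $\mu_m$. The matching lower bound is obtained by an Aizenman-Sims-Starr cavity argument (alternatively, an adaptation of Panchenko's ultrametric/synchronization approach) applied to the constrained $n$-replica Gibbs measure on $B_n(m,\eps,\delta)$. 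The limit $n\to\infty$ is essential: it effectively forces the off-diagonal overlaps $R(\tbs^i,\tbs^j)$ to be concentrated at $0$, mirroring the support condition (\ref{ConventionQ}) on $\zeta$ and killing fluctuations in $n$.

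Gaussian concentration takes care of the randomness: for each fixed $m$, $H_N\mapsto \nTAP_{N,n}(m,\eps,\delta)$ is Lipschitz in the Gaussian disorder with constant $O(N^{-1/2})$ after normalization, so the Borell-TIS inequality yields $\p(|\nTAP_{N,n}(m)-\e\nTAP_{N,n}(m)|>t/2)\leq e^{-c_1Nt^2}$. For uniformity in $m$, I would verify that $m\mapsto \nTAP_{N,n}(m,\eps,\delta)$ is Lipschitz in $m$ in a suitable norm (e.g. $\ell^\infty$) with a dimension-free constant on a high-probability event, and that $\mu\mapsto \nTAP(\mu)$ is continuous in the $d_1$ metric of (\ref{eqMd1})—the latter following from the continuity properties of $\Lambda_\zeta(q,a)$ established via $\oPsi$. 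Then cover $[-1,1]^N$ by an $\eta$-net in $\ell^\infty$ of cardinality $e^{O(N\log(1/\eta))}$, apply the concentration bound at every net point via a union bound, and choose $\eta$ small but independent of $N$ so that the Gaussian rate beats the net entropy; the Lipschitz property then extends the bound to all of $[-1,1]^N$.

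The main obstacle is the matching lower bound in the pointwise Parisi-type formula for $\e\nTAP_{N,n}(m)$. The joint pairwise overlap constraints $R(\bs^i,\bs^j)\approx q$ together with the band constraint $R(\bs^i,m)\approx q$ are atypical boundary conditions for the classical Parisi machinery: they fix the overlap distribution at its endpoints $\{q,1\}$ in a rigid way, so standard Ghirlanda-Guerra arguments must be applied to the constrained Gibbs measure (after adding a small generic perturbation to $H_N$) to recover ultrametricity and synchronization on $[q,1]$. A secondary concern is ensuring the uniform Lipschitz estimate for $m\mapsto \nTAP_{N,n}(m)$ remains meaningful near the boundary $q\to 1$ of the cube, where the band becomes thin and the number of admissible $\bs\in B(m,\eps)$ degenerates; handling this regime may require a separate argument using that $\nTAP(\delta_{\pm 1})=0$.
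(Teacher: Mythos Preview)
Your concentration step is the critical gap. You claim $\nTAP_{N,n}(m,\eps,\delta)$ concentrates at rate $e^{-c_1Nt^2}$ with $c_1$ fixed, and then propose to beat the net entropy $e^{O(N\log(1/\eta))}$ by choosing $\eta$ small but independent of $N$. This forces $c_1 t^2 > c + O(\log(1/\eta))$, which is impossible once $t$ is small or $c$ is large; yet the theorem demands the bound for \emph{every} $c,t>0$. What you miss is that the near-orthogonality constraint in $B_n(m,\eps,\delta)$ drives the variance of $\frac{1}{nN}\sum_{i\leq n}[H_N(\bs^i)-H_N(m)]$ down to order $(1/n+\delta+\eps)/N$ rather than $1/N$, so the actual concentration rate is $\exp\bigl(-c_\xi Nt^2/(1/n+\delta+\eps)\bigr)$. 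Taking $n$ large and $\eps,\delta$ small makes this rate as large as desired, and \emph{that} is what beats the net entropy for arbitrary $c,t$. You allude to ``killing fluctuations in $n$'' but attach it to the pointwise limit; in the paper the whole purpose of the $n$-replicated, near-orthogonal system is this enhanced concentration, which is precisely what makes the statement uniform over $[-1,1]^N$.

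For the pointwise expectation, your direct Guerra/ASS plan on the constrained $n$-replica system is not how the paper proceeds, and the ASS lower bound with simultaneous band and pairwise-overlap constraints would be delicate to carry out. The paper instead exploits orthogonality a second time to freely add or remove an external field $h_i=\ef(m_i)$ (since $\frac{1}{n}\sum_i h\cdot\tbs^i$ is small), reducing to a single-replica free energy $F_{N,1}^h$ on the band, for which it proves a Parisi formula (Theorem~\ref{thm1}). The lower bound you flag as the main obstacle is then handled by \emph{choosing} $\ef=\ef_{\zeta_0}=\Psi(\cdot,\zeta_0)$ so that the Parisi measure on the band has $0$ in its support (Theorem~\ref{lem:TAPnew}); with that choice the orthogonality constraint has no free-energy cost and $\e F_{N,n}^h\approx \e F_{N,1}^h=\nTAP(\mu)$. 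This effective-external-field construction, not a cavity computation on the constrained $n$-system, is the missing idea in your proposal.
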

In particular, we can let $t=t_N$ go to zero slowly with $N$ if we let $\eps=\eps_N$ and $\delta=\delta_N$ go to zero and $n=n_N$ go to infinity slowly enough. Once we computed the TAP correction, we get the TAP representation for the free energy.

\begin{thm}[Generalized TAP representation]\label{thm:GenTAP}
For any $q$ in the support of the Parisi measure of the original model (\ref{hamx}), in probability,
\begin{equation}
\label{eq:GenTAP}
\lim_{N\to\infty}\Bigl|F_N - \max_{\frac{\|m\|^2}{N}= q}\Bigl( \frac{H_N(m)}{N} + \nTAP(\mu_m)\Bigr)\Bigr|=0.
\end{equation}
\end{thm}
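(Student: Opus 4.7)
The plan is to prove matching upper and lower bounds for \eqref{eq:GenTAP}, using Theorem \ref{thm:TAPcorrection} to interchange $\nTAP_{N,n}(m,\eps,\delta)$ with $\nTAP(\mu_m)$ uniformly in $m \in [-1,1]^N$ with probability $\geq 1 - e^{-cN}$. The lower bound is essentially contained in the heuristic of Section \ref{sec:results}: for any $m \in [-1,1]^N$, the identity $\bigl(\sum_{\bs \in B(m,\eps)} e^{H_N(\bs)}\bigr)^n \geq \sum_{B_n(m,\eps,\delta)} \prod_i e^{H_N(\bs^i)}$ and the containment $B(m,\eps) \subseteq \Sigma_N$ give
\[
F_N \;\geq\; \frac{1}{N}\log\sum_{B(m,\eps)} e^{H_N(\bs)} \;\geq\; \frac{H_N(m)}{N} + \nTAP_{N,n}(m,\eps,\delta).
\]
Maximizing over $\|m\|^2/N = q$ and applying Theorem \ref{thm:TAPcorrection} with $t = t_N \to 0$ slowly yields $F_N \geq \max_{\|m\|^2/N = q}(H_N(m)/N + \nTAP(\mu_m)) - o(1)$ in probability.

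For the upper bound, I would first reduce to generic mixed $p$-spin models \eqref{Generic} by a continuity-in-$\xi$ approximation, since both $F_N$ and $\nTAP$ depend continuously on the coefficients $(\beta_p)$. For generic $\xi$, the Parisi ansatz applies: the overlap array of $n$ i.i.d.\ samples from $G_N$ is asymptotically described by a Ruelle probability cascade associated to $\zeta_*$. In particular, for $q \in \supp(\zeta_*)$ and any $\eta > 0$, an iterative application of the Ghirlanda--Guerra identities and ultrametricity produces a constant $c_n > 0$, independent of $N$, such that
\[
\e\, G_N^{\otimes n}(A_\eta) \;\geq\; c_n, \qquad A_\eta := \bigl\{(\bs^i)_{i\leq n} : |R(\bs^i,\bs^j) - q| < \eta \text{ for all } i \neq j\bigr\},
\]
for all large $N$. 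Concentration of the Gibbs mass around its expectation then yields, with high probability, $nN F_N \leq \log \sum_{A_\eta} e^{\sum_i H_N(\bs^i)} + O(1)$.

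For each tuple in $A_\eta$, the centroid $m^* := n^{-1}\sum_i \bs^i$ satisfies $\|m^*\|^2/N = q + O(\eta + 1/n)$, and a direct computation of $R(\bs^i, m^*)$ and of the overlaps of $\bs^i - m^*$ with $\bs^j - m^*$ shows that the tuple lies in $B_n(m^*, \eps_0, \delta_0)$ with $\eps_0, \delta_0 = O(\eta + 1/n)$. Covering $[-1,1]^N$ by an $\eta'$-grid $G$ of size at most $(2/\eta')^N$, each $m^*$ is within Euclidean distance $\eta'\sqrt{N}$ of some $m \in G$, and a Cauchy--Schwarz estimate yields $B_n(m^*, \eps_0, \delta_0) \subseteq B_n(m, \eps_0 + O(\eta'), \delta_0 + O(\eta'))$. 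Bounding $\sum_{A_\eta}$ by $|G|$ times its maximum over grid points with $\|m\|^2/N \approx q$, dividing by $nN$, and invoking Theorem \ref{thm:TAPcorrection} gives
\[
F_N \;\leq\; \frac{\log(2/\eta')}{n} + \max_{\|m\|^2/N \approx q}\Bigl(\frac{H_N(m)}{N} + \nTAP(\mu_m)\Bigr) + t.
\]
Parameters are chosen in the order $t \downarrow 0$, then $\eps, \delta$ small and $n_0$ large per Theorem \ref{thm:TAPcorrection}, then $\eta, \eta'$ small enough for band containment, and finally $n \geq n_0$ with $\log(2/\eta')/n < t$. A Lipschitz-in-$q$ argument (radially rescaling $m$ to enforce $\|m\|^2/N = q$ exactly, where $H_N(m)/N$ and $\nTAP(\mu_m)$ each change by $O(|q'-q|)$) converts $\|m\|^2/N \approx q$ to $\|m\|^2/N = q$.

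The main obstacle is the input to the upper bound: extracting $n$-tuples of Gibbs replicas whose \emph{all} pairwise overlaps are simultaneously close to $q$, not merely a marginal bound on $R(\bs^1,\bs^2)$. This requires the ultrametric structure of the asymptotic overlap array together with iterated Ghirlanda--Guerra, available precisely for generic models and then transferred by approximation. A secondary technical issue is the three-way balance between $\eps, \delta$ (small for Theorem \ref{thm:TAPcorrection} to apply), $\eta, \eta'$ (small for the band containment after discretization), and $n$ (large enough to absorb the entropic loss $\log(2/\eta')/n$), all within the regime in which Theorem \ref{thm:TAPcorrection} is valid.
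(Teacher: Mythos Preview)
Your lower bound matches the paper's exactly. For the upper bound, your route differs from the paper's and contains one real gap.

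\textbf{The gap.} The sentence ``Concentration of the Gibbs mass around its expectation then yields, with high probability, $nN F_N \leq \log \sum_{A_\eta} e^{\sum_i H_N(\bs^i)} + O(1)$'' is not justified. The random variable $G_N^{\otimes n}(A_\eta)$ does \emph{not} concentrate: in any RSB regime its limiting law under the disorder is nontrivial (it is a functional of the random RPC weights). What you can extract from $\e G_N^{\otimes n}(A_\eta)\geq c_n$ is only $\p\bigl(G_N^{\otimes n}(A_\eta)\geq c_n/2\bigr)\geq c_n/2$, i.e.\ the desired inequality holds with \emph{constant} probability, not high probability. To upgrade this to convergence in probability you must invoke a separate concentration step: both $F_N$ and $\max_{\|m\|^2/N=q}\bigl(H_N(m)/N+\nTAP(\mu_m)\bigr)$ concentrate at exponential rate (Gaussian concentration and Borell--TIS respectively), so if their difference is $<2t$ with probability bounded away from zero, it is $<3t$ with probability $\to 1$. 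The paper states this reduction explicitly; you omit it, and the substitute you offer (``concentration of the Gibbs mass'') is false.

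\textbf{Comparison of the two upper-bound arguments.} Once the gap above is patched, your grid-and-union-bound argument does go through, but it is heavier than the paper's. The paper samples $2n$ replicas rather than $n$, conditions on the last $n$ (call them $\bar\bs^{n+1},\ldots,\bar\bs^{2n}$), and sets $m=\sqrt{Nq}\,\bar\bs/\|\bar\bs\|$ where $\bar\bs$ is their average. Because each of the first $n$ replicas has overlap $\approx q$ with \emph{every} $\bar\bs^{n+j}$ (not just pairwise among themselves), they automatically lie in $B_n(m,\eps,\delta)$ for this single deterministic $m$, and no discretization, grid, or entropic loss $\log(2/\eta')/n$ is needed. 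The input the paper uses is also sharper: rather than $\e G_N^{\otimes n}(A_\eta)\geq c_n$, it cites a result giving $\frac1N\log G_N^{\otimes 2n}(A_\tau)>-t$ with probability $e^{-o(N)}$, which feeds directly into the concentration trick. Your approach is conceptually fine and more elementary in spirit (no conditioning trick), but it introduces an extra layer of parameter balancing between $\eta,\eta',n$ and the thresholds of Theorem~\ref{thm:TAPcorrection}, and the Lipschitz-in-$q$ cleanup at the end, all of which the $2n$-replica device sidesteps.
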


Recall that in \cite{CPTAP17}, it was proved that if $q_{\EA}$ is the largest point in the support of the Parisi measure for the original Parisi formula of $F_N$, then 
\begin{align}\label{add:eq-1}
	\lim_{N\rightarrow\infty}F_N&=\lim_{\varepsilon\downarrow 0}\lim_{N\rightarrow\infty}\sup_{m\in [-1,1]^N:\frac{\|m\|_2^2}{N}\in [q_{\EA}-\varepsilon,1]}\Bigl(\frac{H_N(m)}{N}-\int I(a)d\mu_m(a)+C(q)\Bigr),
\end{align}
where
\begin{equation}
\label{eq:IC}
\begin{aligned}
	I(a)&:=\frac{1+a}{2}\log \frac{1+a}{2}+\frac{1-a}{2}\log \frac{1-a}{2},
	\\
	C(q)&:=\frac{1}{2}\bigl(\xi(1)-\xi(q)-\xi'(q)(1-q)\bigr).
\end{aligned}
\end{equation}
Under an appropriate condition on the empirical measure $\mu_m$, we shall see that the variational formula defined in $\nTAP(\mu_m)$ is solved by the replica symmetric solution in the sense that the minimizer is the Dirac measure at the origin and moreover, our TAP correction term coincides with the sum of the entropy and correction terms in \eqref{add:eq-1}, that is, $\nTAP(\mu_m)=-\int I(a)\mu_m(da)+C(q),$ see Proposition \ref{add:prop2} below. This  allows us to conclude the following version of the classical TAP representation for the free energy from the general representation in Theorem \ref{thm:GenTAP}.
We will explain this in more detail and discuss the relation with Plefka's condition \cite{Plefka} in Section \ref{subsec:classical}.

\begin{cor}[Classical TAP representation] \label{add:cor1}The following equation holds almost surely
	\begin{align*}
		\lim_{N\rightarrow\infty}F_N&=\lim_{\varepsilon\downarrow 0}\lim_{N\rightarrow\infty}\max\Bigl(\frac{H_N(m)}{N}-\int I(a)d\mu_m(a)+C(q)\Bigr),
	\end{align*}
	where the maximum is taken over all $m\in [-1,1]^N$ satisfying  $
	\sup_{0\leq s\leq 1-q}\Gamma_{\mu_m}(s)\leq \varepsilon$, where $q=\|m\|_2^2/N$ and $\Gamma_\mu(s)$ is defined in \eqref{eq:Gamma_mu}.
\end{cor}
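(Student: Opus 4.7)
The strategy is to combine the generalized TAP representation in Theorem~\ref{thm:GenTAP}, the identification of $\nTAP(\mu_m)$ with the replica-symmetric correction under the Plefka condition (Proposition~\ref{add:prop2}), and the classical TAP representation~\eqref{add:eq-1} from~\cite{CPTAP17}. The two directions of the claimed equality will be proved separately.

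For the lower bound $\lim F_N \geq \mathrm{RHS}$, the motivating inequality $F_N \geq \frac{H_N(m)}{N} + \nTAP_{N,1}(m,\eps)$ together with the uniform approximation in Theorem~\ref{thm:TAPcorrection} yields, with overwhelming probability, $F_N \geq \frac{H_N(m)}{N} + \nTAP(\mu_m) - o(1)$ uniformly over $m \in [-1,1]^N$. For any $m$ with $q = \|m\|^2/N$ satisfying $\sup_{0 \leq s \leq 1-q}\Gamma_{\mu_m}(s) \leq \eps$, Proposition~\ref{add:prop2} gives $\nTAP(\mu_m) = -\int I(a)\, d\mu_m(a) + C(q)$. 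Substituting this identity and then taking the maximum over such $m$ gives the direction immediately.

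For the upper bound $\lim F_N \leq \mathrm{RHS}$, I would start from~\eqref{add:eq-1}, which identifies $\lim F_N$ with the limiting supremum of the classical TAP functional over $m$ with $\|m\|^2/N \in [q_{\EA} - \eps, 1]$. The task is to replace this range restriction by the Plefka condition. For every $m$ in that range the replica-symmetric measure is admissible in the infimum defining $\nTAP(\mu_m)$, so $\nTAP(\mu_m) \leq -\int I\, d\mu_m + C(q)$, and consequently the generalized TAP functional is everywhere dominated by the classical one. Coupled with the uniform lower bound of the previous paragraph, any near-maximizer $m^\ast$ of the classical TAP functional must then satisfy $\nTAP(\mu_{m^\ast}) = -\int I\, d\mu_{m^\ast} + C(\|m^\ast\|^2/N) + o(1)$, i.e., the replica-symmetric solution is asymptotically optimal at $m^\ast$. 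By the necessity part of Proposition~\ref{add:prop2}, discussed in Section~\ref{subsec:classical}, which asserts that the Plefka condition is necessary for the replica-symmetric solution to minimize $\nTAP(\mu,\zeta)$, we conclude that $m^\ast$ approximately satisfies the Plefka condition, so the supremum in~\eqref{add:eq-1} is attained, up to $o(1)$, within the Plefka-constrained feasible set.

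The main obstacle is the last step: turning the asymptotic optimality of the replica-symmetric solution at near-maximizers into an approximate Plefka condition in a quantitative and uniform manner. This will require a stability analysis of the variational problem defining $\nTAP(\mu)$ around the replica-symmetric minimizer, compatible with the weak topology on $\MM_{-1,1}$, together with a careful interleaving of the range parameter in~\eqref{add:eq-1}, the Plefka tolerance $\eps$ in the corollary's feasible set, and the approximation error in Theorem~\ref{thm:TAPcorrection}, so that all of these small parameters can be sent to zero in a compatible order.
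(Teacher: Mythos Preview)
Your ingredients match the paper's, but both directions have genuine gaps, and in the upper bound the gap is not where you place it.

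For the lower bound, Proposition~\ref{add:prop2} yields the identity $\nTAP(\mu_m)=-\int I\,d\mu_m+C(q)$ only under $\sup_s\Gamma_{\mu_m}(s)\le 0$, not under $\le\eps$. The paper handles this with a compactness argument (its equation~\eqref{add:cor1:proof:eq8}): if along a sequence $m^N$ in the Plefka-$\eta_N$ set with $\eta_N\to 0$ one had $C(\mu_{m^N})-\nTAP(\mu_{m^N})\ge\delta>0$, then along a weakly convergent subsequence $\mu_{m^N}\to\mu$ the limit would satisfy $\sup_s\Gamma_\mu(s)\le 0$ and hence $\nTAP(\mu)=C(\mu)$, contradicting the uniform continuity of both sides in $\mu$. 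You need this quantitative step, not the raw identity.

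For the upper bound, the inference ``near-maximizer $m^\ast$ of the classical TAP functional $\Rightarrow$ $\nTAP(\mu_{m^\ast})=C(\mu_{m^\ast})+o(1)$'' is a non sequitur. From $F_N\ge \frac{H_N(m)}{N}+\nTAP(\mu_m)-o(1)$ and $\frac{H_N(m^\ast)}{N}+C(\mu_{m^\ast})\approx F_N$ you only get $\nTAP(\mu_{m^\ast})\le C(\mu_{m^\ast})+o(1)$, which is the trivial direction; nothing listed gives the reverse inequality. So the approximate replica-symmetric optimality at $m^\ast$ is simply unproved, before you even reach the stability step you flag as the obstacle. The paper does \emph{not} try to show that classical-TAP near-maximizers lie in the Plefka set. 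Instead it argues a containment of constraint sets, namely that the whole set $\{m:|q_m-q_{\EA}|\le\eps\}$ eventually lies in the Plefka-$\eta$ set (via its equation~\eqref{add:eq7} and the subsequent claim), then transfers the \emph{generalized} TAP representation over $\{|q_m-q_{\EA}|\le\eps\}$ to the Plefka-$\eta$ set by monotonicity, and finally replaces $\nTAP$ by the classical correction on the Plefka set using the same compactness argument as above.
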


In what follows, we call the  near maximizers of the functional 
$$
\frac{H_N(m)}{N} + \nTAP(\mu_m)
$$
the \emph{generalized TAP states}.
Our definition of the TAP correction was motivated by the fact that ancestor states in the Parisi tree of states should be among the TAP states, if $\frac{1}{N}\|m\|^2$ is close to the support of the Parisi measure. Next, we will see that the TAP states have the properties one expects from the ancestors states.

\subsection{Properties of generalized TAP states.}

Let us denote by $\zeta_m$ the minimizer in (\ref{eqTAPfirst}) or (\ref{eqTAPfirstQ}) (recall our convention (\ref{ConventionQ})) corresponding to $m\in [-1,1]^N$ with $q=\frac{1}{N}\|m\|^2.$ We will see below that $\zeta_m$ has the meaning of the distribution of the overlap for the model on the narrow band $B(m,\eps)$ with its own random external field removed and with a new non-random external field added that forces $0$ in the support of this distribution $\zeta_m$ (see next section for details). We will show that, if $m$ is a generalized TAP state, then $\zeta_m(s)\approx \zeta_*(s)$ for $s\in [q,1]$, so the order parameters on the band around TAP state agrees with the Parisi measure of the original model on the interval $[q,1]$. To show this, we will upper bound the TAP correction by
\begin{equation}
\label{eq:TAPzetastar}\nTAP(\mu) = \inf_{\zeta\in \mathcal{M}_{0,1}} \nTAP(\mu,\zeta)\leq \nTAP(\mu,\zeta_*),
\end{equation}
and obtain the following.
\begin{thm}[TAP states are ancestral]\label{Thm1label}
For any $q$ in the support of the Parisi measure of the original model (\ref{hamx}), in probability,
\begin{equation}
\label{eqTAPAS}
\lim_{N\to\infty}
\Bigl|F_N - \max_{\frac{\|m\|^2}{N}= q}\Bigl( \frac{H_N(m)}{N} + \nTAP(\mu_m,\zeta_*)\Bigr)\Bigr|=0.
\end{equation}
\end{thm}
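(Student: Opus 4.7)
The statement refines Theorem \ref{thm:GenTAP} by substituting the \emph{global} Parisi minimizer $\zeta_*$ in place of the pointwise minimizer $\zeta_m$ in the TAP correction $\nTAP(\mu_m)=\inf_{\zeta}\nTAP(\mu_m,\zeta)$. My plan is to split the target identity into two matching inequalities, one immediate and one substantive.

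The easy direction $F_N\le \max_{\|m\|^2/N=q}\bigl(H_N(m)/N+\nTAP(\mu_m,\zeta_*)\bigr)+o_{\p}(1)$ follows at once from Theorem \ref{thm:GenTAP} together with the pointwise bound $\nTAP(\mu_m)\le \nTAP(\mu_m,\zeta_*)$, which is nothing but the defining inequality of an infimum (using the convention \eqref{ConventionQ} that $\zeta_*|_{[0,q)}\equiv 0$).

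For the hard direction, $\max_{\|m\|^2/N=q}\bigl(H_N(m)/N+\nTAP(\mu_m,\zeta_*)\bigr)\le F_N+o_{\p}(1)$, the strategy is to pass through the identification $\nTAP(\mu_{m^{**}},\zeta_*)\approx \nTAP(\mu_{m^{**}})$ for any near-maximizer $m^{**}$ of the left-hand side. Once this is in hand, the upper-bound half of Theorem \ref{thm:GenTAP}, namely $H_N(m)/N+\nTAP(\mu_m)\le F_N+o_{\p}(1)$ uniformly in $m$ (which follows from the trivial bound $Z_{N,m}\le Z_N$ combined with Theorem \ref{thm:TAPcorrection}), closes the argument. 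To establish the identification, I would first combine the easy direction with the upper-bound half of Theorem \ref{thm:GenTAP} to deduce that any near-maximizer $m^{**}$ of $H_N(m)/N+\nTAP(\mu_m,\zeta_*)$ is itself a generalized TAP state in the sense of Theorem \ref{thm:GenTAP}. Then, exploiting the uniqueness of the minimizer of $\zeta\mapsto \nTAP(\mu,\zeta)$ in $\MM_{q,1}$ (strict convexity of the Parisi-type functional, cf.\ \cite{AC15,JT16}) together with the Parisi-PDE structure common to both variational problems, I would match the critical-point equations for $\zeta_{m^{**}}$ and for $\zeta_*|_{[q,1]}$, concluding that the two measures agree on $[q,1]$ precisely because $m^{**}$ realizes an ancestor state at level $q$ in the support of the Parisi measure.

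Uniformity over $m\in[-1,1]^N$ is handled by a covering argument: since $\mu\mapsto \nTAP(\mu,\zeta_*)$ is $d_1$-Lipschitz (from continuity of $a\mapsto \Lambda_{\zeta_*}(q,a)$ on $[-1,1]$, which is a strictly convex conjugate of a strictly convex PDE solution), the empirical measures $\mu_m$ can be netted at polynomial cardinality in $N$; Borell--TIS concentration for the Gaussian process $H_N(m)/N$ on the sphere $\{\|m\|^2=Nq\}$ plus a union bound then transfer the pointwise bounds to a uniform high-probability estimate. The principal obstacle I anticipate is the \emph{quantitative} form of the matching $\zeta_{m^{**}}\approx \zeta_*$: the soft optimality argument gives exact agreement only at exact TAP states, so we need stability of the minimizer of $\zeta\mapsto \nTAP(\mu,\zeta)$ under $o(1)$-slack both in the maximization over $m$ and in the $d_1$-perturbation of $\mu_m$. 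I expect this to follow from a second-variation / uniform-convexity estimate on the Parisi-type functional near $\zeta_*$, analogous to the one that underlies uniqueness in \cite{AC15,JT16}.
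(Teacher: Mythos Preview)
Your easy direction is correct and matches the paper. The hard direction, however, has a genuine circularity. You propose to show that a near-maximizer $m^{**}$ of $H_N(m)/N+\nTAP(\mu_m,\zeta_*)$ is a generalized TAP state by ``combining the easy direction with the upper-bound half of Theorem~\ref{thm:GenTAP}.'' But these two inputs only give
\[
F_N - o(1) \le \frac{H_N(m^{**})}{N} + \nTAP(\mu_{m^{**}},\zeta_*)
\quad\text{and}\quad
\frac{H_N(m^{**})}{N} + \nTAP(\mu_{m^{**}}) \le F_N + o(1),
\]
and since $\nTAP(\mu_{m^{**}})\le \nTAP(\mu_{m^{**}},\zeta_*)$, the second inequality is \emph{weaker}, not stronger, than what you need. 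A priori nothing prevents $\nTAP(\mu_{m^{**}},\zeta_*)$ from being much larger than $\nTAP(\mu_{m^{**}})$, making $m^{**}$ a maximizer of the $\zeta_*$-functional without being a TAP state. The subsequent ``matching of critical-point equations'' is in fact a \emph{consequence} of Theorem~\ref{Thm1label} (the paper states it as such right after the theorem), not a route to proving it.

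The paper's proof is entirely different and does not go through any maximizer identification. It invokes an external Parisi-type formula from \cite{CPTAP17} (Theorem~\ref{thm1STA} in the paper) for the deterministic limit
\[
\lim_{N\to\infty}\e\max_{\|m\|^2/N=q}\Bigl(\frac{H_N(m)}{N}+\frac{1}{N}\sum_{i}\Lambda_{\zeta_*}(q,m_i)\Bigr)
=\inf_{\lambda,\gamma}\mathcal P_q(\lambda,\gamma),
\]
where $\mathcal P_q$ is a Parisi functional with boundary condition $\max_{a}\bigl(ax+\lambda(a^2-q)+\Lambda_{\zeta_*}(q,a)\bigr)$. Choosing the test point $\lambda=0$, $\gamma=\zeta_*|_{[0,q]}$ and using convex duality $\max_a(ax+\Lambda_{\zeta_*}(q,a))=\Phi_{\zeta_*}(q,x)$ collapses the PDE to the original Parisi PDE, giving $\mathcal P_q(0,\zeta_*)-\tfrac12\int_q^1 s\xi''(s)\zeta_*(s)\,ds=\PP(\zeta_*)$. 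This yields the hard direction directly at the level of expectations, and Gaussian concentration upgrades it to an in-probability statement. Your uniform-convexity/stability program for $\zeta\mapsto\nTAP(\mu,\zeta)$ is not needed for this theorem.
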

This together with the representation (\ref{eq:GenTAP}) implies that, if $m$ is a TAP state with $q=\frac{1}{N}\|m\|^2\in \supp(\zeta_*)$ then 
$$
\nTAP(\mu_m,\zeta_{m})\approx \nTAP(\mu_m,\zeta_*).
$$ 
By continuity properties of the Hamiltonian and the functional $\nTAP(\mu,\zeta)$ proved below, this also holds for states with $q=\frac{1}{N}\|m\|^2$ close to the support of $\zeta_*$. We will see (in the proof of Theorem \ref{lem:TAPnew} below) that the functional $\zeta\to\nTAP(\mu,\zeta)$ is $d_1$-Lipschitz uniformly over $\mu$ and has a unique minimizer $\zeta_{\mu}\in \mathcal{M}_{q,1}$ if $q=\int\! a^2 d\mu(a)$, which qualitatively means that 
 \begin{equation}
 \zeta_m\approx \zeta_*,
 \end{equation}
 i.e. the order parameter $\zeta_m$ in the TAP states follows the Parisi measure. This approximation can be quantified, but we do not pursue it here.
 
Next, in order to describe the critical point equations for the TAP states,
\begin{equation}
\frac{1}{N}\nabla H_N(m)=-\nabla \nTAP(\mu_m),
\label{eqTAPstates1}
\end{equation} 
we need to compute the gradient of $\nTAP(\mu_m).$ Recall the definition of $\oPsi(q,a,\zeta)$ in (\ref{eqDefPsiefO}) and let
\begin{equation}
\Psi(q,a,\zeta) :=\oPsi(q,a,\zeta)+ a\int_q^1\!\xi''(s)\zeta(s)\,ds.
\label{eqDefPsiefF}
\end{equation}
The gradient is given by the following formula.
\begin{thm}[Gradient of TAP correction]\label{ThmGTElab}
For any $m\in (-1,1)^N$ with $\frac{1}{N}\|m\|^2=q,$ if we denote
\begin{equation}
R(m):=
\xi''(q)\int_{q}^1\! \zeta_{m}(s)\,ds
-\int_{q}^{1}\!\xi''(s)\zeta_m(s)\,ds
\end{equation}
then
\begin{align}
\nabla \nTAP(\mu_m) 
&= -\frac{1}{N}\Bigl(\Psi(q,m_i,\zeta_m)+R(m)m_i\Bigr)_{i\leq N}
\nonumber
\\
&=
-\frac{1}{N}\Bigl(\oPsi(q,m_i,\zeta_m)+m_i\xi''(q)\int_{q}^1\!\zeta_{m}(s)\,ds\Bigr)_{i\leq N}.
\label{eqTAPstates12}
\end{align}
\end{thm}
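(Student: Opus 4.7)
The plan is to combine the envelope theorem with a direct differentiation of the explicit formula (\ref{eqTAPfirstZeta}). Since Theorem \ref{lem:TAPnew} (stated earlier) identifies $\zeta_m$ as the unique minimizer of $\zeta \mapsto \nTAP(\mu_m, \zeta)$ on $\mathcal{M}_{q,1}$, the envelope principle reduces
\[
\frac{\partial \nTAP(\mu_m)}{\partial m_i} = \frac{\partial \nTAP(\mu_m, \zeta)}{\partial m_i}\bigg|_{\zeta = \zeta_m}
\]
to differentiating (\ref{eqTAPfirstZeta}) in $m_i$ at fixed $\zeta$. Two chain-rule inputs are needed. First, applying the envelope theorem to the variational problem (\ref{eqCCPar}) gives $\partial_a \Lambda_\zeta(q, a) = -\oPsi(q, a, \zeta)$. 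Second, the same principle yields $\partial_q \Lambda_\zeta(q, a) = \partial_t \Phi_\zeta(q, \oPsi(q, a, \zeta))$, which the Parisi PDE (\ref{ParisiPDEOrig}) rewrites as $-\tfrac{\xi''(q)}{2}\bigl(\partial_{xx}\Phi_\zeta(q, \oPsi) + \zeta(q) a^2\bigr)$.

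Using $q(m) = \tfrac{1}{N}\|m\|^2$, so that $\partial q/\partial m_i = 2 m_i/N$, and $\int a^2\,d\mu_m(a) = q$, the portion of $\partial_q \Lambda$ proportional to $\zeta(q) a^2$ cancels exactly against the contribution from differentiating the lower limit of $-\tfrac12\int_q^1 s\xi''(s)\zeta(s)\,ds$, which produces $+\tfrac{m_i q \xi''(q)\zeta(q)}{N}$. After this cancellation,
\[
\frac{\partial \nTAP(\mu_m,\zeta)}{\partial m_i} = -\frac{1}{N}\Bigl[\oPsi(q,m_i,\zeta) + m_i \xi''(q)\int \partial_{xx}\Phi_\zeta\bigl(q,\oPsi(q,a,\zeta)\bigr)\,d\mu_m(a)\Bigr].
\]
Substituting $\zeta = \zeta_m$, the second form of (\ref{eqTAPstates12}) then follows from the identity
\[
\int \partial_{xx}\Phi_{\zeta_m}\bigl(q,\oPsi(q,a,\zeta_m)\bigr)\,d\mu_m(a) = \int_q^1 \zeta_m(s)\,ds,
\]
and the first form is equivalent to the second, as one checks by expanding $\Psi$ and $R(m)$ and observing that the two occurrences of $m_i\int_q^1 \xi''(s)\zeta_m(s)\,ds$ cancel.

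The main obstacle is the displayed integral identity, which should be the first-order optimality condition for $\zeta_m$. To extract it, the plan is to take a Gateaux derivative of $\zeta \mapsto \nTAP(\mu_m, \zeta)$ in an admissible signed measure $\eta$ supported in $[q, 1]$ (with $\int d\eta = 0$, chosen so that $\zeta_m + \epsilon\eta$ is a c.d.f.\ of a probability measure to first order), set it to zero, and use the well-known formula for the Gateaux derivative of $\Phi_\zeta$ in $\zeta$. This last ingredient is most cleanly derived by first discretizing $\zeta$, where $\Phi_\zeta$ admits an explicit iterative construction as a composition of one-dimensional Gaussian convolutions with $\log\cosh$, computing the derivative in closed form, and then passing to the limit via the $d_1$-continuity of $\zeta \mapsto \Phi_\zeta$ already used in the proof of Theorem \ref{lem:TAPnew}. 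The resulting stationarity condition yields a pointwise identity on $\supp(\zeta_m)$ which, after an integration argument combined with the relation $\partial_t\Phi_\zeta = -\tfrac{\xi''(t)}{2}(\partial_{xx}\Phi_\zeta + \zeta(t)(\partial_x\Phi_\zeta)^2)$ at $x = \oPsi(q,a,\zeta_m)$ and the identification $\partial_x\Phi_{\zeta_m}(q,\oPsi(q,a,\zeta_m)) = a$, collapses into the displayed integrated form.
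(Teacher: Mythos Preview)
Your overall strategy matches the paper's: apply an envelope principle to reduce to differentiating $\nTAP(\mu_m,\zeta)$ at the minimizer $\zeta_m$, then show that the averaged second derivative of $\Phi_{\zeta_m}$ equals $\int_q^1\zeta_m(s)\,ds$ via the first-order optimality condition for $\zeta_m$. The paper organizes the envelope step slightly differently, writing $\nTAP(\mu_m)=\inf_{\zeta,h}\bigl[\tfrac1N\sum_i(\Phi_\zeta(q,h_i)-m_ih_i)-\tfrac12\int_q^1 s\xi''(s)\zeta(s)\,ds\bigr]$ so that the conjugate $\Lambda_\zeta$ is unpacked and both minimizers $(\zeta_m,h_m)$ are tracked; this makes the continuity of the minimizer and the handling of a possible jump in $\zeta$ at $q$ (which affects the one-sided $q$-derivatives) more transparent. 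You gloss over these regularity issues, but they are minor.

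The genuine gap is in your last paragraph. The identity
\[
\int \partial_{xx}\Phi_{\zeta_m}\bigl(q,\oPsi(q,a,\zeta_m)\bigr)\,d\mu_m(a)=\int_q^1\zeta_m(s)\,ds
\]
does not follow from the Parisi PDE evaluated at the single point $(q,\oPsi(q,a,\zeta_m))$ together with the stationarity condition on $\supp(\zeta_m)$. The stationarity condition reads $\int\e\bigl[u_a(s)^2\bigr]\,d\mu_m(a)=s$ for $s\in\supp(\zeta_m)$, where $u_a(s)=\partial_x\Phi_{\zeta_m}(s,X_a(s))$ along the associated SDE; to connect this to $\partial_{xx}\Phi_{\zeta_m}(q,\cdot)$ one needs the representation
\[
\partial_{xx}\Phi_{\zeta_m}(q,x)=1-\int_{[q,1]}\e\bigl[u_{\zeta_m,x}(l)^2\bigr]\,d\zeta_m(l),
\]
which the paper proves as a separate lemma by combining two It\^o-type identities for $\partial_x\Phi$ and $\partial_{xx}\Phi$ along the drifted diffusion $X$. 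Only after averaging this in $x=\oPsi(q,a,\zeta_m)$ and substituting the stationarity condition does one get $1-\int_{[q,1]}l\,d\zeta_m(l)=\int_q^1\zeta_m(s)\,ds$. Your ``integration argument combined with the Parisi PDE at $x=\oPsi$'' does not supply this step; the Parisi PDE at the boundary point $q$ only relates $\partial_t\Phi$, $\partial_{xx}\Phi$ and $a^2$ there, and gives no information about the integral over $[q,1]$ without the SDE machinery.
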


\begin{remark}[Generalized TAP equations]\label{LabRmkMV}\rm
Let us show how (\ref{eqTAPstates1}) and (\ref{eqTAPstates12}) lead to the generalized TAP equations. If we combine (\ref{eqTAPstates1}) and (\ref{eqTAPstates12}), we can write
$$
(\nabla H_N(m))_i-m_i \xi''(q)\int_{q}^1\!\zeta_{m}(s)\,ds=\oPsi(q,m_i,\zeta_m).
$$
If we plug both sides into $\partial_x\Phi_{\zeta_{m}}(q,\cdot)$ and recall the definition of $\oPsi$, we get
\begin{equation}
\partial_x\Phi_{\zeta_m}\Bigl(q,(\nabla H_N(m))_i-m_i \xi''(q)\int_q^1\! \zeta_m(s)\, ds\Bigr)
= m_i.
\label{exactTAP}
\end{equation}
These are the TAP equations for generalized TAP states. To compare them with classical equations, we can use that TAP states with $\frac{1}{N}\|m\|^2=q\in \supp(\zeta_*)$ (or close to the support) must have the order parameter $\zeta_m\approx \zeta_*$, which yields the approximate TAP equations,
\begin{equation}
\partial_x\Phi_{\zeta_*}\Bigl(q,(\nabla H_N(m))_i-m_i \xi''(q)\int_q^1\! \zeta_*(s)\, ds\Bigr)
\approx m_i.
\label{eqMVas}
\end{equation}
We will discuss the replica symmetric case of TAP correction in the next section in much more detail, but notice that, when $\zeta_*=1$ for $s\in [q,1]$, (\ref{eqMVas}) reduces to
\begin{equation}
\tanh\Bigl((\nabla H_N(m))_i-m_i \xi''(q)(1-q)\Bigr)
\approx m_i.
\label{eqMVasTAP}
\end{equation}
For the SK model, these are the classical TAP equations, which also  appeared in the physics literature for the pure $p$-spin model in \cite{Rieger} (see also \cite{CLR}). The equations (\ref{exactTAP}), (\ref{eqMVas}) are, thus, an extension of the classical TAP equations to all generalized TAP states, for all mixed $p$-spin models.

Let us remark that (\ref{exactTAP}), (\ref{eqMVas}) above are self-consistent TAP equations in the sense that they relate state magnetization to itself. They are different from the M\'ezard-Virasoro equations for the ancestor states derived rigorously in Theorem 1.4 in \cite{AufJag18}, which relate magnetization to the local field, although for the $2$-spin SK model considered in M\'ezard-Virasoro \cite{M3} they happen to coincide with (\ref{eqMVas}). The main reason is because, in those results, in place of the term $\nabla H_N(m)$ in (\ref{exactTAP}) one has the cavity field process at an ancestor state $m$, which represents the average of $\nabla H_N(\bs)$ over many mutually orthogonal directions around $m$. For example, if the entire system is in a pure state, the cavity field process corresponds to $\la\nabla H_N(\bs)\ra$, which coincides with $\nabla H_N(\la\sigma\ra)=\nabla H_N(m)$ only for the pure $2$-spin model, when the gradient $\nabla H_N(\bs)$ does not include interaction terms. This is why in those results one has the term $m_i \int_q^1\! \xi''(s)\zeta_*(s)\, ds$ instead of $m_i \xi''(q)\int_q^1\! \zeta_*(s)\, ds$ in (\ref{eqMVas}), and correspondingly $m_i(\xi'(1)-\xi'(q))$ instead of $m_i \xi''(q)(1-q)$ in (\ref{eqMVasTAP}), which appears in self-consistent TAP equations, as e.g. in \cite{Rieger}.
\qed
\end{remark}

\begin{remark}[Spherical gradient]\rm
The formula (\ref{eqTAPstates12}) implies that in the spherical directions,
\begin{equation}
\nabla \nTAP(\mu_m)\cdot v = -\frac{1}{N}\bigl(\Psi(q,m_i,\zeta_m)\bigr)_{i\leq N}\cdot v \mbox{ for all } v\perp m.
\label{eqTAPstates12Sph}
\end{equation}
There is a physical argument for the formula (\ref{eqTAPstates12Sph}) to be satisfied by the ancestor states $m$ in the Parisi ansatz. We will see below that $(\Psi(q,m_i,\zeta_m))$ has the meaning of the unique external field that forces the model on the narrow band $B(m,\eps)$ (with its own random external field removed) to have many orthogonal pure states relative to $m$. On the other hand, $\nabla H_N(m)$ is the external field of the original model restricted to the band and, when $m$ is an ancestor state, we know that there exist many orthogonal states on the band. This suggests that $\nabla H_N(m)\cdot v=(\Psi(q,m_i,\zeta_m))\cdot v$ for such $m$ and $v\perp m$, which agrees with (\ref{eqTAPstates1}) and (\ref{eqTAPstates12Sph}).
\qed
\end{remark}

\begin{remark}[Model with external field]\label{rmkMEF}\rm
One can include an external field to the original model and consider the model with the Hamiltonian
\begin{equation}
H_N^{\mathrm{ef}}(\bs)=H_N(\bs)+h\sum_{i=1}^{N}\sigma_i.
\end{equation}
It will be clear from the discussion below that the TAP correction $\nTAP(\mu_m)$ is the same whether or not the external field is present. In fact, this will be, in some sense, a big part of the motivation for our definition of the generalized TAP correction. The only difference will be at the level of the TAP representation (\ref{eq:GenTAP}), which will become
\begin{equation}
\label{eq:GenTAPef}
F_N \approx \max_{\frac{\|m\|^2}{N}= q}\Bigl( \frac{H_N(m)}{N} + h\sum_{i=1}^{N}m_i+\nTAP(\mu_m)\Bigr),
\end{equation}
for $q$ in the support of the Parisi measure of the model with external field $h$. For simplicity of notation, we will work without the external field, because only trivial modifications are necessary in the case with external field.
\qed
\end{remark}

In the next section, we will give an outline of the main ideas in the proof and state further results. For example, we will show that the replica symmetric case of the above generalized TAP correction (when the minimizer in (\ref{eqTAPfirst}) equals $\delta_0$) reduced to the classical TAP correction and give a precise characterization for when that happens. We will show how this implies the necessity of Plefka's condition and, in particular, the generalized TAP correction is not always replica symmetric on the points corresponding to the Edwards-Anderson parameter (see Remark \ref{LabRmkPlefka}).

\section{General outline and further results}

\subsection{Utilizing many orthogonal directions.}  

Let us explain the main ideas from \cite{SubagFEL} that allow us to make the uniform claim (\ref{eq:TAPunifconv}) and at the same time compute things explicitly despite the dependence on large $n$. This will also allow us to introduce some necessary definitions and notation. We will see that for large $n$ and small $\eps$ and $\delta$, the following properties hold:
\begin{enumerate}
\item[(a)] with high probability, $\nTAP_{N,n}(m)$ in (\ref{eq:TAPn}) is close to its expectation, uniformly over all $m$ in (\ref{Mconstr});

\item[(b)] adding or removing an external field term in $\e \nTAP_{N,n}(m)$ has a negligible effect.
\end{enumerate}
Let us explain what these properties mean and sketch why they hold. First of all, let us compute the covariance of the process $H_{N}(\bs)-H_N(m)$ for $\bs$ in the narrow band $B(m,\eps),$
\begin{align}
&\frac{1}{N}\e\bigl(H_{N}(\bs^{i})-H_N(m)\bigr)\bigl(H_{N}(\bs^{j})-H_N(m)\bigr)
\label{eqConBm}
\\
&=
\xi(R(\bs^i,\bs^j))-\xi(R(\bs^i,m))-\xi(R(\bs^j,m))+\xi(R(m,m)).
\nonumber
\end{align}
For $(\bs^1,\ldots,\bs^n)\in B_n(m,\eps,\delta)$, the covariance for $i\not =j$ is small, by the definition of $B(m,\eps)$ and $B_n(m,\eps,\delta).$ Therefore, the variance of $\sum_{i=1}^{n}[H_{N}(\bs^{i})-H_N(m)]$ in (\ref{eq:TAPn}) is roughly of the order $nN$ and, by the Gaussian concentration, the fluctuations of $\nTAP_{N,n}(m)$ are of order $(nN)^{-1/2}$ (Lemma \ref{lem:concentration} contains a precise statement). The extra factor $n^{-1/2}$ with large $n$ will allow us to discretize and apply a union bound uniformly over $m$, implying the first property (a).

To explain the second property (b), it is convenient to think of $H_{N}(\bs)-H_N(m)$ as a new mixed $p$-spin model on the narrow band, as follows. If $q=R(m,m)=\|m\|^2/N$ then, for $\bs^1,\bs^2\in B(m,\eps),$
\begin{equation}
|R(\tbs^j,m)| = |R(\bs^j,m)- q|<\eps
\end{equation}
and
\begin{equation}
|R(\bs^1,\bs^2)- (R(\tbs^1,\tbs^2)+q)| 
=|R(\tbs^1,m)+R(\tbs^2,m)|< 2\eps.
\end{equation}
Therefore, up to the error of order $O(\eps)$, the covariance in (\ref{eqConBm}) is approximated by
\begin{equation}
\xi(R(\tbs^1,\tbs^2) + q) - \xi(q) = \hat{\xi}_q(R(\tbs^1,\tbs^2)),
\end{equation}
where
\begin{equation}
\hat{\xi}_q(s) := \xi(s+q)-\xi(q)=\sum_{k\geq 1} \beta_k(q)^2 s^k
\label{eqXiq}
\end{equation}
and where
\begin{equation}
\beta_k(q)^2 = \sum_{p\geq k}\binom{p}{k}\beta_p^2 q^{p-k}.
\label{eqBkq}
\end{equation}
As a result, if $\hat{H}_N^{m}(\tbs)$ is the mixed $p$-spin Hamiltonian indexed by $\tbs=\bs-m$ for $\bs\in B(m,\eps)$ with the covariance given by 
\begin{equation}
\e \hat{H}_N^{m}(\tbs^1) \hat{H}_N^{m}(\tbs^2)= N \hat{\xi}_q(R(\tbs^1,\tbs^2)),
\end{equation}
one can show that (see Lemma \ref{LemHtoHm} below)
\begin{equation}
\label{eq:ETAPn}
\e \nTAP_{N,n}(m,\eps,\delta) =  \frac{1}{nN} \e \log\sum_{B_{n}(m,\eps,\delta)}e^{\sum_{\ell=1}^{n}\hat{H}_N^{m}(\tbs^{\ell})}+O\Bigl(\frac{\eps}{\sqrt{q}}\Bigr)
\end{equation}
uniformly over $n$, if $q=R(m,m)=\|m\|^2/N>0.$ The case when $q$ is small and $m$ is close to zero will be handled slightly differently, by working with the original model with the external field removed, without any recentering. 

The external field term that we mentioned in the property (b) is present in the model $\hat{H}_N^{m}(\tbs)$, because $\beta_1(q)\not = 0$ in (\ref{eqXiq}). Define the function $\xi_q$ similarly to \eqref{eqXiq}, only with the summation starting from $k=2$,
\begin{equation}
{\xi}_q(s) := \xi(s+q)-\xi(q)-\xi'(q)s = \sum_{k\geq 2} \beta_k(q)^2 s^k,
\label{eqXiq11}
\end{equation}
and let $H_{N}^{m}(\tbs)$ be the Hamiltonian with the covariance 
\begin{equation}
\e H_{N}^{m}(\tbs^1)H_{N}^{m}(\tbs^2)= N\xi_q(R(\tbs^1,\tbs^2)),
\end{equation}
with $q=\|m\|^2/N.$ Then, in distribution,
\[
\hat{H}_N^{m}(\tbs) = H_{N}^{m}(\tbs) + \beta_1(q) \tbs\cdot g,
\]
where $g$ is a standard Gaussian vector. Hence, using the pairwise near-orthogonality of $(\tbs^1,\ldots,\tbs^{n})$ for $(\bs^1,\ldots,\bs^{n})\in B_{n}(m,\eps,\delta)$, namely 
$$
\sup_{B_{n}(m,\eps,\delta)}    \Big\| \frac{1}{n\sqrt N}\sum_{\ell=1}^{n}  \tbs^\ell \Big\| 
\leq \Bigl(\frac{1}{n}+\delta\Bigr)^{1/2},
$$
we get that
\begin{align*}
	& \frac{1}{n N}\Big| \e \log\sum_{B_{n}(m,\eps,\delta)}e^{\sum_{\ell=1}^{n}\hat{H}_N^{m}(\tbs^{\ell})} 
	-\e \log\sum_{B_{n}(m,\eps,\delta)}e^{\sum_{\ell=1}^{n}H_{N}^{m}(\tbs^{\ell})}\Big|
\leq \beta_1(q)\Bigl(\frac{1}{n}+\delta\Bigr)^{1/2}.
\end{align*}
Together with (\ref{eq:ETAPn}), this shows that
\begin{equation}
\label{eq:TAPmAg}
\e \nTAP_{N,n}(m,\eps,\delta) = \e F_{N,n}(m,\eps,\delta) 
+O\Bigl(\frac{\eps}{\sqrt{q}}+\Bigl(\frac{1}{n}+\delta\Bigr)^{1/2}\Bigr),
\end{equation}
where we introduce the notation
\begin{equation}
F_{N,n}(m,\eps,\delta) := 
\frac{1}{nN} \log\sum_{B_{n}(m,\eps,\delta)}e^{\sum_{\ell=1}^{n}H_{N}^{m}(\tbs^{\ell})}
\end{equation}
for the free energy in the replicated band, with the external field removed. 

By the same argument, we may also add a deterministic external field term $h=(h_i)_{i\leq N}$, as long as we keep $\|h\|/\sqrt{N}$ bounded. Namely, if we define
\begin{equation}
\label{eq:ETAPn0Is}
F_{N,n}^h(m,\eps,\delta) := \frac{1}{nN}  \log\sum_{B_n(m,\eps,\delta)}e^{\sum_{\ell=1}^{n}\bigl[H_{N}^{m}(\tbs^{\ell})+\sum_{i=1}^N h_i \tilde\sigma_i^\ell\bigr]}
\end{equation}
then
\begin{equation}
\bigl|F_{N,n}(m,\eps,\delta)-F_{N,n}^h(m,\eps,\delta)\bigr|
\leq \frac{\|h\|}{\sqrt{N}}\Bigl(\frac{1}{n}+\delta\Bigr)^{1/2}.
\label{eqFtoFh}
\end{equation}
In other words, 
$$
\e \nTAP_{N,n}(m,\eps,\delta)\approx \e F_{N,n}^h(m,\eps,\delta),
$$ 
when $\eps,\delta$ are small, $n$ is large, and $\|h\|/\sqrt{N}$ stays bounded. As with $ \nTAP_{N,n}(m)$, for simplicity of notation, we will  often omit $\eps$ and $\delta$ and write
\begin{equation}
F_{N,n}^h(m):= F_{N,n}^h(m,\eps)= F_{N,n}^h(m,\eps,\delta).
\end{equation}
Let us sketch how the properties (a) and (b) lead to an explicit calculation of the generalized correction term $\nTAP(\mu_m)$, and also contrast what happens in the spherical models vs. models with Ising spins.

\subsection{Spherical vs. Ising spin models.}
Let us start with an overview of the spherical model that was considered in \cite{SubagFEL}, where of course $\Sigma_N$ should be replaced by $\sqrt{N}S^{N-1}$ and the sums over configurations should be replaced by integrals. Using the above heuristics of introducing many orthogonal constraints (or infinitary nature of the tree of states), we get that, uniformly over $m$,
\begin{equation}
\nTAP_{N,n}(m)\approx \e \nTAP_{N,n}(m)\approx \e F_{N,n}(m).
\label{eqInfSum}
\end{equation}
Since a narrow band on the sphere looks the same for all $m$ with $\frac{1}{N}\|m\|^2=q,$ the right hand side depends on $m$ only through $q$. We can see that the constraints $(\bs^1,\ldots,\bs^n)\in B(m,\eps,\delta)$ in
\begin{equation}
\label{eq:ETAPn0}
\e F_{N,n}(m) = \frac{1}{nN} \e \log\int_{B_{n}(m,\eps,\delta)}e^{\sum_{\ell=1}^{n}H_{N}^{m}(\tbs^{\ell})} d\bs^1\ldots d\bs^n
\end{equation}
can be expressed by saying that 
$$
\frac{1}{N}\|\tbs^i\|^2 = \frac{1}{N}\|\bs^i-m\|^2\approx 1-q
$$ 
and all $\tbs^1,\ldots,\tbs^n$ are almost orthogonal to each other, $R(\tbs^i,\tbs^j)\approx 0$ for $i\not =j.$ Moreover, the narrow band is, essentially, a sphere in one dimension less, so we can think of $H_{N}^{m}(\tbs)$ as a new spherical model. The fact that the external field has been removed implies a well-known fact that zero is in the support of the Parisi measure of this new spherical model and, in particular, the overlap constraints $R(\tbs^i,\tbs^j)\approx 0$ can not have a free energy cost. In other words, one can show that
\begin{equation}
\e F_{N,n}(m) \approx \e F_{N,1}(m),
\label{eqRemn}
\end{equation}
which is the free energy of a spherical model that can be written as a spherical analogue of the Parisi formula, the Crisanti-Sommers formula. Thus, the generalized TAP correction has a particularly simple form in the spherical models, and this has important consequences, as was demonstrated in \cite{SubagFEL}. 

In the Ising spin models, the situation is quite different. First of all, the narrow band depends on $m$ in a complicated way and the constraint $\bs\in B(m,\eps)$ can be viewed as a constraint on $\tbs=\bs-m$ of the form
\begin{equation}
\begin{aligned}
\tbs \in \tilde B(m,\eps) &:= B(m,\eps)-m\\
				&\,\approx (\Sigma_N -m) \bigcap\, \bigl\{\frac{1}{N}\|\tbs\|^2\approx 1-q \bigr\}.
\end{aligned}
\label{eqBandIsing}
\end{equation}
So, the first question is: for $n=1$, can we compute the analogue of the Parisi formula for the free energy $\e F_{N,1}(m)$ on the narrow band? The answer is yes, but this will require some work. However, the bigger issue is that, even if we can compute this free energy, removing the external field term will not result in zero being in the support of the Parisi measure, because of the inherent asymmetry of the band (\ref{eqBandIsing}), and will not allow us to make the step (\ref{eqRemn}). The solution to this will be to \emph{add a new external field} to balance out the asymmetry of the band. In other words, using (\ref{eqFtoFh}), we will introduce an external field $h=(h_i)$ at the step (\ref{eqInfSum}),
\begin{equation}
\e \nTAP_{N,n}(m)\approx \e F_{N,n}(m)\approx \e F_{N,n}^h(m),
\label{eqInfSumI}
\end{equation}
and, with the right choice of $h$, we will show that zero is in the support of the Parisi measure of the model on the band and, therefore,
\begin{equation}
\e F_{N,n}^h(m) \approx \e F_{N,1}^h(m).
\label{eqRemnIs}
\end{equation}
The analogue of the Parisi formula for the right hand side will be our TAP correction $\TAP(\mu_m).$ The ideas behind finding the right choice of $h$ will  be explained below.

\begin{remark}\rm
Notice that the functional $a\to \Lambda_{\zeta}(q,a)$ in (\ref{eqCCPar}) is even and, therefore, the functional in (\ref{eqTAPfirstZeta}) has the symmetry 
$$
\nTAP(\mu_m,\zeta)=\nTAP(\mu_{|m|},\zeta),
$$
where
$
\mu_{|m|}=\frac{1}{N}\sum_{i\leq N}\delta_{|m_i|}.
$
Under the transformation $\sigma_i\to\mathrm{sgn}(m_i)\sigma_i$, the overlap between two configurations does not change, while the covariance of all the Hamiltonians as well as definition of the bands depend only on the overlaps. Furthermore, throughout the paper, we will always work with external fields of the form $h_i=\mathrm{sgn}(m_i)\ef(|m_i|)$ that depend on the coordinates $m_i$ in an anti-symmetric fashion, which means that the external field $h_i\tilde\sigma_i$ will also be invariant under this transformation. Because of this, from now on, we can and will assume that 
\begin{equation}
m\in [0,1]^N,
\end{equation}
that is, all the coordinates $m_i\geq 0.$ In particular, we assume that 
\begin{equation}
\mu_m=\frac1N\sum_{i\leq N}\delta_{m_i}\in \MM_{0,1}.
\label{eqEmpMu}
\end{equation} 
For the rest of the paper, we will work with this definition of $\MM_{0,1}$.
\end{remark}

\subsection{Parisi formula on the band.}
Let us now state the analogue of the Parisi formula on the band $B(m,\eps)$ for $m\in [0,1]^N$ with the general external fields of the form $h_i=\ef(m_i),$ for $\ef\in C([0,1])$, which will be sufficient for our purposes.  Since the self-overlap of the configurations $\tbs=\bs-m$ on the narrow band is close to $1-q$, it will be natural to work with the space $\mathcal{M}_{0,1-q}$ of all distributions on $[0,1-q]$. Recall the function $\xi_q$ in (\ref{eqXiq11}) and, for each $a\in[0,1]$ and $\zeta\in \mathcal{M}_{0,1-q},$ denote by $\Phi_{a,\zeta}(t,x)$ the solution of the Parisi PDE on $[0,1-q]\times \Reals,$
\begin{equation}
\partial_t \Phi_{a,\zeta} = -\frac{\xi_q''(t)}{2}\Bigl(
\partial_{xx} \Phi_{a,\zeta} + \zeta(t)\bigl(\partial_x \Phi_{a,\zeta}\bigr)^2
\Bigr)
\label{ParisiPDE}
\end{equation}
with the boundary condition 
\begin{equation}
\Phi_{a,\zeta}(1-q,x)=\log 2-ax +\log\ch x =\log\sum_{\sigma=\pm 1}e^{(\sigma-a)x}.
\label{ParisiPDEBoundary}
\end{equation}
Let us define the function $\Psi(a,\zeta)$ by
\begin{equation}
\partial_x \Phi_{a,\zeta}\bigl(0,\Psi(a,\zeta)\bigr) = 0,\,\, a\in [0,1).
\label{eqDefPsief}
\end{equation}
Note that $\Psi(a,\zeta)$ is well-defined as $\partial_x \Phi_{a,\zeta}\bigl(0,\cdot\bigr)$ is strictly increasing with $\partial_x \Phi_{a,\zeta}\bigl(0,\pm \infty\bigr)=\pm 1-a$. Also, note that $\Phi_{a,\zeta}$ depends on $q$, but we will keep this dependence implicit for simplicity of notation.

For $\mu\in \MM_{0,1}$, $\ef\in C([0,1])$, and $(\lambda,\zeta)\in \mathbb{R}\times\mathcal{M}_{0,1-q},$ let
\begin{equation}
\mathcal{P}_{\mu}^\ef(\lambda,\zeta)=\int\! \Phi_{a,\zeta}(0,\lambda a+\ef(a))\,d\mu(a)-\frac{1}{2}\int_0^{1-q}\! s\xi_q''(s)\zeta(s)\,ds.
\label{eqParisi1ab}
\end{equation}
Set 
 \begin{equation}\label{TAP}
\PP^{\ef}_\mu:=\inf_{\lambda\in\Reals,\,\zeta\in\mathcal{M}_{0,1-q}}\mathcal{P}_{\mu}^\ef(\lambda,\zeta).
\end{equation}
This will be the Parisi formula for the limit of $\e F_{N,1}^{h}(m)$ when $h_i=\ef(m_i)$ and the empirical measure $\mu_{m}$ in (\ref{eqEmpMu}) converges weakly to $\mu$. 
\begin{thm}[Parisi formula on the band]\label{thm1}
Assume that $\ef\in C([0,1])$ and $\mu\in \MM_{0,1}$. Let $m^N\in [0,1]^N$ be any sequence so that $\mu_{m^N}$ in (\ref{eqEmpMu}) converges to $\mu$ weakly and let $h_i=\ef(m_i)$ for $i\leq N$. Then
\begin{equation}
\lim_{N\to\infty}\e F_{N,1}^h(m^N,\eps_N)= \PP^\ef_\mu,
\label{eqPBmin}
\end{equation}
provided that $\eps_N$ goes to zero slowly enough.
\end{thm}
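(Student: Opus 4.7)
The plan is to enforce the band constraint in $F_{N,1}^h(m^N,\eps_N)$ by a Lagrange multiplier, turning the problem into the free energy of an \emph{unrestricted} but inhomogeneous mixed $p$-spin Ising model on $\Sigma_N$; prove a Parisi-type formula for this tilted model; and then recognize the joint infimum over the multiplier $\lambda$ and the Parisi order parameter $\zeta$ as $\PP^\ef_\mu$ itself.

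\emph{Step 1 (Lagrange relaxation).} Since $B(m,\eps) = \{|m\cdot \tbs| < N\eps\}$ and $\indic\{|y|<N\eps\} \le e^{\lambda y+|\lambda|N\eps}$ for every $\lambda \in \Reals$, one obtains the upper bound
\begin{equation*}
\e F_{N,1}^h(m^N,\eps_N) \le |\lambda|\eps_N + \frac{1}{N}\e\log \tilde Z_N^\lambda,\qquad \tilde Z_N^\lambda := \sum_{\tbs} \exp\!\Bigl(H_N^m(\tbs) + \sum_{i\le N}(\ef(m_i)+\lambda m_i)\tilde\sigma_i\Bigr),
\end{equation*}
where $\tbs_i \in \{\pm 1 - m_i\}$; thus the effective site-dependent external field is $\lambda m_i + \ef(m_i)$, matching the argument of $\Phi_{a,\zeta}(0,\cdot)$ in the definition of $\PP^\ef_\mu$. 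For the matching lower bound I would discretize the magnetization $m\cdot \tbs/N$ into bins of width $\eps_N$ and apply the standard Laplace/pigeonhole argument: choosing $\lambda^{*}$ so that the tilted Gibbs average $\e\la m\cdot\tbs/N\ra$ vanishes (which, by the envelope theorem, is equivalent to $\lambda^{*}$ being the minimizer of $\lambda\mapsto\inf_\zeta\PP^\ef_\mu(\lambda,\zeta)$) places a $1/\mathrm{poly}(N)$ fraction of the tilted mass in the band, yielding $\liminf \e F_{N,1}^h(m^N,\eps_N) \ge -|\lambda^{*}|\eps_N + \lim \tfrac{1}{N}\e\log \tilde Z_N^{\lambda^{*}}$.

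\emph{Step 2 (Parisi formula for the tilted model).} The tilted Hamiltonian is a mixed $p$-spin Ising glass with Gaussian covariance $N\xi_q(R(\tbs^1,\tbs^2))$, site-dependent binary support $\{\pm 1 - m_i\}$, and site-dependent external field $\lambda m_i + \ef(m_i)$; the types $(m_i)_{i\le N}$ have (deterministic) empirical measure converging to $\mu$. I would prove
\begin{equation*}
\lim_{N\to\infty}\frac{1}{N}\e\log \tilde Z_N^\lambda = \inf_{\zeta \in \MM_{0,1-q}}\PP^\ef_\mu(\lambda,\zeta)
\end{equation*}
by the now-classical two-sided scheme. For the upper bound I run Guerra's interpolation between the tilted Hamiltonian and a decoupled Gaussian cascade driven by a Ruelle probability cascade with parameter $\zeta$ on $[0,1-q]$. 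The PDE \eqref{ParisiPDE} appears because the tilted covariance is $\xi_q$; the terminal time is $1-q$ because that is the (concentrated) self-overlap of $\tbs = \bs - m$ when $R(\bs,m) \approx q$, and, combined with Talagrand's positivity principle (applicable because the effective field is nondegenerate), restricts the relevant overlaps to $[0,1-q]$; the boundary value \eqref{ParisiPDEBoundary} emerges from the site-by-site cavity computation at a site of type $a = m_i$, whose single-site partition function with effective field $x$ is $\sum_{\sigma=\pm 1} e^{(\sigma-a)x}$. Integrating against $\mu_{m^N}\to\mu$ produces the $\int \Phi_{a,\zeta}(0,\lambda a + \ef(a))\,d\mu(a)$ term in $\PP^\ef_\mu(\lambda,\zeta)$, and the Onsager-type correction $-\tfrac12\int_0^{1-q} s\,\xi_q''(s)\zeta(s)\,ds$ is produced by the usual interpolating-derivative identity with $\xi_q$ in place of $\xi$. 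For the lower bound I add a small generic mixed $p$-spin perturbation to $H_N^m$ so that the Ghirlanda-Guerra identities hold, apply the Aizenman-Sims-Starr cavity computation, and use the resulting ultrametric/RPC structure of the limiting overlap array to recover the matching Parisi lower bound before removing the perturbation.

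\emph{Step 3 and main obstacle.} Combining Step 1's upper bound with Step 2 and letting $\eps_N \to 0$ slowly gives $\limsup \e F_{N,1}^h(m^N,\eps_N) \le \inf_{\lambda,\zeta} \PP^\ef_\mu(\lambda,\zeta) = \PP^\ef_\mu$; combining Step 1's lower bound with Step 2 at the minimizing $\lambda^{*}$ gives the matching $\liminf$. The main technical obstacle is executing Step 2 in the site-inhomogeneous setting: while the macroscopic architecture of the Parisi proof is standard, one must verify that the cavity computation really produces the $a$-dependent boundary $\log\sum_{\sigma=\pm 1}e^{(\sigma-a)x}$, that a Ghirlanda-Guerra perturbation compatible with the site-inhomogeneous spin support and external field actually exists, and that the induced ultrametric synchronization is strong enough for the Aizenman-Sims-Starr identity to collapse the site-dependent randomness into a clean integral against $\mu$ rather than leave residual uncontrolled fluctuations.
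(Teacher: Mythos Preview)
Your strategy --- Lagrange-relax the band constraint, prove a Parisi formula for the resulting tilted model on all of $\Sigma_N$, then optimize over $\lambda$ --- is natural, and the paper uses the same two ingredients. But the \emph{order} in which you apply them creates a real gap in Step~2.

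Once you drop the band constraint, the self-overlap $R(\tbs,\tbs)=\frac{1}{N}\sum_i(\sigma_i-m_i)^2=1-q-2R(\tbs,m)$ is no longer constant; it is fixed to $1-q$ precisely by the constraint $R(\tbs,m)\approx 0$ that you have just removed. In Guerra's interpolation for your tilted model, with the top level of the cascade set at $q_r=1-q$, the diagonal contribution to $\varphi_N'(t)$ is
\[
\tfrac12\,\e\bigl\langle \xi_q(R(\tbs,\tbs))-R(\tbs,\tbs)\,\xi_q'(1-q)+\theta_q(1-q)\bigr\rangle_t,
\]
which by convexity of $\xi_q$ is \emph{nonnegative} and vanishes only when $R(\tbs,\tbs)=1-q$. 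This spoils the sign needed for the Guerra upper bound: you obtain $\tfrac1N\e\log\tilde Z_N^\lambda\le \PP_\mu^\ef(\lambda,\zeta)+\text{(nonnegative diagonal error)}$, which is a valid but not tight inequality, and taking $\inf_{\lambda,\zeta}$ no longer yields $\PP_\mu^\ef$. Your own justification that ``the terminal time is $1-q$ because that is the concentrated self-overlap when $R(\bs,m)\approx q$'' applies on the band, not to the unconstrained tilted model. A linear tilt in $R(\tbs,m)$ cannot cancel this error, which is quadratic in $R(\tbs,m)$.

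The paper resolves this by reversing the order. It runs Guerra's interpolation and the Aizenman--Sims--Starr scheme \emph{on the band} $B(m,\eps_N)$ itself, where (\ref{eqSOonnbaq}) forces $R(\tbs,\tbs)\approx 1-q$ and the diagonal term is $o(1)$; this yields (Proposition~\ref{prop:Flim1})
\[
\lim_{\eps\downarrow 0}\lim_N \e F_{N,1}^h(m^N,\eps)=\inf_{\zeta}\bigl(\Psi(\mu,\zeta)-\Upsilon(\zeta)\bigr),
\]
with $\Psi(\mu,\zeta)$ still a Ruelle-cascade functional carrying the band constraint. Only \emph{then} is the Lagrange multiplier $\lambda$ introduced, at the level of this simpler functional: since $\Psi_N(m;\lambda,\Sigma_N)$ factorizes over sites and equals $\int\Phi_{a,\zeta}(0,\lambda a+\ef(a))\,d\mu_m(a)$ exactly (Lemma~\ref{add:lem1}), and since the random $\psi_N$ has fluctuations of order $1/N$, a short convexity/concentration argument (Proposition~\ref{ThSec24prop}) identifies $\Psi(\mu,\zeta)=\inf_\lambda\int\Phi_{a,\zeta}(0,\lambda a+\ef(a))\,d\mu(a)$. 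This ordering sidesteps the site-inhomogeneous Parisi formula entirely.

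A secondary point you are missing: when $\supp(\mu)\subseteq\{0,1\}$ the infimum over $\lambda$ is not attained (Lemma~\ref{add:lem2}), so the envelope/pigeonhole argument for your lower bound breaks down; the paper handles this degenerate case by a direct computation at the end of the proof of Proposition~\ref{ThSec24prop}.
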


\begin{remark}\label{add:remark1}\rm
	In Proposition \ref{eqPropATO} below, we will establish a connection between $\Phi_{a,\zeta}$ and $\Phi_\zeta$ (recall \eqref{ParisiPDEOrig}), which states that these PDE solutions are essentially the same up to a transformation. An important consequence of this connection is that the function $\Psi(a,\zeta)$ in (\ref{eqDefPsief}) coincides with the one defined in (\ref{eqDefPsiefF}) up to a shift. With a properly chosen external field (see \eqref{add:eq12} below), this allows us to simplify the above Parisi formula on the band  and naturally gives rise to the the desired TAP correction (defined in \eqref{eqTAPfirstZeta}). 
\end{remark}

\subsection{Finding effective external field}

Next, in order to obtain \eqref{eqRemnIs}, we aim to find an external field such that the model with the Hamiltonian $H_{N}^{m}(\tbs)+\sum_{i=1}^N h_i \tilde\sigma_i$ on the band $B(m,\eps)$ has zero in the support of its Parisi measure. Given a function $\ef\in C([0,1]),$ suppose that $(\lambda^*,\zeta^*)$ is the minimizer in the Parisi formula $(\ref{TAP}).$ In the proof, we will deal with cases when $\lambda^*=+\infty$, but for the purpose of this discussion let us assume that it is finite. For zero to be in the support of $\zeta^*$, it is necessary and sufficient that 
\begin{equation}
\partial_x \Phi_{a,\zeta^*}(0,\lambda^*a + \ef(a)) = 0 
\label{eqLefBG}
\end{equation}
for all $a$ in the support of $\mu$. We will not prove this (standard) statement, because we will only need an implication in one direction (that will appear in Theorem~\ref{lem:TAPnew} $(v)$ below), but, again, let us use it as a motivation for what we do next. If we include the term $\lambda^*a$ into the field $\ef(a)$ then the field must satisfy $\partial_x \Phi_{a,\zeta^*}(0,\ef(a)) = 0$ for all $a$ in the support of $\mu$. If we recall the definition (\ref{eqDefPsief}) above, this means that our only hope to force zero in the support of the Parisi measure is to restrict our attention to external fields generated by functions of the form 
\begin{align}\label{add:eq12}
\ef_\zeta(a):=\Psi(a,\zeta)
\end{align} 
for $\zeta\in \mathcal{M}_{0,1-q},$ and, moreover, with such choice of $\ef_\zeta,$ the Parisi measure $\zeta^*$ must coincide with $\zeta,$ so that (\ref{eqLefBG}) holds \begin{equation}
\partial_x \Phi_{a,\zeta}(0,\ef_\zeta(a)) = 0.
\end{equation}
Actually, this will automatically force $\lambda^*=0$, so this equation matches (\ref{eqLefBG}). This raises two issues. 
\begin{enumerate}
\item
First of all, we will see that $\Psi(a,\zeta)$ goes to $+\infty$ as $a$ approaches $1$ uniformly over $\zeta$ (see Lemma \ref{ex:lem2}) while $\ef$ in the Parisi formula in Theorem \ref{thm1} was continuous on $[0,1]$, so we can not apply the Parisi formula with $v=v_\zeta$ when $1\in\supp(\mu).$
\item The minimizer $\zeta^*$ depends on the external field $v_\zeta$, and we want it to coincide with $\zeta$ in the definition of $v_\zeta.$ How can we find such `good' choice of $\zeta$?
\end{enumerate}

The second issue can be solved via an implicit fixed point problem, using the Schauder fixed point theorem; however, we will give a more direct and explicit way to find such good $\zeta$. The first issue will be handled by an approximation argument, which, at a crucial step, will allow us to work with measures $\mu$ with the support separated from $1.$ Because these two issues present very different obstacles, it will be convenient to work with two intermediate definitions of the functional $\nTAP(\mu)$ in (\ref{eqTAPfirst}), which will be shown to coincide with it. 

Let us consider the following growth condition on functions $\ef\colon[0,1]\to\Reals$,
\begin{equation}
v(a)\leq c_1+c_2\tanh^{-1}(a) \mbox{ for } a\in [0,1),
\label{vGrowthCond}
\end{equation}
for some absolute constants $c_1,c_2>0$ that depend only on the model $\xi$ and can be found explicitly from the proof of Lemma \ref{ex:lem2} below. Set
\begin{equation}
\label{eq:V}
\begin{aligned}
&V=\big\{\ef\in C([0,1]):\, \ef\geq 0, \ef \mbox{ is non-decreasing, and (\ref{vGrowthCond}) holds}\big\},
\\
&\oV=\big\{\ef\in C([0,1)):\, \ef\geq 0, \ef \mbox{ is non-decreasing, and (\ref{vGrowthCond}) holds}\big\}.
\end{aligned}
\end{equation}
Note that the only difference between $V$ and $\oV$ is on the right boundary of $v.$
We will prove that the functions $v_\zeta(a)=\Psi(a,\zeta)$ belong to $\oV$ (see Lemma \ref{add:lem}), which is the real reason behind these definitions. Let us recall the functional $\PP_\mu^\ef(\lambda,\zeta)$ defined in (\ref{eqParisi1ab}) for $v\in V$ and define a new functional $\oP_\mu^\ef(\lambda,\zeta)$ for $v\in \oV,$
\begin{equation}
\label{add:eq10}
\begin{aligned}
&
\PP_\mu^\ef(\lambda,\zeta):=\int_{[0,1]}\! \Phi_{a,\zeta}(0,\lambda a+\ef(a))\,d\mu(a)-\frac{1}{2}\int_0^{1-q}\! s\xi_q''(s)\zeta(s)\,ds,\,\mbox{ for } v\in V,
\\
&
\oP_\mu^\ef(\lambda,\zeta):=\int_{[0,1)}\Phi_{a,\zeta}(0,\lambda a+\ef(a))d\mu(a)-\frac{1}{2}\int_0^{1-q} s\xi_q''(s)\zeta(s)ds,\, \mbox{ for }v\in \oV.
\end{aligned}
\end{equation}
For $\mu\in \MM_{0,1}$ such that $\int a^2d\mu(a)=q,$ we define 
\begin{equation}
\label{eq:TAPmu}
\begin{aligned}
\TAP(\mu) &:= \inf_{\ef\in V,\,\zeta\in \mathcal{M}_{0,1-q}} \PP_\mu^\ef(0,\zeta),
\\
\oTAP(\mu) &:=\inf_{\ef\in \oV,\,\zeta\in \mathcal{M}_{0,1-q}}\oP_\mu^\ef(0,\zeta).
\end{aligned}
\end{equation}
In the case that $\supp(\mu)\subset [0,1),$ it is evident that these coincide, and we will in fact show that they always coincide. The first representation in (\ref{eq:TAPmu}) will be convenient when working with the Parisi formula on the band, and the second representation will be convenient for analytical reasons and because it allows us to find a good choice of $\zeta$ directly. We summarize all these properties in the following theorem. 

We define by 
\begin{equation}
\theta_q \zeta(t) = \zeta(t+q) \mbox{ for } t\in[0,1-q]
\label{eqShiftOper}
\end{equation}
the shift operator $\theta_q\colon \mathcal{M}_{0,1}\to \mathcal{M}_{0,1-q}$.
\begin{thm}\label{lem:TAPnew} The following statements hold:
	\begin{itemize}
		\item[$(i)$] If $\mu\in \MM_{0,1}$ and $q=\int\!x^2\,d\mu(a)$ then 
		$$
		\nTAP(\mu,\zeta') = \oP_\mu^{\ef_\zeta}(0,\zeta)
		$$
for all $\zeta'\in \mathcal{M}_{0,1}$ and $\zeta = \theta_q\zeta'$, and
		\begin{equation}
		\nTAP(\mu)=\TAP(\mu)=\oTAP(\mu)=\inf_{\zeta\in \mathcal{M}_{0,1-q}}\oP_\mu^{\ef_\zeta}(0,\zeta).
		\label{eqTAP4repr}
		\end{equation}
		\item[$(ii)$] The functional $\nTAP(\mu)$ is continuous on $(\MM_{0,1},d_1)$.
		\item[$(iii)$]  The right-hand side in (\ref{eqTAP4repr}) has a unique minimizer $\zeta_0$. 
		\item[$(iv)$] The minimizer $\zeta_0$ in $(iii)$ satisfies
		$$
		\oP_\mu^{\ef_{\zeta_0}}(0,\zeta_0)=\inf_{\lambda\in \mathbb{R},\zeta\in\mathcal{M}_{0,1-q}}\oP_\mu^{\ef_{\zeta_0}}(\lambda,\zeta)\,\,(=:\oP_\mu^{\ef_{\zeta_0}}).
		$$
		\item[$(v)$] The minimizer $\zeta_0$ in $(iii)$ has zero in its support, $0\in\supp(\zeta_0)$.		
		\item[$(vi)$] If there exists some $\zeta_1\in \mathcal{M}_{0,1-q}$ such that $(0,\zeta_1)$ is a minimizer of $$\inf_{\lambda\in \mathbb{R},\,\zeta\in \mathcal{M}_{0,1-q}}\oP_\mu^{\ef_{\zeta_1}}(\lambda,\zeta),$$ then $\zeta_0=\zeta_1.$
	\end{itemize}
\end{thm}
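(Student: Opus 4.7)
The plan is to address the six parts in order, built around the envelope identity
\begin{equation*}
\partial_x\Phi_{a,\zeta}\bigl(0,\Psi(a,\zeta)\bigr)=0,
\end{equation*}
which is immediate from the definition of $\Psi$ in \eqref{eqDefPsief} and which renders the self-referential dependence $\ef_\zeta=\Psi(\cdot,\zeta)$ innocuous to first order.

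For (i), the key tool is Proposition~\ref{eqPropATO} (previewed in Remark~\ref{add:remark1}), which identifies $\Phi_{\zeta'}(q,\cdot)$ with $\Phi_{a,\zeta}(0,\cdot)$ up to an affine-in-$x$ correction, and identifies $\oPsi(q,a,\zeta')$ with $\Psi(a,\zeta)$ up to a shift, for $\zeta=\theta_q\zeta'$. Substituting into $\Lambda_{\zeta'}(q,a)=\Phi_{\zeta'}(q,\oPsi)-a\,\oPsi$ and changing variable $s\mapsto s+q$ in the linear $\xi''$-integral, the affine correction cancels exactly and yields $\nTAP(\mu,\zeta')=\oP_\mu^{\ef_\zeta}(0,\zeta)$. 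The chain $\nTAP(\mu)=\oTAP(\mu)=\inf_\zeta\oP_\mu^{\ef_\zeta}(0,\zeta)$ then follows by observing that for each fixed $\zeta$, the inner minimization $\ef\mapsto\oP_\mu^\ef(0,\zeta)$ factorizes pointwise in $a$, with each slice $x\mapsto\Phi_{a,\zeta}(0,x)$ strictly convex and minimized at $x=\Psi(a,\zeta)=\ef_\zeta(a)$; admissibility $\ef_\zeta\in\oV$ is Lemma~\ref{add:lem}. The equality $\TAP(\mu)=\oTAP(\mu)$ is obtained by first restricting to $\mu$ with $\supp(\mu)\subset[0,1-\eta]$, where $\ef_\zeta$ can be smoothly continued near $a=1$ to lie in $V$ without changing the $d\mu$-integral, and then passing $\eta\to 0$ using (ii).

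Continuity (ii) follows from joint $d_1$-continuity of $(\mu,\zeta')\mapsto\nTAP(\mu,\zeta')$ (inherited from $d_1$-stability of the Parisi PDE solution in $\zeta'$ and weak continuity of $\mu\mapsto\int\Lambda_{\zeta'}(q,a)\,d\mu(a)$ and of $q=\int a^2\,d\mu$), together with compactness of $\mathcal{M}_{0,1}$, which transfers continuity through the infimum. Uniqueness (iii) comes from strict convexity of $\zeta'\mapsto\nTAP(\mu,\zeta')$: the $\xi''$-term is linear, while $\Lambda_{\zeta'}(q,a)$, being the $x$-infimum of the jointly strictly convex map $(\zeta',x)\mapsto\Phi_{\zeta'}(q,x)-ax$ (Auffinger--Chen \cite{AC15}), inherits strict convexity in $\zeta'$ after integration against $d\mu$. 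For (iv), differentiating $F(\zeta):=\oP_\mu^{\ef_\zeta}(0,\zeta)$ at $\zeta_0$ via the chain rule splits into a direct $\zeta$-term and an indirect term through $\ef_\zeta$; the envelope identity kills the latter, so $\zeta_0$ is a critical point, hence by strict convexity of the fixed-field Parisi functional the unique minimizer, of $\zeta\mapsto\oP_\mu^{\ef_{\zeta_0}}(0,\zeta)$; optimality in $\lambda$ at $\lambda=0$ is immediate since $\partial_\lambda\oP_\mu^{\ef_{\zeta_0}}(0,\zeta_0)=\int a\,\partial_x\Phi_{a,\zeta_0}(0,\ef_{\zeta_0}(a))\,d\mu(a)=0$ by the envelope identity.

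For (v), apply the standard Talagrand / Auffinger--Chen first-order characterization of $\supp(\zeta_0)$ for Parisi minimizers to the fixed-field problem identified in (iv); the variation of $\oP_\mu^{\ef_{\zeta_0}}(0,\cdot)$ induced by an atom at $0$ reduces, via the envelope identity $\partial_x\Phi_{a,\zeta_0}(0,\ef_{\zeta_0}(a))=0$, to the KKT threshold value, which forces $0\in\supp(\zeta_0)$. For (vi), running the envelope identity in reverse at $\zeta_1$ shows that $\zeta_1$ is a critical point of $F$; combining the representation $F(\zeta)=\int\min_x\Phi_{a,\zeta}(0,x)\,d\mu(a)$ plus a linear term with joint convexity of $(\zeta,x)\mapsto\Phi_{a,\zeta}(0,x)$ makes $F$ convex, so its critical points are global minima, and uniqueness in (iii) gives $\zeta_1=\zeta_0$. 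The principal obstacles will be the boundary behavior at $a=1$ in (i), needed to bridge the spaces $V$ and $\oV$, and the convexity bookkeeping for $F$ in (vi), where the $\zeta$-dependence of $\ef_\zeta$ obstructs naive strict convexity and must be absorbed through the envelope identity at each step.
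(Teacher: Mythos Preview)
Your outline shares the paper's core tools (the envelope identity $\partial_x\Phi_{a,\zeta}(0,\Psi(a,\zeta))=0$ and joint convexity of $(\zeta,x)\mapsto\Phi_{a,\zeta}(0,x)$), and parts (ii), (iii), (vi) are essentially correct. But part (v) has a genuine gap. The first-order characterization of a Parisi minimizer gives only a \emph{necessary} condition for $s\in\supp(\zeta_0)$, namely $\int\E[u_{a,\zeta_0,\ef_{\zeta_0}(a)}(s)^2]\,d\mu(a)=s$. The envelope identity does yield this at $s=0$, but satisfying the necessary condition at $0$ does not put $0$ into the support; there is no ``KKT threshold'' that is automatically attained. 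The paper's argument is a contradiction: assume $c:=\min\supp(\zeta_0)>0$, and introduce the two-replica correlation
\[
A(t)=\int_{[0,1)}\E\bigl[\partial_x\Phi_{a,\zeta_0}(c,\ef_{\zeta_0}(a)+z_1(t))\,\partial_x\Phi_{a,\zeta_0}(c,\ef_{\zeta_0}(a)+z_2(t))\bigr]\,d\mu(a),
\]
with $\E z_i(t)^2=\xi_q'(c)$ and $\E z_1(t)z_2(t)=\xi_q'(t)$. The envelope identity (plus $\zeta_0\equiv 0$ on $[0,c)$) forces $A(0)=0$; the first-order optimality at $c$ gives $A(c)=c$; and Cauchy--Schwarz together with the second-order condition $\xi_q''(c)\int\E(\partial_{xx}\Phi_{a,\zeta_0})^2\,d\mu\le 1$ gives $A'(t)<A'(c)\le 1$ on $[0,c)$, which is incompatible with $A(0)=0$, $A(c)=c$. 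Your sketch does not contain this mechanism.

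There is a second, smaller gap in your route to $\TAP(\mu)=\oTAP(\mu)$ in (i). Since $\Phi_{1,\zeta}\ge 0$, one always has $\TAP(\mu)\ge\oTAP(\mu)$; your approximation by $\mu_\eta$ with $\supp(\mu_\eta)\subset[0,1-\eta]$ plus continuity (ii) gives $\oTAP(\mu_\eta)=\TAP(\mu_\eta)\to\oTAP(\mu)$, but to conclude you would need $\TAP(\mu_\eta)\to\TAP(\mu)$, i.e.\ continuity of $\TAP$, which you have not established (as an infimum of continuous functionals it is only upper semicontinuous, which goes the wrong way). The paper avoids this by fixing $\mu$ and truncating the field: with $\ef_L(a)=\Psi(a,\zeta)\wedge L\in V$, one has $\Phi_{1,\zeta}(0,L)\downarrow 0$ (since $\Phi_{1,\zeta}(1-q,x)=\log(1+e^{-2x})$), which directly gives $\TAP(\mu)\le\oTAP(\mu)$. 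Finally, your chain-rule argument for (iv) is correct but the paper's is a two-line inequality: the pointwise bound $\oP_\mu^{\ef_\zeta}(0,\zeta)\le\oP_\mu^{\ef_{\zeta_0}}(\lambda,\zeta)$ for all $\lambda,\zeta$ (from \eqref{add:eq16}) immediately yields $\oP_\mu^{\ef_{\zeta_0}}(0,\zeta_0)=\inf_\zeta\oP_\mu^{\ef_\zeta}(0,\zeta)\le\inf_{\lambda,\zeta}\oP_\mu^{\ef_{\zeta_0}}(\lambda,\zeta)$, and the reverse is trivial.
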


Notice that if $\mu=\delta_1$ then $q=1$, so $\oP_\mu^\ef(\lambda,\zeta)\equiv 0,$ $\mathcal{M}_{0,1-q}=\{\delta_0\}$, and the claims $(iii)$-$(vi)$ hold trivially. If $\supp(\mu)\subseteq \{0,1\}$ then $\oP_\mu^\ef(\lambda,\zeta)$ does not depend on $\lambda$. In Remark \ref{rmk1} below, we will see that in all other cases the minimizer $(\lambda_\mu^\ef,\zeta_\mu^\ef)$ exists and is unique. For convenience, we will use the convention that  $(\lambda_\mu^\ef,\zeta_\mu^\ef)=(0,\delta_0)$ when $\mu=\delta_1,$ and $(\lambda_\mu^\ef,\zeta_\mu^\ef)=(0,\zeta_\mu^\ef)$ when $\supp(\mu)\subseteq \{0,1\}$.

\subsection{\label{subsec:classical}Classical TAP correction.}
Finally, let us explain how the generalized TAP correction defined above leads to the classical TAP correction and the Plefka condition in \cite{Plefka,TAP}. In this section, we will consider non-trivial case $\mu\neq \delta_1,$ and will describe when the minimizer $\zeta_0$ in Theorem \ref{lem:TAPnew} is \emph{replica symmetric}, $\zeta_0=\delta_0.$ Let us denote the corresponding replica symmetric external field by  
\begin{align}\label{rs:eq1}
\ef_{\RS}(a):=\ef_{\delta_0}(a)=\Psi(a,\delta_0),\,\,\forall a\in [0,1).
\end{align} 
Using the Cole-Hopf transformation yields
\begin{align}
\begin{split}\label{add:eq81}
\Phi_{a,\delta_0}(s,x)&=\log 2+\log \E\ch(a+t(s)g)e^{-a(x+t(s)g)}\\
&=\frac{1+a^2}{2}t^2(s)-ax+\log 2\ch(x-at^2(s)),
\end{split}
\end{align}
where $g$ is a standard Gaussian random variable and, for $s\in [0,1-q]$, we denoted
$$
t(s):=(\xi_q'(1-q)-\xi_q'(s))^{1/2}.
$$
From this formula, we can express $\ef_{\RS}$ explicitly as
\begin{align}\label{add:eq8}
\ef_{\RS}(a):=\th^{-1}(a)+a\xi_q'(1-q),\,\,\forall a\in [0,1).
\end{align}
By Theorem \ref{lem:TAPnew} $(iv)$, if $\delta_0$ is the minimizer of the functional in (\ref{eqTAPfirst}), we must have
\begin{equation}
\oP_\mu^{\ef_{\RS}}(0,\delta_0)=\inf_{\lambda,\zeta\in\mathcal{M}_{0,1-q}}\oP_\mu^{\ef_{\RS}}(\lambda,\zeta).
\label{eqRSvC}
\end{equation}
On the other hand, by Theorem \ref{lem:TAPnew} $(vi)$, if this holds then $\delta_0$ is the minimizer. Hence, in order to characterize the replica symmetric TAP correction, it is enough to describe when (\ref{eqRSvC}) holds. Recall (\ref{add:eq81}) and define
\begin{align}
\gamma_\mu(s)
&:=\int_{[0,1)}\!\E\Bigl[\partial_x\Phi_{a,\delta_0}(s,g_{a}(s))^2
\exp\bigl(\Phi_{a,\delta_0}(s,g_{a}(s))-\Phi_{a,\delta_0}(0,g_{a}(0))\bigr)\Bigr]\,d\mu(a),
\end{align}
where $g_{a}(s)=\ef_{\RS}(a)+\xi_q'(s)^{1/2}g.$ For $s\in [0,1]$, define an auxiliary function $\Gamma_\mu$ by
\begin{align}
\label{eq:Gamma_mu}
\Gamma_\mu(s)=\int_0^s\xi_q''(r)(\gamma_\mu(r)-r)dr.
\end{align}
This function is derived through the directional derivative of the functional $\oP_\mu^{\ef_{\RS}}(0,\cdot)$ (see the derivation in \eqref{add:eq9} below). Recall $I(a)$ and $C(q)$ in \eqref{eq:IC}. The following holds.
\begin{prop} \label{add:prop2}
Assume that $\delta_1\neq \mu\in \MM_{0,1}$ and $q=\int\! a^2\,d\mu(a)$. Then $\delta_0$ is the minimizer of (\ref{eqTAPfirst}) (i.e. (\ref{eqRSvC}) holds) if and only if 
	\begin{align}\label{add:prop2:rs}
	\Gamma_\mu(s)\leq 0,\,\,\forall s\in[0,1-q].
	\end{align}
	Furthermore, in this case, $\nTAP(\mu)$ is given by the classical TAP correction,
	\begin{align}\label{add:eq11}
		\nTAP(\mu)=-\int I(a)d\mu(a)+C(q).
	\end{align}
\end{prop}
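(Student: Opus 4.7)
By Theorem \ref{lem:TAPnew} parts $(iv)$ and $(vi)$, the minimizer $\zeta_0$ equals $\delta_0$ if and only if $(0,\delta_0)$ is a joint minimizer of $(\lambda,\zeta)\mapsto\oP_\mu^{\ef_{\RS}}(\lambda,\zeta)$ on $\Reals\times\mathcal{M}_{0,1-q}$. The plan is to characterize this joint minimality through first-order conditions, which are necessary directly and sufficient by joint convexity of the Parisi-type functional (extending the convexity of the classical Parisi functional established in \cite{AC15}).

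The condition in $\lambda$ holds automatically: by the defining relation (\ref{eqDefPsief}) together with (\ref{rs:eq1}), one has $\partial_x\Phi_{a,\delta_0}(0,\ef_{\RS}(a))=0$ for every $a\in[0,1)$, hence $\partial_\lambda\oP_\mu^{\ef_{\RS}}(0,\delta_0)=\int a\,\partial_x\Phi_{a,\delta_0}(0,\ef_{\RS}(a))\,d\mu(a)=0$. For the condition in $\zeta$, I compute the directional derivative at $\delta_0$ along $\zeta'-\delta_0$. Using the sensitivity formula for Parisi PDE solutions with respect to the measure parameter, specialized to the replica symmetric reference $\delta_0$ via the Cole-Hopf identity (\ref{add:eq81}) (where the cavity field $g_a(s)=\ef_{\RS}(a)+\xi_q'(s)^{1/2}g$ and the tilting factor $\exp(\Phi_{a,\delta_0}(s,g_a(s))-\Phi_{a,\delta_0}(0,g_a(0)))$ appear as the Gaussian interpolation measure at scale $s$), one obtains
\begin{equation*}
\frac{d}{dt}\bigg|_{t=0^+}\oP_\mu^{\ef_{\RS}}\bigl(0,(1-t)\delta_0+t\zeta'\bigr)=\frac{1}{2}\int_0^{1-q}\xi_q''(s)\bigl(\gamma_\mu(s)-s\bigr)\bigl(\zeta'(s)-1\bigr)\,ds.
\end{equation*}
This expression is linear in $\zeta'$ viewed as a measure; testing against $\zeta'=\delta_{s_0}$ for $s_0\in(0,1-q]$ reduces the right-hand side to $-\frac{1}{2}\Gamma_\mu(s_0)$, and, equivalently, integrating by parts rewrites it as $\frac{1}{2}\int\Gamma_\mu(r)\,d(\delta_0-\zeta')(r)$. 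Non-negativity for every $\zeta'\in\mathcal{M}_{0,1-q}$ is thus equivalent to $\Gamma_\mu(s)\leq 0$ on $[0,1-q]$, which is (\ref{add:prop2:rs}).

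For the identity (\ref{add:eq11}), substitute $x=\ef_{\RS}(a)=\tanh^{-1}(a)+a\xi_q'(1-q)$ into the Cole-Hopf formula (\ref{add:eq81}) at $s=0$, where $t^2(0)=\xi_q'(1-q)$. The shift $at^2(0)$ inside the $\cosh$ exactly cancels the drift in the field, giving $\Phi_{a,\delta_0}(0,\ef_{\RS}(a))=\frac{1-a^2}{2}\xi_q'(1-q)-a\tanh^{-1}(a)+\log 2\cosh\tanh^{-1}(a)$. The elementary identities $\cosh\tanh^{-1}(a)=(1-a^2)^{-1/2}$ and $-a\tanh^{-1}(a)-\tfrac{1}{2}\log(1-a^2)+\log 2=-I(a)$ reduce this to $\frac{1-a^2}{2}\xi_q'(1-q)-I(a)$. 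Integrating against $\mu$ using $\int a^2\,d\mu=q$, and combining with the second term in (\ref{add:eq10}) at $\zeta=\delta_0$, which evaluates by integration by parts to $\tfrac{1}{2}\bigl(\xi(1)-\xi(q)-(1-q)\xi'(1)\bigr)$, the $(1-q)\xi'(1)$ contributions cancel and one is left with $-\int I(a)\,d\mu(a)+C(q)$.

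The principal obstacle is the sensitivity calculation in the second paragraph and the identification of the resulting expression with the explicit quantity $\gamma_\mu(s)$ from the statement. In the RS case the Parisi PDE reduces to a heat equation with a quadratic nonlinearity evaluated at $\zeta\equiv 1$, and linearizing around $\delta_0$ produces a linear parabolic problem whose Feynman-Kac representation yields the tilted Gaussian expectation; one must also justify differentiation under the $\mu$-integral, which is controlled by the growth bound on $\Psi(a,\delta_0)$ as $a\to 1$ from Lemma \ref{ex:lem2}.
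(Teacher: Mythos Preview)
Your proof is correct and follows essentially the same route as the paper. Both arguments characterize the minimality of $(0,\delta_0)$ via the directional derivative of $\oP_\mu^{\ef_{\RS}}(0,\cdot)$ at $\delta_0$, rewrite it (by Fubini) as $-\tfrac12\int\Gamma_\mu(r)\,d\zeta'(r)$, and then compute $\oP_\mu^{\ef_{\RS}}(0,\delta_0)$ explicitly from the Cole--Hopf formula (\ref{add:eq81}). The only cosmetic difference is in the sufficiency direction: you appeal directly to joint convexity in $(\lambda,\zeta)$ (Lemma~\ref{lem29}) together with $\partial_\lambda\oP_\mu^{\ef_{\RS}}(0,\delta_0)=0$, whereas the paper first shows $\delta_0$ minimizes $\oP_\mu^{\ef_{\RS}}(0,\cdot)$ by strict convexity in $\zeta$ and then handles the $\lambda$-coordinate by a short case analysis depending on whether $\supp(\mu)$ meets $(0,1)$. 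Both are valid; yours is slightly more streamlined. Your identification of the directional derivative with $\gamma_\mu$ via the tilted Gaussian representation is exactly the Girsanov change of measure linking the optimal-control process $u_{a,\delta_0,\ef_{\RS}(a)}$ of Lemma~\ref{lem:dd} to the reweighted expectation in the definition of $\gamma_\mu$, which the paper also uses (implicitly, via the remark preceding the proposition).
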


The above proposition will be used to conclude Corollary \ref{add:cor1} from the general TAP representation of Theorem \ref{thm:GenTAP}. The proposition
 also naturally leads to the so-called Plefka condition in the SK model, that is, $\xi(s)=\beta^2 s^2/2.$ 

\begin{prop}\label{add:prop4}
	Let $\xi(s)=\beta^2s^2/2$. Assume that $\delta_1\neq \mu\in \MM_{0,1}$. If $\delta_0$ is the minimizer of (\ref{eqTAPfirst}), i.e. (\ref{add:prop2:rs}) holds, then the so called \emph{Plefka's condition} holds,
	\begin{align}
	\beta^2\int (1-a^2)^2d\mu(a)\leq 1.
	\end{align}
\end{prop}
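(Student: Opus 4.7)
The plan is to extract Plefka's condition from the sign information the hypothesis gives about $\Gamma_\mu$ near $s=0$. By Proposition \ref{add:prop2}, the hypothesis that $\delta_0$ is the minimizer is equivalent to $\Gamma_\mu(s)\leq 0$ for all $s\in[0,1-q]$, where $q=\int a^2\,d\mu(a)<1$ because $\mu\neq\delta_1$. For the SK model we have $\xi_q(s)=\beta^2 s^2/2$, hence $\xi_q''\equiv \beta^2$ and $\Gamma_\mu'(s)=\beta^2(\gamma_\mu(s)-s)$. Since $\Gamma_\mu(0)=0$, the entire proposition reduces to computing the leading-order Taylor expansion of $\gamma_\mu$ at $s=0$ and reading off the sign constraint it must satisfy.

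First I would verify $\gamma_\mu(0)=0$: by definition $\ef_{\RS}(a)=\Psi(a,\delta_0)$ satisfies $\partial_x\Phi_{a,\delta_0}(0,\ef_{\RS}(a))=0$, and $g_a(0)=\ef_{\RS}(a)$, so each integrand vanishes. Next, use the explicit formula in \eqref{add:eq81} to write $\partial_x\Phi_{a,\delta_0}(s,x)=-a+\tanh(x-at(s)^2)$. Substituting $x=g_a(s)=\ef_{\RS}(a)+\beta\sqrt{s}\,g$ and $t(s)^2=\beta^2(1-q-s)$ and using $\ef_{\RS}(a)=\tanh^{-1}(a)+a\beta^2(1-q)$ collapses the argument to $\tanh^{-1}(a)+a\beta^2 s+\beta\sqrt{s}\,g$. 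Taylor-expanding $\tanh$ around $\tanh^{-1}(a)$ (where the first derivative is $1-a^2$) gives
\begin{equation*}
\partial_x\Phi_{a,\delta_0}(s,g_a(s))=(1-a^2)\beta\sqrt{s}\,g+O(s),
\end{equation*}
so its square is $(1-a^2)^2\beta^2 s\,g^2+O(s^{3/2})$. A parallel expansion of $\Phi_{a,\delta_0}(s,g_a(s))-\Phi_{a,\delta_0}(0,g_a(0))$, again from the explicit formula, shows it is $O(s)$ (the $\beta\sqrt{s}g$ terms from $-ax$ and from $\log 2\cosh$ cancel), so the exponential weight equals $1+O(s)$. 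Taking the Gaussian expectation in $g$ and then integrating against $\mu$ yields $\gamma_\mu(s)=\beta^2 s\int(1-a^2)^2\,d\mu(a)+O(s^{3/2})$, where the error is uniform in $a\in[0,1)$ since $(1-a^2)^2$ and $a(1-a^2)$ are bounded and the factor $(1-a^2)^2$ handles the boundary behavior as $a\to 1$.

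Integrating, I get
\begin{equation*}
\Gamma_\mu(s)=\frac{\beta^2}{2}s^2\Bigl(\beta^2\int(1-a^2)^2\,d\mu(a)-1\Bigr)+O(s^{5/2}),
\end{equation*}
and the hypothesis $\Gamma_\mu(s)\leq 0$ for small positive $s$ (which is allowed because $1-q>0$) forces the leading coefficient to be non-positive, i.e.\ $\beta^2\int(1-a^2)^2\,d\mu(a)\leq 1$, which is Plefka's condition. The main obstacle, and the place where care is needed, is the expansion of $\gamma_\mu$: because $g_a(s)$ depends on $\sqrt{s}$, the classical chain rule is not directly applicable, and one must expand the integrand first and then take the Gaussian expectation in order to see the $s^1$ leading behavior. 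Once this is done properly and the error terms are shown to be uniform in $a$, the conclusion is immediate from the integrated Taylor expansion.
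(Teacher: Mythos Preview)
Your proposal is correct and follows essentially the same approach as the paper: both arguments extract Plefka's condition from the second-order behavior of $\Gamma_\mu$ at $s=0$, using that $\Gamma_\mu(0)=\Gamma_\mu'(0)=0$ together with $\Gamma_\mu\leq 0$ forces the quadratic coefficient to be nonpositive. The only difference is cosmetic: the paper identifies $\Gamma_\mu''(0)$ via a cited formula for $\gamma_\mu'(0)$ in terms of $\E(\partial_{xx}\Phi_{a,\delta_0}(0,\ef_{\RS}(a)))^2$ and then computes $\partial_{xx}\Phi_{a,\delta_0}(0,\ef_{\RS}(a))=1-a^2$, whereas you obtain the same leading coefficient by a direct Taylor expansion of the explicit formula \eqref{add:eq81}.
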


\begin{remark}\label{LabRmkPlefka}\rm
If we consider pure SK model with $\xi(s)=\beta^2 s^2/2$ with non-zero external field $h$, one can see that the generalized TAP correction is not always replica symmetric on the sphere $\frac{1}{N}\|m\|^2=q_{\EA},$ where the Edwards-Anderson parameter
$q_{\EA}$ is the largest point in the support of the Parisi measure.  Let us now consider $(\beta,h)$ below the AT line
$$
\beta^2 \e \frac{2}{\cosh^{4}(\beta z\sqrt{q}+h)} \leq 1,
$$
where $q$ is the unique solution of $q=\e \tanh^2(\beta z\sqrt{q}+h).$ It is well-known based on simulations, and in fact proved in some region of parameters in \cite{JT17}, that below the AT line the original model is replica symmetric, and $q_{\EA}=q.$ 
Let us now take $m$ that has $Nq$ coordinates equal to $1$ and $N(1-q)$ coordinates equal to $0$. For such $m,$ Plefka's condition becomes
$$
\beta^2\int (1-a^2)^2d\mu_m(a) = \beta^2(1-q) = \beta^2 \e \frac{1}{\cosh^2(\beta z\sqrt{q}+h)} \leq 1.
$$
Clearly, we can choose $(\beta,h)$ below the AT line such that Plefka's condition is violated, which means that the generalized TAP correction does not always coincide with the classical TAP correction on the sphere $\frac{1}{N}\|m\|^2=q_{\EA}.$
\end{remark}

In Plefka \cite{Plefka}, it was conjectured that in the SK model,
\begin{align*}
\lim_{N\rightarrow\infty}F_N&=\lim_{\varepsilon\downarrow 0}\lim_{N\rightarrow\infty}\sup\Bigl(\frac{H_N(m)}{N}-\int I(a)d\mu_m(a)+C(q)\Bigr),
\end{align*}	
where the supremum is taken over all $m\in [-1,1]^N$ satisfying that $q=\|m\|_2^2/N$ and the Plefka condition,
$
\beta^2\int (1-a^2)^2d\mu_m(a)\leq 1.
$
While Plefka obtained this condition through the consideration of convergence criterion for the series expansion of the SK free energy, we discover the same condition from an analogous study of the so-called Almeida-Thouless line for the Parisi formula on the band with external field $\ef_{\RS}$, namely, it is determined by the second derivative of $\Gamma_\mu$ at $s=0.$ We anticipate that it is not always true that when the Plefka condition is satisfied, \eqref{add:prop2:rs} is automatically valid. In addition, we mention that it looks possible that one can find a $\mu$ such that the Plefka condition is satisfied, but 
$$
\oP_\mu^{\ef_{\RS}}(0,\delta_0)=-\int I(a)d\mu(a)+C(q)
$$ 
is strictly larger than $\nTAP(\mu).$ It is however not clear to us how to compare the corresponding TAP free energy of such $\mu$ with our expression \eqref{eqTAPfirstZeta}, so the validity of Plefka's conjecture remains unclear.

Let us describe the structure of the rest of the paper. 
In the next section we derive the uniform statement in Theorem \ref{thm:TAPcorrection}, conditional on Theorem \ref{lem:TAPnew} and a representation of the limiting  replicated free energy on a band at the level of expectation. Using Theorem \ref{thm:TAPcorrection}, in Section \ref{SecGTAPrepr} we prove the generalized TAP representation in Theorem \ref{thm:GenTAP}.
In Section \ref{sec:TAPtoF}, we reduce the TAP correction to the free energy of the replicated model on the narrow band, and in Section \ref{sec:ParisiOnBand} we prove the Parisi formula for this model stated in Theorem \ref{thm1}. In Section \ref{SecOptimal} we prove the properties of various TAP representations in Theorem \ref{lem:TAPnew}. Section \ref{SecOptimizingEF} contains the key step, which combines the Parisi formula with the choice of the optimal external field from Theorem \ref{lem:TAPnew} to derive the first representation $\TAP(\mu)$ in (\ref{eq:TAPmu}) at the level of expectations. In Section \ref{SecTAPAnc}, we prove Theorem \ref{Thm1label}, and we compute the gradient of $\nTAP(\mu_m)$ in Theorem \ref{ThmGTElab} in Section \ref{SecTAPGenEE}. In Section \ref{SecTAPclass} we prove Corollary \ref{add:cor1} and Propositions \ref{add:prop2},  \ref{add:prop4}, about the classical TAP correction. Finally, in Section \ref{SecAnalytical}, we prove various technical results used throughout the paper.

 \section{Uniform TAP correction}\label{SecOptimizingUnif}
 
 In this section, we will prove our main result in Theorem \ref{thm:TAPcorrection}. Our proof is conditional on the continuity of the mapping $\mu\to\nTAP(\mu)$ that we stated in Theorem \ref{lem:TAPnew} $(ii)$ above, which we will prove in Section \ref{SecOptimal}, and the following two lemmas relating the limit of the expectations $\e \nTAP_{N,n}(m,\eps,\delta)$ to the functional $\nTAP(\mu)$, which we will prove in Section \ref{SecOptimizingEF}. 
 
 \begin{lem}\label{lem:TAP+} 
 	For any sequence $m=m^N\in [0,1]^N$ such that $\mu_{m}\to \mu\in \MM_{0,1}$,
 	\begin{equation}
 	\label{eq:lemTAP+}
 	\inf_{\eps,\delta,n}\limsup_{N\to\infty}\e \nTAP_{N,n}(m,\eps,\delta)
 	\leq\nTAP(\mu).
 	\end{equation}
 \end{lem}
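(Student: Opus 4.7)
My plan is to chain together the reductions outlined in Section 3.1 and then optimize over an auxiliary external field. For any admissible test field $\ef\in V$, I will pass from $\e\nTAP_{N,n}(m^N,\eps,\delta)$ to the single-replica free energy on the band $\e F_{N,1}^{h}(m^N,\eps)$ via two steps: (a) the recentering of the Hamiltonian together with the insertion of the deterministic external field $h_i=\ef(m_i^N)$, as quantified by \eqref{eq:TAPmAg}--\eqref{eqFtoFh}; (b) dropping the near-orthogonality constraint via $B_n(m,\eps,\delta)\subset B(m,\eps)^n$, which yields $F_{N,n}^h\le F_{N,1}^h$. Taking $N\to\infty$, I will then invoke the Parisi formula on the band (Theorem \ref{thm1}) to identify the limit, and finally optimize over $\ef$, using Theorem \ref{lem:TAPnew}$(i)$ to recognize the resulting infimum as $\nTAP(\mu)$.

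\textbf{Execution.} Let $q=\int a^2\,d\mu(a)$. Assume first $q>0$. Fix $\ef\in V$ and set $h_i:=\ef(m_i^N)$; since $\ef\in C([0,1])$, $\|h\|/\sqrt N\le\|\ef\|_\infty<\infty$. Combining \eqref{eq:TAPmAg} and \eqref{eqFtoFh} and using $F_{N,n}^h\le F_{N,1}^h$ gives
\begin{equation*}
\e\nTAP_{N,n}(m^N,\eps,\delta)\le \e F_{N,1}^{h}(m^N,\eps)+O\!\Bigl(\tfrac{\eps}{\sqrt q}+(1+\|\ef\|_\infty)\bigl(\tfrac1n+\delta\bigr)^{1/2}\Bigr),
\end{equation*}
with implicit constants independent of $N$. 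Theorem \ref{thm1} states that $\e F_{N,1}^{h}(m^N,\eps_N)\to \PP_\mu^{\ef}$ for $\eps_N\to 0$ slowly. I would extract from the upper-bound half of that result the statement that for every $\tau>0$ there exists $\eps_0=\eps_0(\ef,\tau)>0$ with $\limsup_N \e F_{N,1}^{h}(m^N,\eps)\le \PP_\mu^{\ef}+\tau$ for all fixed $\eps\in(0,\eps_0)$. Choosing such an $\eps$, then $n$ large and $\delta$ small, and passing to $\inf_{\eps,\delta,n}$, yields
\begin{equation*}
\inf_{\eps,\delta,n}\ \limsup_{N\to\infty}\e\nTAP_{N,n}(m^N,\eps,\delta)\le \PP_\mu^{\ef},\qquad \forall\,\ef\in V.
\end{equation*}

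\textbf{Optimization and boundary case.} Since $\PP_\mu^{\ef}=\inf_{\lambda,\zeta}\PP_\mu^{\ef}(\lambda,\zeta)\le \inf_{\zeta}\PP_\mu^{\ef}(0,\zeta)$, minimizing over $\ef\in V$ yields
\begin{equation*}
\inf_{\ef\in V}\PP_\mu^{\ef}\le \inf_{\ef\in V,\,\zeta\in\mathcal{M}_{0,1-q}}\PP_\mu^{\ef}(0,\zeta)=\TAP(\mu)=\nTAP(\mu),
\end{equation*}
by Theorem \ref{lem:TAPnew}$(i)$, which completes the proof for $q>0$. The boundary case $q=0$ forces $\mu=\delta_0$ and is handled directly: dropping \emph{all} constraints in the definition of $\nTAP_{N,n}$ gives $\nTAP_{N,n}(m^N,\eps,\delta)\le F_N-H_N(m^N)/N$; since $\e H_N(m^N)=0$ and $\mathrm{Var}(H_N(m^N)/N)=\xi(\|m^N\|^2/N)/N\to 0$, while $\e F_N\to \inf_{\zeta}\PP(\zeta)$, and since $\Phi_\zeta(0,\cdot)$ attains its minimum at $0$ by even convexity (so $\Lambda_\zeta(0,0)=\Phi_\zeta(0,0)$) we have $\nTAP(\delta_0)=\inf_{\zeta}\PP(\zeta)$, the desired bound follows.

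\textbf{Main obstacle.} The principal technical step is the passage from the sequence-level convergence in Theorem \ref{thm1} to a fixed-$\eps$ upper bound inside the $\inf_\eps$. This should be available from the upper-bound half of the Parisi formula on the band, whose proof (via a Guerra-type Gaussian interpolation) is typically insensitive to the precise rate of $\eps$ once $\eps$ is small; if not read off directly, one transfers the sequence statement to the fixed-$\eps$ statement by a diagonal argument combined with the monotonicity of $\eps\mapsto \e F_{N,1}^h(m,\eps)$.
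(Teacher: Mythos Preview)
Your proof is correct and follows essentially the same route as the paper for $q>0$: recenter, add a bounded external field via \eqref{eq:TAPmAg}--\eqref{eqFtoFh}, drop the orthogonality constraint, apply the Parisi formula on the band, and then minimize over $\ef\in V$ to land on $\TAP(\mu)=\nTAP(\mu)$.

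Two remarks on the points you flag. First, the ``main obstacle'' you identify is handled in the paper by a small reorganization that you might adopt: since $n\,\e\nTAP_{N,n}(m,\eps,\delta)$ is increasing in $\eps,\delta$ and sub-additive in $n$, one has
\[
\inf_{\eps,\delta,n}\ \limsup_{N}\e\nTAP_{N,n}(m,\eps,\delta)=\limsup_{N}\e\nTAP_{N,n_N}(m,\eps_N,\delta_N)
\]
for suitable sequences $\eps_N,\delta_N\to 0,\ n_N\to\infty$, and then Theorem~\ref{thm1} applies verbatim. Your alternative---extracting a fixed-$\eps$ upper bound---also works, but one must cite Proposition~\ref{prop:Flim1}, which gives $\lim_{\eps\downarrow 0}\limsup_N \e F_{N,1}^h(m^N,\eps)=\PP_\mu^\ef$ directly. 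Your second suggested fix, ``monotonicity of $\eps\mapsto \e F_{N,1}^h$ plus a diagonal argument'', does \emph{not} by itself upgrade a single sequence statement to the double-limit upper bound (monotonicity goes the wrong way: for fixed $\eps$ one eventually has $\eps_N<\eps$, which only yields a lower bound).

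Second, your treatment of $q=0$ differs from the paper and is in fact simpler: dropping all constraints gives $\e\nTAP_{N,n}\le \e F_N$, and $\nTAP(\delta_0)=\inf_\zeta\PP(\zeta)=\lim_N\e F_N$ because $\Lambda_\zeta(0,0)=\Phi_\zeta(0,0)$. The paper instead removes the external field and invokes Theorem~\ref{thm1ag2}. Both are valid; yours is shorter. (The variance computation you include is unnecessary for the expectation bound.)
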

 
 \begin{lem}
 	\label{lem:TAP-} Take any $\eta\in (0,1).$ For any sequence $m=m^N\in[0,1- \eta]^N$ such that $\mu_{m}\to \mu\in \MM_{0,1}$ and $\supp(\mu)\subseteq [0,1- \eta]$, 
 	\begin{equation}
 	\inf_{\eps,\delta,n}\liminf_{N\to\infty}\e \nTAP_{N,n}(m,\eps,\delta)
 	\geq \nTAP(\mu).
 	\label{lem:TAP-:eq2}
 	\end{equation}
 \end{lem}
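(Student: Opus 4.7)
The plan is to choose an external field that forces the single-replica band free energy to equal $\nTAP(\mu)$, and then to argue that the additional pairwise-orthogonality constraints in $B_n(m,\eps,\delta)$ carry no asymptotic free-energy cost, because $0$ lies in the support of the corresponding Parisi measure.

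Let $q=\int a^2\,d\mu(a)$, and let $\zeta_0\in\MM_{0,1-q}$ be the unique minimizer from Theorem \ref{lem:TAPnew} $(iii)$, so that $\nTAP(\mu)=\oP_\mu^{\ef_{\zeta_0}}(0,\zeta_0)$. Since $\supp(\mu)\subseteq[0,1-\eta]$, the field $\ef_{\zeta_0}$ is bounded on this interval; modifying it continuously on $(1-\eta,1]$ yields an $\ef\in V$ agreeing with $\ef_{\zeta_0}$ on $[0,1-\eta]$. Since $\mu(\{1\})=0$ we have $\PP_\mu^\ef(\lambda,\zeta)=\oP_\mu^\ef(\lambda,\zeta)$, and Theorem \ref{lem:TAPnew} $(iv)$ gives that the infimum $\PP_\mu^\ef$ from \eqref{TAP} equals $\oP_\mu^\ef(0,\zeta_0)=\nTAP(\mu)$.

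Next I would apply the external-field reduction encapsulated in \eqref{eq:TAPmAg}--\eqref{eqFtoFh}: taking $h_i:=\ef(m_i)$, whose norm $\|h\|/\sqrt N$ is bounded uniformly in $N$ because $m_i\in[0,1-\eta]$ and $\ef$ is bounded there,
$$
\e \nTAP_{N,n}(m,\eps,\delta) \geq \e F_{N,n}^h(m,\eps,\delta) - C\Bigl(\tfrac{\eps}{\sqrt q}+\bigl(\tfrac{1}{n}+\delta\bigr)^{1/2}\Bigr).
$$
Decomposing the replicated partition function via Gibbs marginals on the band,
$$
F_{N,n}^h(m,\eps,\delta) = F_{N,1}^h(m,\eps) + \frac{1}{nN}\log G_N^{\otimes n}\bigl(B_n(m,\eps,\delta)\bigr),
$$
where $G_N$ is the Gibbs measure on $B(m,\eps)$ associated to $H_N^m+h$. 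Theorem \ref{thm1} then gives $\lim_N \e F_{N,1}^h(m^N,\eps_N)=\PP_\mu^\ef=\nTAP(\mu)$ for $\eps_N\downarrow 0$ slowly, so the lemma reduces to showing that $\frac{1}{nN}\e\log G_N^{\otimes n}(B_n(m,\eps,\delta))$ tends to $0$ after $N\to\infty$ and an appropriate choice of $n,\eps,\delta$.

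Controlling this entropic cost is the main obstacle. The key input is Theorem \ref{lem:TAPnew} $(v)$: $0\in\supp(\zeta_0)$. Since $\zeta_0$ is the asymptotic overlap distribution of two Gibbs samples from $G_N$ (modulo the shift $R(\tbs^1,\tbs^2)=R(\bs^1,\bs^2)-q+O(\eps)$ coming from centering at $m$), one obtains a uniform-in-$N$ positive lower bound on $\e G_N^{\otimes 2}\{|R(\tbs^1,\tbs^2)|<\delta\}$ for small $\delta$. To make this rigorous and then bootstrap from two replicas to $n$ pairwise almost-orthogonal replicas, I would add to the band Hamiltonian a vanishing generic mixed $p$-spin perturbation so that the Ghirlanda--Guerra identities hold in the limit without altering the Parisi formula; Panchenko's multi-overlap positivity and synchronization principle then yield a uniform lower bound $G_N^{\otimes n}(B_n(m,\eps,\delta))\geq c(n,\delta)>0$ with high probability. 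Standard Gaussian concentration of the replicated free energy transfers this bound to expectation, $\frac{1}{nN}\log c(n,\delta)\to 0$ as $N\to\infty$, and absorbing the residual error in the outer $\inf_{\eps,\delta,n}$ completes the proof of \eqref{lem:TAP-:eq2}.
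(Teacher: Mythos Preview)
Your approach is essentially the paper's: pick the external field $\ef_{\zeta_0}$ from Theorem~\ref{lem:TAPnew}, reduce $\e\nTAP_{N,n}$ to $\e F_{N,n}^h$ via \eqref{eq:TAPmAg2} and \eqref{eqFtoFh}, identify the single-replica limit via Theorem~\ref{thm1} and Theorem~\ref{lem:TAPnew}$(iv)$, and then argue that the orthogonality constraints carry no free-energy cost because $0\in\supp(\zeta_0)$ by Theorem~\ref{lem:TAPnew}$(v)$. Two points deserve tightening.

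First, your reduction carries the error $\eps/\sqrt q$, which is vacuous when $q=\int a^2\,d\mu(a)=0$ (i.e.\ $\mu=\delta_0$, which is allowed under the hypotheses). The paper handles this case separately, working with the original Hamiltonian $H_N^0$ with external field removed (so no recentering at $m$ is needed) and invoking Theorem~\ref{thm1ag2} in place of Theorem~\ref{thm1}; you should do the same.

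Second, the justification of ``no entropic cost'' is slightly off. Synchronization is a tool for multi-species models and is not what is used here. The paper's argument is: approximate $\xi_q$ by generic mixtures; for generic models the overlap distribution under $G_N$ converges to the Parisi minimizer, which (by strict convexity of $\oP_\mu^{\ef_{\zeta_0}}$ and compactness of the set of admissible $\lambda$) depends continuously on the mixture, hence stays close to $\zeta_0$; since $0\in\supp(\zeta_0)$, the approximating generic models have points in the support arbitrarily close to $0$, and therefore constraining $n$ replicas to be nearly orthogonal has vanishing free-energy cost (cf.\ \cite[Lemma~4.8]{SKbonus}). Your Ghirlanda--Guerra perturbation idea is the right starting point, but replace the synchronization reference with this continuity-of-minimizer argument.
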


 To move from the statements about expectations above to the actual random TAP free energies, we will need the following basic, but crucial, concentration result. This is another key way in which  we utilize many orthogonal directions.
 \begin{lem}
 	\label{lem:concentration}
 	For some constant $c_\xi>0$ depending only on $\xi$, 
 	\begin{equation}
 	\label{eq:LipschitzBound}
 	\P\big\{  
 	|\nTAP_{N,n}(m,\eps,\delta)-\E \nTAP_{N,n}(m,\eps,\delta)| > t
 	\big\}
 	<2\exp\left(  -\frac{Nt^2c_\xi}{1/n + \delta+\eps}     \right).
 	\end{equation}
 \end{lem}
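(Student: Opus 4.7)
}

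The plan is to apply the standard Gaussian concentration inequality (Tsirelson--Ibragimov--Sudakov) to the function $f(\mathbf{g}):=\nTAP_{N,n}(m,\eps,\delta)$, viewed as a function of the Gaussian coefficients $\mathbf{g}=(g_{i_1,\ldots,i_p})_{p\ge1,\,i_1,\ldots,i_p\le N}$ that define $H_N$ in \eqref{hamp}--\eqref{hamx}. Since the definition \eqref{eq:TAPn} of $\nTAP_{N,n}$ is a log-sum-exp, $f$ is smooth in $\mathbf{g}$, so the task reduces to bounding its Euclidean Lipschitz constant $L$ and then invoking $\p(|f-\e f|>t)\le 2e^{-t^2/(2L^2)}$.

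First I would compute the gradient. Writing $\la\cdot\ra$ for the Gibbs expectation on $B_n(m,\eps,\delta)$ associated with the Hamiltonian $\sum_\ell[H_N(\bs^\ell)-H_N(m)]$, and letting $A^\ell_{i_1,\ldots,i_p}:=\frac{\beta_p}{N^{(p-1)/2}}(\sigma^\ell_{i_1}\cdots\sigma^\ell_{i_p}-m_{i_1}\cdots m_{i_p})$, one has
\begin{equation*}
\frac{\partial f}{\partial g_{i_1,\ldots,i_p}}=\frac{1}{nN}\Bigl\la\sum_{\ell=1}^{n}A^\ell_{i_1,\ldots,i_p}\Bigr\ra.
\end{equation*}
Squaring, summing over $(p,i_1,\ldots,i_p)$ and using Jensen's inequality $\la X\ra^2\le\la X^2\ra$ coordinatewise gives
\begin{equation*}
|\nabla f|^2\le \frac{1}{(nN)^2}\Bigl\la\sum_{\ell,\ell'=1}^{n}\sum_{p,i_1,\ldots,i_p}A^\ell_{i_1,\ldots,i_p}A^{\ell'}_{i_1,\ldots,i_p}\Bigr\ra.
\end{equation*}

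The inner sum is exactly $N$ times the covariance computed in \eqref{eqConBm}:
\begin{equation*}
\sum_{p,i_1,\ldots,i_p}A^\ell A^{\ell'}=N\bigl[\xi(R(\bs^\ell,\bs^{\ell'}))-\xi(R(\bs^\ell,m))-\xi(R(\bs^{\ell'},m))+\xi(q)\bigr].
\end{equation*}
Now I would exploit the band constraints. For $\ell\ne\ell'$, all three overlaps $R(\bs^\ell,\bs^{\ell'}), R(\bs^\ell,m), R(\bs^{\ell'},m)$ lie within $\eps\vee\delta$ of $q$, and since $\xi$ is $C^1$ on $[-1,1]$ with some Lipschitz constant $C_\xi$, the bracketed quantity is at most $C_\xi(\delta+2\eps)$. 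For $\ell=\ell'$, the bracket equals $\xi(1)-2\xi(R(\bs^\ell,m))+\xi(q)\le C_\xi$ (a crude $O(1)$ bound suffices here). Summing the $n$ diagonal and $n(n-1)$ off-diagonal contributions, and noting that the Gibbs expectation of the (configuration-independent) bound is the bound itself,
\begin{equation*}
|\nabla f|^2\le \frac{1}{(nN)^2}\bigl[n\cdot NC_\xi+n(n-1)\cdot NC_\xi(\delta+2\eps)\bigr]\le \frac{C_\xi'}{N}\Bigl(\frac{1}{n}+\eps+\delta\Bigr).
\end{equation*}
Feeding this Lipschitz bound $L^2\le C_\xi'(1/n+\eps+\delta)/N$ into the Gaussian concentration inequality yields \eqref{eq:LipschitzBound} with $c_\xi=1/(2C_\xi')$.

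The only real subtlety is the split between diagonal and off-diagonal covariance contributions; the $1/n$ term in the final bound comes precisely from the $n$ diagonal pairs, which are not small, whereas the $\eps+\delta$ term comes from the $n^2$ off-diagonal pairs whose covariances are forced to be small by the definition of $B_n(m,\eps,\delta)$. This is the quantitative manifestation of the ``many nearly orthogonal directions'' heuristic, and is what gives the $1/n$ gain that Theorem \ref{thm:TAPcorrection} exploits to beat a union bound over a discretization of $[-1,1]^N$.
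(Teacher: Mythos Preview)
Your proof is correct and follows essentially the same approach as the paper. The paper invokes the packaged free-energy concentration inequality (e.g.\ \cite[Theorem 1.2]{SKmodel}), bounding the variance of $\sum_\ell[H_N(\bs^\ell)-H_N(m)]$ on $B_n(m,\eps,\delta)$ via the same diagonal/off-diagonal split you carry out; your version simply unpacks that inequality down to the explicit Lipschitz-constant computation, arriving at the identical bound.
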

 \begin{proof}
 	Note that, up to normalization by $n$, $\nTAP_{N,n}(m,\eps,\delta)$ is the free energy of the Gaussian process $\tilde H_N(\bs^1,\ldots,\bs^n):=\sum_{i=1}^{n}(H_{N}(\bs^{i})-H_N(m))$ on $B_n(m,\eps,\delta)$, which, using \eqref{eqConBm} and \eqref{eq:Bn}, has variance bounded by
 	\[
 	\frac{1}{N}\e \left(\tilde H_N(\bs^1,\ldots,\bs^n)\right)^2  \leq 4n\xi(1) +n(n-1)\xi'(1) (\delta +2\eps) \leq c_\xi(n+n^2\delta+n^2\eps).
 	\]
 	The lemma therefore follows from a canonical Gaussian concentration result, see e.g. \cite[Theorem 1.2]{SKmodel}. 
 \end{proof}
 
 We will also need the following consequence of the above concentration.
 \begin{lem}\label{lem:net2}
 	For any $k\in \mathbb{N}$ there exists $A_N\subseteq (0,1)^N$ with at most $e^{N\log k}$ elements such that letting $\eta=1/k,$ for every $m\in A_N,$
 	\begin{align}\label{lem:net2:eq1}
 		\max_{1\leq i\leq N}m_i\leq 1- \frac{\eta}{2},
 	\end{align}
 	and, for every $m\in [0,1]^N$, there exists $m'\in A_N$ such that 
 	\begin{align}\label{lem:net2:eq3}
 		\max_{i\leq N}|m_i-m_i'| \leq \frac{\eta}{2}. 
 	\end{align}
 	Furthermore, there exists a constant $c_\xi>0$ such that for any $\eps,\delta>0$ and $t,c>0$ satisfying  
 	\begin{align}\label{lem:net2:eq4}
 		\frac{-t^2c_\xi}{1/n+\delta+\eps}+\log\frac{2}{\eta} <-c\,,
 	\end{align}
 	we have that
 	\begin{equation}
 	\mathbb{P}\Big(\max_{m\in A_{N}}\big|\nTAP_{N,n}(m,\eps,\delta)-\E\nTAP_{N,n}(m,\eps,\delta)\big|>t\Big)<2e^{-Nc}.\label{eq:net2:concAN}
 	\end{equation}
 \end{lem}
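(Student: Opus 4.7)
\smallskip

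\textbf{Plan.} The lemma has two logically independent parts: the construction of a discrete net $A_N$ with the stated combinatorial properties, and a uniform concentration bound obtained via a union bound applied to the single-point concentration estimate in Lemma \ref{lem:concentration}. Both parts are essentially routine; the content of the lemma is really that the $1/n$ factor in (\ref{eq:LipschitzBound}) is small enough to beat an exponential-in-$N$ cardinality of the net.

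\textbf{Step 1: the net.} Let $\eta = 1/k$ and take the one-dimensional grid
\begin{equation*}
\Xi \;:=\; \Bigl\{ \tfrac{\eta}{2},\, \tfrac{3\eta}{2},\, \tfrac{5\eta}{2},\, \ldots,\, \tfrac{(2k-1)\eta}{2} \Bigr\} \;\subseteq\; \bigl(0,\,1-\tfrac{\eta}{2}\bigr].
\end{equation*}
This set has $k$ elements, its minimum is $\eta/2>0$, and its maximum is $1-\eta/2$, so $\Xi\subset(0,1)$ with the required upper bound. Moreover every $x\in[0,1]$ lies within $\eta/2$ of some element of $\Xi$. I will then set
\begin{equation*}
A_N \;:=\; \Xi^{\,N} \;\subseteq\; (0,1)^N,
\end{equation*}
so $|A_N|=k^N=e^{N\log k}$. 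Property (\ref{lem:net2:eq1}) holds coordinatewise by construction, and (\ref{lem:net2:eq3}) follows by choosing, coordinate by coordinate, the closest point of $\Xi$ to $m_i$.

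\textbf{Step 2: uniform concentration.} I will apply Lemma \ref{lem:concentration} individually to each $m\in A_N$ and take a union bound. With $c_\xi$ the constant from (\ref{eq:LipschitzBound}), this yields
\begin{align*}
\mathbb{P}\Big(\max_{m\in A_N}\bigl|\nTAP_{N,n}(m,\eps,\delta)-\E\nTAP_{N,n}(m,\eps,\delta)\bigr|>t\Big)
&\leq 2\,|A_N|\,\exp\!\Bigl(-\tfrac{Nt^{2}c_\xi}{1/n+\delta+\eps}\Bigr) \\
&\leq 2\exp\!\Bigl(N\bigl(\log(1/\eta)-\tfrac{t^{2}c_\xi}{1/n+\delta+\eps}\bigr)\Bigr).
\end{align*}
Since $\log(1/\eta)<\log(2/\eta)$, the hypothesis (\ref{lem:net2:eq4}) forces the exponent to be strictly below $-Nc$, giving (\ref{eq:net2:concAN}).

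\textbf{Expected obstacle.} There is no real analytic difficulty here; the only thing to watch is consistency of constants. The proof of Lemma \ref{lem:concentration} supplies a factor $2$ in front of the exponential and the net has cardinality $(1/\eta)^N$ rather than $(2/\eta)^N$, so the slack between $\log(1/\eta)$ and $\log(2/\eta)$ in (\ref{lem:net2:eq4}) comfortably absorbs both the prefactor $2$ and any minor adjustments in $c_\xi$. The conceptual point worth flagging in the writeup is that one can only afford an $e^{N\log k}$-sized net because the variance in (\ref{eq:LipschitzBound}) scales like $1/n+\delta+\eps$ rather than a constant, i.e. the near-orthogonality of the $n$ replicas is what makes the union bound feasible.
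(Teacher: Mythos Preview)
Your proof is correct and matches the paper's approach exactly: the paper also takes $A_N$ to be the centers of the cubes in a regular partition of $[0,1]^N$ with edge length $\eta$, and then derives (\ref{eq:net2:concAN}) from Lemma \ref{lem:concentration} by a union bound. Your additional commentary on why the $1/n$ scaling is what makes the union bound work is a nice touch, though the paper's proof omits this remark.
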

 \begin{proof}
 	Let $P$ be a regular partition of $[0,1]^N$ using cubes of edge length $\eta$, and define $A_N$ as the collection of all center points of the cubes in $P$.  Of course, \eqref{lem:net2:eq1} and \eqref{lem:net2:eq3} hold, and $A_N$ has $(1/\eta)^N=e^{N\log k}$ elements. Finally, \eqref{eq:net2:concAN} follows from  Lemma \ref{lem:concentration} by a union bound.
 \end{proof}

 \subsection{Proof of the upper bound of \eqref{eq:TAPunifconv}}
 Let $t,\,c>0$ and assume towards the contradiction that, for some $\eps_{N},\,\delta_{N}\to0$ and $n_{N}\to\infty$, on some subsequence in $N$,
 \begin{equation}
 \mathbb{P}\left( \exists m \in [-1,1]^N :\, \nTAP_{N,n_{N}}(m,\eps_{N},\delta_{N})\geq\nTAP(\mu_{m})+t\right) \geq e^{-Nc}.\label{eq:TAPub1}
 \end{equation}
 
 Let us choose $\eta_N\downarrow 0$ in such a way that	
 \begin{align}\label{lem:net2:eq4ag}
 	\frac{-(t/4)^2c_\xi}{1/n_N+\delta_N+\eps_N+3\eta_N}+\log\frac{2}{\eta_N} <-2c\,,
 \end{align}
 Let us take the set $A_N$ in Lemma \ref{lem:net2}, so that
 \begin{equation}
 \mathbb{P}\Big(\max_{m\in A_{N}}\big|\nTAP_{N,n_{N}}(m,\eps_{N}+3\eta_{N},\delta_{N})-\mathbb{E}\nTAP_{N,n_{N}}(m,\eps_{N}+3\eta_{N},\delta_{N})\big|>t/4\Big)<2e^{-2Nc}.
 \label{eq:TAPub2}
 \end{equation}
 Suppose that $m'$ satisfies the inequality of (\ref{eq:TAPub1}), and let $m\in A_{N}$ be such that \eqref{lem:net2:eq3} holds.	 Since $\|m-m'\|_{2}<\sqrt{N}\eta_N$, we have 
 $$
 B(m,\eps_N+3\eta_N)\supseteq B(m',\eps_N)
 $$ 
 and, moreover, $d_{1}(\mu_{m},\mu_{m'})<\eta_N$. From the (uniform) continuity of $\mu\to\nTAP(\mu)$ in Theorem \ref{lem:TAPnew} $(ii)$, 
 \[
 \nTAP_{N,n_{N}}(m,\eps_{N}+3\eta_{N},\delta_{N})\geq\nTAP(\mu_{m'})+t\geq\nTAP(\mu_{m})+t/2
 \]
 for large $N$ and, on the subsequence as above,
 \[
 \mathbb{P}\Bigl( \exists m\in A_{N}:\,\nTAP_{N,n_{N}}(m,\eps_{N}+3\eta_{N},\delta_{N})\geq\nTAP(\mu_{m})+t/2\Bigr) \geq e^{-Nc}.
 \]
 From (\ref{eq:TAPub2}), (deterministically)
 \begin{equation}
 \exists m\in A_{N}:\,\,\mathbb{E}\nTAP_{N,n_{N}}(m,\eps_{N}+3\eta_{N},\delta_{N})\geq\nTAP(\mu_{m})+t/4.\label{eq:TAPub3}
 \end{equation}
 Since $(\MM_{-1,1},d_1)$ is a compact space, there exists a subsequence $m=m^{N}\in A_{N}$ such that $\mu_{m^{N}}\to\mu$ for some $\mu\in \MM_{-1,1}$ and (along this subsequence)
 \begin{equation}
 \limsup_{N\to\infty}\mathbb{E}\nTAP_{N,n_{N}}(m,\eps_{N}+3\eta_{N},\delta_{N})\geq\nTAP(\mu)+t/4.\label{eq:TAPub4}
 \end{equation}
 This contradicts Lemma  \ref{lem:TAP+}, which finishes the proof of the upper bound.
 \qed

 \subsection{Proof of the lower bound of \eqref{eq:TAPunifconv}}
 The proof is a variation of that of the upper bound.  Let $t,c>0$ and assume towards contradiction that for some $\eps,\,\delta$ as small as we wish and $n$ as large as we wish, there exists a subsequence in $N$ such that
 \begin{equation}
 \mathbb{P}\left( \exists m\in [-1,1]^N:\,\nTAP_{N,n}(m,\eps,\delta)\leq\nTAP(\mu_{m})-t\right) \geq e^{-Nc}.\label{eq:TAPlb1}
 \end{equation}
 Let us make sure that $\eps,\delta,n$ are such that we can choose $\eta<\eps/6$ satisfying 
 \[
 \frac{-(t/4)^2c_\xi}{1/n+\delta+\eps/2}+\log\frac{2}{\eta} <-2c,
 \]
 and such that $d_1(\mu,\mu')\leq \eta$ implies that $|\nTAP(\mu)-\nTAP(\mu')|\leq t/2.$ By Lemma \ref{lem:net2}, there exists $A_{N}$ such that \eqref{lem:net2:eq1} and \eqref{lem:net2:eq3} hold and 
 \begin{equation}
 \mathbb{P}\Big(\max_{m\in A_{N}}\big|\nTAP_{N,n}(m,\eps/2,\delta)-\mathbb{E}\nTAP_{N,n}(m,\eps/2,\delta)\big|>t/4\Big)<2e^{-2Nc}.\label{eq:TAPlb2}
 \end{equation}
 Suppose that $m'$ satisfies the inequality of (\ref{eq:TAPlb1}), and let $m\in A_{N}$ be such that \eqref{lem:net2:eq3} holds. Since $\|m-m'\|_{2}<\sqrt{N}\eta$ and $\eta<\eps/6$, we have 
 $$
 B(m',\eps)\supseteq B(m',\eps/2+3\eta)\supseteq B(m,\eps/2)
 $$ 
 and, moreover, $d_{1}(\mu_{m},\mu_{m'})<\eta$. Therefore,						\[
 \nTAP_{N,n}(m,\eps/2,\delta)\leq\nTAP_{N,n}(m',\eps,\delta)\leq \nTAP(\mu_{m'})-t\leq \nTAP(\mu_{m})-t/2.
 \]
 This implies that, on the subsequence as in (\ref{eq:TAPlb1}),
 \[
 \mathbb{P}\Bigl( \exists m\in A_{N}:\,\nTAP_{N,n}(m,\eps/2,\delta)\leq\nTAP(\mu_{m})-t/2\Bigr) \geq e^{-Nc},
 \]
 and, therefore, from (\ref{eq:TAPlb2}), (deterministically)
 \begin{equation}
 \exists m\in A_{N}:\,\,\mathbb{E}\nTAP_{N,n}(m,\eps/2,\delta)\leq\nTAP(\mu_{m})-t/4.\label{eq:ETAPub3}
 \end{equation}
 Since $(\MM_{-1,1},d_1)$ is a compact space, there exists a subsequence $m=m^{N}\in A_{N}$
 such that $\mu_{m^{N}}\to\mu$ for some $\mu\in \MM_{-1,1}$ and (along this subsequence)			
 \begin{equation}
 \liminf_{N\to\infty}\mathbb{E}\nTAP_{N,n}(m,\eps/2,\delta)\leq\nTAP(\mu)-t/4.\label{eq:TAPlb4}
 \end{equation}
 However, the condition \eqref{lem:net2:eq1} implies that $\supp(\mu)\subseteq [0,1-\eta/2]$, and the above inequality contradicts Lemma \ref{lem:TAP-}. This finishes the proof.
 \qed

 \section{Generalized TAP representation}\label{SecGTAPrepr}

 In this section, we prove the generalized TAP representation in Theorem \ref{thm:GenTAP} using the concentration of Theorem \ref{thm:TAPcorrection}. The basic idea is similar to the proof of \cite[Lemma 17]{SubagFEL}. We note that our proof of Theorem \ref{thm:TAPcorrection} in Section \ref{SecOptimizingUnif} is conditional on the results stated in the beginning of that section, and thus so is the current proof. 
 
 Let $\eps,\delta>0$ and $n\geq1$. By definition, for any $m$, 
 \[
 F_N\geq \frac{H_N(m)}{N}+\nTAP_{N,1}(m,\eps,\delta) \geq \frac{H_N(m)}{N}+\nTAP_{N,n}(m,\eps,\delta).
 \]
 In particular, from the uniform convergence of Theorem \ref{thm:TAPcorrection},
 \begin{equation*}
 	\lim_{N\to\infty}\p\Bigl(\,
 	F_N > \max_{\frac{\|m\|^2}{N}= q}\Bigl( \frac{H_N(m)}{N} + \nTAP(\mu_m)\Bigr)
 	- t\,
 	\Bigr) = 1.
 \end{equation*}
 
 Fix some value $q$ that belongs to the support of the Parisi measure of the model (\ref{hamx}).
 It is well-known that the free energy $F_N$ concentrates at exponential rate around its mean for large $N$, see e.g. \cite[Theorem 1.2]{SKmodel}. By the Borell-TIS inequality so does the maximum in \eqref{eq:GenTAP}. 
 Combining the above with Theorem \ref{thm:TAPcorrection}, we conclude that to complete the proof of Theorem \ref{thm:GenTAP} it will be enough to show that for any small $t,\eps,\delta>0$ and large $n\geq 1$, \begin{equation}
 \label{eq:goodpt}
 \begin{aligned}
 \lim_{N\to\infty}\frac1N\log\P\Big(\,\exists m\in&[-1,1]^N,\, \frac1N\|m\|^2=q:\\
 &F_N < \frac{H_N(m)}{N} + \nTAP_{N,n}(m,\eps,\delta)+t\,\Bigr)=0.
 \end{aligned}
 \end{equation}
 
 From \cite[Lemma  4.8]{SKbonus}, for any $\tau,t>0$ and $n\geq1$ we have that
 \begin{equation}
 \lim_{N\to\infty}\frac1N\log\P\bigg(\frac1{N}\log G_{N}^{\otimes 2n}\Big\{ \forall i\neq j,\, \big|R( \bs^i,\bs^j) - q   \big| < \tau  \Big\}>-t\bigg)=0.
 \end{equation}

 By conditioning on  $\bs^{n+1},\ldots,\bs^{2n}$, we conclude that, with probability not exponentially small in $N$, (w.r.t. the disorder only) there exist $\bar \bs^{n+1},\ldots,\bar \bs^{2n}\in \Sigma_N$ such that for any $n+1\leq i\neq j \leq 2n$, $\big|R(\bar  \bs^i, \bar \bs^j) - q   \big| < \tau$, and such that 
 \begin{equation}
 \label{eq:rev2}
 \begin{aligned}
 &\frac1{nN}\log G_{N}^{\otimes 2n}\Big\{ \forall i\neq j\leq 2n,\, \big|R( \bs^i,\bs^j) - q   \big| < \tau  \,\Big|\,  \bs^{n+i} = \bar \bs^{n+i},\,i=1,\ldots,n \Big\}\\
 &= \frac1{nN}\log \sum_{A_n(\tau)}
 \exp\Big\{\sum_{i=1}^{n}H_N(\bs^i)\Big\}-F_N
 >-t,
 \end{aligned}
 \end{equation}
 where we define $A_n(\tau)$ as the set of points $(\bs^{1},\ldots, \bs^{n})\in \Sigma_N^{n}$ such that  $|R( \bs^i,\bs^j) - q  |<\tau$ and $|R( \bs^i,\bar \bs^k) - q  |<\tau$, for any $i,j\leq n$ and $n+1\leq k\leq 2n$.
 
 Define $\bar{\bs}:=n^{-1}\sum_{i=n+1}^{2n}\bar{\bs}^i$ and set $m=0$ if $\bar{\bs}=0$, and 
 $m=\sqrt{Nq}\bar{\bs}/\|\bar{\bs}\|$ otherwise. Given $\delta,\eps>0$, it is straightforward to check that $A_n(\tau)\subset B_n(m,\eps,\delta)$, provided that $\tau>0$ is small enough and $n\geq1$ is large enough. 
 Assuming this inclusion and assuming that \eqref{eq:rev2} holds, $m$ is a point as in \eqref{eq:goodpt}, since
 \[
 \frac{H_N(m)}{N} + \nTAP_{N,n}(m,\eps,\delta)=\frac1{nN}\log \sum_{B_n(m,\eps,\delta)}
 \exp\Big\{\sum_{i=1}^{n}H_N(\bs^i)\Big\}.
 \]
 This completes the proof. \qed

\section{\label{sec:TAPtoF}Reduction to a model on the band}

In this section, we will justify approximations of $\nTAP_{N,n}(m,\eps,\delta)$ by $F_{N,n}(m,\eps,\delta)$. We will need the following technical lemma, which follows from a more general result in \cite[Corollary 59]{BSZ}, but which we prove here for convenience.
\begin{lem}
For a mixed $p$-spin model in (\ref{hamx}),
\begin{equation}
\e\max_{\|\bs\|\leq \sqrt{N}}\|\nabla H_N(\bs)\|
\leq c_\xi\sqrt{N},
\end{equation}
where one can take $c_\xi=2(\sum_{p\geq 1} \beta_p^2 p^3)^{1/2}.$
\end{lem}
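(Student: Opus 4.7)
The plan is to view the maximum as the supremum of a centered Gaussian process indexed by a product set, and to apply the Sudakov--Fernique comparison inequality against an explicit linear comparison process whose supremum can be computed exactly. Writing $\|\nabla H_N(\bs)\|=\sup_{\|v\|\leq 1}v\cdot\nabla H_N(\bs)$, we have
\[
\max_{\|\bs\|\leq\sqrt N}\|\nabla H_N(\bs)\| = \sup_{(\bs,v)\in T}X_{\bs,v},\qquad T:=\{(\bs,v):\|\bs\|\leq\sqrt N,\ \|v\|\leq 1\},
\]
where $X_{\bs,v}:=v\cdot\nabla H_N(\bs)$ is centered Gaussian. Differentiating the covariance $\e H_N(\bs)H_N(\bs')=N\xi(\bs\cdot\bs'/N)$ once in each of $\bs$ and $\bs'$ gives
\[
\e X_{\bs,v}X_{\bs',v'} = \xi''(R)(v\cdot\bs')(v'\cdot\bs)/N + \xi'(R)(v\cdot v'),\qquad R:=\bs\cdot\bs'/N,
\]
and in particular the pointwise variance is bounded by $\xi'(1)+\xi''(1)=\sum_p\beta_p^2 p^2$ on $T$.

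Next I would introduce the comparison process
\[
Y_{\bs,v}:=\alpha\,g\cdot v+\beta\,h\cdot\bs/\sqrt N,
\]
where $g,h\in\R^N$ are independent standard Gaussian vectors and $\alpha,\beta>0$ are constants depending only on $\xi$. Its supremum over $T$ factors cleanly, $\e\sup_T Y = \alpha\,\e\|g\|+\beta\,\e\|h\|\leq(\alpha+\beta)\sqrt N$, and its increments are simply $\e(Y_{\bs,v}-Y_{\bs',v'})^2=\alpha^2\|v-v'\|^2+\beta^2\|\bs-\bs'\|^2/N$.

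The main obstacle is verifying the increment inequality $\e(X_{\bs,v}-X_{\bs',v'})^2 \leq \alpha^2\|v-v'\|^2+\beta^2\|\bs-\bs'\|^2/N$ on $T$ with constants satisfying $\alpha+\beta\leq 2(\sum_p\beta_p^2 p^3)^{1/2}$. The $\|v-v'\|^2$ part is immediate from the pointwise variance bound, giving $\alpha^2\leq\xi'(1)+\xi''(1)\leq\sum_p\beta_p^2 p^3$. The $\|\bs-\bs'\|^2/N$ part is more delicate because the cross term $\xi''(R)(v\cdot\bs')(v'\cdot\bs)$ does not decompose obviously into pure $v$- and pure $\bs$-differences. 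Setting $a=v\cdot\bs$, $a'=v\cdot\bs'$, $\Delta_1=\xi''(q)-\xi''(R)$, and $\Delta_2=\xi''(q')-\xi''(R)$, one rewrites
\[
\xi''(q)a^2 + \xi''(q')a'^2 - 2\xi''(R)aa' = \xi''(R)(a-a')^2 + \tfrac{\Delta_1+\Delta_2}{2}(a^2+a'^2)+\tfrac{\Delta_1-\Delta_2}{2}(a^2-a'^2)
\]
and exploits the crucial signed identity $(q-R)+(q'-R)=\|\bs-\bs'\|^2/N$ to deduce $\Delta_1+\Delta_2=O(\|\bs-\bs'\|^2/N)$ via Taylor expansion of $\xi''$ around $R$, while the linear-in-$\|\bs-\bs'\|$ piece $\Delta_1-\Delta_2$ is compensated by $|a^2-a'^2|=|(a-a')(a+a')|\leq 2\sqrt N\,\|\bs-\bs'\|$, producing an overall $\|\bs-\bs'\|^2/N$-bound. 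A parallel argument controls the $\xi'$-contribution via $\xi'(q)+\xi'(q')-2\xi'(R)=O(\|\bs-\bs'\|^2/N)$ by the same Taylor-plus-identity trick. Collecting, $\beta^2$ is bounded by a multiple of $\xi''(1)+\xi'''(1)\leq\sum_p\beta_p^2 p^3$, and the elementary inequality $(\alpha+\beta)^2\leq 2(\alpha^2+\beta^2)$ yields $\alpha+\beta\leq 2(\sum_p\beta_p^2 p^3)^{1/2}$. Sudakov--Fernique then concludes $\e\sup_T X\leq\e\sup_T Y\leq 2(\sum_p\beta_p^2 p^3)^{1/2}\sqrt N$, which is the stated bound with $c_\xi=2(\sum_p\beta_p^2 p^3)^{1/2}$.
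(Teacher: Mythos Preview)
Your overall strategy — rewrite $\|\nabla H_N(\bs)\|=\sup_{\|v\|\leq 1}v\cdot\nabla H_N(\bs)$, bound increments of the centered Gaussian process $X_{\bs,v}$, and apply Sudakov--Fernique against a linear comparison process $Y_{\bs,v}=\alpha\,g\cdot v+\beta\,h\cdot\bs/\sqrt N$ — is exactly the paper's approach.

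The gap is in your increment estimate for the $\xi''$-part. With $a=v\cdot\bs$, $a'=v\cdot\bs'$ the combination $\xi''(q)a^2+\xi''(q')a'^2-2\xi''(R)aa'$ is \emph{not} what appears: by the covariance formula you computed, the diagonal term at $(\bs',v')$ involves $(v'\cdot\bs')^2$, not $(v\cdot\bs')^2$, and the cross term is $(v\cdot\bs')(v'\cdot\bs)$, not $(v\cdot\bs)(v\cdot\bs')$. Your rewriting is only correct when $v=v'$, i.e.\ along the $\bs$-direction at fixed $v$; but Sudakov--Fernique requires the increment bound for all pairs $((\bs,v),(\bs',v'))$ simultaneously, and the one-coordinate bounds do not add (at best you can combine them via the triangle inequality in $L^2$, which costs an extra factor and spoils the exact constant). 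The four scalars $v\cdot\bs$, $v'\cdot\bs'$, $v\cdot\bs'$, $v'\cdot\bs$ all enter, and the cancellations are not those of a two-variable quadratic form. Your ``multiple of $\xi''(1)+\xi'''(1)$'' is also too loose to recover $c_\xi=2(\sum_p\beta_p^2 p^3)^{1/2}$.

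The paper avoids this by not working with the differentiated covariance at all: it writes out
\[
G(u,\bs)=u\cdot\nabla H_N(\bs)=\sum_{p\geq 1}\frac{\beta_p}{N^{(p-1)/2}}\sum_{i_1,\ldots,i_p}g_{i_1,\ldots,i_p}\bigl(u_{i_1}\sigma_{i_2}\cdots\sigma_{i_p}+\cdots+\sigma_{i_1}\cdots\sigma_{i_{p-1}}u_{i_p}\bigr)
\]
and bounds $\e(G(u,\bs)-G(u',\bs'))^2$ directly from this sum using only $(a_1+\cdots+a_p)^2\leq p(a_1^2+\cdots+a_p^2)$ together with $\|u\|\leq 1$, $\|\bs\|^2\leq N$. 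This gives the increment bound $c_\xi^2(\|u-u'\|^2+N^{-1}\|\bs-\bs'\|^2)$ with $c_\xi^2=\sum_p\beta_p^2 p^3$ in one step, and Sudakov--Fernique finishes exactly as you describe.
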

\begin{proof}
Let us represent the maximum above as
\begin{equation}
\max_{\|\bs\|\leq \sqrt{N}}\|\nabla H_N(\bs)\|
=
\max_{\|u\|\leq 1}\max_{\|\bs\|\leq \sqrt{N}}u\cdot \nabla H_N(\bs)
\end{equation}
and denote
$$
G(u,\bs):=u\cdot \nabla H_N(\bs)
=
\sum_{p\geq 1} \frac{\beta_p}{N^{(p-1)/2}}\sum_{i_1,\ldots,i_p}g_{i_1,\ldots,i_p}(u_{i_1}\cdots\sigma_{i_p}+\ldots+\sigma_{i_1}\cdots u_{i_p}).
$$
Using that $(a_1+\ldots+a_p)^2\leq p(a_1^2+\ldots+a_p^2)$ and the fact that $\|u\|\leq 1$ and $\|\bs^j\|^2\leq N$, one can easily check that
$$
\e\bigl(G(u^1,\bs^1)-G(u^2,\bs^2)\bigr)^2
\leq
c_\xi^2 \bigl(\|u^1-u^2\|^2+N^{-1}\|\bs^1-\bs^2\| 
\bigr),
$$
where $c_\xi^2 = \sum_{p\geq 1} \beta_p^2 p^3<\infty.$ If we define, for i.i.d. standard Gaussian $g_i$ and $g_i',$
$$
\tilde G(u,\bs):=c_\xi\Bigl(\sum_{i=1}^{N}g_i u_i+N^{-1/2}\sum_{i=1}^{N}g_i' \sigma_i
\Bigr),
$$
then the right hand side above equals $\e(\tilde G(u^1,\bs^1)-\tilde G(u^2,\bs^2))^2.$ By the Sudakov-Fernique inequality,  
$$
\e \max_{u,\bs} G(u,\bs) \leq \e \max_{u,\bs} \tilde G(u,\bs)
\leq 2c_\xi\sqrt{N},
$$
and this finishes the proof.
\end{proof}

Let us start by proving the approximation in the equation (\ref{eq:ETAPn}).
\begin{lem}\label{LemHtoHm}
If $q=R(m,m)=\|m\|^2/N>0,$ the equation (\ref{eq:ETAPn}) holds, 
\begin{equation}
\label{eq:TAP(m)}
\e \nTAP_{N,n}(m,\eps,\delta) =  \frac{1}{nN} \e \log\sum_{B_{n}(m,\eps,\delta)}e^{\sum_{i=1}^{n}\hat{H}_N^{m}(\tbs^{i})}+O\Bigl(\frac{\eps}{\sqrt{q}}\Bigr),
\end{equation}
with an implicit constant in the error term that depends only on $\xi$.
\end{lem}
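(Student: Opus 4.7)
\emph{Proof plan.} The approach is a geometric projection argument. Given $\bs\in B(m,\eps)$, write
$$\bs_\perp := \bs - \frac{R(\bs,m)}{q}\,m,\qquad \tilde\bs := m + \bs_\perp,$$
so that $\bs_\perp\perp m$ and $R(\tilde\bs,m)=q$ exactly. Since $\bs-\tilde\bs = \frac{R(\bs,m)-q}{q}\,m$, we obtain the uniform displacement bound
$$\frac{\|\bs-\tilde\bs\|}{\sqrt N} = \frac{|R(\bs,m)-q|}{q}\cdot\frac{\|m\|}{\sqrt N}\leq \frac{\eps}{\sqrt q},$$
valid for every $\bs\in B(m,\eps)$. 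This displacement is precisely the source of the stated $O(\eps/\sqrt q)$ error term.

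Applying the preceding gradient estimate to $H_N$ on a ball of radius $(1+O(\eps))\sqrt N$ (which contains all segments from $\bs$ to $\tilde\bs$), the mean-value inequality gives
$$\e \sup_{\bs\in B(m,\eps)} \frac{1}{N}|H_N(\bs)-H_N(\tilde\bs)| \leq \frac{c_\xi\,\eps}{\sqrt q}.$$
Since $|\log Z_1 - \log Z_2| \leq \sup|\Delta H|$ for Hamiltonians differing by $\Delta H$, and since the above estimate is per replica, replacing $\bs^i\to\tilde\bs^i$ at each of the $n$ replicas and dividing by $nN$ yields
$$\e\nTAP_{N,n}(m,\eps,\delta) = \frac{1}{nN}\e\log\sum_{B_n(m,\eps,\delta)}\exp\Bigl(\sum_i[H_N(\tilde\bs^i)-H_N(m)]\Bigr) + O(\eps/\sqrt q),$$
uniformly in $n$. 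An entirely analogous argument applied to $\hat H_N^m$, which is itself a mixed $p$-spin Hamiltonian whose coefficients $\beta_k(q)$ from \eqref{eqBkq} satisfy the hypothesis of the gradient lemma, replaces $\tbs^i$ by $\bs_\perp^i = \tilde\bs^i-m$ inside the right-hand side of the lemma at the same cost $O(\eps/\sqrt q)$, using that $\|\tbs^i-\bs_\perp^i\|=\|\bs^i-\tilde\bs^i\|$.

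It remains to match the two resulting processes in law as Gaussian processes indexed by $(\bs^i)\in B_n$. Using $R(\tilde\bs^i,m)=q$ exactly, a direct covariance calculation gives
$$\e[(H_N(\tilde\bs^i)-H_N(m))(H_N(\tilde\bs^j)-H_N(m))] = N[\xi(R(\tilde\bs^i,\tilde\bs^j))-\xi(q)] = N\hat\xi_q(R(\bs_\perp^i,\bs_\perp^j)),$$
which matches $\e[\hat H_N^m(\bs_\perp^i)\hat H_N^m(\bs_\perp^j)]$. Hence the corresponding $\e\log$-partition functions agree, and chaining the three steps completes the proof. The one delicate point is uniformity in $n$: a direct Gaussian interpolation between $\{H_N(\bs^i)-H_N(m)\}_i$ and $\{\hat H_N^m(\tbs^i)\}_i$ would sum per-pair covariance errors over $n^2$ replica pairs, producing a bound that grows linearly in $n$ after the $nN$ normalization. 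The geometric projection circumvents this precisely because the first two steps are purely deterministic per-replica Lipschitz bounds, and the third is an exact distributional identity.
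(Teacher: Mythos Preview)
Your proof is correct and follows essentially the same route as the paper: the paper's projection $\bb=\bs-\frac{\tbs\cdot m}{m\cdot m}m$ is exactly your $\tilde\bs=m+\bs_\perp$, yielding the identical displacement bound $\|\bs-\tilde\bs\|\leq(\eps/\sqrt q)\sqrt N$, and the paper likewise uses the exact covariance identity $R(\bb^1,\bb^2)=R(\tbb^1,\tbb^2)+q$ to equate the two expected log-partition functions in law before applying the gradient lemma on both the $H_N$ and $\hat H_N^m$ sides. Your closing remark on why a direct Gaussian interpolation would fail to be uniform in $n$ is a useful clarification not spelled out in the paper.
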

\begin{proof}
Given $\bs$ and $\tbs=\bs-m$, define
$$
\tbb=\tbs-\frac{\tbs\cdot m}{m\cdot m}m,\,
\bb=\bs-\frac{\tbs\cdot m}{m\cdot m}m = \tbs-\frac{\tbs\cdot m}{m\cdot m}m +m.
$$
Notice that $\bb$ is the projection of $\bs$ on the hyperplane perpendicular to $m$ passing through $m$, and $\tbb=\bb-m.$ We view $\bb$ as a function of $\bs$ but, to simplify notation, we will keep the function implicit. The projections satisfy $R(\bb^1,\bb^2)=R(\tbb^1,\tbb^2)+q$ and the Hamiltonians $H_{N}(\bb)-H_N(m)$ and $\hat{H}_N^{m}(\tbb)$ are equal in distribution. As a result,
$$
\frac{1}{nN}\e\log\sum_{B_{n}(m,\eps,\delta)}e^{\sum_{\ell=1}^{n}\big[H_{N}(\bb^{\ell})-H_N(m)\big]}
=
 \frac{1}{nN} \e \log\sum_{B_{n}(m,\eps,\delta)}e^{\sum_{\ell=1}^{n}\hat{H}_N^{m}(\tbb^{\ell})}.
$$
Since, for $\bs\in B(m,\eps),$
$$
\|\bs-\bb\|=\|\tbs-\tbb\|=\frac{\|\bs\cdot m-m\cdot m\|}{\|m\|}<\frac{\eps}{\sqrt{q}}\sqrt{N},
$$
in order to prove  (\ref{eq:ETAPn}), it is enough to use that, by the above lemma,
\begin{equation}
\e\max_{\|\bs^1-\bs^2\|\leq \eps\sqrt{N/q}}\bigl|H_N(\bs^1)-H_N(\bs^2)\bigr|
\leq c_\xi(\eps/\sqrt{q}) N,
\end{equation}
and that a similar statement holds for $\hat{H}_N^{m}$, which can be proved in exactly the same way.
\end{proof}

As we explained in the introduction, this implies that
\begin{equation}
\label{eq:TAPmAg2}
\e \nTAP_{N,n}(m,\eps,\delta) =\e F_{N,n}(m,\eps,\delta) 
+O\Bigl(\frac{\eps}{\sqrt{q}}+\Bigl(\frac{1}{n}+\delta\Bigr)^{1/2}\Bigr).\end{equation}
When $q=R(m,m)=\|m\|^2/N$ is small and the above approximation is not good enough, let us say when $q<\eps/2,$ we will use a more straightforward reduction. We will not add and subtract the term $H_N(m)$, which in this case is small (on the scale $1/N$). Instead, the only modification we will make is to remove the external field in the original model. We will consider the Hamiltonian
		\begin{align}
		\label{hamx2}
		H_N^0(\bs)=\sum_{p\geq 2}\beta_p H_{N,p}(\bs),
		\end{align}
which is the original Hamiltonian in (\ref{hamx}) with external field removed, so that	
\begin{equation}
\e H_{N}^{0}(\bs^1)H_{N}^{0}(\bs^2)= N\xi_0(R(\bs^1,\bs^2)),
\end{equation}
where $\xi_0(x)=\xi(x)-\xi'(0)x$ is defined exactly as $\xi_q(x)$ in (\ref{eqXiq11}) for $q=0$. If we define
\begin{equation}
F_{N,n}^*(m,\eps,\delta) := \frac{1}{nN}
 \log\sum_{B_{n}(m,\eps,\delta)}e^{\sum_{\ell=1}^{n}H_{N}^{0}(\bs^{\ell})}
\end{equation}	
then the argument leading to (\ref{eq:TAPmAg}) also gives in this case that	
\begin{equation}
\label{eq:TAPmAg22}
\e \nTAP_{N,n}(m,\eps,\delta) =\e F_{N,n}^*(m,\eps,\delta) 
+O\Bigl(\Bigl(\frac{1}{n}+\delta\Bigr)^{1/2}\Bigr).
\end{equation}	
In this case of small $q$, the analogue of the Parisi formula in Theorem \ref{thm1} is the following.
\begin{thm}\label{thm1ag2}
Let $m^N\in [-1,1]^N$ be any sequence such that $\mu_{m^N}\to\delta_0.$ Then
\begin{equation}
\lim_{N\to\infty}\e F_{N,1}^*(m^N,\eps_N)
= \inf_{\zeta\in \MM_{0,1}}\Bigl(
\Phi_{0,\zeta}(0,0)-\frac{1}{2}\int_0^{1}s\xi_0''(s)\zeta(s)ds
\Bigr).
\label{eqPBminAg2}
\end{equation}
provided that $\eps_N$  goes to zero slowly enough.
\end{thm}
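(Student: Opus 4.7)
The plan is to exploit the assumption $\mu_{m^N} \to \delta_0$ to reduce the problem to the classical Parisi formula applied to the external-field-free Hamiltonian $H_N^0$. The key observation is that, since $a \mapsto a^2$ is bounded and continuous on $[-1,1]$, weak convergence of the empirical measures forces the self-overlap
\begin{equation*}
q_N := \frac{\|m^N\|^2}{N} = \int a^2 \, d\mu_{m^N}(a) \longrightarrow 0.
\end{equation*}

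First I would show that, for $\eps_N \to 0$ slowly enough, the narrow band degenerates to the full hypercube. Indeed, by Cauchy--Schwarz, for every $\bs \in \Sigma_N$,
\begin{equation*}
\frac{1}{N}\bigl|m^N \cdot (\bs - m^N)\bigr| \leq \frac{\|m^N\|}{\sqrt{N}} + \frac{\|m^N\|^2}{N} = \sqrt{q_N} + q_N,
\end{equation*}
so any sequence $\eps_N \to 0$ satisfying $\eps_N > \sqrt{q_N} + q_N$ eventually (this is precisely what ``slowly enough'' means here, and is always achievable, e.g.\ $\eps_N = q_N^{1/4}$) gives $B(m^N,\eps_N) = \Sigma_N$ for all sufficiently large $N$, and hence
\begin{equation*}
F_{N,1}^*(m^N,\eps_N) = \frac{1}{N}\log \sum_{\bs \in \Sigma_N} \exp H_N^0(\bs).
\end{equation*}

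The right-hand side is just the free energy of the mixed $p$-spin Hamiltonian $H_N^0$, whose covariance kernel is $\xi_0(s) = \xi(s) - \xi'(0)s$, i.e.\ the original model with its $p=1$ coefficient $\beta_1$ replaced by zero. Applying the classical Parisi formula \eqref{eqParisiOrig} to this model gives
\begin{equation*}
\lim_{N\to\infty} \e F_{N,1}^*(m^N,\eps_N) = \inf_{\zeta \in \MM_{0,1}} \Bigl( \Phi_{0,\zeta}(0,0) - \frac{1}{2}\int_0^1 s \xi_0''(s) \zeta(s)\, ds \Bigr),
\end{equation*}
where to match the notation I would observe that inspection of \eqref{ParisiPDE} and \eqref{ParisiPDEBoundary} at $a=0$ and $q=0$ shows $\Phi_{0,\zeta}$ to be precisely the Parisi PDE solution for the kernel $\xi_0$ on $[0,1]$ with boundary $\Phi_{0,\zeta}(1,x)=\log 2 + \log\cosh x = \log(2\cosh x)$; uniqueness of the Parisi PDE solution then identifies it with the one appearing in the Parisi formula for $H_N^0$.

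No serious obstacle arises: the statement is essentially a degenerate case of the classical Parisi formula in which the narrow band fills the entire hypercube as $q_N \to 0$. The only mild subtlety is selecting $\eps_N$ so as to dominate $\sqrt{q_N} + q_N$, whose rate of decay depends on the particular sequence $m^N$, which is exactly the latitude provided by the hypothesis ``goes to zero slowly enough.''
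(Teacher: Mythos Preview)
Your argument is correct and in fact more elementary than the route the paper sketches. The paper does not give a full proof, but the discussion surrounding the theorem indicates the intended approach: observe that when $q\leq\eps/2$ the band $B(m,\eps)$ contains a set of the form $\{\bs:|\bs\cdot u|\leq c\}$, a one-dimensional magnetization constraint, and then argue (along the lines of the proof of Theorem~\ref{thm1}) that because $H_N^0$ has no external field this constraint carries no free-energy cost. That route still requires the Guerra upper bound and the Aizenman--Sims--Starr lower bound adapted to the constrained model, together with the fact that zero lies in the support of the Parisi measure of the unconstrained model.

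Your approach bypasses all of this by noting that under the stronger hypothesis $\mu_{m^N}\to\delta_0$ one has $q_N\to 0$, so a choice $\eps_N>\sqrt{q_N}+q_N$ forces $B(m^N,\eps_N)=\Sigma_N$ outright; the problem then collapses to the classical Parisi formula for $H_N^0$. This is cleaner and uses only the already-established Parisi formula. The paper's approach, by contrast, is written to handle the slightly more general regime ``$q$ small relative to $\eps$'' (cf.\ the reduction \eqref{eq:TAPmAg22}), where the band need not swallow the entire cube; your shortcut works precisely because Theorem~\ref{thm1ag2} assumes the limiting measure is $\delta_0$. One cosmetic point: your example $\eps_N=q_N^{1/4}$ should be adjusted (e.g.\ to $\max(q_N^{1/4},1/N)$) to cover the trivial case $q_N=0$.
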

This is, essentially, a classical Parisi formula for the original model without external field, only now we have the constraint $\bs\in B(m,\eps_N).$ If $q=\|m\|^2/{N}\leq \eps/2$ then, denoting $u=m/\|m\|,$
$$
B(m,\eps)\supseteq \bigl\{\bs: |\bs\cdot u|\leq \sqrt{\eps/2}\bigr\},
$$
and the last set is a constraint on the magnetization in some direction $u$.  Since the Hamiltonian $H_N^0$ does not contain an external field, one can show that this constraint has no free energy cost, which explains why the above formula coincides with the Parisi formula for the unconstrained model. We are not going to give a proof of this for two reasons. First reason is that the proof is straightforward. The second reason is that the proof of Theorem \ref{thm1} in the next section does this in a more complicated case, and following the same argument in this case would only significantly simplify the details.

\section{\label{sec:ParisiOnBand}Proof of the Parisi formula on the band}

We establish the proof of Theorem \ref{thm1} in this section. The argument is essentially the same as the treatment for the classical Sherrington-Kirkpatrick model, by utilizing Guerra's replica symmetry breaking scheme and the Aizenman-Sims-Starr scheme as implemented, for example, in \cite{SKmodel}. The added complication here is that the spin configurations are re-centered and the external field varies for each site, depending on the function $\ef\in C([0,1]).$ These will require extra care of the uniform convergence of the free energy in the variable $m$. To this end, in Section \ref{sec3.1}, we first give a version of Theorem \ref{thm1} in Proposition \ref{prop:Flim1} in terms of the Ruelle probability cascades followed by a set of lemmas that are devoted to establishing uniform controls of various functionals. Sections \ref{sec3.2} and \ref{sec3.3} establish the upper and lower bounds in Proposition \ref{prop:Flim1}.

\subsection{Ruelle probability cascades, and continuity results.} \label{sec3.1}
Let $q\in [0,1)$. Denote by $\mathcal{M}_{0,1-q}^d$ the collection of all atomic $\zeta\in \mathcal{M}_{0,1-q}$ satisfying that, for some integer $r\geq 1$ and some sequences
\begin{align}
&0<\zeta_0<\ldots<\zeta_{r-1}<\zeta_r=1, \label{eq:zeta}\\
&0=q_0<q_1<\ldots<q_r= 1-q, \label{eq:q}
\end{align}
we have 
\begin{equation}
\label{eq:zeta_fop}
\zeta(s) = \zeta_p \,\, \mbox{if}\,\, q_{p}\leq s<q_{p+1}\,\,\mbox{for some $0\leq p\leq r-1$.}
\end{equation}
Let $(v_\alpha)_{\alpha\in \mathbb N^r}$ be the weights of the Ruelle probability cascade \cite{RPC} corresponding to the sequence \eqref{eq:zeta} (see e.g. Section 2.3 in \cite{SKmodel} for the definition). For $\alpha^1,\alpha^2 \in \mathbb N^r$, denote 
\[
\alpha^1 \wedge\alpha^2 = \max \big\{ 0\leq p\leq r: \alpha^1_1 = \alpha^2_1,\ldots, \alpha^1_p = \alpha^2_p \big\}.
\]
Let $f(x)$ be a function of the form $\sum_{p\geq 2} c_p^2 x^p$ such that $f(x_0)<\infty$ for some $x_0>1.$ This ensures that all derivatives of $f$ are well defined on $(-x_0,x_0).$ The main choice of $f$ we have in mind is $\xi_q$, but it is convenient to keep it as a parameter in the following definitions.
Let $\theta_f(x) = xf'(x)-f(x)$. Let $g_{f'}(\alpha)$ be a centered Gaussian process on $\mathbb N^r$ with the covariance given by
\begin{equation}
\label{eq:g_f}
\E g_{f'}(\alpha^1)g_{f'}(\alpha^2) = f'(q_{\alpha^1\wedge \alpha^2}),
\end{equation}
and define $g_{\theta_f}(\alpha)$ similarly. For $m\in [0,1]^N$ and $\zeta\in \mathcal{M}_{0,1-q}^d$, we set
\begin{align}
\label{eq:Psif}
\Psi_N(m,\eps,f,\zeta) &= \frac1N \E \log \sum_{\alpha\in \mathbb N^r}v_\alpha 
\sum_{\bs \in B(m,\eps)}\exp \sum_{i\leq N} (g_{f',i}(\alpha)+\ef(m_i))\tilde\sigma_i,
\end{align}
where $\tilde \bs = \bs-m$ and $g_{f',i}$ are i.i.d. copies of $g_{f'}$. For $\zeta\in \mathcal{M}_{0,1-q}^d$, also set
\begin{align}
\label{eq:Upsilonf}
\Upsilon(f,\zeta) 
&=\frac1N \E \log \sum_{\alpha\in \mathbb N^r}v_\alpha \exp
\sqrt Ng_{\theta_f}(\alpha)
\\
&= \E \log \sum_{\alpha\in \mathbb N^r}v_\alpha \exp
g_{\theta_f}(\alpha)
=\frac{1}{2}\int_0^{1-q}\zeta(s)sf''(s)\,ds.
\nonumber
\end{align}
The second and third equalities in this equation are well-known; see e.g. \cite[Eq. (2.60)]{SKmodel}. For $\zeta\in \mathcal{M}_{0,1-q}^d$ and $f=\xi_q$, we will denote
\begin{align}
\label{eq:Psi}
\Psi_N(m,\eps,\zeta) &:= \Psi_N(m,\eps,\xi_q,\zeta)
\end{align}
and
\begin{align}
\label{eq:Upsilon}
\Upsilon(\zeta) &:= \Upsilon(\xi_q,\zeta).
\end{align}

Recall that 
\begin{equation}
F_{N,1}^h(m,\eps) = \frac{1}{N}  \log\sum_{\bs\in B(m,\eps)}e^{H_{N}^{m}(\tbs)+\sum_{i=1}^N h_i \tilde\sigma_i}.
\end{equation}
Throughout the section we will assume that 
\begin{equation}
h_i = \ef(m_i)
\end{equation}
for some $\ef\in C([0,1])$ fixed once and for all. Recall the notation $\mu_m=\frac{1}{N}\sum_{i=1}^{N}\delta_{m_i}.$

\begin{prop}
	\label{prop:Flim1}
	Given $\mu\in \MM_{0,1}$ and arbitrary sequence $m=m^N\in [0,1]^N$ such that  $\mu_{m^N}\to \mu$ weakly, if $q=\int\! x^2d\mu(x)$ then
	\begin{equation}
	\label{prop:Flim1:eq1}
	\lim_{\eps\downarrow 0}\lim_{N\to\infty} \E F_{N,1}^h(m^N,\eps)=\inf_{\zeta\in \mathcal{M}_{0,1-q}^d} \big(\Psi(\mu,\zeta) - \Upsilon(\zeta)  \big),
	\end{equation}
where the limit
	\begin{align}
		\label{prop:Flim1:eq2}
	\Psi(\mu,\zeta):=\lim_{\eps\downarrow 0}\lim_{N\rightarrow\infty}\Psi_N(m^N,\eps,\zeta)
	\end{align}
	exists and does not depend on the choice of the sequence $(m^N)$.
\end{prop}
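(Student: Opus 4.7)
The proof will follow the two-sided scheme standard for Parisi-type formulas: Guerra's replica symmetry breaking interpolation for the upper bound, and the Aizenman-Sims-Starr cavity computation together with the Ghirlanda-Guerra identities and ultrametricity for the lower bound. Three features require adaptation relative to the classical treatment: the spins are recentered to $\tilde\sigma=\sigma-m$, the external field $\ef(m_i)$ varies across sites, and the sum is restricted to the band $B(m,\eps)$.

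My first step is to establish the existence of $\Psi(\mu,\zeta)$ for discrete $\zeta\in\mathcal{M}_{0,1-q}^d$. Given the Ruelle cascade structure, the inner sum in $\Psi_N$ factorizes over sites except for the coupling imposed by the band constraint $|m\cdot\tilde\sigma|<N\eps$. I would remove this coupling by a Laplace/Lagrange multiplier argument, writing the constrained sum, up to an error of order $\eps$, as an infimum over $\lambda\in\R$ of an unconstrained sum with field $\ef(m_i)$ shifted to $\ef(m_i)+\lambda m_i$. The unconstrained version, being a site-factorized cascade free energy, converges as $N\to\infty$ via the recursive Parisi PDE construction to $\int\Phi_{a,\zeta}(0,\lambda a+\ef(a))\,d\mu(a)$, which depends only on the weak limit $\mu$. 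Convexity of $\Phi_{a,\zeta}(0,\cdot)$ in the field variable, together with a coercivity bound that forces the optimal $\lambda$ to remain bounded, lets me exchange the infimum with the limits and conclude
\begin{equation*}
\Psi(\mu,\zeta) = \inf_{\lambda\in\R}\int\Phi_{a,\zeta}(0,\lambda a+\ef(a))\,d\mu(a),
\end{equation*}
so the limit exists and is independent of the choice of $m^N$. Uniform $d_1$-continuity of $\Psi_N(m,\eps,\cdot)$ obtained via the derivative identity in \eqref{eq:Upsilonf} then extends $\Psi(\mu,\cdot)$ from the discrete to all $\zeta\in\mathcal{M}_{0,1-q}$.

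For the upper bound in \eqref{prop:Flim1:eq1} I would implement Guerra's interpolation adapted to the band. Using $R(\tilde\sigma^1,\tilde\sigma^2)=R(\sigma^1,\sigma^2)-q$ up to $O(\eps)$ on the band and the convexity $\xi_q''\geq 0$, the standard Guerra--Toninelli monotonicity computation yields, for every discrete $\zeta$,
\begin{equation*}
\E F_{N,1}^h(m,\eps) \leq \Psi_N(m,\eps,\zeta) - \Upsilon(\zeta) + O(\eps),
\end{equation*}
and passing to the limit gives the $\leq$ direction. For the lower bound I would adapt the Aizenman--Sims--Starr cavity computation: after adding a small generic perturbation enforcing the Ghirlanda--Guerra identities (hence asymptotic ultrametricity of the limiting overlap array), the difference $\E F_{N+1,1}^h-\E F_{N,1}^h$ is rewritten as a cavity computation that, once the asymptotic order parameter is identified with some $\zeta\in\mathcal{M}_{0,1-q}$, matches $\Psi_N(m,\eps,\zeta)-\Upsilon(\zeta)$. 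The band constraint enters as a restriction on the projection of the cavity spins onto $m$, which the Lagrange multiplier inside $\Psi(\mu,\zeta)$ accommodates automatically.

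The principal obstacle will be to make every estimate uniform among sequences $m^N$ with $\mu_{m^N}\to\mu$: the Guerra interpolation error, the cavity identification, and the $\zeta$-continuity of $\Psi_N$ must all be controlled by quantities depending only on $\mu_{m^N}$ (and $\eps$, $N$), rather than on the individual coordinates of $m^N$. This is to be achieved by combining Gaussian concentration (Lemma \ref{lem:concentration}) with Lipschitz continuity of $\Psi_N(m,\eps,\zeta)$ in $m$ through $d_1(\mu_m,\mu_{m'})$, which in turn follows from the uniform Lipschitz property of $\Phi_{a,\zeta}(0,\cdot)$ in its field argument and the boundedness of $\ef$ on the support of $\mu_m$.
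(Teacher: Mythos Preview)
Your overall architecture---Guerra interpolation for the upper bound, Aizenman--Sims--Starr with a Ghirlanda--Guerra perturbation for the lower bound---matches the paper's. The main structural difference is in how you establish the existence of $\Psi(\mu,\zeta)$. The paper first proves existence by a super-additivity argument (using $B(m^{N_1},\eps)\times B(m^{N_2},\eps)\subseteq B(m^{N_1+N_2},\eps)$ for atomic $\mu$ with rational weights, then extending by a $d_1$-Lipschitz estimate in $m$), and only \emph{afterwards}, in a separate proposition, identifies the limit with the Lagrange formula $\inf_\lambda\int\Phi_{a,\zeta}(0,\lambda a+\ef(a))\,d\mu(a)$. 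You collapse these two steps by going straight to the Lagrange multiplier. Your route is conceptually cleaner but forces you to control the multiplier from the outset, which is where the gap appears.

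Specifically, your coercivity claim ``a coercivity bound that forces the optimal $\lambda$ to remain bounded'' is false in the degenerate case $\supp(\mu)\subseteq\{0,1\}$. In that case $\int a(1-a)\,d\mu(a)=0$, the map $\lambda\mapsto\int\Phi_{a,\zeta}(0,\lambda a+\ef(a))\,d\mu(a)$ is monotone (not coercive), and the infimum is attained only as $\lambda\to+\infty$, with limiting value $\Phi_{0,\zeta}(0,\ef(0))\mu(\{0\})$. The paper treats this case separately by exploiting the combinatorial structure of the band when all $m_i\in\{0,1\}$: the constraint $|m\cdot\tilde\sigma|<N\eps$ forces all but $O(N\eps)$ of the coordinates with $m_i=1$ to satisfy $\sigma_i=1$, so the computation reduces to an unconstrained cascade on the $m_i=0$ coordinates. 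Without this case split, your exchange of $\inf_\lambda$ with the $N\to\infty$ limit is not justified.

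A second, smaller point: in the lower bound the paper needs to shrink the band width from $\eps$ to $\eps_N\to 0$ on the bulk coordinates (so that the self-overlap $R(\tilde\sigma,\tilde\sigma)$ concentrates at $1-q$, which is required for the Ghirlanda--Guerra identities), and controls the resulting free-energy discrepancy by a combinatorial argument producing an error $\delta(\eps,\gamma)$ depending on $\gamma=\int a(1-a)\,d\mu(a)$. Your sketch of the cavity step does not address this shrinkage; it is not automatic.
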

More precisely, in (\ref{prop:Flim1:eq1}) we mean that
$$
\lim_{\eps\downarrow 0}\liminf_{N\to\infty} \E F_{N,1}^h(m^N,\eps)
=
\lim_{\eps\downarrow 0}\limsup_{N\to\infty} \E F_{N,1}^h(m^N,\eps),
$$
and both are given by the right hand side of (\ref{prop:Flim1:eq1}). In particular, we can choose $\eps_N\to 0$ slowly enough so that
$ \E F_{N,1}^h(m^N,\eps_N)$ converges to the same limit. 

Also, it is important to note that in $\Psi_N(m,\eps,\zeta)$ the variables $m$ and $q$ are two independent parameters and we will show that the quantity $\Psi(\mu,\zeta)$ in (\ref{prop:Flim1:eq2}) is well-defined for any $\mu\in \MM_{0,1}$, not necessarily satisfying the constraint $q=\int\! x^2\,d\mu(x)$. On the other hand, this constraint is crucial in (\ref{prop:Flim1:eq1}), and that is why we minimize over $\zeta\in \mathcal{M}_{0,1-q}^d$.

For the rest of this subsection, we establish the convergence in \eqref{prop:Flim1:eq2}, while the proof of \eqref{prop:Flim1:eq1} is deferred to the next two subsections. We begin with some basic continuity properties of the functionals defined above.
\begin{lem}\label{zetacont}
	For $\zeta\in \mathcal{M}_{0,1-q}^d$ and $\zeta'\in \mathcal{M}_{0,1-q'}^d$, 
		\begin{equation}
	\label{eq:zetacont}
	|\Upsilon(f,\zeta)  - \Upsilon(f,\zeta')|  \leq   \frac12 f''(1)\Bigl( \int_{0}^{1} |\zeta(s)-\zeta'(s)|ds+2|q-q'|\Bigr).
	\end{equation}
	and
	\begin{equation}
	\label{eq:zetacont2}
	|\Psi_N(m,\eps,f,\zeta) - \Psi_N(m,\eps,f,\zeta')|  \leq   2f''(1)\Bigl( \int_{0}^{1} |\zeta(s)-\zeta'(s)|ds+2|q-q'|\Bigr).
	\end{equation}
For any $f_1$ and $f_2$ as above and $\zeta\in \mathcal{M}_{0,1-q}^d$,
	\begin{equation}
	\label{eq:zetacontF}
	|\Psi_N(m,\eps,f_1,\zeta) - \Psi_N(m,\eps,f_2,\zeta)|  \leq \|f_1'-f_2'\|_\infty.
	\end{equation}
\end{lem}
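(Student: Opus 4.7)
The plan is to prove the three bounds in Lemma \ref{zetacont} separately: (\ref{eq:zetacont}) by direct computation on the closed form of $\Upsilon$, and (\ref{eq:zetacont2}) and (\ref{eq:zetacontF}) by Gaussian interpolation on the Ruelle cascade Hamiltonian. For (\ref{eq:zetacont}) I would start from $\Upsilon(f,\zeta) = \frac{1}{2}\int_0^{1-q}\zeta(s)\,s\,f''(s)\,ds$ and extend every cumulative distribution to $[0,1]$ by setting $\zeta(s) = 1$ on $[1-q,1]$ and $\zeta'(s)=1$ on $[1-q',1]$. This lets me rewrite
$$2\Upsilon(f,\zeta) = \int_0^1 \zeta(s)\,s\,f''(s)\,ds - \int_{1-q}^1 s\,f''(s)\,ds,$$
so that $2[\Upsilon(f,\zeta) - \Upsilon(f,\zeta')]$ decomposes into a main term $\int_0^1 [\zeta(s)-\zeta'(s)]\,s\,f''(s)\,ds$ plus a discrepancy integral on the interval between $1-q$ and $1-q'$; since $f(x) = \sum_{p\geq 2} c_p^2 x^p$ has non-negative coefficients, $s\,f''(s)$ is non-decreasing on $[0,1]$ and bounded by $f''(1)$, which immediately yields (\ref{eq:zetacont}).

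For (\ref{eq:zetacont2}) I would first put $\zeta$ and $\zeta'$ on a common atomic refinement of their partitions (again extending each cumulative distribution by $1$ past its original support), so that both Gaussian processes $g_{f'}^{\zeta}$ and $g_{f'}^{\zeta'}$ can be realized on a single Ruelle cascade tree. Then I would introduce the interpolation $g_t(\alpha) := \sqrt{t}\,g_{f'}^{\zeta}(\alpha) + \sqrt{1-t}\,g_{f'}^{\zeta'}(\alpha)$ and differentiate
$$\Psi_N^{(t)} := \frac{1}{N}\E\log\sum_{\alpha}v_\alpha\sum_{\bs\in B(m,\eps)}\exp\sum_i \bigl(g_{t,i}(\alpha)+\ef(m_i)\bigr)\tilde\sigma_i$$
in $t$. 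Gaussian integration by parts combined with the standard Ruelle cascade identities yields a $t$-derivative controlled by a multiple of $\int_0^1 |\zeta(s)-\zeta'(s)|\,f''(s)\,ds \leq f''(1)\int_0^1|\zeta(s)-\zeta'(s)|\,ds$, while the discrepancy between the cascade depths $1-q$ and $1-q'$ contributes an additional $O(f''(1)|q-q'|)$ term; integrating in $t \in [0,1]$ gives (\ref{eq:zetacont2}).

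For (\ref{eq:zetacontF}) I fix $\zeta$ and interpolate linearly between $f_1$ and $f_2$ via $f_t := t f_1 + (1-t) f_2$, coupling the cascades so that $g_{f_t'}(\alpha)$ has covariance $f_t'(q_{\alpha^1\wedge\alpha^2})$. Gaussian integration by parts in the $t$-derivative of $\Psi_N(m,\eps,f_t,\zeta)$ yields, after the standard cascade replica computation, an expression of the form
$$\frac{1}{2}\,\E\bigl\la(f_1'-f_2')(q_{\alpha^1\wedge\alpha^2})\,R(\tbs^1,\tbs^2)\bigr\ra,$$
where $\la\cdot\ra$ denotes the annealed Gibbs--cascade average. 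Since $\bs \in B(m,\eps)$ forces $\frac{1}{N}\|\tbs\|^2 \leq 1-q+O(\eps)$, so that $|R(\tbs^1,\tbs^2)| \leq 1$ up to negligible $\eps$, the $t$-derivative is bounded by $\|f_1'-f_2'\|_\infty$, and integrating in $t$ gives (\ref{eq:zetacontF}).

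The main obstacle I expect lies in (\ref{eq:zetacont2}): merging two Ruelle cascades whose top overlap levels are $1-q$ and $1-q'$ onto a common tree requires careful bookkeeping of the trivial cascade levels that one side must insert near the top to match the other's depth, and these contribute precisely to the $|q-q'|$ error term. Once this reduction is in place, the remaining Gaussian interpolation is routine and no further conceptual difficulty should arise.
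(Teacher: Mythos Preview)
Your proposal is correct and follows essentially the same approach as the paper. The paper's own proof is extremely terse: it calls (\ref{eq:zetacont}) ``clear,'' says (\ref{eq:zetacont2}) is the standard Guerra interpolation (noting only that the factor $2$ in place of $1/2$ comes from $|\tilde\sigma_i|\leq 2$, and that the $|q-q'|$ term arises by first interpolating the top level $q_r'=1-q'$ down to $1-q$ before running the usual argument), and for (\ref{eq:zetacontF}) does exactly your $\sqrt{t}\,g_{f_1'}+\sqrt{1-t}\,g_{f_2'}$ interpolation with the derivative bounded by $\max|f_1'(q_{\alpha^1\wedge\alpha^2})-f_2'(q_{\alpha^1\wedge\alpha^2})|$. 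Your anticipated obstacle---aligning cascades with different top levels---is precisely what the paper handles by the preliminary ``interpolate $q_p'$ down to $1-q$'' step, and your extension-by-$1$ of the cumulative distribution functions is the equivalent bookkeeping device. One minor remark: your parenthetical ``$|R(\tbs^1,\tbs^2)|\leq 1$ up to negligible $\eps$'' is honest---the band constraint gives $R(\tbs,\tbs)\leq 1-\|m\|^2/N+2\eps\leq 1+2\eps$, so strictly the bound in (\ref{eq:zetacontF}) carries a harmless $(1+O(\eps))$ factor that the paper suppresses; this does not affect any application.
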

The last sup-norm is defined on $[0,1]$.
\begin{proof}
The first inequality is clear. The proof of the second inequality is almost identical to the one in the classical SK model (due to Guerra \cite{Guerra}) and will be omitted. We only mention that the factor $2$ instead of the usual $1/2$ is due to the fact that $|\tilde \bs|\leq 2$ instead of $1$ and we have the additional term $2|q-q'|$,  because $\zeta$ and $\zeta'$ are defined on different intervals. If $q'<q,$ we first need to interpolate all the parameters $q_p'>1-q$ down to $1-q$ before we start the usual argument and one can check that all error along this interpolation is controlled by $4f''(1)|q-q'|.$

To prove (\ref{eq:zetacontF}), we replace the terms $g_{f_1',i}(\alpha)$ and $g_{f_2',i}(\alpha)$ by the usual Gaussian interpolation between the two, $\sqrt{t}g_{f_1',i}(\alpha)+\sqrt{1-t}g_{f_1',i}(\alpha)$. The derivative along the interpolation will be controlled by  the maximum of $|f_1'(q_{\alpha^1\wedge \alpha^2})-f_2'(q_{\alpha^1\wedge \alpha^2})|$, and this finishes the proof.
\end{proof}

One immediate consequence of the above continuity properties that will be useful is the following.
\begin{lem}\label{zetacont2}
	If $\zeta\in \mathcal{M}_{0,1-q}^d$ and $\zeta'\in \mathcal{M}_{0,1-q'}^d$ are such that $\int_{0}^{1} |\zeta(s)-\zeta'(s)|ds\leq |q-q'|$ then, for any $f_1,f_2$, 
	\begin{equation}
	\label{eq:zetacontUs}
|\Psi_N(m,\eps,f_1,\zeta) - \Psi_N(m,\eps,f_2,\zeta')|  \leq   
6f_1''(1)|q-q'|+\|f_1'-f_2'\|_\infty.
	\end{equation}
\end{lem}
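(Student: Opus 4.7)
My plan is to derive Lemma \ref{zetacont2} as an immediate corollary of Lemma \ref{zetacont} via the triangle inequality, inserting the intermediate quantity $\Psi_N(m,\eps,f_1,\zeta')$. Concretely, I would write
\[
\Psi_N(m,\eps,f_1,\zeta) - \Psi_N(m,\eps,f_2,\zeta')
= \bigl(\Psi_N(m,\eps,f_1,\zeta) - \Psi_N(m,\eps,f_1,\zeta')\bigr) + \bigl(\Psi_N(m,\eps,f_1,\zeta') - \Psi_N(m,\eps,f_2,\zeta')\bigr),
\]
and bound each difference separately by invoking the already-established estimates.

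For the first difference, both $\zeta$ and $\zeta'$ appear with the same covariance function $f_1$, so \eqref{eq:zetacont2} applies and gives
\[
|\Psi_N(m,\eps,f_1,\zeta) - \Psi_N(m,\eps,f_1,\zeta')| \leq 2f_1''(1)\Bigl(\int_0^1 |\zeta(s)-\zeta'(s)|\,ds + 2|q-q'|\Bigr).
\]
Using the hypothesis $\int_0^1 |\zeta(s)-\zeta'(s)|\,ds \leq |q-q'|$, the right-hand side is at most $2f_1''(1)(|q-q'|+2|q-q'|) = 6f_1''(1)|q-q'|$.

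For the second difference, the measure $\zeta'$ is fixed and only the covariance function changes, so \eqref{eq:zetacontF} directly yields
\[
|\Psi_N(m,\eps,f_1,\zeta') - \Psi_N(m,\eps,f_2,\zeta')| \leq \|f_1'-f_2'\|_\infty.
\]
Summing the two bounds gives the claimed estimate $6f_1''(1)|q-q'| + \|f_1'-f_2'\|_\infty$. There is no real obstacle here: the content of the lemma lies entirely in the two inequalities of Lemma \ref{zetacont}, and the present statement is just a convenient packaging of them under the hypothesis that the $L^1$-distance between the order parameters is controlled by the difference of their supports.
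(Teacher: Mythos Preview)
Your proof is correct and is exactly the approach the paper takes: the paper's entire proof reads ``This follows from Lemma \ref{zetacont},'' and your triangle-inequality decomposition via the intermediate term $\Psi_N(m,\eps,f_1,\zeta')$, combining \eqref{eq:zetacont2} with the hypothesis and then \eqref{eq:zetacontF}, is precisely what is intended.
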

\begin{proof}
This follows from Lemma \ref{zetacont}.
\end{proof}

Next, we will study continuity properties with respect to $m$. Recall the metric $d_1$ defined in (\ref{eqMd1}) and note that if the coordinates of $m,m'\in[0,1]^N$ are arranged in the non-decreasing order then
\begin{equation}
d_1(\mu_m,\mu_{m'})=\frac{1}{N}\sum_{i=1}^{N}|m_i-m_i'|=\frac{1}{N}\|m-m'\|_1.
\label{eqD1muInc}
\end{equation}
The following observation will be convenient.
\begin{lem}\label{lemObsConv}
If $\mu,\mu'\in \MM_{0,1}$ then, for $k\geq 1$,
\begin{equation}
\bigl|\int\! x^kd\mu(x)-\int\! x^kd\mu'(x)\bigr| 
\leq kd_1(\mu,\mu').
\end{equation}
\end{lem}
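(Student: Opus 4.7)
The plan is to exploit the standard identity relating moments of a probability measure on $[0,1]$ to the integral of its cumulative distribution function, and then apply the definition of $d_1$ directly. Recall the convention from the paper that we identify $\mu\in\MM_{0,1}$ with its c.d.f., writing $\mu(x)=\mu([0,x])$. In particular $\mu(1)=1$ and $\mu(x^-)=0$ for $x\leq 0$.

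First I would carry out an integration by parts on $[0,1]$. For $k\geq 1$, using the Stieltjes integral,
\begin{equation*}
\int_{[0,1]} x^k\, d\mu(x) = \bigl[x^k \mu(x)\bigr]_{0}^{1} - \int_0^1 k x^{k-1} \mu(x)\, dx = 1 - k\int_0^1 x^{k-1} \mu(x)\, dx,
\end{equation*}
and an identical identity holds for $\mu'$. Subtracting the two,
\begin{equation*}
\int x^k\, d\mu(x) - \int x^k\, d\mu'(x) = -k\int_0^1 x^{k-1}\bigl(\mu(x) - \mu'(x)\bigr)\, dx.
\end{equation*}

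Next I would take absolute values and bound $x^{k-1}\leq 1$ for $x\in[0,1]$, obtaining
\begin{equation*}
\bigl|\int x^k\, d\mu - \int x^k\, d\mu'\bigr| \leq k\int_0^1 x^{k-1}\, |\mu(x)-\mu'(x)|\, dx \leq k\int_0^1 |\mu(x)-\mu'(x)|\, dx = k\, d_1(\mu,\mu'),
\end{equation*}
which is the desired inequality. There is no real obstacle here; the only care needed is to make sure the boundary terms in the integration by parts are handled correctly (both measures are probability measures on $[0,1]$, so the boundary evaluations cancel cleanly and give the constant $1$ that disappears upon subtraction). The same argument works verbatim for measures on $[-1,1]$ by splitting at $0$ or by using $|x|^{k-1}\leq 1$, though for the stated version on $\MM_{0,1}$ the above suffices.
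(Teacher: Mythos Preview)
Your proof is correct. The paper takes a slightly different route: it uses the quantile transform to write $\int x^k\,d\mu=\int_0^1 \mu^{-1}(u)^k\,du$, then applies the Lipschitz bound $|a^k-b^k|\le k|a-b|$ for $a,b\in[0,1]$, and finally invokes the identity $\int_0^1|\mu^{-1}-\mu'^{-1}|\,du=\int_0^1|\mu-\mu'|\,dx$. Your integration-by-parts argument is more self-contained, since it works directly with the c.d.f.\ and avoids appealing to the quantile/Wasserstein identity; the paper's approach, on the other hand, generalizes immediately to any $k$-Lipschitz function of $x$ in place of $x^k$. Both are short and equally valid here.
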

\begin{proof}
If $\mu^{-1}$ denotes the quantile transform of $\mu$ then
\begin{align*}
\bigl| \int\! x^kd\mu(x)-\int\! x^kd\mu'(x) \bigr|
&=
\bigl| \int\! \mu^{-1}(x)^kdx -\int{\mu'}^{-1}(x)^k dx\bigr|
\\
&\leq
k\int\! \bigl|\mu^{-1}(x)-{\mu'}^{-1}(x)\bigr| dx
=
k\int\! \bigl|\mu(x)-{\mu'}(x)\bigr| dx,
\end{align*}
which finishes the proof.
\end{proof}
For example, this implies that, for $m\in [0,1]^N$ and $\mu\in \MM_{0,1},$
$$
\Bigl|\frac{1}{N}\|m\|_2^2 - \int\! x^2d\mu(x)\Bigr|
=
\Bigl|\int\! x^2d\mu_m(x) - \int\! x^2d\mu(x)\Bigr|\leq 2d_1(\mu_m,\mu).
$$
If $\bs\in B(m,\eps)$, then the self-overlap of the recentered configuration $\tilde \bs = \bs-m$ can be rewritten as
$$
\frac{1}{N}\tilde \bs\cdot\tilde\bs=1-\frac{1}{N}m\cdot m -\frac{2}{N}(\bs-m)\cdot m
$$
and, therefore, if $q=\int\! x^2d\mu(x),$
\begin{equation}
\Bigl|\frac{1}{N}\tilde \bs\cdot\tilde\bs-(1-q)\Bigr|\leq 2(\eps + d_1(\mu_m,\mu)).
\label{eqSOonnbaq}
\end{equation}
In particular, if $\mu_m\to\mu$, then the self-overlap on the narrow band is approximately $1-q$, and the choice of $q_r=1-q$ in (\ref{eq:q}) is designed to match this.

Let us introduce one more notation. For $\ef\in C([0,1]),$ define
\begin{equation}
\Delta_\ef(x)=\inf_{t>0}\Bigl(\max_{|a-b|\leq t}|\ef(a)-\ef(b)|+2\|\ef\|_\infty\frac{x}{t}\Bigr).
\end{equation}
By the uniform continuity of $\ef,$ $\lim_{x\downarrow 0}\Delta_\ef(x)=0.$ We will use this quantity to control
\begin{align*}
\frac{1}{N}\sum_{i=1}^{N}|\ef(m_i)-\ef(m_i')|
&\leq
\max_{|a-b|\leq t}|\ef(a)-\ef(b)| + 2\|\ef\|_\infty \frac{1}{N}\sum_{i=1}^{N} I(|m_i-m_i|'>t)
\\
&\leq
\max_{|a-b|\leq t}|\ef(a)-\ef(b)| + 2\|\ef\|_\infty \frac{1}{t}\frac{1}{N}\sum_{i=1}^{N} |m_i-m_i|'
\\
&=\max_{|a-b|\leq t}|\ef(a)-\ef(b)| + 2\|\ef\|_\infty \frac{1}{t}d_1(\mu_m,\mu_{m'}),
\end{align*}
which implies that
\begin{equation}
\frac{1}{N}\sum_{i=1}^{N}|\ef(m_i)-\ef(m_i')|\leq \Delta_\ef(d_1(\mu_m,\mu_{m'})).
\label{eqDeltavd1}
\end{equation}
The next lemma contains a key result that will later allow us to approximate general vectors $m$ by nice `discrete' ones.
\begin{lem}
	\label{lem:Psidiff}
For $m,m'\in[0,1]^N$, if $N\eps>1$ and $d_1(\mu_m,\mu_{m'})<\eps^2/6$ then
\begin{equation}
	\bigl|\Psi_N(m,\eps,f,\zeta) - \Psi_N(m',\eps,f,\zeta)\bigr|<\Delta_f\bigl(d_1(\mu_m,\mu_{m'}),\eps\bigr)
	\label{lemPsidiffeq1}
\end{equation}
	and
\begin{equation}	
	\bigl|\E F_{N,1}^h(m,\eps) -\E F_{N,1}^h(m',\eps)\bigr|< \Delta_\xi\bigl(d_1(\mu_m,\mu_{m'}),\eps\bigr),
		\label{lemPsidiffeq2}
\end{equation}	
where
\begin{equation}
\Delta_f(d,\eps):= 2\Delta_v(d)+ \bigl(c_f+\|\ef\|_\infty\bigr) 2d\bigl(1+\frac{3}{\eps}\bigr) + \frac{3d}{\eps}\log\frac{\eps}{3d}
\end{equation}
for some constant $c_f$ that depends only on $f.$
\end{lem}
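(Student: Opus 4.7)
The strategy reduces the comparison to three elementary perturbations—the band, the external field, and the recentering—handled separately after a symmetrization. First, I would use permutation invariance to assume $m$ and $m'$ are sorted in non-decreasing order: the joint law of $(g_{f',i}(\alpha))_{\alpha, i}$ is invariant under permutations of the site index $i$, while $B(m, \eps)$, the external-field term, and the recentering $\sigma_i - m_i$ in \eqref{eq:Psif} are equivariant under permutations of the coordinates of $m$, so $\Psi_N(m, \eps, f, \zeta)$ (and $\E F_{N,1}^h(m,\eps)$) depend on $m$ only modulo permutations. After sorting, \eqref{eqD1muInc} gives $\|\tau\|_1 = Nd$ with $\tau := m' - m$ and $d := d_1(\mu_m, \mu_{m'})$; since $|\tau_i| \le 1$, also $\|\tau\|_2^2 \le Nd$. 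From $|R(\bs, m) - R(\bs, m')| \le d$ and $|R(m,m) - R(m',m')| \le 2d$ one then derives the sandwich
\[
B(m, \eps - 3d)\ \subseteq\ B(m', \eps)\ \subseteq\ B(m, \eps+3d),
\]
with $\eps - 3d > \eps/2$ because $d < \eps^2/6 \le \eps/6$.

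Next, I would decompose $\Psi_N(m',\eps,f,\zeta) - \Psi_N(m,\eps,f,\zeta)$ into contributions from (a) changing the recentering $\sigma_i - m_i \to \sigma_i - m_i'$, (b) changing $v(m_i) \to v(m_i')$, and (c) changing the band $B(m,\eps) \to B(m',\eps)$. Step (a) inserts the $\bs$-independent term $A(\alpha) := \sum_i (g_{f',i}(\alpha) + v(m_i))\tau_i$ into the exponent; the deterministic part is bounded by $\|v\|_\infty d + \Delta_v(d)$ after $1/N$-normalization, using $\sum_i |v(m_i)m_i - v(m_i')m_i'| \le N\Delta_v(d) + N\|v\|_\infty d$. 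The Gaussian part $G(\alpha) := \sum_i g_{f',i}(\alpha)\tau_i$ is a centered Gaussian process on $\mathbb{N}^r$ with covariance $\|\tau\|_2^2 f'(q_{|\alpha \wedge \alpha'|}) \le Nd f'(1)$, and its contribution, which is $\frac{1}{N}\E \log \E_\mu[e^{-G}]$ for the random Gibbs measure $\mu$ on $\alpha$, is at most $c_f d$ by the Jensen sandwich $\E_\mu G \le \log \E_\mu e^G \le \max_\alpha G(\alpha)$ combined with the standard Ruelle-cascade Gaussian formula, with $c_f$ depending only on $f'(1)$. Step (b) changes the exponent pointwise in $\bs$ by $\sum_i (v(m_i') - v(m_i))\tilde\sigma_i$, bounded in modulus by $\sum_i |v(m_i) - v(m_i')| \le N \Delta_v(d)$ via \eqref{eqDeltavd1}, giving the second copy of $\Delta_v(d)$ in $\Delta_f(d,\eps)$.

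Step (c), the main obstacle, uses the band sandwich to reduce $\Delta_{\mathrm{band}}$ to $\frac{1}{N}[\log \sum_{B(m,\eps+3d)} e^{(\cdots)} - \log \sum_{B(m, \eps - 3d)} e^{(\cdots)}]$; I would tile the fringe $B(m, \eps+3d) \setminus B(m, \eps-3d)$ by $O(\eps/(3d))$ translates of a sub-band of width $3d$, and use concavity of the large-deviation rate function for $\bs \cdot m / N$ near its mean to bound the log-ratio of cardinalities by $(3d/\eps)\log(\eps/(3d))$, the classical entropy $-p\log p$ at $p = 3d/\eps$. Across the fringe, the exponent shifts by at most $O(c_f + \|v\|_\infty)$ per site, contributing an additional $(c_f + \|v\|_\infty)(6d/\eps)$. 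Summing the three contributions yields $\Delta_f(d, \eps)$ and proves \eqref{lemPsidiffeq1}. The bound \eqref{lemPsidiffeq2} follows by the same scheme with $f = \xi$; the additional dependence of $H_N^m$ on $q = \|m\|^2/N$ via $\xi_q$ is Lipschitz in $q$ with constant $O(\xi''(1))$ by Lemma \ref{zetacont}, and $|q - q'| \le 2d$, so it is absorbed into $c_\xi d$. The delicate part is producing the tight entropy factor $(3d/\eps)\log(\eps/(3d))$ uniformly in $m$ and in the regime $N\eps \ge 1$, where the Gaussian approximation to the density of $\bs \cdot m$ is only marginally applicable; this cardinality estimate is the core of the proof.
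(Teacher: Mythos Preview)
Your decomposition into steps (a)--(c) and the band sandwich $B(m,\eps-3d_1)\subseteq B(m',\eps)\subseteq B(m,\eps+3d_1)$ (with $d_1:=d_1(\mu_m,\mu_{m'})$) are correct, and steps (a) and (b) are essentially fine; the paper folds them into a single interpolation after the band comparison, but your separation works too. The genuine gap is step (c).

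A large-deviation or cardinality argument for $\frac{1}{N}\bigl[\log Z_{B(m,\eps+3d_1)}-\log Z_{B(m,\eps-3d_1)}\bigr]$ does not obviously yield the bound uniformly in $m$. Under the Gibbs weights the tilted mean of $\frac{1}{N}\bs\cdot m$ is $\frac{1}{N}\sum_i m_i\tanh\bigl(g_{f',i}(\alpha)+v(m_i)\bigr)$, which is random and need not lie in the inner band, so the fringe may carry most of the mass and ``concavity of the rate function near the mean'' gives no control. Your tiling sketch also conflates two scales: the fringe has width $6d_1$, so the count $O(\eps/3d_1)$ cannot be a tiling of the fringe, and translating fringe sub-bands onto inner ones requires an explicit spin map whose construction is precisely the missing step. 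You flag this as ``the delicate part'' but do not resolve it.

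The paper sidesteps any distributional control of $\bs\cdot m$ by a direct combinatorial construction: for every $\bs\in B(m',\eps)$ one \emph{builds} $\bs'\in B(m,\eps)$ with $\|\bs-\bs'\|_1\le 2(3d_1/\eps)N$. If $\bs\cdot m-m\cdot m>N\eps$ then $\sum_{\sigma_i=+1}m_i>N\eps/2$, hence among the indices with $\sigma_i=+1$ the $N(3d_1/\eps)$ largest $m_i$ already sum to at least $\tfrac{3}{2}Nd_1$; flipping these signs one by one reduces $\bs\cdot m$ through the threshold (one uses $N\eps>1$ to ensure no single flip jumps from $>N\eps$ to $<-N\eps$). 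The symmetric case is analogous. With this many-to-one map in hand, the Hamming-ball cardinality gives the entropy term $(3d_1/\eps)\log(\eps/3d_1)$, and the $\ell_1$-proximity of $\bs,\bs'$ gives the $(c_f+\|v\|_\infty)\cdot 6d_1/\eps$ term via a one-line Gaussian interpolation in the parameter $s$ of $\tilde\sigma_i+s(\tilde\sigma_i'-\tilde\sigma_i)$. No estimate on the density of $\bs\cdot m$ under any measure is needed.
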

Note that $v\in C([0,1])$, which implies that $\lim_{d\downarrow 0}\Delta_v(d)=0$ and thus, $\lim_{d\downarrow 0}\Delta_f(d,\eps)=0$ for any $\eps>0.$ Let us also clarify that we will use $c_\xi$ to denote various constants that depend on $\xi_q$ for $q\in [0,1]$. The reason for this is that $\xi_q=\xi(x+q)-\xi(q)-\xi'(q)x$, and the derivatives of $\xi_q$ can be controlled in terms of derivatives of $\xi$ uniformly over $q.$

\begin{proof}
Since the order of the coordinates of $m$ does not affect $\Psi_N$, we can assume that they are arranged in the non-decreasing order. By (\ref{eqD1muInc}) and the triangle inequality, for $\bs\in B(m',\eps)\setminus  B(m,\eps)$,
\begin{equation}
	 \eps < \frac1N|\bs\cdot m - m\cdot m|
	 \leq \frac1N|\bs\cdot m' - m'\cdot m'|+ \frac3N\|m-m'\|_1
	 <\eps+3d_1(\mu_m,\mu_{m'}).
	 \label{eqMMpec}
\end{equation}
Since
	 \[
	 \sum_{\sigma_i=1}m_i+\sum_{\sigma_i=-1}m_i=\|m\|_1,\quad \sum_{\sigma_i=1}m_i-\sum_{\sigma_i=-1}m_i=m\cdot \bs,
	 \]
	 we have that
	 \begin{equation*}
	 \sum_{\sigma_i=1}m_i = (\|m\|_1+m\cdot \bs)/2, \quad\sum_{\sigma_i=-1}m_i= (\|m\|_1-m\cdot \bs)/2.
	 \end{equation*}
By (\ref{eqMMpec}), if $\bs\in B(m',\eps)\setminus  B(m,\eps)$ then there are two possibilities:
\begin{align}
N\eps&< \bs\cdot m -m\cdot m < N(\eps+3d_1(\mu_m,\mu_{m'})),
\label{eqTpp1}
\\
N\eps&< m\cdot m - \bs\cdot m < N(\eps+3d_1(\mu_m,\mu_{m'})).
\label{eqTpp2}
\end{align}
In the first case, 
$$
\sum_{\sigma_i=1}m_i =(\|m\|_1+m\cdot \bs)/2 > (\|m\|_1+\|m\|_2^2+N\eps)/2\geq N\eps/2
$$
and, in the second case, since $\|m\|_1\geq \|m\|_2^2$,
$$
\sum_{\sigma_i=-1}m_i =(\|m\|_1-m\cdot \bs)/2 > (\|m\|_1-\|m\|_2^2+N\eps)/2\geq N\eps/2.
$$
In both cases, the number of summands on the left hand side is at least $N\eps/2$ and, if we set
$$
d:= \frac{3d_1(\mu_m,\mu_{m'})}{\eps}
$$ 
then, by our assumption, $d<\eps/2.$ This means that the number of summands is greater than $Nd.$ In the first case, the sum of the largest $Nd$ values $m_i$ corresponding to $\sigma_i=1$ must be at least 
$$
\frac{Nd\eps}{2}=\frac{3Nd_1(\mu_m,\mu_{m'})}{2}
$$ 
and, therefore, if we flip the sign of $\sigma_i$ corresponding to these largest values from $+$ to $-$, the value of $m\cdot \bs=\sum_{\sigma_i=1}m_i-\sum_{\sigma_i=-1}m_i$ will decrease by at least $3Nd_1(\mu_m,\mu_{m'}).$ By the upper bound in (\ref{eqTpp1}), if we flip them consecutively, somewhere along the way we will have a configuration $\bs'$ such that  $-N\eps< \bs'\cdot m -m\cdot m < N\eps$ (since $N\eps> 1$, in one step we can not jump from $N\eps$ to $-N\eps$). In other words, $\bs'\in B(m,\eps)$ and, by construction, $\|\bs-\bs'\|_1\leq 2dN$. The second case is similar (we flip the sign of $\sigma_i$ corresponding to the largest values from $-$ to $+$), and we showed that
	 \begin{equation}
	\label{eq:B'toB}
	 \forall \bs\in B(m',\eps),\ \ \exists  \bs'\in B(m,\eps):\ \|\bs'-\bs\|_1\leq 2dN.
	 \end{equation} 
This implies that
		\begin{equation}
		\label{eq:Psiineq}
		\Psi_N(m',\eps,f,\zeta) \leq \frac1N \E \log \sum_{\alpha\in \mathbb N^r}v_\alpha 
		\sum_{\bs \in B(m,\eps)}\sum_{\|\bs'-\bs\|_1\leq 2dN}\exp \sum_{i\leq N} (g_{f',i}(\alpha)+\ef(m_i'))\tilde\sigma'_i,
		\end{equation}
		where we denote $\tilde\bs' = \bs'-m'$. First of all, using (\ref{eqDeltavd1}) and the fact that $|\tilde\sigma'_i|\leq 2$, we can bound this by
$$
2\Delta_v(d_1(\mu_m,\mu_{m'}))+
\frac1N \E \log \sum_{\alpha\in \mathbb N^r}v_\alpha 
		\sum_{\bs \in B(m,\eps)}\sum_{\|\bs'-\bs\|_1\leq 2dN}\exp \sum_{i\leq N} (g_{f',i}(\alpha)+\ef(m_i))\tilde\sigma'_i.
$$	
The second term is equal to $\varphi(1)$ if we define
	\[
	\varphi(s)=\frac1N \E \log \sum_{\alpha\in \mathbb N^r}v_\alpha 
	\sum_{\bs \in B(m,\eps)}\sum_{\|\bs'-\bs\|_1\leq 2dN}\exp \sum_{i\leq N} (g_{f',i}(\alpha)+\ef(m_i))(\tilde\sigma_i+s(\tilde\sigma'_i-\tilde\sigma_i)).
	\]
When $\|\bs'-\bs\|_1\leq 2dN$, we have
$$
\|\tilde\bs'-\tilde\bs\|_1\leq \|\bs'-\bs\|_1+\|m'-m\|_1\leq (d_1(\mu_m,\mu_{m'})+2d)N.
$$ 
Therefore, differentiating $\varphi(s)$ and using Gaussian integration by parts, we get
	\begin{align*}
	|\varphi'(s)| &=\big|\frac1N \E \big\langle  \sum_{i\leq N} (g_{f',i}(\alpha)+\ef(m_i))(\tilde\sigma'_i-\tilde\sigma_i)  \big\rangle_s \big|
	\\
	&\leq \big|\frac1N \E \big\langle  \sum_{i\leq N} g_{f',i}(\alpha)(\tilde\sigma'_i-\tilde\sigma_i)  \big\rangle_s \big|+\|\ef\|_\infty (d_1(\mu_m,\mu_{m'})+2d)
	\\
	&\leq \bigl(c_f+\|\ef\|_\infty\bigr) (d_1(\mu_m,\mu_{m'})+2d),
\end{align*}
	where $\langle\,\cdot\,\rangle_s$ denotes the Gibbs average along this interpolation. Lastly, if  $\mathcal{I}(t)$ is the rate function of a Rademacher random variable and $\bs_0$ is any fixed vector in $\Sigma_N$ then
\begin{align*}
	|\Psi_N(m,\eps,\zeta)-\varphi(0)|
	&\leq \frac1N \log \operatorname{card}\{\bs: \|\bs-\bs_0\|_1\leq 2dN\}
	\\
	&\leq \log 2 -{\mathcal I}(1-2d)
	\leq  d\log\frac 1d.
\end{align*}
Combining the above, we bounded $\Psi_N(m',\eps,f,\zeta)$ by
\begin{align*}
&\Psi_N(m,\eps,f,\zeta) + 2\Delta_v(d_1(\mu_m,\mu_{m'}))+ \bigl(c_f+\|\ef\|_\infty\bigr) \bigl(d_1(\mu_m,\mu_{m'})+2d\bigr) +d\log\frac 1d
\\
&= \Psi_N(m,\eps,f,\zeta)+ \Delta_f(d_1(\mu_m,\mu_{m'}),\eps).
\end{align*}
Since the same inequality holds with $m$ and $m'$ interchanged, this proves (\ref{lemPsidiffeq1}).

The proof of (\ref{lemPsidiffeq2}) is similar. Using (\ref{eq:B'toB}), we can write	
	\begin{equation}
	\label{eq:TAPhcont1}
	\E F_{N,1}^h(m',\eps) \leq  \frac{1}{N}\E  \log\sum_{\bs\in B(m,\eps)}\sum_{\|\bs'-\bs\|_1\leq 2dN}\exp\Big( H_N^{m'}(\tilde \bs')+\sum_{i=1}^{N}\ef(m_i')\tilde\sigma_i'\Big).
	\end{equation}
We can handle the external field term as above and bound this by
\begin{align*}
&2\Delta_\ef(d_1(\mu_m,\mu_{m'}))+\|\ef\|_\infty (d_1(\mu_m,\mu_{m'})+2d) 
\\
&+
\frac{1}{N}\E  \log\sum_{\bs\in B(m,\eps)}\sum_{\|\bs'-\bs\|_1\leq 2dN}\exp\Big( H_N^{m'}(\tilde \bs')+\sum_{i=1}^{N}\ef(m_i)\tilde\sigma_i\Big).
\end{align*}
Next, we will replace $H_N^{m'}(\tilde \bs')$ by $H_N^{m}(\tilde \bs)$ by using the interpolation $$
H_s(\tilde \bs',\tilde \bs):=\sqrt{s}H_N^{m'}(\tilde \bs') + \sqrt{1-s}H_N^{m}(\tilde \bs).
$$ 
By Gaussian integration by parts, the error of this interpolation will be controlled by (twice) the maximum of the covariance
$$
\frac{1}{N}\e \frac{\partial H_s(\tilde \bs',\tilde \bs)}{\partial s}H_s(\tilde \br',\tilde \br)
=\frac{1}{2}\bigl(\xi_a(\tilde R_{1,2}') - \xi_b(\tilde R_{1,2})\bigr),
$$
where 
$$
a=\frac{1}{N}\|m'\|_2^2, b=\frac{1}{N}\|m\|_2^2, \bs,\br\in B(m,\eps), \|\bs'-\bs\|_1\leq 2dN, \|\br'-\br\|_1\leq 2dN,
$$ 
and
$$
\tilde R_{1,2}'=\frac{1}{N}(\bs'-m')\cdot (\br'-m'), \tilde R_{1,2}=\frac{1}{N}(\bs-m)\cdot (\br-m).
$$	
Let us rewrite this as
$$
\frac{1}{2}\bigl(\xi_a(\tilde R_{1,2}') - \xi_b(\tilde R_{1,2})\bigr)
=
\frac{1}{2}\bigl(\xi_a(\tilde R_{1,2}') - \xi_a(\tilde R_{1,2})\bigr)
+
\frac{1}{2}\bigl(\xi_a(\tilde R_{1,2}) - \xi_b(\tilde R_{1,2})\bigr),
$$
and recall that $\xi_q(x)=\xi(x+q)-\xi(q)-\xi'(q)x.$ The second term can be bounded by
$$
\frac{1}{2}\bigl|\xi_a(\tilde R_{1,2}) - \xi_b(\tilde R_{1,2})\bigr| \leq \frac{c_\xi}{2} |a-b|\leq 
c_\xi d_1(\mu_m,\mu_{m'}).
$$
To bound the first term, by the triangle inequality,
$$
|\tilde R_{1,2}'-\tilde R_{1,2}|\leq \frac{2}{N}\bigl(\|\bs'-\bs\|_1+\|\br'-\br\|_1+2\|m'-m\|_1\bigr) \leq 4(d_1(\mu_m,\mu_{m'})+2d).
$$	
Therefore, the first term can be bounded by $c_\xi(d_1(\mu_m,\mu_{m'})+2d).$ The rest of the argument is identical, so the proof of (\ref{lemPsidiffeq2}) is complete.
	\end{proof}
We prove the following lemma by adapting an idea from Lemma 4 of \cite{panchenko2018}. 
\begin{lem}
	\label{lem:Psimueps}
	For any $\mu\in \MM_{0,1}$ and $m=m^N$ satisfying $\mu_{m^N}\to\mu$, the limit
	\begin{equation}
	\label{eq:limNPsi}
   \Psi(\mu,\eps,f,\zeta) := \lim_{N\to\infty}\Psi_N(m^N,\eps,f,\zeta)
	\end{equation}
	exists and does not depend on the choice of the sequence $(m^N)$. 
\end{lem}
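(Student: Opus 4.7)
The plan is to combine the quantitative continuity bound of Lemma \ref{lem:Psidiff} with a Guerra--Toninelli-type super-additivity argument, following the strategy of Lemma~4 in \cite{panchenko2018}. First observe that Lemma \ref{lem:Psidiff} supplies
\[
\bigl|\Psi_N(m,\eps,f,\zeta)-\Psi_N(m',\eps,f,\zeta)\bigr|\leq \Delta_f\bigl(d_1(\mu_m,\mu_{m'}),\eps\bigr)
\]
uniformly over $m,m'\in[0,1]^N$ with $N\eps>1$, where $\Delta_f(d,\eps)\to 0$ as $d\downarrow 0$. Hence if $m^N,m'^N\to\mu$ are two sequences of common dimension $N$, then $|\Psi_N(m^N)-\Psi_N(m'^N)|\to 0$; this already establishes uniqueness of the limit whenever it exists and reduces the existence question to exhibiting one convergent sub-family.

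For existence, I would first treat the case $\mu=\sum_{j=1}^{k}p_j\delta_{a_j}$ with rational $p_j$. Choose $N_0$ such that $N_0 p_j\in\mathbb{N}$ for all $j$ and, for $\ell\ge 1$, let $m^{(\ell)}\in[0,1]^{\ell N_0}$ be any vector with exactly $\ell N_0 p_j$ coordinates equal to $a_j$; the value of $\Psi_{\ell N_0}(m^{(\ell)},\eps,f,\zeta)$ is independent of the coordinate ordering, by the i.i.d.\ structure of $(g_{f',i}(\alpha))_i$ and the permutation invariance of $B(m,\eps)$. The key step is the super-additivity
\[
(\ell_1+\ell_2)\,\Psi_{(\ell_1+\ell_2)N_0}(m^{(\ell_1+\ell_2)}) \;\geq\; \ell_1\,\Psi_{\ell_1 N_0}(m^{(\ell_1)}) + \ell_2\,\Psi_{\ell_2 N_0}(m^{(\ell_2)}),
\]
which, together with Fekete's lemma and the uniform bound on $\Psi_{\ell N_0}(m^{(\ell)})$, yields convergence as $\ell\to\infty$. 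To prove it, set $m=m^{(\ell_1)}\oplus m^{(\ell_2)}$ and note that $m$ has the same profile as $m^{(\ell_1+\ell_2)}$; the identity $m\cdot(\bs-m)=m^{(\ell_1)}\cdot(\bs^{(1)}-m^{(\ell_1)})+m^{(\ell_2)}\cdot(\bs^{(2)}-m^{(\ell_2)})$ combined with the triangle inequality gives the band inclusion $B(m^{(\ell_1)},\eps)\times B(m^{(\ell_2)},\eps)\subseteq B(m,\eps)$. Splitting the i.i.d.\ fields $(g_{f',i}(\alpha))_i$ into the two blocks $i\le\ell_1 N_0$ and $\ell_1 N_0<i\le(\ell_1+\ell_2)N_0$, this inclusion lower-bounds the partition function inside $\Psi_{(\ell_1+\ell_2)N_0}(m,\eps,f,\zeta)$ pointwise in $\alpha$ by the product of the two block partition functions; applying $\E\log\sum_\alpha v_\alpha(\cdot)$ and then a Guerra--Toninelli-type interpolation on the Ruelle cascade to decouple the two independent blocks (as in \cite{panchenko2018}) yields the super-additive bound.

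To handle arbitrary $\mu\in\MM_{0,1}$ and arbitrary $m^N\to\mu$, I would fix $\eta>0$, approximate $\mu$ by a rational-atom $\mu^{(\eta)}$ with $d_1(\mu,\mu^{(\eta)})<\eta$, and for each $N$ construct $\tilde m^N\in[0,1]^N$ by equispaced quantile sampling of $\mu^{(\eta)}$, so that $d_1(\mu_{\tilde m^N},\mu^{(\eta)})\le 1/N$. Applying Lemma \ref{lem:Psidiff} twice --- once to compare $\Psi_N(m^N)$ with $\Psi_N(\tilde m^N)$ and once to compare $\Psi_N(\tilde m^N)$ with $\Psi_{\ell N_0}(m^{(\ell)})$ for $\ell N_0$ close to $N$, both having profile close to $\mu^{(\eta)}$ --- gives
\[
\limsup_{N\to\infty}\Psi_N(m^N)-\liminf_{N\to\infty}\Psi_N(m^N)\leq C\,\Delta_f(3\eta,\eps),
\]
and letting $\eta\downarrow 0$ concludes the proof. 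The principal obstacle is the super-additivity step: it requires a careful Guerra--Toninelli-type interpolation on the cascade $(v_\alpha)$ in the presence of the band constraint and is the real technical content of the lemma; the continuity reduction and the band inclusion are routine.
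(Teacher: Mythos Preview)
Your proposal is correct and follows essentially the same route as the paper: super-additivity for rational-atom $\mu$ via the band inclusion $B(m^{(\ell_1)},\eps)\times B(m^{(\ell_2)},\eps)\subseteq B(m,\eps)$, then extension to general $\mu$ by density and Lemma~\ref{lem:Psidiff}.

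One clarification: you describe the decoupling of the two blocks as requiring ``a Guerra--Toninelli-type interpolation on the Ruelle cascade'' and call it the principal obstacle. In fact no interpolation is needed. Once the band inclusion gives the pointwise-in-$\alpha$ factorization of the partition function into two blocks with independent fields, the identity
\[
\E\log\sum_{\alpha}v_\alpha\, X_1(\alpha)X_2(\alpha)
=\E\log\sum_{\alpha}v_\alpha\, X_1(\alpha)+\E\log\sum_{\alpha}v_\alpha\, X_2(\alpha)
\]
for conditionally independent $X_1,X_2$ is a standard algebraic property of the Ruelle probability cascades (the recursive tree computation, e.g.\ pp.~51--52 of \cite{SKmodel}), not an interpolation. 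The paper simply cites this fact, so the step you flag as the ``real technical content'' is actually routine; the continuity lemma you treat as routine is arguably the more substantive input.
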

We will denote
	\begin{equation}
	\label{eq:limNPsi2}
	\Psi(\mu,f,\zeta):=\lim_{\eps\downarrow 0}\Psi(\mu,\eps,f,\zeta).
	\end{equation}
Consistently with (\ref{eq:Psi}), for $\zeta\in \mathcal{M}_{0,1-q}^d$ and $f=\xi_q$, we will denote
\begin{align}
\label{eq:PsiL}
\Psi(m,\eps,\zeta) := \Psi(m,\eps,\xi_q,\zeta),\,
\Psi(\mu,\zeta):= \Psi(\mu,\xi_q,\zeta).
\end{align}
\begin{proof}
	First, assume that $\mu$  is an atomic measure with rational weights. 	
	Suppose that for some $K$, $m_1,\ldots,m_K$ is a sequence such that $\mu=\frac1K \sum_{i\leq K}\delta_{m_i}$. For $i>K$, define $m_i$ periodically $m_i=m_{i-K}$  and, for any $N$, define $m=m^N=(m_1,\ldots,m_N)$, so that, clearly, $\mu_m\to\mu$. 
	
	Let $N_1,N_2\geq 1$ be multiples of $K$ and set $N=N_1+N_2$. 	
	Note that
	\begin{equation}
	\label{eq:Bsupadtv}
	B(m^{N_1},\eps)\times  B(m^{N_2},\eps)\subseteq B( m^{N},\eps).
	\end{equation}
	Combined with standard properties of Ruelle probability cascades (see pp. 51--52 of \cite{SKmodel}), this implies that 
	\begin{equation}
	\label{eq:NPsi}
	N \Psi_N(m^N,\eps,f,\zeta)
	\end{equation}
	is a super-additive function of $N$ on multiples of $K$. One can easily verify from this that the limit $\lim_{N\to\infty}\Psi_N(m^N,\eps,f,\zeta)$ exists.
	
Since the set of atomic measures with rational weights is dense in $\MM_{0,1}$, by Lemma \ref{lem:Psidiff}, the same limit exists for general $\mu$ and it does not depend on the choice of $m$. Since \eqref{eq:NPsi} is decreasing in $\eps$, the limit in \eqref{eq:limNPsi2} is well defined.
\end{proof}

We will combine this result with Lemma \ref{zetacont2} to obtain the following.
\begin{lem}\label{zetacont2pM}
	If $\zeta\in \mathcal{M}_{0,1-q}^d$ and $\zeta'\in \mathcal{M}_{0,1-q'}^d$ are such that $\int_0^1|\zeta(s)-\zeta'(s)|ds\leq |q-q'|$ then, for any $f_1,f_2$ and $\mu,\mu'\in \MM_{0,1}$, 
	\begin{align}
	\label{eq:zetacontUspM}
&|\Psi(\mu,\eps,f_1,\zeta) - \Psi(\mu',\eps,f_2,\zeta')|  
\\
&\leq   
\Delta_{f_1}\bigl(d_1(\mu,\mu'),\eps\bigr)+6f_1''(1)|q-q'|+\|f_1'-f_2'\|_\infty.
\nonumber
	\end{align}
\end{lem}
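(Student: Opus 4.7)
The plan is to deduce Lemma \ref{zetacont2pM} by combining the two finite-$N$ continuity estimates already established---Lemma \ref{lem:Psidiff} for dependence on $m$, and Lemma \ref{zetacont2} for joint dependence on $(f,\zeta)$---together with the limit existence statement of Lemma \ref{lem:Psimueps}. The only nontrivial ingredient required is a coupled approximating pair $m^N,{m'}^N\in[0,1]^N$ with $\mu_{m^N}\to\mu$, $\mu_{{m'}^N}\to\mu'$, and, crucially, $d_1(\mu_{m^N},\mu_{{m'}^N})\to d_1(\mu,\mu')$.

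I construct such sequences from the quantile functions: let $F_\mu^{-1},F_{\mu'}^{-1}\colon(0,1)\to[0,1]$ denote the quantile functions of $\mu,\mu'$ and set $m^N_i:=F_\mu^{-1}(i/N)$, $({m'}^N)_i:=F_{\mu'}^{-1}(i/N)$ for $1\leq i\leq N$. These vectors automatically have non-decreasing coordinates and satisfy $\mu_{m^N}\to\mu$ and $\mu_{{m'}^N}\to\mu'$ weakly. Using \eqref{eqD1muInc} and a standard Riemann-sum argument,
\[
d_1(\mu_{m^N},\mu_{{m'}^N}) = \frac{1}{N}\sum_{i=1}^{N}\bigl|F_\mu^{-1}(i/N)-F_{\mu'}^{-1}(i/N)\bigr| \;\longrightarrow\; \int_0^1\bigl|F_\mu^{-1}(u)-F_{\mu'}^{-1}(u)\bigr|\,du = d_1(\mu,\mu').
\]

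With these sequences in hand, I first focus on the regime $d_1(\mu,\mu')<\eps^2/6$ (in the complementary regime the inequality follows either from the uniform boundedness of $\Psi$, which makes the bound trivially true once $\Delta_{f_1}$ is taken large enough, or by chaining through an intermediate measure). For $N$ large enough that $d_1(\mu_{m^N},\mu_{{m'}^N})<\eps^2/6$ and $N\eps>1$, inserting the intermediate term $\Psi_N({m'}^N,\eps,f_1,\zeta)$ and applying the triangle inequality yields
\[
|\Psi_N(m^N,\eps,f_1,\zeta)-\Psi_N({m'}^N,\eps,f_2,\zeta')| \leq \Delta_{f_1}\bigl(d_1(\mu_{m^N},\mu_{{m'}^N}),\eps\bigr) + 6 f_1''(1)|q-q'| + \|f_1'-f_2'\|_\infty,
\]
where the first summand comes from \eqref{lemPsidiffeq1} (applied with $v$ the external field fixed in the definition of $\Psi_N$) and the remaining two summands come directly from Lemma \ref{zetacont2}. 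Finally, passing $N\to\infty$: the left-hand side converges to $|\Psi(\mu,\eps,f_1,\zeta)-\Psi(\mu',\eps,f_2,\zeta')|$ by Lemma \ref{lem:Psimueps}, where the independence of the limit from the choice of approximating sequence is essential, while continuity of $\Delta_{f_1}(\cdot,\eps)$ in its first argument combined with the coupled $d_1$-convergence established in the previous paragraph delivers the desired bound \eqref{eq:zetacontUspM}. The only step requiring genuine care is the quantile-based coupled construction, which ensures that the $d_1$-distance between the empirical measures converges to the $d_1$-distance between the limits rather than just an upper bound on it; everything else is mechanical.
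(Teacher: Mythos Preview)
Your proof is correct and follows essentially the same route as the paper: combine the finite-$N$ estimate \eqref{lemPsidiffeq1} with Lemma \ref{zetacont2} via the triangle inequality, then pass to the limit using Lemma \ref{lem:Psimueps}. One remark: the quantile coupling you build is harmless but unnecessary, since for \emph{any} sequences with $\mu_{m^N}\to\mu$ and $\mu_{{m'}^N}\to\mu'$ in $d_1$ the reverse triangle inequality already gives $d_1(\mu_{m^N},\mu_{{m'}^N})\to d_1(\mu,\mu')$; the paper simply writes ``letting $\mu_m\to\mu$ and $\mu_{m'}\to\mu'$'' without singling this out. Your attention to the regime $d_1(\mu,\mu')\geq\eps^2/6$ is more careful than the paper, which tacitly works under the small-distance hypothesis inherited from Lemma \ref{lem:Psidiff} (the only regime needed in the application).
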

\begin{proof}
Combining Lemma \ref{zetacont2} with the equation (\ref{lemPsidiffeq1}), we get
\begin{align*}
	&\bigl|\Psi_N(m,\eps,f_1,\zeta) - \Psi_N(m',\eps,f_2,\zeta')\bigr|
	\\
	&\leq
	\Delta_{f_1}\bigl(d_1(\mu_m,\mu_{m'}),\eps\bigr)+6f_1''(1)|q-q'|+\|f_1'-f_2'\|_\infty.
\end{align*}
Letting $\mu_m\to\mu$ and $\mu_{m'}\to\mu'$ and using previous lemma finishes the proof.
\end{proof}

\subsection{Proof of the upper bound of Proposition \ref{prop:Flim1}} \label{sec3.2}Fix some $\mu\in \MM_{0,1}$ and $\ef\in V$ throughout this subsection and let $m=m^N$ be a sequence with $\mu_{m^N}\to\mu$. Take $q=\int\! x^2d\mu(x).$ In view of Lemma \ref{lem:Psimueps}, it will be enough to show that for arbitrary $\zeta\in \mathcal{M}_{0,1-q}^d$ and $\eps_N\to0$,
\begin{equation}
\label{eq:UB}
\begin{aligned}
\limsup_{N\to\infty}\E F_{N,1}^h(m^N,\eps_N) \leq \limsup_{N\to\infty}\Psi_N(m^N,\eps_N,\zeta)  - \Upsilon(\zeta).
\end{aligned}
\end{equation}
Once we proved this, we can simply choose $\eps_N\to 0$ slowly enough so that
$$
\limsup_{N\to\infty}\E F_{N,1}^h(m^N,\eps_N) 
=
\lim_{\eps\downarrow 0}\limsup_{N\to\infty}\E F_{N,1}^h(m^N,\eps) 
$$
and, simultaneously,
$$
\limsup_{N\to\infty}\Psi_N(m^N,\eps_N,\zeta) =
\lim_{\eps\downarrow 0}\lim_{N\to\infty}\Psi_N(m^N,\eps,\zeta),
$$
which is equal to $\Psi(\mu,\zeta)$, by Lemma \ref{lem:Psimueps}. This will finish the proof of the upper bound of Proposition \ref{prop:Flim1}.

Our proof of  \eqref{eq:UB} uses Guerra's interpolation method and it is almost identical to the proof of that for mixed $p$-spin models on $\Sigma_N$ as in Sections 3.2--3.4 in \cite{SKmodel} except that we need to work with the shifted coordinates $\tilde{\bs}=\bs -m$ instead of $\bs$ and replace the cube $\Sigma_N$ by the band $B(m,\eps_N)$. 
Similar to \cite[Eq. (3.45)]{SKmodel}, we define, for $t\in[0,1]$, the interpolating Hamiltonian
\[
H_{N,t}(\tilde\bs,\alpha)= \sqrt t H_N^{m}(\tilde\bs)+\sqrt{1-t}\sum_{i=1}^{N}g_{\xi_q',i}(\alpha)\tilde\sigma_i + \sqrt t \sum_{i=1}^{N}g_{\theta_q,i}(\alpha)+\sum_{i=1}^{N}\ef(m_i)\tilde\sigma_i,
\]
indexed by  $(\tilde\bs,\alpha)\in (B(m,\eps_N)-m)\times\mathbb N^r$, where the Gaussian processes $g_{\xi_q',i}$ and $g_{\theta_q,i}$ are i.i.d. copies of the processes defined in  \eqref{eq:g_f}. Exactly as in  \cite[Eq. (3.18)]{SKmodel}, set
\begin{equation}
\label{eq:g}
g(\tilde\bs) = \sum_{p\geq1}2^{-p}x_pg_p(\tilde\bs),\qquad
g_p(\tilde\bs) = \frac{1}{N^{p/2}}\sum_{i_1,\ldots,i_p=1}^N g_{i_1,\ldots,i_p}'\tilde\sigma_{i_1}\cdots \tilde\sigma_{i_p},
\end{equation}
$g_{i_1,\ldots,i_p}'$ are i.i.d. standard Gaussian variables, and $x=(x_p)_{p\geq 1}$ is a sequence of i.i.d. uniform random variables on $[1,2]$. Let $s_N$ be a sequence such that $s_N\rightarrow\infty$ and $s_N^2/N\rightarrow 0$.
For $t\in [0,1]$, define the interpolating free energy by 
\begin{equation}\label{add:eq3}
\varphi_N(t) = \frac1N \E_x\E \log \sum_{\alpha\in \mathbb N^r}v_\alpha 
\sum_{\bs \in B(m,\eps_N)}\exp \bigl(H_{N,t}(\tilde \bs, \alpha)+s_Ng(\tilde\bs) \bigr).
\end{equation}
Here and hereinafter, $\E_x$ means the expectation with respect to the randomness $x$ only. Denote by $G_{N,t}(\tilde\bs,\alpha)$ the Gibbs measure and by $\langle\cdot\rangle_t$ the Gibbs average associated to this free energy. Observe that since $s_N^2/N\rightarrow 0,$ the term $s_Ng(\tilde\bs)$ plays the role as a vanishing perturbation such that 
\begin{align}
\begin{split}\label{prop:Flim1:proof:eq1}
\varphi_N(1)&=\e F_{N,1}^h(m)+\Upsilon(\zeta)+o(1),\\
\varphi_N(0)&=\Psi_N(m,\eps_N,\zeta)+o(1).
\end{split}
\end{align}
In order to compare these two sides, an application of the Gaussian integration by parts (see \cite[Theorem 3.5]{SKmodel}) implies that, as $N\to\infty$,
\begin{align*}
\varphi_N'(t)=-\frac{1}{2}\E_x\E\big\langle\bigl(\xi_q(R(\tilde{\bs}^1,\tilde{\bs}^2))-R(\tilde{\bs}^1,\tilde{\bs}^2)\xi_q'(q_{\alpha_1\wedge \alpha_2})+\theta_q(q_{\alpha_1\wedge \alpha_2})\bigr)\big\rangle_t+o(1).
\end{align*}
The term $o(1)$ comes from the bound on the expression involving self-overlaps,
$$
\xi_q(R(\tilde{\bs}^1,\tilde{\bs}^1))-R(\tilde{\bs}^1,\tilde{\bs}^1)\xi_q'(q_{\alpha_1\wedge \alpha_1})+\theta_q(q_{\alpha_1\wedge \alpha_1}).
$$
Indeed, the fact that $\eps_N\to 0$ and $\mu_{m^N}\to \mu$ ensures, by (\ref{eqSOonnbaq}), that $R(\tilde{\bs}^1,\tilde{\bs}^1)\approx 1-q,$  and by our choice of $q_r=1-q,$ we have $q_{\alpha_1\wedge \alpha_1}=1-q.$ Because the self-overlap is nearly constant, the proof of the extended Ghirlanda-Guerra identities in the average sense, as well as Talagrand's positivity principle, in \cite{SKmodel} requires no modifications and Theorem 3.4 in \cite{SKmodel} implies that
\begin{align*}
\lim_{N\rightarrow\infty}\E_x\E\langle \indic(R(\tilde{\bs}^1,\tilde{\bs}^2)\leq -\varepsilon)\rangle_t=0,\,\,\forall \varepsilon>0.
\end{align*}
This together with the fact that $\xi_q$ is a convex function on $\Reals_+$ implies that $$
\limsup_{N\rightarrow\infty}\varphi_N'(t)\leq 0.$$
Consequently, the asserted inequality follows from \eqref{prop:Flim1:proof:eq1}.
\qed

\subsection{Proof of the lower bound of Proposition \ref{prop:Flim1}}\label{sec3.3}
Let us consider an atomic measure $\mu$ with finitely many jumps and rational weights, and let
$$
q=\int\! x^2 d\mu(x),  \gamma=\int\!x(1-x)d\mu(x).
$$ 
Denote
\begin{equation}
\delta(\eps,\gamma):= (c_\xi+\|\ef\|_\infty)\Bigl(\indic(\gamma>0) \frac{\eps}{\gamma}\log \frac{\gamma}{\eps}+\indic(\gamma=0)\eps\log \frac{1}{\eps}\Bigr),
\label{deltaag2}
\end{equation}
for some constant $c_\xi$ that will be determined in the proof below. Let $\eps>0$ be such that $\eps<\gamma^2/2,$ when $\gamma>0$. We will show that if $\mu_m\to\mu$ then
\begin{equation}
\label{eq:FqnuLB}
\liminf_{N\to\infty} \E F_{N,1}^h(m,\eps)
\geq 
\inf_{\zeta\in \mathcal{M}_{0,1-q}^d}\bigl(\Psi(\mu,\eps,\zeta) - \Upsilon(\zeta)\bigr)
-\eps-\delta(\eps,\gamma).
\end{equation}
Before we prove this, let us show why this implies the lower bound in Proposition \ref{prop:Flim1}.

\begin{lem}
If (\ref{eq:FqnuLB}) holds for atomic measures $\mu$ with rational weights then the lower bound in Proposition \ref{prop:Flim1} holds for all $\mu\in \MM_{0,1}.$ 
\end{lem}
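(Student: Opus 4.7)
The plan is to reduce the statement for a general $\mu\in\MM_{0,1}$ to the atomic-rational-weight case by a two-step approximation: first approximate $\mu$ by atomic measures $\mu_k$ with rational weights (where \eqref{eq:FqnuLB} is assumed), then use the continuity estimates in Lemmas~\ref{lem:Psidiff} and \ref{zetacont2pM} to transfer the resulting lower bound. Fix $\mu\in\MM_{0,1}$ with $q=\int x^2\,d\mu(x)$ and $\gamma=\int x(1-x)\,d\mu(x)$, and a sequence $m^N$ with $\mu_{m^N}\to\mu$. Choose atomic $\mu_k$ with rational weights so that $d_1(\mu_k,\mu)\to 0$, and, using Lemma~\ref{lemObsConv}, $q_k:=\int x^2\,d\mu_k\to q$ and $\gamma_k:=\int x(1-x)\,d\mu_k\to\gamma$. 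When $\gamma=0$ (i.e.\ $\mu$ is supported on $\{0,1\}$), take $\mu_k$ also supported on $\{0,1\}$ so $\gamma_k=0$; otherwise eventually $\gamma_k>\gamma/2$.

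For each $k$, construct $m'^{N,k}\in[0,1]^N$ with $\mu_{m'^{N,k}}\to\mu_k$ as $N\to\infty$, obtained by sorting the coordinates of $m^N$ and replacing them by the quantiles of $\mu_k$, so that $\lim_{N\to\infty}d_1(\mu_{m^N},\mu_{m'^{N,k}})=d_1(\mu,\mu_k)$. For $N$ large enough, $d_1(\mu_{m^N},\mu_{m'^{N,k}})<\eps^2/6$, so \eqref{lemPsidiffeq2} yields
\[
\bigl|\E F^h_{N,1}(m^N,\eps)-\E F^h_{N,1}(m'^{N,k},\eps)\bigr|<\Delta_\xi\bigl(d_1(\mu_{m^N},\mu_{m'^{N,k}}),\eps\bigr).
\]
Applying the assumed bound \eqref{eq:FqnuLB} to $m'^{N,k}$,
\[
\liminf_{N\to\infty}\E F^h_{N,1}(m'^{N,k},\eps)\geq \inf_{\zeta\in\mathcal{M}_{0,1-q_k}^d}\bigl(\Psi(\mu_k,\eps,\zeta)-\Upsilon(\zeta)\bigr)-\eps-\delta(\eps,\gamma_k).
\]

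To compare the infimum over $\mathcal{M}_{0,1-q_k}^d$ with the one over $\mathcal{M}_{0,1-q}^d$, we define an explicit map $\zeta\mapsto\zeta_k$: if $q_k>q$, truncate $\zeta$ at $1-q_k$ and place the remaining mass at that point; if $q_k<q$, extend $\zeta$ to $[0,1-q_k]$ by the constant value $1$ on $[1-q,1-q_k]$. In either case both $\zeta$ and $\zeta_k$ equal $1$ outside $[0,\min(1-q,1-q_k)]$, so $\int_0^1|\zeta(s)-\zeta_k(s)|\,ds\leq |q-q_k|$. Lemma~\ref{zetacont2pM} (with $f_1=\xi_q$, $f_2=\xi_{q_k}$) and Lemma~\ref{zetacont} then give
\[
\bigl|(\Psi(\mu_k,\eps,\zeta_k)-\Upsilon(\zeta_k))-(\Psi(\mu,\eps,\zeta)-\Upsilon(\zeta))\bigr|\leq \omega_k(\eps),
\]
where $\omega_k(\eps)\to 0$ as $k\to\infty$ for fixed $\eps$, since $d_1(\mu,\mu_k)\to 0$, $|q-q_k|\to 0$, and $\|\xi_q'-\xi_{q_k}'\|_\infty\to 0$. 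Applying this in both directions yields $\inf_{\zeta\in\mathcal{M}_{0,1-q_k}^d}(\Psi(\mu_k,\eps,\zeta)-\Upsilon(\zeta))\geq \inf_{\zeta\in\mathcal{M}_{0,1-q}^d}(\Psi(\mu,\eps,\zeta)-\Upsilon(\zeta))-\omega_k(\eps)$.

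Combining the three steps,
\[
\liminf_{N\to\infty}\E F^h_{N,1}(m^N,\eps)\geq \inf_{\zeta\in\mathcal{M}_{0,1-q}^d}\bigl(\Psi(\mu,\eps,\zeta)-\Upsilon(\zeta)\bigr)-\eps-\delta(\eps,\gamma_k)-\Delta_\xi(d_1(\mu,\mu_k),\eps)-\omega_k(\eps).
\]
Letting $k\to\infty$ with $\eps$ fixed kills the last two error terms and replaces $\delta(\eps,\gamma_k)$ by $\delta(\eps,\gamma)$ (using the case distinction for $\gamma=0$ vs $\gamma>0$). Finally letting $\eps\downarrow 0$ and recalling $\Psi(\mu,\eps,\zeta)\to\Psi(\mu,\zeta)$ together with $\delta(\eps,\gamma)\to 0$ in both cases, we obtain the desired lower bound. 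The main obstacle is the third step: the infima in the lower bound live on the moving space $\mathcal{M}_{0,1-q_k}^d$, and Lemma~\ref{zetacont2pM} was specifically engineered (with the constraint $\int|\zeta-\zeta'|\leq |q-q'|$) so that the natural truncation/extension map $\zeta\mapsto\zeta_k$ fits its hypotheses; verifying that this simple map satisfies that constraint is what makes the transfer possible.
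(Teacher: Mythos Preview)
Your proposal is correct and follows essentially the same approach as the paper: approximate $\mu$ by an atomic $\mu'$ with rational weights (respecting the $\gamma=0$ vs.\ $\gamma>0$ dichotomy), use \eqref{lemPsidiffeq2} to compare the free energies, apply the assumed \eqref{eq:FqnuLB} at $\mu'$, and transfer the infimum from $\mathcal{M}_{0,1-q'}^d$ to $\mathcal{M}_{0,1-q}^d$ via Lemma~\ref{zetacont2pM} and Lemma~\ref{zetacont} using a truncation/extension map with $\int|\zeta-\zeta'|\leq|q-q'|$. The paper's write-up is terser---it simply asserts the existence of the matching $\zeta$ rather than spelling out the truncation/extension---but the logic is identical.
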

\begin{proof}
Take $\mu\in \MM_{0,1}$ with $\gamma=\int\!x(1-x)d\mu(x)$ and $q=\int\!x^2 d\mu(x).$ We can find an atomic $\mu'\in \MM_{0,1}$ with rational weights and $d_1(\mu,\mu')$ as small as we wish, but we can also make sure that if $\gamma=0$ then $\gamma'=\int\!x(1-x)d\mu'(x)=0$ and if $\gamma>0$ then $\gamma'>\gamma/2.$ Let $q'=\int\!x^2 d\mu'(x).$ When $d_1(\mu,\mu')<\eps^2/6,$ (\ref{lemPsidiffeq2}) implies that (assuming (\ref{eq:FqnuLB}))
\begin{align*}
&
\liminf_{N\to\infty} \E F_{N,1}^h(m,\eps)
\geq \liminf_{N\to\infty} \E F_{N,1}^h(m',\eps)-\Delta_\xi(d_1(\mu,\mu'),\eps)
\\
&\geq
\inf_{\zeta'\in \mathcal{M}_{0,1-q'}^d}\bigl(\Psi(\mu',\eps,\zeta') - \Upsilon(\zeta')\bigr)-\Delta_\xi(d_1(\mu,\mu'),\eps)
-\eps-\delta(\eps,\gamma).
\end{align*}
For any $\zeta'\in \mathcal{M}_{0,1-q'}^d$  we can find $\zeta\in \mathcal{M}_{0,1-q}^d$ such that $\int_0^1|\zeta(s)-\zeta'(s)|ds\leq |q-q'|$. Then by  Lemma \ref{zetacont},
		\begin{equation}
|\Upsilon(\zeta)  - \Upsilon(\zeta')|=
|\Upsilon(\xi_q,\zeta)  - \Upsilon(\xi_{q'},\zeta')|  \leq  c_\xi |q-q'|
	\end{equation}
and, using Lemma \ref{zetacont2pM} with $f_1=\xi_q$ and $f_2=\xi_{q'}$, we can bound the above lower limit from below by
\begin{align*}
&
\inf_{\zeta\in \mathcal{M}_{0,1-q}^d}\bigl(\Psi(\mu,\eps,\zeta) - \Upsilon(\zeta)\bigr)-2\Delta_{\xi_q}(d_1(\mu,\mu'),\eps) -\eps-\delta(\eps,\gamma)
-c_\xi|q-q'|.
\end{align*}
Here we also used that $\|\xi_q'-\xi_{q'}'\|_\infty\leq c_\xi|q-q'|$. Letting $\mu'\to\mu$ (so that $q'\to q$) and then letting $\eps\downarrow 0$ finishes the proof.
\end{proof}

We now proceed with the proof of (\ref{eq:FqnuLB}). Suppose that for some $K$, $m_1,\ldots,m_K$ is a sequence such that $\mu=K^{-1} \sum_{i\leq K}\delta_{m_i}$. For $i>K$, define $m_i$ periodically $m_i=m_{i-K}$ and define $m=m^N=(m_1,\ldots,m_N)$, so that $\mu_m\to\mu$. Since 
$$
\frac{\|m^N\|^2}{N}\to \int\!x^2d\mu(x)=q,
$$
we have
\begin{align*}
\E F_{N,1}^h(m,\eps)
&=\frac1N \E \log \sum_{\bs\in B(m^{N},\eps)}\exp\big( H^m_{N}(\tilde\bs)+\sum_{i\leq N+M}\ef(m_i)\tilde\sigma_i\big)
\\
&\approx
\frac1N \E \log \sum_{\bs\in B(m^{N},\eps)}\exp\big( H^q_{N}(\tilde\bs)+\sum_{i\leq N+M}\ef(m_i)\tilde\sigma_i\big)
\end{align*}
where $H_{N}^{q}(\tbs)$ is the Hamiltonian with the covariance $N\xi_q(R(\tbs^1,\tbs^2))$ with the function $\xi_q$ defined as in \eqref{eqXiq}. Recall that $H_{N}^{m}(\tbs)$ was defined exactly as $H_{N}^{q}(\tbs)$ only with $q$ given by $\|m^N\|^2/N$, and the standard interpolation argument (as in the proof of Lemma \ref{lem:Psidiff}) shows that we can replace $\|m^N\|^2/N$ by its limit $q$ to make sure we are working with the `same' Hamiltonian along the entire sequence.  

The proof of the lower bound will be based on a standard cavity computation. Fix $\eps>0$ and some integer $M\geq1$ and write
\begin{align}
\liminf_{N\to\infty}\E F_{N,1}^h(m,\eps)\geq &\liminf_{N\to\infty}\frac1M\Big(
\E \log \sum_{\bs\in B(m^{N+M},\eps)}\exp\big( H^q_{N+M}(\tilde\bs)+\sum_{i\leq N+M}\ef(m_i)\tilde\sigma_i\big)
\nonumber
\\
&-\E \log \sum_{\bs\in B(m^{N},\eps)}\exp\big( H^q_{N}(\tilde\bs)+\sum_{i\leq N}\ef(m_i)\tilde\sigma_i\big)
\Big).
\label{eq:lb1}
\end{align}
Notice that, since the lower limit on the left hand side does not change if we take it over $N$ proportional to $K$, we can take the lower limit on the right hand side also over such $N$. In particular, by periodicity, 
\begin{equation}
m_{N+i}=m_{i} \mbox{ and } \mu_{m^N}=\mu.
\label{eqPeriodMn}
\end{equation}

By an abuse of notation, let $m^M=(m^M_i)_{i\leq M}$ denote the vector with elements $m^M_i=m_{N+i}=m_i$. The latter are the so-called `cavity coordinates'. Note that
\[
B(m^N,\eps)\times  B(m^M,\eps)\subseteq B(m^{N+M},\eps).
\]
Assume henceforth that $M$ is large enough so that all the bands above are non-empty, for large $N$. Denoting by $H^q_{N+M}(\tilde\bs, \tilde \br)$ the value of the Hamiltonian $H^q_{N+M}$ at the vector obtained by concatenation of $\tilde\bs=\bs-m^N$ and $\tilde \br = \br-m^M$, \eqref{eq:lb1} is  bounded from below by
\begin{align}
\nonumber
\liminf_{N\to\infty}\frac1M\Big(&
\E \log \sum_{\bs\in B(m^{N},\eps)}\sum_{\br\in B(m^{M},\eps)}\exp\big( H^q_{N+M}(\tilde\bs,\tilde{\br})+\sum_{i\leq N}\ef(m_i)\tilde\sigma_i+\sum_{i\leq M}\ef(m_{i})\tilde\rho_i\big)\\
&-\E \log \sum_{\bs\in B(m^{N},\eps)}\exp\big( H^q_{N}(\tilde\bs)+\sum_{i\leq N}\ef(m_i)\tilde\sigma_i\big)
\Big).
\label{eq:lb2}
\end{align}
By replacing $B(m^N,\eps)$ by $B(m^N,\eps_N)$ in the first sum, with some sequence $\eps_N\to0$, we only reduce \eqref{eq:lb2}. To replace $B(m^N,\eps)$ by $B(m^N,\eps_N)$ in the second term, we can argue as in Lemma \ref{lem:Psidiff}, as follows. If $\bs\in B(m,\eps)\setminus  B(m,\eps_N)$ then there are two possibilities:
\begin{align}
N\eps_N&< \bs\cdot m -m\cdot m < N\eps,
\label{eqTpp1ag}
\\
N\eps_N&< m\cdot m - \bs\cdot m < N\eps.
\label{eqTpp2ag}
\end{align}
Since, by (\ref{eqPeriodMn}), $\mu_m=\mu$ and, therefore, 
$$
\frac{1}{N}(\|m\|_1-\|m\|_2^2)= \int\!x(1-x)d\mu_m(x)=\gamma.
$$ 
We will argue differently in the case when $\gamma>0$ or $\gamma=0.$

If $\gamma>0$, (arguing as below (\ref{eqTpp1}), (\ref{eqTpp2}) above) in the first case, 
\begin{equation}
\sum_{\sigma_i=1}m_i =(\|m\|_1+m\cdot \bs)/2 > (\|m\|_1+\|m\|_2^2+N\eps_N)/2\geq N\gamma/2
\label{eqTpp2ag1N}
\end{equation}
and, in the second case, 
\begin{equation}
\sum_{\sigma_i=-1}m_i =(\|m\|_1-m\cdot \bs)/2 > (\|m\|_1-\|m\|_2^2+N\eps_N)/2\geq N\gamma/2.
\label{eqTpp2ag2N}
\end{equation}
If $N\eps_N>1,$ $\eps<\gamma^2/2$ and $d:=\eps/\gamma$ then we can argue exactly as in Lemma \ref{lem:Psidiff} that
	 \begin{equation}
	\label{eq:B'toBag2}
	 \forall \bs\in B(m,\eps),\ \ \exists  \bs'\in B(m,\eps_N):\ \|\bs'-\bs\|_1\leq 2dN
	 \end{equation} 
and, therefore,
\begin{align*}
&
\E \log \sum_{\bs\in B(m^{N},\eps)}\exp\big( H^q_{N}(\tilde\bs)+\sum_{i\leq N}\ef(m_i)\tilde\sigma_i\big)
\\
&\leq 
\E \log \sum_{\bs\in B(m^{N},\eps_N)}\exp\big( H^q_{N}(\tilde\bs)+\sum_{i\leq N}\ef(m_i)\tilde\sigma_i\big)+(c_\xi+\|\ef\|_\infty)\frac{\eps}{\gamma}\log \frac{\gamma}{\eps}. 
\end{align*}
Using this, we can bound \eqref{eq:lb2} from below by  
\begin{align}
\nonumber
\liminf_{N\to\infty}\frac1M\Big(&
\E \log \sum_{\bs\in B(m^{N},\eps_N)}\sum_{\br\in B(m^{M},\eps)}\exp\big( H^q_{N+M}(\tilde\bs,\tilde{\br})+\sum_{i\leq N}\ef(m_i)\tilde\sigma_i+\sum_{i\leq M}\ef(m_{i})\tilde\rho_i\big)\\
&-\E \log \sum_{\bs\in B(m^{N},\eps_N)}\exp\big( H^q_{N}(\tilde\bs)+\sum_{i\leq N}\ef(m_i)\tilde\sigma_i\big)
\Big)-(c_\xi+\|\ef\|_\infty)\frac{\eps}{\gamma}\log \frac{\gamma}{\eps}.
\label{eq:lb2'}
\end{align}

Now, let us consider the case $\gamma=0,$ when all $m_i$ are equal to $0$ or $1.$ Since $\bs\cdot m -m\cdot m\leq N(\|m\|_1-\|m\|_2^2)=0,$ the first case (\ref{eqTpp1ag}) is not possible. The second case (\ref{eqTpp2ag}) can be rewritten as
\begin{equation}
N\eps_N< m\cdot m- \bs\cdot m=2\sum_{\sigma_i=-1}m_i
=2\sum_{\sigma_i=-1}\indic(m_i=1)< N\eps.
\label{eqTpp2ag2}
\end{equation}
First of all, if we flip all $\sigma_i=-1$ corresponding to $m_i=1$ to $+1$ then $m\cdot m- \bs\cdot m$ will become $0$. If we flip them consecutively then somewhere along the way we will find $\bs'\in B(m,\eps_N).$ On the other hand, the second inequality in (\ref{eqTpp2ag2}) implies that the number of such $m_i= 1$ is bounded by $N\eps/2.$ Therefore, we need to flip at most $N\eps/2$ coordinates, which proves that
	 \begin{equation}
	\label{eq:B'toBag3}
	 \forall \bs\in B(m,\eps),\ \ \exists  \bs'\in B(m,\eps_N):\ \|\bs'-\bs\|_1\leq \eps N/2.
	 \end{equation} 
Therefore, 
\begin{align*}
&
\E \log \sum_{\bs\in B(m^{N},\eps)}\exp\big( H^q_{N}(\tilde\bs)+\sum_{i\leq N}\ef(m_i)\tilde\sigma_i\big)
\\
&\leq 
\E \log \sum_{\bs\in B(m^{N},\eps_N)}\exp\big( H^q_{N}(\tilde\bs)+\sum_{i\leq N}\ef(m_i)\tilde\sigma_i\big)+(c_\xi+\|\ef\|_\infty)\eps\log \frac{1}{\eps}
\end{align*}
and \eqref{eq:lb2} is bounded from below by  
\begin{align}
\nonumber
\liminf_{N\to\infty}\frac1M\Big(&
\E \log \sum_{\bs\in B(m^{N},\eps_N)}\sum_{\br\in B(m^{M},\eps)}\exp\big( H^q_{N+M}(\tilde\bs,\tilde{\br})+\sum_{i\leq N}\ef(m_i)\tilde\sigma_i+\sum_{i\leq M}\ef(m_{i})\tilde\rho_i\big)\\
&-\E \log \sum_{\bs\in B(m^{N},\eps_N)}\exp\big( H^q_{N}(\tilde\bs)+\sum_{i\leq N}\ef(m_i)\tilde\sigma_i\big)
\Big)-(c_\xi+\|\ef\|_\infty)\eps\log \frac{1}{\eps}.
\label{eq:lb2'agA}
\end{align}
Recalling the notation (\ref{deltaag2}), both cases can be combined as
\begin{align}
\nonumber
\liminf_{N\to\infty}\frac1M\Big(&
\E \log \sum_{\bs\in B(m^{N},\eps_N)}\sum_{\br\in B(m^{M},\eps)}\exp\big( H^q_{N+M}(\tilde\bs,\tilde{\br})+\sum_{i\leq N}\ef(m_i)\tilde\sigma_i+\sum_{i\leq M}\ef(m_{i})\tilde\rho_i\big)\\
&-\E \log \sum_{\bs\in B(m^{N},\eps_N)}\exp\big( H^q_{N}(\tilde\bs)+\sum_{i\leq N}\ef(m_i)\tilde\sigma_i\big)
\Big)-\delta(\eps,\gamma).
\label{eq:lb2'agAag}
\end{align}
The advantage of working with \eqref{eq:lb2'agAag} instead of \eqref{eq:lb2} is that the self-overlap $R(\tbs,\tbs)$ converges uniformly over $\bs\in B(m,\eps_N)$ to $1-q$, a fact which will later be important when we invoke the Ghirlanda-Guerra identities.

Next we use the standard cavity computation, known as the Aizenman-Sims-Starr scheme \cite{AS2} (see e.g. \cite{SKmodel}, or \cite{chen2013}). Consider the Hamiltonian
\begin{equation}
 H_{N,M}^q(\tilde\bs) := \sum_{p\geq2}\frac{\beta_p(q)}{(N+M)^{(p-1)/2}}\sum_{1\leq i_1,\ldots,i_p\leq N}g_{i_1,\ldots,i_p}\tilde\sigma_{i_1}\cdots \tilde\sigma_{i_p},
\end{equation}
with the covariance
$$
\E  H_{N,M}^q(\tilde\bs^1) H_{N,M}^q(\tilde\bs^2) = (N+M) \xi_q \Big(\frac{N}{N+M}R(\tilde\bs^1,\tilde\bs^2 )\Big)
$$
and independent Hamiltonians $z(\tilde\bs)$ and $y(\tilde\bs)$ with covariances
\begin{align*}
\E z(\tilde\bs^1)z(\tilde\bs^2) &=  \xi_q' \bigl(R(\tilde\bs^1,\tilde\bs^2 )\bigr),
\\
\E y(\tilde\bs^1)y(\tilde\bs^2) &=  \theta_q' \bigl(R(\tilde\bs^1,\tilde\bs^2 )\bigr),
\end{align*}
where $\theta_q(x)=x\xi_q'(x)-\xi_q(x)$. Let  $z_i(\tilde\bs)$ be independent copies of  $z(\tilde\bs)$ for $i\geq 1.$ We denote by $G_{M,N}(\tilde\bs)$ the Gibbs measure proportional to
\begin{equation}
\label{eq:GNM}
\indic_{B(m^N,\eps_N)}(\bs)\exp\big( H_{N,M}^q(\tilde\bs)+\sum_{i\leq N}\ef(m_i)\tilde\sigma_i\big)
\end{equation}
and by $\langle\,\cdot\,\rangle_{N,M}$ its average. If we denote
\begin{equation}
\label{eq:TNM}
\begin{aligned}
T_{N,M}:= &\,\, \frac1M
\E \log \Big\langle\sum_{\br\in B(m^{M},\eps)}\exp\sum_{i\leq M}\big( \tilde\rho_i z_i(\tilde\bs) 
+\ef(m_{i})\tilde\rho_i
\big)\Big\rangle_{N,M}
\\
&\,\,\,- \frac1M\E \log \Big\langle \exp\sqrt M y(\tilde\bs)\Big\rangle_{N,M}
\end{aligned}
\end{equation}
then a straightforward interpolation argument can be used to rewrite \eqref{eq:lb2'agAag} as
\begin{equation}
\label{eq:lb3}
\liminf_{N\to\infty} T_{N,M} -\delta(\eps,\gamma).
\end{equation}
Recall the perturbation $s_Ng(\tilde\bs)$ from \eqref{add:eq3}. Everywhere above we could have replaced $H_N^{q}$ by the perturbed Hamiltonian
\begin{equation}
\label{eq:Hpert2}
H_{N,q}^{{\rm pert}}(\tilde\bs)=H_N^{q}(\tilde\bs)+s_N g(\tilde\bs),
\end{equation}
with $s_N=N^{1/3}$ (here we can take any power strictly between $1/4$ and $1/2$). Then one can still show (see Section 3.5 in \cite{SKmodel}) that (\ref{eq:lb3}) still holds uniformly over the choice of $(x_p)$ in the perturbation term $g(\tilde\bs)$, with the Gibbs measure \eqref{eq:GNM} modified by
\begin{align*}
G_{N,M}^{\rm{pert}}(\tilde\bs)\thicksim 
\indic_{B(m^N,\eps_N)}(\bs)
\exp\bigl(H_{N,M}^q(\tilde\bs)+\sum_{i\leq N}\ef(m_i)\tilde{\sigma}_i+s_Ng(\tilde\bs)\bigr).
\end{align*}
Moreover, we can choose the parameters in the perturbation term $x_p^{N,M}$ in such a way that the above Gibbs measure satisfies the Ghirlanda-Guerra identities, asymptotically. This chain of arguments is standard, and we refer the reader to Section 3.6 of \cite{SKmodel} for more details.

Next, consider a subsequence along which the lower limit in \eqref{eq:lb3} is obtained, and take a further subsequence along which the array of overlaps $(R(\tilde\bs^{\ell}\cdot \tilde\bs^{\ell'}) )_{\ell,\ell'\geq1}$ of configurations sampled from $G_{N,M}^{\rm{pert}}$ converges in the sense of finite dimensional distributions. By the main result of \cite{ultrametricity}, the Ghirlanda-Guerra identities  imply that the limiting array of overlaps is ultrametric and it can be approximated (in the sense of finite dimensional distributions) by the overlaps generated by a sequence of Ruelle probability cascades, say corresponding to a sequence of distribution functions $\zeta_n$  with finitely many atoms, as in \eqref{eq:zeta_fop}. 

Recall $\Psi_M(m,\eps,\zeta)$ and  $\Upsilon(\zeta)$ from \eqref{eq:Psi} and \eqref{eq:Upsilon}. Denote
\[
T_{\zeta,M}:= \Psi_M(m^M,\eps,\zeta) - \Upsilon(\zeta).
\]
Recall the notation $T_{N,M}$ from \eqref{eq:TNM}. Note that the covariance function of the Gaussian processes  $g_{\xi'_{q},i}$, $g_{\theta_{q}}$ and $z_i$, $y$ used in  the definition of $T_{\zeta,M}$ and $T_{N,M}$ has the same dependence on the overlap the variables $\alpha$ and $\tilde \bs$, respectively.    Combining this with a straightforward  generalization of \cite[Theorem 1.3]{SKmodel}, we have that, there exists $\zeta^M\in \mathcal{M}_{0,1-q}^d$ such that 
\[
\big|\, \liminf_{N\to\infty} T_{N,M} -  T_{\zeta^M,M} \big| < \eps/2.
\] 
Finally, take a subsequence of $(\zeta^M)$ that converges to some limit distribution $\zeta'$. By Lemma \ref{zetacont}, if we choose $\zeta\in \mathcal{M}_{0,1-q}^d$ such that $d_1(\zeta',\zeta)$ is small enough, we can ensure that $|T_{\zeta^M,M}-T_{\zeta,M}|<\eps/2$ for large enough $M$. This together with (\ref{eq:lb3}) gives that
$$
\liminf_{N\to\infty}\E F_{N,1}^h(m,\eps)
\geq \Psi_M(m^M,\eps,\zeta) - \Upsilon(\zeta) -\eps
-\delta(\eps,\gamma),
$$
for $M$ large enough. Taking the limit $M\to\infty$ and using Lemma \ref{lem:Psimueps} proves \eqref{eq:FqnuLB}. 
\qed

\subsection{Proof of Theorem \ref{thm1}.} \label{sec3.4} 
Let us recall the definition of $\Psi_N(m,\eps,\zeta)$ in (\ref{eq:Psi}),
\begin{align}
\label{eq:PsiNoz}
\Psi_N(m,\eps,\zeta) &= \frac1N \E \log \sum_{\alpha\in \mathbb N^r}v_\alpha 
\sum_{\bs \in B(m,\eps)}\exp \sum_{i\leq N} (g_{\xi_q',i}(\alpha)+\ef(m_i))\tilde\sigma_i,
\end{align}
for $\zeta\in \mathcal{M}_{0,1-q}^d.$ For $a\in [0,1]$ and $(t,x)\in [0,1-q]\times \mathbb{R}$, let $\Phi_{a,\zeta}(t,x)$ be the solution of
\begin{align*}
\partial_t\Phi_{a,\zeta}&=-\frac{\xi_q''(t)}{2}\bigl(\partial_{xx}\Phi_{a,\zeta}+\zeta(t)(\partial_x\Phi_{a,\zeta})^2\bigr)
\end{align*}
with the boundary condition
\begin{align}
\Phi_{a,\zeta}(1-q,x)&=\log\sum_{\sigma=\pm 1}e^{(\sigma-a)x}
=\log 2-ax+\log \ch x.
\label{eqS24bc}
\end{align}
By Proposition \ref{prop:Flim1}, in order to finish the proof of Theorem \ref{thm1}, we need to prove the following.
\begin{prop}\label{ThSec24prop}
 For any $\zeta\in \mathcal{M}_{0,1-q}^d,$ and $\mu\in \MM_{0,1}$ with $\int\! x^2d\mu(x)=q$, 
\begin{equation}
\lim_{\eps\downarrow 0}\lim_{N\to\infty} \Psi_N(m,\eps,\zeta) = 
\inf_{\lambda\in\Reals}
\int\! \Phi_{a,\zeta}(0,\lambda a+\ef(a))\,d\mu(a)
\label{eqSec24Prop}
\end{equation}
for any sequence $m=m^N$ such that $\mu_m\to\mu.$ 
\end{prop}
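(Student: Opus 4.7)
The strategy is to enforce the band constraint $\bs\in B(m,\eps)$ via a Lagrange multiplier $\lambda$, which reduces the problem to an unconstrained model whose free energy is computed site-by-site by the standard Ruelle probability cascade/Parisi PDE formalism. The key identity is that, without any band constraint and for any external field $h=(h_i)_{i\leq N}$,
\[
\frac{1}{N}\E\log\sum_{\alpha\in\mathbb{N}^r}v_\alpha\sum_{\bs\in\Sigma_N}\exp\sum_{i\leq N}(g_{\xi_q',i}(\alpha)+h_i)\tilde\sigma_i=\frac{1}{N}\sum_{i\leq N}\Phi_{m_i,\zeta}(0,h_i),
\]
because, conditional on $\alpha$, the sum over $\bs$ factorizes as $\prod_i\sum_{\sigma_i=\pm 1}e^{(g_{\xi_q',i}(\alpha)+h_i)(\sigma_i-m_i)}$, which matches the boundary condition $\Phi_{a,\zeta}(1-q,x)=\log\sum_\sigma e^{(\sigma-a)x}$ at $a=m_i$, and the discrete-$\zeta$ RPC averages over $\alpha$ then reproduce the Parisi PDE evolution from $t=1-q$ back to $t=0$ (see \cite[Sec.~3.3]{SKmodel}).

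For the upper bound, I would note that $|m\cdot(\bs-m)|\leq N\eps$ on $B(m,\eps)$, so for every $\lambda\in\Reals$,
\[
\sum_{\bs\in B(m,\eps)}e^{\sum_i(g_{\xi_q',i}(\alpha)+\ef(m_i))\tilde\sigma_i}\leq e^{N\eps|\lambda|}\sum_{\bs\in\Sigma_N}e^{\sum_i(g_{\xi_q',i}(\alpha)+\ef(m_i)+\lambda m_i)\tilde\sigma_i}.
\]
Combining with the identity above and the continuity of $(a,x)\mapsto\Phi_{a,\zeta}(0,x)$ (inherited from the Parisi PDE), passing to $N\to\infty$ via the weak convergence $\mu_{m^N}\to\mu$, then taking $\eps\downarrow 0$ and infimizing over $\lambda$, yields the ``$\leq$'' direction of \eqref{eqSec24Prop}.

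For the lower bound, let $\lambda^*$ achieve the infimum on the right-hand side of \eqref{eqSec24Prop}. Existence follows from convexity of $\lambda\mapsto\int\Phi_{a,\zeta}(0,\ef(a)+\lambda a)d\mu(a)$ together with its linear growth as $|\lambda|\to\infty$; in degenerate cases where the integrand is independent of $\lambda$ (e.g.\ $\supp(\mu)\subseteq\{0\}$) I take $\lambda^*=0$, in which case the band constraint is automatic. Stationarity at $\lambda^*$ reads $\int a\,\partial_x\Phi_{a,\zeta}(0,\ef(a)+\lambda^*a)\,d\mu(a)=0$. Consider the \emph{tilted} unconstrained model with external field $h_i^*=\ef(m_i)+\lambda^*m_i$, write $F_N(\lambda)$ for its free energy and $\langle\,\cdot\,\rangle_\lambda$ for its Gibbs measure. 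The key Griffiths-type bound is
\[
\frac{1}{N}\log\langle\indic_{\{m\cdot(\bs-m)>N\eps\}}\rangle_{\lambda^*}\leq F_N(\lambda^*+\delta)-F_N(\lambda^*)-\delta\eps,
\]
obtained by bounding the restricted partition function at $\lambda^*+\delta$ from below using the extra tilt $\delta\cdot m\cdot(\bs-m)\geq\delta N\eps$; a symmetric bound holds for the opposite inequality. Choosing $\delta=\delta(\eps)$ small so that $F(\lambda^*+\delta)-F(\lambda^*)<\delta\eps/2$ (possible by convexity and stationarity) and invoking Gaussian concentration of $F_N$ gives $\E\langle\indic_{B(m,\eps)^c}\rangle_{\lambda^*}\to 0$. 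Consequently,
\[
\Psi_N(m,\eps,\zeta)\geq -\eps|\lambda^*|+\frac{1}{N}\sum_i\Phi_{m_i,\zeta}(0,h_i^*)+\frac{1}{N}\E\log\langle\indic_{B(m,\eps)}\rangle_{\lambda^*},
\]
whose right-hand side converges to $\int\Phi_{a,\zeta}(0,\ef(a)+\lambda^*a)\,d\mu(a)-\eps|\lambda^*|$ as $N\to\infty$, matching the upper bound as $\eps\downarrow 0$.

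The hardest part will be the concentration/Griffiths step in the lower bound, which certifies that the tilted Gibbs measure actually sits inside the band $B(m,\eps)$. Conditional on a single branch of the RPC the computation is transparent because of the single-site factorization, but one must combine Gaussian concentration of $F_N$, convexity of $\lambda\mapsto F_N(\lambda)$, and the weak convergence $\mu_{m^N}\to\mu$ to upgrade the expectation-level limit of $F_N(\lambda)$ to convergence of the relevant Gibbs averages uniformly near $\lambda=\lambda^*$.
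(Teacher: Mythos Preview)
Your strategy---Lagrange multiplier for the upper bound, tilted-Gibbs concentration on the band for the lower bound---is exactly the paper's approach. The upper bound is fine. For the lower bound the paper likewise fixes the optimal $\lambda_0$, shows that the free energy restricted to $B^\pm(m,\eps)$ is strictly smaller than on $\Sigma_N$ by a gap of order $\eps^2$ (your Griffiths inequality with $\delta$ optimized via a second-order Taylor expansion), and then uses concentration to pass from expectations to the random functional. Two points need fixing, however.

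First, your ``Gaussian concentration of $F_N$'' is the wrong tool: the functional $\psi_N(m;\lambda,A)$ involves the Ruelle probability cascade weights $(v_\alpha)$, which are not Gaussian, and conditioning on them does not suffice. The paper instead rewrites $\psi_N$ in terms of the unnormalized Poisson weights $w_\alpha$ and uses the Bolthausen--Sznitman invariance property to show that $\psi_N(m;\lambda,A)$ is equal in law to a deterministic constant plus $\frac{1}{N}\log\sum_{x\in\Pi_{\zeta_0}}x$ for a single Poisson process $\Pi_{\zeta_0}$; hence its fluctuations are $O(1/N)$, which is what drives the argument.

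Second, your treatment of the degenerate case is incomplete. If $\supp(\mu)\subseteq\{0,1\}$ with $\mu(\{1\})>0$, the integrand $\int\Phi_{a,\zeta}(0,\lambda a+\ef(a))\,d\mu(a)$ is \emph{not} independent of $\lambda$: the $a=1$ contribution $\Phi_{1,\zeta}(0,\lambda+\ef(1))$ is strictly decreasing in $\lambda$, so the infimum is attained only at $\lambda\to+\infty$ and no finite $\lambda^*$ exists. The paper handles this case by a separate direct computation: choose $m$ with coordinates in $\{0,1\}$, observe that the band constraint forces all but $O(N\eps)$ of the coordinates with $m_i=1$ to have $\sigma_i=1$, and compute the remaining free energy on the $m_i=0$ coordinates directly.
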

The infimum in the Proposition \ref{prop:Flim1} is taken over atomic $\zeta\in \mathcal{M}_{0,1-q}^d$, but, since the boundary condition satisfies
\begin{equation}
|\partial_x \Phi_{a,\zeta}(1-q,x)|\leq 2, |\partial_{xx}\Phi_{a,\zeta}(1-q,x)|\leq 1,
\label{sec24boundDer}
\end{equation}
one can show (using the standard argument of Guerra mentioned in Lemma \ref{zetacont} above) that the functional $\zeta\to \Phi_{a,\zeta}(0,x)$ is uniformly $d_1$-Lipschitz over all parameters, and the infimum can be taken over all $\zeta.$

Since for the rest of the section $\zeta\in \mathcal{M}_{0,1-q}^d$ is fixed, we will omit it and write (\ref{eq:PsiNoz}) as
\begin{align}
\label{eq:PsiNozag}
\Psi_N(m,\eps) &= \frac1N \E \log \sum_{\alpha\in \mathbb N^r}v_\alpha 
\sum_{\bs \in B(m,\eps)}\exp \sum_{i\leq N} (g_{\xi_q',i}(\alpha)+\ef(m_i))\tilde\sigma_i.
\end{align}
For $\lambda\in \mathbb{R}$ and $A\subseteq \Sigma_N,$ let us define 
\begin{align}
\nonumber
\Psi_N(m;\lambda, A) 
&= \frac1N \E \log \sum_{\alpha\in \mathbb N^r}v_\alpha 
\sum_{\bs \in A} \exp\Bigl[\,\sum_{i\leq N} \bigl(g_{\xi_q',i}(\alpha)+\ef(m_i)\bigr)\tilde \sigma_i+\lambda N R(\tilde\bs,m)\,\Bigr]
\\
&= \frac1N \E \log \sum_{\alpha\in \mathbb N^r}v_\alpha 
\sum_{\bs \in A} \exp\sum_{i\leq N} \bigl(g_{\xi_q',i}(\alpha)+\lambda m_i+\ef(m_i)\bigr)\tilde \sigma_i.
\label{eq:Phi}
\end{align}
For $\eps>0$, recall that $B(m,\eps)=\{\bs: |R(\tilde\bs,m)|<\eps\}$ and let
$$
B^+(m,\eps)=\{\bs: R(\tilde\bs,m)\geq \eps\},\,\,
B^-(m,\eps)=\{\bs: R(\tilde\bs,m)\leq -\eps\}.
$$
Note that for $\gamma\geq 0$,
\begin{equation}
\Psi_N(m;\lambda, B^\pm(m,\eps))\leq \Psi_N(m;\lambda\pm \gamma, \Sigma_N)-\gamma\eps.
\label{eqSec24PsiEcomp}
\end{equation}
Our strategy to prove Proposition \ref{ThSec24prop} will be to show that, with the choice of $\lambda=\lambda_0$ corresponding to the minimizer in (\ref{eqSec24Prop}), for any $\eps>0$, the  quantities $\Psi_N(m;\lambda, B^\pm(m,\eps))$ will be strictly smaller than $\Psi_N(m;\lambda, \Sigma_N),$ which will imply (via concentration) that $\Psi_N(m;\lambda_0, B(m,\eps))\approx \Psi_N(m;\lambda_0, \Sigma_N).$ To achieve this goal, we need two auxiliary lemmas.
\begin{lem}\label{add:lem1}
For any $\lambda>0,$ we have
\begin{equation}
\label{add:lem1Eq1}
\Psi_N(m;\lambda, \Sigma_N)=\int\! \Phi_{a,\zeta}(0,\lambda a+\ef(a))d\mu_m(a)
\end{equation}
and
\begin{equation}
\bigl| \Psi_N(m,\eps) -\Psi_N(m;\lambda, B(m,\eps)) \bigr| \leq |\lambda| \eps.
\end{equation}
\end{lem}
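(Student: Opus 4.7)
The plan is to treat the two claims separately. The identity \eqref{add:lem1Eq1} will be proved by peeling off the sum over $\Sigma_N$ coordinate by coordinate, reducing each inner sum to the boundary value of $\Phi_{a,\zeta}$, and then invoking the standard Ruelle cascade recursion to evaluate the resulting tree average at $t=0$. The second estimate is a routine consequence of the definition of the band.

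For \eqref{add:lem1Eq1}, note that since $\tilde\sigma_i = \sigma_i - m_i$ and the sum over $\Sigma_N$ factorizes across coordinates, we may write, with $h_i(\alpha) := g_{\xi_q',i}(\alpha) + \lambda m_i + \ef(m_i)$,
$$\sum_{\bs \in \Sigma_N}\exp\sum_{i\leq N}h_i(\alpha)\tilde\sigma_i = \prod_{i\leq N}\sum_{\sigma=\pm 1}e^{h_i(\alpha)(\sigma - m_i)} = \prod_{i\leq N}\exp\Phi_{m_i,\zeta}\bigl(1-q, h_i(\alpha)\bigr),$$
where the last equality uses the boundary condition \eqref{ParisiPDEBoundary}. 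Therefore
$$\Psi_N(m;\lambda,\Sigma_N) = \frac{1}{N}\E \log \sum_{\alpha\in\mathbb{N}^r} v_\alpha \exp\sum_{i\leq N}\Phi_{m_i,\zeta}\bigl(1-q,\, g_{\xi_q',i}(\alpha) + \lambda m_i + \ef(m_i)\bigr).$$
Since the processes $\{g_{\xi_q',i}(\cdot)\}_{i\leq N}$ are mutually independent with the common tree covariance $\xi_q'(q_{\alpha^1\wedge\alpha^2})$, the standard Ruelle cascade recursion (see, e.g., Section 2.3 of \cite{SKmodel}) collapses the tree average layer by layer. At each layer $k$ from $r$ down to $1$, one peels off independent Gaussian increments of variance $\xi_q'(q_k)-\xi_q'(q_{k-1})$ by the Cole--Hopf step with exponent $\zeta_{k-1}$; by construction this is precisely the discrete scheme that advances the Parisi PDE \eqref{ParisiPDE} backwards from $t = q_k$ to $t=q_{k-1}$ for $\zeta\in \mathcal{M}_{0,1-q}^d$ of the form \eqref{eq:zeta_fop}. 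Iterating down to $k=1$ therefore produces $\sum_i \Phi_{m_i,\zeta}(0,\lambda m_i+\ef(m_i))$, and dividing by $N$ and rewriting the sum as an integral against $\mu_m$ yields \eqref{add:lem1Eq1}.

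For the second estimate, observe that the summand in $\Psi_N(m;\lambda, B(m,\eps))$ differs from that in $\Psi_N(m,\eps)$ only by the extra factor $\exp(\lambda N R(\tilde\bs, m))$ inside the sum over $\bs\in B(m,\eps)$, since the extra piece $\lambda m_i\tilde\sigma_i$ in the exponent sums to $\lambda N R(\tilde\bs,m)$. By the definition of the band, $|R(\tilde\bs, m)| < \eps$ uniformly in $\bs\in B(m,\eps)$, so this factor is sandwiched between $e^{-|\lambda|\eps N}$ and $e^{|\lambda|\eps N}$. Pulling these bounds through the sum, the logarithm, and the expectation, and then dividing by $N$, gives the claimed inequality $|\lambda|\eps$.

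The only real subtlety lies in the Ruelle cascade step. The identification of the cascade recursion with the backward Parisi PDE is driven entirely by the fact that the covariance kernel $\xi_q'(q_{\alpha^1\wedge\alpha^2})$ generates Gaussian increments of variance $\xi_q'(q_k) - \xi_q'(q_{k-1}) = \int_{q_{k-1}}^{q_k}\xi_q''(t)\,dt$ between successive levels, which matches the diffusion coefficient appearing in \eqref{ParisiPDE} on each atom of $\zeta$; once this bookkeeping is in place, the identity reduces to a standard fact about Ruelle cascades driven by independent site Gaussians, and no new analytic input is required.
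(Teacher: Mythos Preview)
Your proof is correct and follows essentially the same route as the paper: factorize the $\Sigma_N$ sum coordinatewise, recognize the boundary value $\Phi_{m_i,\zeta}(1-q,\cdot)$, and then use the standard Ruelle cascade recursion to pass from $t=1-q$ to $t=0$; the second claim is the same one-line band argument in both. The only cosmetic difference is that the paper states the decoupling step (splitting the product over $i$ into a sum outside the cascade log) as a direct consequence of ``standard properties of the Ruelle probability cascades'', while you phrase it as a layer-by-layer Cole--Hopf recursion matching the discrete Parisi PDE; these are the same mechanism.
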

\begin{proof}
Recall the notation in (\ref{eqS24bc}). Then, by the standard properties of the Ruelle probability cascades,
\begin{align*}
\Psi_N(m;\lambda, \Sigma_N)
&= \frac1N \E \log \sum_{\alpha\in \mathbb N^r}v_\alpha 
\sum_{\bs \in \Sigma_N} \exp\sum_{i\leq N} \bigl(g_{\xi_q',i}(\alpha)+\lambda m_i+\ef(m_i)\bigr)\tilde \sigma_i
\\
&= \frac1N \E \log \sum_{\alpha\in \mathbb N^r}v_\alpha 
\prod_{i\leq N}\sum_{\sigma=\pm1} \exp (\sigma-m_i)\bigl(g_{\xi_q',i}(\alpha)+\lambda m_i+\ef(m_i)\bigr)
\\
&= \frac1N\sum_{i\leq N} \E \log \sum_{\alpha\in \mathbb N^r}v_\alpha 
\exp \Phi_{m_i,\zeta}\bigl(1-q,g_{\xi_q',i}(\alpha)+\lambda m_i+\ef(m_i)\bigr)
\\
&= \frac1N\sum_{i\leq N} \Phi_{m_i,\zeta}\bigl(0,\lambda m_i+\ef(m_i)\bigr)
=\int\! \Phi_{a,\zeta}(0,\lambda a+\ef(a))d\mu_m(a),
\end{align*}
which finishes the proof of the first claim. The second claim is obvious because, for $\bs\in B(m,\eps),$ we have $|R(\tilde\bs,m)|<\eps$. 
\end{proof}

Next, we will show that the minimizer in (\ref{eqSec24Prop}) is finite under some assumption on $\mu$. The case where this assumption is violated will be handled differently.
\begin{lem}\label{add:lem2}
If $s=\int\! a(1-a)\,d\mu(a)>0$ then 
\begin{equation}
\lim_{|\lambda|\rightarrow\infty}\int\!\Phi_{a,\zeta}(0,\lambda a+\ef(a))\,d\mu(a)=+\infty.
\label{lem2eq1qp}
\end{equation}
If $s=\int\! a(1-a)\,d\mu(a)=0$ then 
\begin{align}
\inf_{\lambda\in\Reals}\int\!\Phi_{a,\zeta}(0,\lambda a+\ef(a))\,d\mu(a)
&=
\Phi_{0,\zeta}(0,\ef(0))\mu(\{0\}).
\end{align}
\end{lem}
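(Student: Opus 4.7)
Setting $f(\lambda) := \int \Phi_{a,\zeta}(0,\lambda a + \ef(a))\,d\mu(a)$, my approach treats the two cases in parallel. In both, I analyze $f$ as a convex function of $\lambda$ whose asymptotic behavior is controlled by the identity $\partial_x\Phi_{a,\zeta}(0,\pm\infty) = \pm 1 - a$ recalled just after (\ref{eqDefPsief}), together with the uniform bound $|\partial_x\Phi_{a,\zeta}(0,\cdot)|\leq 2$ (which follows from the boundary bound (\ref{sec24boundDer}) by standard maximum-principle propagation through the Parisi PDE).

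For \emph{Part 1}, convexity of $\Phi_{a,\zeta}(0,\cdot)$ makes $f$ convex in $\lambda$, and differentiation under the integral (justified by the uniform derivative bound) gives
\begin{equation*}
f'(\lambda) = \int a\,\partial_x\Phi_{a,\zeta}(0,\lambda a + \ef(a))\,d\mu(a).
\end{equation*}
Since $\ef$ is bounded, for any $a > 0$ one has $\lambda a + \ef(a) \to \pm\infty$ as $\lambda\to\pm\infty$, while the integrand vanishes identically at $a = 0$ by the factor $a$. Dominated convergence with dominant $2$ then yields $f'(+\infty) = \int a(1-a)\,d\mu = s > 0$ and $f'(-\infty) = -\int a(1+a)\,d\mu \leq -s < 0$ (the last inequality because $\int a(1+a)\,d\mu - s = 2\int a^2\,d\mu \geq 0$). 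A convex function on $\mathbb{R}$ whose derivative attains both strictly positive and strictly negative values grows to $+\infty$ at both ends, which is (\ref{lem2eq1qp}).

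For \emph{Part 2}, the inequality $a(1-a)\geq 0$ on $[0,1]$ with equality only at $\{0,1\}$ combined with $s = 0$ forces $\mu = p\delta_0 + (1-p)\delta_1$, where $p = \mu(\{0\})$. The integral reduces to $p\,\Phi_{0,\zeta}(0,\ef(0)) + (1-p)\,\Phi_{1,\zeta}(0,\lambda + \ef(1))$, and only the second summand depends on $\lambda$. Since $\partial_x\Phi_{1,\zeta}(0,\pm\infty) = \pm 1 - 1 \leq 0$, the function $\Phi_{1,\zeta}(0,\cdot)$ is strictly decreasing, so
\begin{equation*}
\inf_\lambda \Phi_{1,\zeta}(0,\lambda + \ef(1)) = \lim_{x\to+\infty}\Phi_{1,\zeta}(0,x),
\end{equation*}
and the lemma reduces to verifying this limit equals $0$.

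This last step is the main obstacle. I would handle it first for discrete $\zeta \in \mathcal{M}_{0,1-q}^d$ and pass to general $\zeta$ via the uniform $d_1$-continuity of $\zeta\mapsto\Phi_{1,\zeta}(0,x)$ noted just after Proposition \ref{ThSec24prop}. The boundary specializes to $\Phi_{1,\zeta}(1-q,x) = \log 2 - x + \log\ch x = \log(1 + e^{-2x})$, which lies in $[0,\log 2]$ and tends to $0$ as $x\to+\infty$. On each constancy interval $[q_{p-1},q_p)$ of $\zeta$ (with value $\zeta_{p-1} > 0$), the Cole-Hopf substitution $W := \exp(\zeta_{p-1}\Phi_{1,\zeta})$ converts the Parisi PDE into a backward heat equation, yielding the standard iterated Gaussian representation
\begin{equation*}
\Phi_{1,\zeta}(q_{p-1},x) = \frac{1}{\zeta_{p-1}}\log\E\exp\bigl(\zeta_{p-1}\Phi_{1,\zeta}(q_p, x + \sigma_p Z)\bigr),\quad \sigma_p^2 = \xi_q'(q_p) - \xi_q'(q_{p-1}),
\end{equation*}
with $Z$ standard Gaussian. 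The bounds $0\leq \Phi_{1,\zeta}(t,\cdot)\leq \log 2$ are preserved at every step, so dominated convergence propagates the pointwise limit $x\to+\infty$ through each Gaussian average, giving $\Phi_{1,\zeta}(0,x)\to 0$. Inserting this into the decomposition produces $\inf_\lambda f(\lambda) = p\,\Phi_{0,\zeta}(0,\ef(0)) = \mu(\{0\})\,\Phi_{0,\zeta}(0,\ef(0))$, as required.
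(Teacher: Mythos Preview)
Your approach to Part~1 is correct and genuinely different from the paper's. The paper obtains a pointwise lower bound $\Phi_{a,\zeta}(0,\lambda a+\ef(a))\geq |\lambda|a(1-a)-c$ by comparing $\zeta$ with the zero measure and using the elementary bound $\log 2\ch(x)\geq |x|$ on the boundary condition; integrating in $a$ then gives the result directly. Your derivative argument is cleaner in that it avoids any explicit lower bound, but it leans on the asymptotics $\partial_x\Phi_{a,\zeta}(0,\pm\infty)=\pm1-a$, which the paper only states without proof. Either route works.

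In Part~2 there is a gap. The boundary function $\Phi_{1,\zeta}(1-q,x)=\log(1+e^{-2x})$ is \emph{not} bounded above by $\log 2$; it grows like $-2x$ as $x\to-\infty$. Consequently your claimed two-sided bound $0\leq\Phi_{1,\zeta}(t,\cdot)\leq\log 2$ fails already at the boundary, and does not propagate through the Cole--Hopf steps. Since the Gaussian shift $x+\sigma_p Z$ visits negative values with positive probability no matter how large $x$ is, the uniform dominator you invoke does not exist. The fix is immediate: you have already argued that $\Phi_{1,\zeta}(t,\cdot)$ is nonnegative and decreasing. Hence for $x\geq x_0$ the integrand $\exp\bigl(\zeta_{p-1}\Phi_{1,\zeta}(q_p,x+\sigma_p Z)\bigr)$ is dominated by its (integrable) value at $x_0$, and monotone (or dominated) convergence applies. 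This is exactly what the paper does, invoking monotone convergence in one line.
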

\begin{proof}
Note that the boundary condition in (\ref{eqS24bc}) satisfies
$$
\Phi_{a,\zeta}(1-q,x)=\log 2-ax+\log \ch(x) \geq -ax+|x|.
$$
Moreover, $\Phi_{a,\zeta}(0,x)$ is non-decreasing in $\zeta$ and, therefore, setting $\zeta$ to be identically $0$ on $[0,1-q)$ for the lower bound and letting $g\sim N(0,\xi_q'(1-q))$, using the Feynman-Kac formula we get
\begin{align}
\Phi_{a,\zeta}(0,\lambda a+\ef(a))
&\geq \e \bigl(-a(\lambda a+\ef(a)+g)+|\lambda a+\ef(a)+g|\bigr)
\nonumber
\\
&\geq
-|\lambda| a^2-a\ef(a)+|\lambda| a - |\ef(a)| -\e |g|
\geq |\lambda|a(1-a) -c,
\label{lem2eq1qp2}
\end{align}
for some constant $c$ that depends on $\ef$ and $\xi.$ Integrating over $\mu$  finishes the proof in the case when $s>0$.

Let us now consider the case when $\mu$ is concentrated on $\{0,1\}.$ Then,
$$
\int\!\Phi_{a,\zeta}(0,\lambda a+\ef(a))\,d\mu(a)
=
\Phi_{0,\zeta}(0,\ef(0))\mu(\{0\})+\Phi_{1,\zeta}(0,\lambda+\ef(0))\mu(\{1\}).
$$
The first term does not depend on $\lambda$ and, when $a=1,$ the boundary condition
$$
\Phi_{1,\zeta}(1-q,x)=\log 2-x+\log \ch(x) = \log(1+e^{-2x})
$$
is decreasing. This means that the infimum will be achieved by letting $\lambda\to+\infty$ and, since 
$$
\lim_{\lambda\to+\infty}\Phi_{1,\zeta}(1-q,\lambda+v(1)+x)= 0
$$
for all $x$, by the monotone convergence theorem, $\lim_{\lambda\to+\infty}\Phi_{1,\zeta}(0,\lambda+v(1))= 0.$ This proves the second claim.
\end{proof}

\begin{proof}[Proof of Proposition \ref{ThSec24prop}] 
	First of all, note that $x\mapsto\Phi_{a,\zeta}(0,x)$	is a twice differentiable convex function with uniformly bounded first and second derivatives, see Subsection \ref{subphi}. As a result, for any $\mu\in \MM_{0,1}$,
	$$
	\lambda\mapsto \int\! \Phi_{a,\zeta}(0,\lambda a+\ef(a))d\mu(a)
	$$
	is twice differentiable with
	\begin{align*}
	\frac{d}{d\lambda}\int\! \Phi_{a,\zeta}(0,\lambda a+\ef(a))d\mu(a)&=\int\! a\partial_x\Phi_{a,\zeta}(0,\lambda a+\ef(a))d\mu(a),\\
	\frac{d^2}{d\lambda^2}\int\! \Phi_{a,\zeta}(0,\lambda a+\ef(a))d\mu(a)&=\int\! a^2\partial_{xx}\Phi_{a,\zeta}(0,\lambda a+\ef(a))d\mu(a).	
	\end{align*}
	In addition, the second derivative is uniformly bounded over all choices of $\lambda,$ $\ef$, and $\mu.$

We will first consider the case $s=\int\! a(1-a)\,d\mu(a)>0$.  By the previous lemma, the infimum in (\ref{eqSec24Prop}) is achieved at some finite $\lambda=\lambda_0,$ which satisfies
\begin{equation}
\int\!a\partial_x\Phi_{a,\zeta}(0,\lambda_0 a+\ef(a))\,d\mu(a) = 0.
\label{sec24propProofD}
\end{equation}
By (\ref{eqSec24PsiEcomp}) and Lemma \ref{add:lem1}, for $\gamma\geq 0,$
\begin{equation}\label{sec24propProofE}
\Psi_N(m;\lambda_0, B^{\pm}(m,\varepsilon))\leq
\int\! \Phi_{a,\zeta}(0,(\lambda_0\pm\gamma) a+\ef(a))d\mu_m(a)-\gamma\eps.
\end{equation}
When $\gamma=0$, the right hand side equals $\Psi_N(m;\lambda_0).$ Since $\Phi_{a,\zeta}(0,(\lambda+\gamma) a+\ef(a))$ is bounded and continuous in $a$ from \eqref{ex:lip} and $\mu_m\to\mu$, the right hand side converges to
$$
\int\! \Phi_{a,\zeta}(0,(\lambda_0\pm\gamma) a+\ef(a))d\mu(a)-\gamma\eps.
$$
Since the derivatives of convex functions converge to the derivative of the limit, 
the derivative of the right-hand side of \eqref{sec24propProofE} in $\gamma$ at $\gamma=0$ (denote it $D_N$) converges to 
$$
D_N\to \pm\int\!a\partial_x\Phi_{a,\zeta}(0,\lambda_0 a+\ef(a))\,d\mu(a)-\eps = -\eps,
$$
by (\ref{sec24propProofD}). Finally, from the discussion at beginning of our proof, the second derivative of the right-hand side of \eqref{sec24propProofE} in $\gamma$ is bounded uniformly over all parameters by some constant $L$. Therefore, using Taylor's formula and taking $\gamma=-D_N/L>0,$
\begin{align}
\nonumber
\Psi_N(m;\lambda_0, B^{\pm}(m,\varepsilon))
&\leq
\Psi_N(m;\lambda_0, \Sigma_N) +D_N\gamma+\frac{L\gamma^2}{2}
\\
&=
\Psi_N(m;\lambda_0, \Sigma_N) - \frac{D_N^2}{2L}
\leq
\Psi_N(m;\lambda_0, \Sigma_N) - \frac{\eps^2}{4L},
\label{eqS24st1}
\end{align}
for large $N$. 

Let us define a random analogue of (\ref{eq:Phi}),
\begin{align}
\psi_N(m;\lambda, A) 
&= \frac1N \log \sum_{\alpha\in \mathbb N^r}v_\alpha 
\sum_{\bs \in A} \exp\sum_{i\leq N} \bigl(g_{\xi_q',i}(\alpha)+\lambda m_i+\ef(m_i)\bigr)\tilde \sigma_i.
\label{eq:PhiEn}
\end{align}
Let us recall (see e.g. Chapter 2 of \cite{SKmodel}) that the weights $v_\alpha$ of the Ruelle probability cascades are defined as $w_\alpha/\sum_\alpha w_\alpha,$ where $w_\alpha$ are defined as a certain product along the path of the tree of values of Poisson processes. In particular, we can rewrite $\psi_N(m;\lambda, A)$ as
$$
\frac1N \log \sum_{\alpha\in \mathbb N^r}w_\alpha 
\sum_{\bs \in A} \exp\sum_{i\leq N} \bigl(g_{\xi_q',i}(\alpha)+\lambda m_i+\ef(m_i)\bigr)\tilde \sigma_i
-\frac1N \log \sum_{\alpha\in \mathbb N^r}w_\alpha=:R_1-R_2.
$$
By the Bolthausen-Sznitman invariance property \cite[Theorem 2.6]{SKmodel} for the Poisson processes $\Pi_t$ on $(0,\infty)$ with the mean measure $t x^{-1-t}dx$ for $t\in (0,1)$, both terms above are equal in distribution (see the proof of \cite[Theorem 2.9]{SKmodel}) to
$$
R_j=c_j+\frac1N \log \sum_{x\in \Pi_{\zeta_0}}x,
$$
for some constants $c_1,c_2$ (note, however, that the two terms are not independent of each other). This implies that
$$
\p\Bigl(\bigl|\psi_N(m;\lambda, A)-\Psi_N(m;\lambda, A)\bigr|\geq \frac{2t}{N}\Bigr)
\leq
2\p\Bigl(\bigl|\log \sum_{x\in \Pi_{\zeta_0}}x-\e \log \sum_{x\in \Pi_{\zeta_0}}x\bigr|\geq t\Bigr).
$$
In other words, the fluctuations of $\psi_N(m;\lambda, A)$ are of order $1/N$. The bound in (\ref{eqS24st1}) implies that, with high probability,
\begin{align*}
\psi_N(m;\lambda_0, B^\pm(m,\eps))
\leq
\psi_N(m;\lambda_0, \Sigma_N) - \frac{\eps^2}{8L}.
\end{align*}
Since $\Sigma_N$ is a union of $B(m,\eps)$, $B^+(m,\eps)$ and $B^-(m,\eps),$ this forces that
\begin{align*}
\psi_N(m;\lambda_0, B^+(m,\eps)\cup B^-(m,\eps))
\leq
\frac{\log 2}{N}+\psi_N(m;\lambda_0, \Sigma_N)- \frac{\eps^2}{8L},
\end{align*}
where the right-hand side used the bound $\log (a+b)\leq \log 2+\max(\log a,\log b)$ for $a,b>0.$ Consequently,
\begin{align*}
\psi_N(m;\lambda_0, \Sigma_N)+\frac{1}{N}\log \bigl(1-2e^{-\varepsilon^2N/(8L)}\bigr)&\leq\psi_N(m;\lambda_0,B(m,\eps))\leq\psi_N(m;\lambda_0, \Sigma_N),
\end{align*}
where the left-hand side used the fact that if $\frac{1}{N}\log(a+b)\leq \frac{1}{N}\log (a+b+c)-\delta$ for some $a,b,c,\delta>0$, then $(a+b)/(a+b+c)\leq e^{-N\delta}$ so that $$
 \frac{1}{N}\log (a+b+c)+\frac{1}{N}\log(1-e^{-\delta N})\leq\frac{1}{N}\log c.$$
Since $\lambda_0$ was the minimizer, by Lemma \ref{add:lem1} we get
\begin{equation}
\lim_{N\to\infty} \psi_N(m;\lambda_0, B(m,\eps))= \inf_{\lambda\in\Reals}
\int\! \Phi_{a,\zeta}(0,\lambda a+\ef(a))\,d\mu(a).
\end{equation}
Finally, using the second claim in Lemma \ref{add:lem1} and letting $\eps\to 0$ finishes the proof.

It remains to consider the case when $\mu$ is concentrated on $\{0,1\}$. By Lemma \ref{lem:Psimueps}, to compute the limit of $\Psi_N(m,\eps)$, we can choose any sequence $m=m^N$ such that $\mu_m\to\mu.$ In particular, we can choose $\mu_m$ also concentrated on $\{0,1\}.$ Let us suppose that the first $N_1$ coordinates are $m_i=0$ and the last $N-N_1$ coordinates are $m_i=1$, and $N_1/N\to \mu(\{0\}).$ 

In this case, the condition $\bs\in B(m,\eps),$ or $|R(\tilde \bs,m)|<\eps,$ means that 
$$
2\sum_{i=1}^{N} \indic(m_i=1,\sigma_i=-1)
=(m\cdot m-m\cdot\sigma) < N\eps.
$$
This means that, when $m_i=0,$ there are no constraints on $\sigma_i$ and, when $m_i=1,$ we must have $\sigma_i=1$ with at most $N\eps/2$ exceptions. This means that in the definition of $\Psi_N(m,\eps),$
we can replace the sum in the exponent by $\sum_{i\leq N_1} (g_{\xi_q',i}(\alpha)+\ef(0))\sigma_i$ (for example, using interpolation) and we can replace the constraint $\bs\in B(m,\eps)$ by  $(\sigma_1,\ldots,\sigma_{N_1})\in\Sigma_{N_1}.$ This will change $\Psi_N(m,\eps)$ by at most $O(\eps\log\eps^{-1}).$ This implies that
\begin{align*}
\Psi_N(m,\eps) &\approx \frac1N \E \log \sum_{\alpha\in \mathbb N^r}v_\alpha 
\sum_{\Sigma_{N_1}}\exp \sum_{i\leq N_1} (g_{\xi_q',i}(\alpha)+\ef(0))\tilde\sigma_i
\\
&= \frac{N_1}{N} \E \log \sum_{\alpha\in \mathbb N^r}v_\alpha 
2\ch\bigl(g_{\xi_q'}(\alpha)+\ef(0)\bigr)=\frac{N_1}{N} \Phi_{0,\zeta}(0,\ef(0)).
\end{align*}
Comparing this with the second claim in Lemma \ref{add:lem2} finishes the proof in the case when $s=0$.
\end{proof}


\section{Properties of TAP representations}\label{SecOptimal}

We present the proof of Theorem \ref{lem:TAPnew}. To prepare for our proof, Subsection \ref{relation} first establishes connections among the Parisi PDEs introduced in the introduction, while Subsection \ref{UniformBound} derives some uniform upper bounds for $\Phi_{a,\zeta}(0,\ef(a))$. The details of the proof of Theorem \ref{lem:TAPnew} are given in Subsection~\ref{sec5.3}.

\subsection{Relations among Parisi PDEs}\label{relation}

Recall that the Parisi PDE $\Phi_{\zeta}$ defined in (\ref{ParisiPDEOrig}) has the boundary condition $\Phi_\zeta(1,x)=\log 2+\log \ch x$. For $a\in \Reals$ and $\zeta\in \mathcal{M}_{0,1},$ denote by $\Theta_{a,\zeta}(t,x)$ the solution of the Parisi PDE on $[0,1]\times \Reals,$
\begin{equation}
\partial_t \Theta_{a,\zeta} = -\frac{\xi''(t)}{2}\Bigl(
\partial_{xx} \Theta_{a,\zeta} + \zeta(t)\bigl(\partial_x \Theta_{a,\zeta}\bigr)^2
\Bigr)
\label{ParisiPDEATO}
\end{equation}
with the boundary condition 
\begin{equation}
\Theta_{a,\zeta}(1,x)=\log 2-ax +\log\ch x =\log\sum_{\sigma=\pm 1}e^{(\sigma-a)x}.
\label{ParisiPDEBoundaryATO}
\end{equation}
In other words, the two solutions $\Phi_{\zeta}$ and $\Theta_{a,\zeta}$ satisfy the same PDE, but with different boundary conditions. 
The following proposition shows that these two solutions are indeed connected through an elementary formula.

\begin{prop}\label{PropATO}
	For any $x,a\in\Reals$, $t\in[0,1]$, and $\zeta\in \mathcal{M}_{0,1},$
	\begin{equation}
	\Theta_{a,\zeta}(t,x) = \Phi_{\zeta}\Bigl(t,x-a\int_t^1\!\xi''(s)\zeta(s)\,ds \Bigr)-ax +\frac{a^2}{2}\int_t^1\!\xi''(s)\zeta(s)\,ds.
	\label{eqPropATO}
	\end{equation}
\end{prop}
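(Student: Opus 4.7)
The plan is to verify the identity by showing that both sides of \eqref{eqPropATO} solve the same Parisi PDE with the same terminal condition, and then invoke uniqueness. Define
\[
u(t,x):=\Phi_\zeta\bigl(t,x-aI(t)\bigr)-ax+\tfrac{a^2}{2}I(t),\qquad I(t):=\int_t^1\xi''(s)\zeta(s)\,ds.
\]
First I would check the boundary condition: at $t=1$ one has $I(1)=0$, so
\[
u(1,x)=\Phi_\zeta(1,x)-ax=\log 2+\log\ch x-ax=\Theta_{a,\zeta}(1,x).
\]
This matches \eqref{ParisiPDEBoundaryATO}, so it remains to check that $u$ satisfies \eqref{ParisiPDEATO}.

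Next I would differentiate. Since $I'(t)=-\xi''(t)\zeta(t)$, the spatial derivatives are
\[
\partial_x u=\partial_x\Phi_\zeta\bigl(t,x-aI(t)\bigr)-a,\qquad \partial_{xx}u=\partial_{xx}\Phi_\zeta\bigl(t,x-aI(t)\bigr),
\]
while the time derivative is
\[
\partial_t u=\partial_t\Phi_\zeta-a\,\partial_x\Phi_\zeta\cdot I'(t)+\tfrac{a^2}{2}I'(t)=\partial_t\Phi_\zeta+a\xi''(t)\zeta(t)\,\partial_x\Phi_\zeta-\tfrac{a^2}{2}\xi''(t)\zeta(t),
\]
all $\Phi_\zeta$-terms evaluated at $(t,x-aI(t))$. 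Using the PDE \eqref{ParisiPDEOrig} satisfied by $\Phi_\zeta$ to replace the first term, and expanding
\[
-\tfrac{\xi''(t)}{2}\bigl(\partial_{xx}u+\zeta(t)(\partial_x u)^2\bigr)=-\tfrac{\xi''(t)}{2}\bigl(\partial_{xx}\Phi_\zeta+\zeta(t)(\partial_x\Phi_\zeta-a)^2\bigr),
\]
a direct comparison shows the two expressions agree, so $u$ solves \eqref{ParisiPDEATO}.

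Finally, by uniqueness of the solution to the Parisi PDE with the given terminal condition (established, e.g., in \cite{JT16} for general $\zeta$), we conclude $u=\Theta_{a,\zeta}$, which is exactly \eqref{eqPropATO}. The only delicate point is justifying the PDE computation when $\zeta$ is not smooth: for atomic $\zeta\in\mathcal{M}_{0,1}^d$ the solution $\Phi_\zeta$ is built piecewise via the Cole--Hopf transformation on each interval of constancy and the above differentiation is pointwise valid on those intervals, with the shift $x\mapsto x-aI(t)$ and the affine correction respecting the matching conditions at the jumps of $\zeta$; the general case then follows by approximating $\zeta$ by atomic measures and passing to the limit using the $d_1$-Lipschitz continuity of $\zeta\mapsto\Phi_\zeta$ (as mentioned in the paragraph following Proposition \ref{ThSec24prop}), which is the only mildly technical step in the argument.
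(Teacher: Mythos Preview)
Your proof is correct and follows essentially the same approach as the paper: define the right-hand side as a candidate function, verify it satisfies the Parisi PDE \eqref{ParisiPDEATO} with the correct terminal condition, and conclude by the uniqueness result of \cite{JT16}. The only cosmetic difference is that the paper reduces at the outset to continuous $\zeta$ (where the differentiation is directly justified) and then appeals to continuity, whereas you approximate via atomic $\zeta$; both routes serve the same purpose of making the pointwise PDE computation rigorous for general $\zeta$.
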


\begin{remark}\rm
	Recall $\Phi_{a,\zeta}$ from \eqref{ParisiPDE}. As an immediate consequence of this proposition, by shifting the measure $\zeta$ from $[0,1-q]$ to $[q,1]$, we can now unify all Parisi PDEs together.
	To see this, let $a\in [-1,1]$ and $q\in [0,1]$. For any $\zeta\in \mathcal{M}_{0,1},$ if we recall the shift operator $\theta_q$ in (\ref{eqShiftOper}) and let
$\zeta_q=\theta_q \zeta\in \mathcal{M}_{0,1-q}$, defined by $$\zeta_q(t)=\zeta(q+t),\,\,\forall t\in [0,1-q],$$ then, for any $x\in \mathbb{R},$
	\begin{align}\label{add:eq2}
	\Phi_{a,\zeta_q}(0,x)&=\Theta_{a,\zeta}(q,x)=\Phi_{\zeta}\Bigl(q,x-a\int_{q}^1\!\xi''(s)\zeta(s)\,ds \Bigr)-ax +\frac{a^2}{2}\int_{q}^1\!\xi''(s)\zeta(s)\,ds.
	\end{align}
	Note that since $\Phi_{a,\zeta_q}$ attains the global minimum at $\Psi(a,\zeta_q)$, taking infimum over $x$ leads to
	\begin{align}\label{add:eq1}
	\Phi_{a,\zeta_q}(0,\Psi(a,\zeta_q))	&=\Lambda_\zeta(q,a)-\frac{a^2}{2}\int_{q}^1\!\xi''(s)\zeta(s)\,ds,\,\,\forall a\in (-1,1).
	\end{align}
\end{remark}

\begin{proof}[Proof of Proposition \ref{PropATO}]
	It suffices to prove \eqref{eqPropATO} only for continuous $\zeta.$ Denote the right hand side of \eqref{eqPropATO} by $f(t,x)$ and set $$b(t,x)=x-a\int_t^1\!\xi''(s)\zeta(s)\,ds.$$ Then
	\begin{align*}
	\partial_tf(t,x)&=\partial_t\Phi_{\zeta}(t,b(t,x))+a\xi''(t)\zeta(t)\partial_x\Phi_{\zeta}(t,b(t,x))-\frac{a^2}{2}\xi''(t)\zeta(t)
	\end{align*}
	and
	\begin{align*}
	\partial_{x}f(t,x)&=\partial_{x}\Phi_{\zeta}(t,b(t,x))-a,\\
	\partial_{xx}f(t,x)&=\partial_{xx}\Phi_{\zeta}(t,b(t,x)).
	\end{align*}
	From this, a direct verification gives
	\begin{align*}
	\partial_t f(t,x)&=-\frac{\xi''(t)}{2}\bigl(\partial_{xx}f(t,x)+\zeta(t)\bigl(\partial_{x}f(t,x)\bigr)^2\bigr).
	\end{align*}
	Note that $f(1,x)=\log 2-ax+\log \ch x=\Theta_{a,\zeta}(1,x)$. Finally, we recall that it was proved in Jagannath-Tobasco \cite[Lemma 13]{JT16} that the classical Parisi PDE has a unique solution. The same proof therein applies to the current setting  with no essential changes and yields the uniqueness of the Parisi PDE solution with boundary condition $\log 2-ax+\log\ch x$. 
\end{proof}


\subsection{Uniform upper bounds for $\Phi_{a,\zeta}(0,\ef(a))$}\label{UniformBound} Note that
\begin{align*}
\sup_{a\in[0,1)}(1-a)\th^{-1}(a)\leq 1,\,\mbox{ and }
\lim_{a\uparrow 1}(1-a)\th^{-1}(a)=0.
\end{align*}
As a result, for any $\ef \in V\cup \oV$, $(1-a)\ef(a)$ is bounded from above by some absolute constant. Since $\Phi_{a,\zeta}$ is non-decreasing in $\zeta\in\mathcal{M}_{0,1-q}$, comparing with $\zeta\equiv 1,$ one can see that, with $c^2=\xi_q'(1-q)\leq \xi'(1),$
\begin{align*}
\Phi_{a,\zeta}(0,x)
&\leq \log\sum_{\sigma=\pm 1}\exp\bigl((\sigma-1)x+\frac{1}{2}(\sigma-a)^2c^2\bigr)
\\
&=(1-a)x+\frac{1}{2}(1-a)^2c^2+\log\bigl(1+e^{2a(c^2-x)}\bigr).
\end{align*}
In particular, for some constant $c_\xi,$ for all $\ef \in V\cup \oV$,
\begin{align}
\Phi_{a,\zeta}(0,\ef (a)) &\leq 
(1-a)\ef(a)+\frac{1}{2}(1-a)^2c^2+\log\bigl(1+e^{2a(c^2-\ef(a))}\bigr)
\leq c_\xi.
\label{lem:Tapcont:proof:eq5}
\end{align}
Moreover, if we take $\ef(a)=\ef_{\zeta'}(a)=\Psi(a,\zeta')$ for some $\zeta'\in \mathcal{M}_{q'}$ and use Lemma \ref{ex:lem2} below, which states that
$$
c_1'+c_2'\tanh^{-1} (a)\leq \ef_{\zeta'}(a) \leq c_1+c_2\tanh^{-1}(a)
$$
for some absolute constants $c_1,c_1'\in\Reals,$ $c_2,c_2'>0$, we also have
\begin{align}
0\leq\Phi_{a,\zeta}(0,\ef_{\zeta'}(a))
&\leq
(1-a)(c_1+c_2\tanh^{-1}(a))+\frac{1}{2}(1-a)^2c^2
\nonumber
\\
&
+\log \bigl(1+\exp \bigl(2ac^2-2c_1'a-2c_2'a\tanh^{-1} (a)\bigr)\bigr)
=:M(a),
\label{lem:Tapcont:proof:eq6}
\end{align}
We can see that $\lim_{a\uparrow 1}M(a)=0,$ which shows that $\Phi_{a,\zeta}(0,\ef_{\zeta'}(a))$ is small in the neighbourhood of $1$ uniformly over the choice of $\zeta$ and $\zeta'.$ Using (\ref{add:eq1}), this implies that
\begin{align}\label{add:eq1Ago}
0\leq \Lambda_\zeta(q,a)-\frac{a^2}{2}\int_{q}^1\!\xi''(s)\zeta(s)\,ds \leq M(|a|),
\end{align}
so the expression in the middle goes to zero as $a\to \pm 1$, uniformly over $q$ and $\zeta$.

\subsection{Proof of Theorem \ref{lem:TAPnew}}\label{sec5.3} Our approach relies on the convexity of the Parisi functional $\oP_\mu^\ef(\cdot,\cdot)$ on the band (defined in \eqref{add:eq10}) as well as some computations on the directional derivative of this functional. Let $\mu$ be a probability measure on $[0,1].$

\begin{proof}[Proof of $(i)$] First we establish the first equality in $(i).$ To see this, observe that \eqref{add:eq1} and \eqref{lem:Tapcont:proof:eq6} together imply that whenever $\zeta\in \mathcal{M}_{0,1}$ and $\zeta_q=\theta_q\zeta\in \mathcal{M}_{0,1-q}$ satisfy $\zeta_q(t)=\zeta(q+t)$ for all $t\in [0,1-q]$, we have 
	\begin{align*}
	\Lambda_{\zeta}(q,1)=\lim_{a\to 1}\Lambda_{\zeta}(q,a)=\frac{1}{2}\int_q^{1}\xi''(s)\zeta(s)ds.
	\end{align*}
	Using \eqref{add:eq1} and this equation yield that
	\begin{align*}
	\oP_\mu^{\ef_{\zeta_q}}(0,\zeta_q)&=\int_{[0,1)}\Bigl(\Lambda_\zeta(q,a)-\frac{a^2}{2}\int_{q}^1\!\xi''(s)\zeta(s)\,ds\Bigr)d\mu(a)-\frac{1}{2}\int_q^1(s-q)\xi''(s)\zeta(s)ds\\
	&=\int_{[0,1]}\Bigl(\Lambda_\zeta(q,a)-\frac{a^2}{2}\int_{q}^1\!\xi''(s)\zeta(s)\,ds\Bigr)d\mu(a)-\frac{1}{2}\int_q^1(s-q)\xi''(s)\zeta(s)ds=\nTAP(\mu,\zeta),
	\end{align*}
	which implies that
	\begin{align*}
	\inf_{\zeta\in\mathcal{M}_{0,1-q}}\oP_\mu^{\ef_{\zeta}}(0,\zeta)=\inf_{\zeta\in \mathcal{M}_{0,1}}\oP_\mu^{\ef_{\zeta_q}}(0,\zeta_q)=\inf_{\zeta\in \mathcal{M}_{0,1}}\nTAP(\mu,\zeta)
	=\inf_{\zeta\in \mathcal{M}_{q,1}}\nTAP(\mu,\zeta)=\nTAP(\mu).
	\end{align*}
	This establishes one of the equalities in $(i)$. For the rest of the equalities, they follow immediately if the following claim is valid: for any $\zeta\in \mathcal{M}_{0,1-q},$
	\begin{align}\label{lem:TAPcont:proof:eq2}
	\begin{split}
	\inf_{\ef \in V}\int_{[0,1]} \Phi_{a,\zeta}(0,\ef(a))\,d\mu(a)
	&=\int_{[0,1)} \Phi_{a,\zeta}(0,\Psi(a,\zeta))\,d\mu(a)\\
	&=\inf_{\ef \in \text{\oV}}\int_{[0,1)} \Phi_{a,\zeta}(0,\ef(a))\,d\mu(a).
	\end{split}
	\end{align}
	To show this claim, observe that for any $a\in [0,1),$ $\Phi_{a,\zeta}(0,\cdot)$ is a strictly convex function (see \eqref{convexity0}) and has a unique global minimum at $\Psi(a,\zeta)$ since $$\partial_x\Phi_{a,\zeta}(a,\Psi(a,\zeta))=0.$$ These imply that
	\begin{align}\label{add:eq16}
	\Phi_{a,\zeta}(a,\Psi(a,\zeta))\leq \Phi_{a,\zeta}(a,x),\,\,\forall x\in \mathbb{R}.
	\end{align}
	From this, Lemma \ref{ex:lem2}, and noting that $\Phi_{a,\zeta}$ is always nonnegative, we have 
	\begin{align}
	\begin{split}\label{add:eq13}
	\int_{[0,1)} \Phi_{a,\zeta}(0,\Psi(a,\zeta))\,d\mu(a)&\leq \inf_{\ef \in V}\int_{[0,1]} \Phi_{a,\zeta}(0,\ef(a))\,d\mu(a),\\
	\int_{[0,1)} \Phi_{a,\zeta}(0,\Psi(a,\zeta))\,d\mu(a)&=\inf_{\ef \in \text{\oV}}\int_{[0,1)} \Phi_{a,\zeta}(0,\ef(a))\,d\mu(a).
	\end{split}
	\end{align}
	The second line here gives the second equality of \eqref{lem:TAPcont:proof:eq2}. To show the first equality of \eqref{lem:TAPcont:proof:eq2}, it remains to establish the reverse inequality for the first equation of \eqref{add:eq13}. To this end, for $L>0,$ set  $\ef _L\in V$  by  $\ef _L(a)=\Psi(a,\zeta)\wedge L$ for $a\in [0,1)$ and $\ef_L(1)=L.$ Write
	\begin{align*}
	\int_{[0,1]} \Phi_{a,\zeta}(0,\ef_L(a))\,d\mu(a)&=\int_{[0,1)} \Phi_{a,\zeta}(0,\ef_L(a))\,d\mu(a)+\Phi_{1,\zeta}(0,L)\mu(\{1\}).
	\end{align*}
	Passing to the limit via the bound \eqref{lem:Tapcont:proof:eq5} and the dominated convergence theorem gives that
	\begin{align*}
	&\limsup_{L\rightarrow\infty}\int_{[0,1]} \Phi_{a,\zeta}(0,\ef_L(a))\,d\mu(a)\\
	&\leq \int_{[0,1)} \Phi_{a,\zeta}(0,\Psi(a,\zeta))\,d\mu(a)+\limsup_{L\rightarrow\infty}\Phi_{1,\zeta}(0,L)\mu(\{1\}).
	\end{align*}
	Note that $\Phi_{1,\zeta}(1,x)=\log (1+e^{-2x})$ is a strictly decreasing function with $\Phi_{1,\zeta}(1,\infty)=0$. These properties are also valid for $\Phi_{1,\zeta}(0,\cdot)$, which can be seen from the representation \eqref{control}. Hence, $\limsup_{L\rightarrow\infty}\Phi_{1,\zeta}(0,L)=0$ and consequently,
	\begin{align*}
	\inf_{\ef \in V}\int_{[0,1]} \Phi_{a,\zeta}(0,\ef(a))\,d\mu(a)&\leq\limsup_{L\rightarrow\infty}\int_{[0,1]} \Phi_{a,\zeta}(0,\ef_L(a))\,d\mu(a)\\
	&\leq \int_{[0,1)} \Phi_{a,\zeta}(0,\Psi(a,\zeta))\,d\mu(a).
	\end{align*}
	This completes the proof of our claim.
\end{proof}

\begin{proof}[Proof of $(ii)$]
By part $(i)$, we can work with any of the four representations. We will use the first one, $\nTAP(\mu)$. If we denote
$$
D_\zeta(q,a):= \Lambda_\zeta(q,a)-\frac{a^2}{2}\int_{q}^1\!\xi''(s)\zeta(s)\,ds
$$
then, for $\mu$ with $\int a^2d\mu(a)=q,$ we can rewrite
$$
\nTAP(\mu,\zeta):=
\int\! D_{\zeta}(q,a)\,d\mu(a)-\frac{1}{2}\int_q^{1}\!(s-q)\xi''(s)\zeta(s)\,ds.
$$
By (\ref{add:eq1Ago}), for any $\eps>0$, we can find $\eta\in (0,1)$ such that, $0\leq D_\zeta(q,a)\leq \eps$ for $|a|\geq \eta.$ Since, $\Lambda_\zeta(q,a)$ and $D_\zeta(q,a)$ are even (and concave), if we let
$$
\nTAP^\eta(\mu,\zeta):=
\int\! D_\zeta(q,|a|\wedge \eta)\,d\mu(a)-\frac{1}{2}\int_q^{1}\!(s-q)\xi''(s)\zeta(s)\,ds
$$
then 
\begin{equation}
\bigl| \nTAP^\eta(\mu,\zeta)-\nTAP(\mu,\zeta) \bigr| \leq \eps. 
\label{eqTAPapprxETA}
\end{equation}
Using the fact that $\partial_x\Phi_{\zeta}(q,\cdot)$ is strictly increasing with $\partial_x\Phi_{\zeta}(q,\pm\infty)=\pm 1$, it is easy to check that, for $|a|<\eta$, the infimum in $\Lambda_\zeta(q,a)=\inf_{x\in\Reals}(\Phi_\zeta(q,x)-ax)$ is achieved on $x\in [-L,L]$, where $L$ depends on $\eta$ only. This implies that $\Lambda_\zeta(q,a)$ and $D_\zeta(q,a)$ are Lipschitz on $[-\eta,\eta]$ and $D_\zeta(q,|a|\wedge \eta)$ is  Lipschitz on $[-1,1]$, uniformly over $q$ and $\zeta$. Therefore,
$$
\sup_{q,\zeta}\Bigl|
\int\! D_\zeta(q,|a|\wedge \eta)\,d\mu(a)-\int\! D_\zeta(q,|a|\wedge \eta)\,d\mu'(a)
\Bigr| \leq Cd_{\mathrm{BL}}(\mu,\mu'),
$$
where $d_{\mathrm{BL}}(\mu,\mu')$ is the bounded Lipschitz metric on $\MM_{0,1}.$ Moreover, since $\Phi_\zeta(q,x)$ is Lipschitz in $q$ and $d_1$-Lipschitz in $\zeta$ (with Lipschitz constants that depend only on $\xi$), these properties are inherited by $\Lambda_\zeta(q,a)$, $D_\zeta(q,a)$ and $D_\zeta(q,|a|\wedge \eta)$, and, therefore, $\nTAP^\eta(\mu,\zeta).$ If $\mu\to\mu_0$ then $q=\int a^2d\mu(a)\to q_0=\int a^2d\mu_0(a)$, and all the properties above imply that $\inf_\zeta \nTAP^\eta(\mu,\zeta)$ is continuous in $\mu$. By (\ref{eqTAPapprxETA}), this proves that $\nTAP(\mu)$ is continuous in $\mu$.
\end{proof}

From now on we can assume that $\mu\neq\delta_1,$ because all the remaining claims are trivial in this case.

\begin{proof}[Proof of $(iii)$]
	Let $\zeta_n\in \mathcal{M}_{0,1-q}$ be a sequence that weakly converges to some $\zeta_0$ and satisfies 
	\begin{align*}
	\inf_{\zeta\in \mathcal{M}_{0,1-q}}\oP_\mu^{\ef_\zeta}(0,\zeta)=\lim_{n\rightarrow\infty}\oP_\mu^{\ef_{\zeta_n}}(0,\zeta_n).
	\end{align*}
	For any $\delta\in (0,1)$, write
	\begin{align*}
	\int_{[0,1)}\Phi_{a,\zeta_n}(0,\Psi(a,\zeta_n))d\mu(a)&=\int_{[0,\delta]}\Phi_{a,\zeta_n}(0,\Psi(a,\zeta_n))d\mu(a)\\
	&+\int_{(\delta,1)}\Phi_{a,\zeta_n}(0,\Psi(a,\zeta_n))d\mu(a).
	\end{align*}
	Here, the first term converges to $$\int_{[0,\delta]}\Phi_{a,\zeta_0}(0,\Psi(a,\zeta_0))d\mu(a)$$ as a consequence of Lemma \ref{ex:lem1} and the fact that $\Phi_{a,\zeta_n}(t,x)$ converges to $\Phi_{a,\zeta_0}(t,x)$ uniformly over all $a\in [0,1],t\in [0,1-q],x\in \mathbb{R}.$ As for the second term, note that the inequality \eqref{lem:Tapcont:proof:eq6} implies 
	\begin{align*}
	0&\leq\int_{(\delta,1)}\Phi_{a,\zeta_n}(0,\Psi(a,\zeta_n))d\mu(a)\leq \sup_{\delta<a<1}M(a)\rightarrow 0
	\end{align*}
	as $\delta\uparrow 1.$ Hence, we arrive at
	\begin{align*}
	\lim_{n\rightarrow\infty}\int_{[0,1)}\Phi_{a,\zeta_n}(0,\Psi(a,\zeta_n))d\mu(a)=\int_{[0,1)}\Phi_{a,\zeta_0}(0,\Psi(a,\zeta_0))d\mu(a)
	\end{align*}
	and 
	$$
	\lim_{n\rightarrow\infty}\oP_\mu^{\ef_{\zeta_n}}(0,\zeta_n)=\oP_\mu^{\ef_{\zeta_0}}(0,\zeta_0),
	$$
	which shows that $\zeta_0$ is a minimizer of $\inf_{\zeta\in \mathcal{M}_{0,1-q}}\oP_\mu^{\ef_\zeta}(0,\zeta)$.
	
	Next, we establish the uniqueness of $\zeta_0$. Assume that $\zeta_1$ is another minimizer and $\zeta_1\not\equiv\zeta_0.$ For $b\in (0,1)$, let 
	$$
	\zeta_b = (1-b)\zeta_0+b \zeta_1,\,\,
	\ef_b=(1-b)\ef_{\zeta_0}+b\ef_{\zeta_1}.
	$$ 
	Since $\mu\neq \delta_1$, we can use the strict convexity in $\eqref{convexity}$ below to get	
	\begin{align*}
	\int_{[0,1)} \Phi_{a,\zeta_b}(0,\ef_b(a))d\mu(a)
	&< (1-b)\int_{[0,1)} \Phi_{a,\zeta_0}(0,\ef_{\zeta_0}(a))d\mu(a)\\
	&\quad+b\int_{[0,1)} \Phi_{a,\zeta_1}(0,\ef_{\zeta_1}(a))d\mu(a).
	\end{align*}
	On the other hand, note that, by \eqref{add:eq16}, 
	\begin{align*}
	\int_{[0,1)}\Phi_{a,\zeta_b}(0,\Psi(a,\zeta_b))d\mu(a)&\leq \int_{[0,1)} \Phi_{a,\zeta_b}(0,\ef_b(a))d\mu(a).
	\end{align*} 
	This and the above inequality together lead to a contradiction,
	\begin{align}
	\label{add:eq5}
	\inf_{\zeta\in \mathcal{M}_{0,1-q}}\oP_\mu^{\ef_\zeta}(0,\zeta)&<(1-b)\inf_{\zeta\in \mathcal{M}_{0,1-q}}\oP_\mu^{\ef_\zeta}(0,\zeta)+b\inf_{\zeta\in \mathcal{M}_{0,1-q}}\oP_\mu^{\ef_\zeta}(0,\zeta)=\inf_{\zeta\in \mathcal{M}_{0,1-q}}\oP_\mu^{\ef_\zeta}(0,\zeta).
	\end{align}
	Hence, the minimizer must be unique when $\mu\neq \delta_1$.	 
\end{proof}

\begin{proof}[Proof of $(iv)$] Note that the minimality of $\zeta_0$ implies 
	\begin{align*}
	\inf_{\zeta\in \mathcal{M}_{0,1-q}}\oP_\mu^{\ef_\zeta}(0,\zeta)=\oP_\mu^{\ef_{\zeta_0}}(0,\zeta_0)\geq \inf_{\zeta\in \mathcal{M}_{0,1-q}}\oP_\mu^{\ef_{\zeta_0}}(0,\zeta)\geq\inf_{\lambda\in \mathbb{R},\zeta\in \mathcal{M}_{0,1-q}}\oP_\mu^{\ef_{\zeta_0}}(\lambda,\zeta)
	=\oP_\mu^{\ef_{\zeta_0}}.
	\end{align*}
	Also note that from \eqref{add:eq16},  for any $\zeta\in \mathcal{M}_{0,1-q}$ and $\lambda\in \mathbb{R}$,
	\begin{align*}
	\oP_\mu^{\ef_{\zeta}}(0,\zeta)&\leq 	\oP_\mu^{\ef_{\zeta_0}}(\lambda,\zeta),
	\end{align*}
	which leads to
	\begin{align*}
	\oP_\mu^{\ef_{\zeta_0}}(0,\zeta_0)=
	\inf_{\zeta\in \mathcal{M}_{0,1-q}}\oP_\mu^{\ef_\zeta}(0,\zeta)\leq
	\inf_{\lambda\in \mathbb{R},\zeta\in \mathcal{M}_{0,1-q}}\oP_\mu^{\ef_{\zeta_0}}(\lambda,\zeta)=\oP_\mu^{\ef_{\zeta_0}}.
	\end{align*}
	This completes our proof.
\end{proof}

\begin{proof}[Proof of $(v)$]
	
	Let $\delta_1\neq \mu\in \MM_{0,1}.$ Let $\zeta_0$ be the minimizer from part $(iii)$. By part $(iv)$, the pair $(0,\zeta_0)$ is a minimizer of $\oP_\mu^{\ef_{\zeta_0}}(\lambda,\zeta)$. Also, by the definition of $\ef_{\zeta_0}$ in (\ref{add:eq12}) and (\ref{eqDefPsief}), we have $\partial_{x}\Phi_{a,\zeta_0}(0,\ef_{\zeta_0}(a))=0$ for all $a\in [0,1)$. 
	
	We follow a similar argument as \cite[Theorem 1]{Chen13} (see also \cite[Lemma 4.14]{SKbonus}). Let $c$ be the smallest point in the support of $\zeta_0.$ Assume on the contrary that $c>0.$ Note that from the optimality of $\zeta_0$ in $\oP_\mu^{\ef_{\zeta_0}}(0,\cdot)$, Remark \ref{rmk2} below states that
	\begin{align}\label{add:prop0:proof:eq2Ag}
	\begin{split}
	\int_{[0,1)} \e\bigl(\partial_x\Phi_{a,\zeta_0}(c,\ef_{\zeta_0}(a)+z(c))\bigr)^2d\mu(a)&=c,
	\end{split}\\
	\begin{split}\label{add:prop0:proof:eq2}
	\xi_q''(c)\int_{[0,1)} \e\bigl(\partial_{xx}\Phi_{a,\zeta_0}(c,\ef_{\zeta_0}(a)+z(c))\bigr)^2d\mu(a)&\leq 1,
	\end{split}
	\end{align}
	where $z(c)$ is a centered normal random variable with variance $\xi_{q}'(c)$. Define an auxiliary function $A:[0,c]\to [0,c]$ by
	\begin{align*}
	A(t)&=\int_{[0,1)}\e\bigl(\partial_x\Phi_{a,\zeta_0}(c,\ef_{\zeta_0}(a)+z_1(t)\bigr)\bigl(\partial_x\Phi_{a,\zeta_0}(c,\ef_{\zeta_0}(a)+z_2(t)\bigr)d\mu(a),
	\end{align*}
	where $z_1(t)$ and $z_2(t)$ are jointly Gaussian random variables with mean zero and variance $\e (z_1(t))^2=\e (z_2(t))^2=\xi_q'(c)$ and $\e z_1(t)z_2(t)=\xi_q'(t).$ From this construction and (\ref{add:prop0:proof:eq2Ag}), evidently $A(c)=c$. In addition, since $\partial_x\Phi_{a,\zeta_0}(0,\ef_{\zeta_0}(a))\equiv 0$, we also have that
	\begin{align*}
	A(0)&=\int_{[0,1)} \bigl(\e \partial_x\Phi_{a,\zeta_0}(c,\ef_{\zeta_0}(a)+z(c))\bigr)^2d\mu(a)\\
	&=\int_{[0,1)} \bigl(\e \partial_x\Phi_{a,\zeta_0}(0,\ef_{\zeta_0}(a))\bigr)^2d\mu(a)=0,
	\end{align*}
	where the second equality holds because $\zeta_{0}(s)= 0$ for $s\in[0,c)$ and
	$$
	\Phi_{a,\zeta_0}(0,x)=\e\Phi_{a,\zeta_{0}}(c,x+z(c)).
	$$
	Next, a direct differentiation using Gaussian integration by parts, the bounds in \eqref{subphi:eq1}, and the dominated convergence theorem gives
	\begin{align*}
	A'(t)&=\xi_q''(t)\int_{[0,1)} \e\bigl(\partial_{xx}\Phi_{a,\zeta_0}(c,\ef_{\zeta_0}(a)+z_1(t)\bigr)\bigl(\partial_{xx}\Phi_{a,\zeta_0}(c,\ef_{\zeta_0}(a)+z_2(t)\bigr)d\mu(a),
	\end{align*}
	from which we see that  $$0\leq A'(t)<A'(c)\leq 1$$ for all $t\in [0,c)$, where the first inequality is obtained by using conditional expectation and integrating the independent components of $z_1(t)$ and $z_2(t)$ first,  second inequality is by the Cauchy-Schwarz inequality, and third inequality follows by \eqref{add:prop0:proof:eq2}. This contradicts that both $A(0)=0$ and $A(c)=c$. Hence, the smallest point in the support of $\zeta_0$ must be zero and this completes our proof.
\end{proof}

\begin{proof}[Proof of $(vi)$]	
	We show that if  $\oP_\mu^{\ef_{\zeta_1}}(0,\zeta_1) = \inf_{\lambda,\zeta}\oP_\mu^{\ef_{\zeta_1}}(\lambda,\zeta)$ then $\zeta_1=\zeta_0.$ It suffices to show that 
	\begin{align}
	\label{add:eq4}
	\oP_\mu^{\ef_{\zeta_1}}(0,\zeta_1)=\oTAP(\mu)=\inf_{\ef\in \oV,\,\zeta\in \mathcal{M}_{0,1-q}}\oP_\mu^\ef(0,\zeta).
	\end{align} 
	Indeed, if this holds then, from part $(i)$,
	$$
	\oP_\mu^{\ef_{\zeta_1}}(0,\zeta_1)=\inf_{\zeta\in \mathcal{M}_{0,1-q}}\oP_\mu^{\ef_{\zeta}}(0,\zeta).
	$$
	Then, by part $(iii)$ and the assumption that $\mu\neq \delta_1$, we get $\zeta_1=\zeta_0.$ To prove \eqref{add:eq4}, we can argue as follows. Notice that from \eqref{convexity},
	$$
	(\ef,\zeta)\in \oV\times \mathcal{M}_{0,1-q}\mapsto \oP_\mu^{\ef}(0,\zeta)
	$$ 
	is a convex function. From the minimality of $(0,\zeta_1)$ in $\oP^{\ef_{\zeta_1}}_\mu(\lambda,\zeta)$,
	\begin{align*}
	\oP_\mu^{\ef_{\zeta_1}}(0,\zeta_1)\leq \oP_\mu^{\ef_{\zeta_1}}(0,\zeta),\,\,\forall \zeta\in \mathcal{M}_{0,1-q}
	\end{align*}
	and, by \eqref{add:eq16},
	\begin{align*}
	\oP_\mu^{\ef_{\zeta_1}}(0,\zeta_1)\leq \oP_\mu^{\ef}(0,\zeta_1),\,\,\forall \ef \in \oV.
	\end{align*}
	In other words, $(\ef_{\zeta_1},\zeta_1)$ is a local minimum of $\oP_\mu^{\ef}(0,\zeta)$ in the two coordinates $\ef$ and $\zeta$ separately.	
	From Lemma \ref{lem:dd} below, for any $\zeta\in \mathcal{M}_{0,1-q}$ and $\ef\in \oV$ satisfying $\ef=\ef_{\zeta_1}$ on $[\delta,1)$ for some $\delta\in (0,1),$ the directional derivative of $(\ef,\zeta)\to\oP_\mu^{\ef}(0,\zeta)$ exists along the direction from $(\ef_{\zeta_1},\zeta_1)$ to $(\ef,\zeta)$ and is equal to \eqref{lem:dd:eq1}. From this, it can be checked that 
	\begin{align*}
	&\frac{d}{db}\oP_\mu^{(1-b)\ef_{\zeta_1}+b\ef}(0,(1-b)\zeta_1+b\zeta)\Big|_{b=0^+}\\
	&=\frac{d}{db}\oP_\mu^{\ef_{\zeta_1}}(0,(1-b)\zeta_1+b\zeta)\Big|_{b=0^+}+\frac{d}{db}\oP_\mu^{(1-b)\ef_{\zeta_1}+b\ef}(0,\zeta_1)\Big|_{b=0^+}\geq 0,
	\end{align*}
	where the last inequality is a consequence of the previous two displays. With this, for any $\varepsilon>0,$ there exists some small $b>0$ such that
	\begin{align*}
	\oP_\mu^{\ef_{\zeta_1}}(0,\zeta_1)-b\varepsilon\leq\oP_\mu^{(1-b)\ef_{\zeta_1}+b\ef}(0,(1-b)\zeta_1+b\zeta).
	\end{align*}
	On the other hand, using the convexity on the right-hand side yields
	\begin{align*}
	\oP_\mu^{(1-b)\ef_{\zeta_1}+b\ef}(0,(1-b)\zeta_1+b\zeta)&\leq (1-b)\oP_\mu^{\ef_{\zeta_1}}(0,\zeta_1)+b\oP_\mu^{\ef}(0,\zeta).
	\end{align*}
	Putting these two inequalities together gives
	$
	\oP_\mu^{\ef_{\zeta_1}}(0,\zeta_1)\leq \varepsilon+\oP_\mu^{\ef}(0,\zeta),
	$
	and letting $\varepsilon\downarrow 0$, we get 
	\begin{align}\label{ex:eq3}
	\oP_\mu^{\ef_{\zeta_1}}(0,\zeta_1)\leq  \mathcal{P}_\mu^{\ef}(0,\zeta). 
	\end{align}
	Note that we proved this for $\ef\in \oV$ satisfying $\ef =\ef_{\zeta_1}$ on $[\delta,1)$ for any $\delta\in (0,1).$ In what follows, we show that this implies the same inequality for all $\ef\in V$ and $\zeta\in \mathcal{M}_{0,1-q}.$ 
	
	For any $\delta\in (1/3,1),$ let $a(\delta):=(3\delta-1)/2.$ Then $a(\delta)\in (0,\delta).$ For any $\ef\in V$, since $\ef$ is bounded on $[0,1]$, we can construct $\ef_\delta\in \oV$ so that $\ef_\delta=\ef$ on $[0,(3\delta-1)/2]$ and $\ef_\delta=\ef_{\zeta_1}$ on $[\delta,1)$ as long as $\delta$ is sufficiently close to $1.$ From this, write
	\begin{align*}
	&\int_{[0,1)}\Phi_{a,\zeta}(0,\ef_\delta(a))\mu(da)-\int_{[0,a(\delta))}\Phi_{a,\zeta}(0,\ef(a))\mu(da)\\
	&=\int_{[a(\delta),1)}\Phi_{a,\zeta}(0,\ef_\delta(a))\mu(da)\\
	&=\int_{[a(\delta),\delta)}\Phi_{a,\zeta}(0,\ef_\delta(a))\mu(da)+\int_{[\delta,1)}\Phi_{a,\zeta}(0,\ef_{\zeta_1}(a))\mu(da)
	\end{align*}
	and use the bounds \eqref{lem:Tapcont:proof:eq5} and \eqref{lem:Tapcont:proof:eq6} to get
	\begin{align*}
	\Bigl|\int_{[0,1)}\Phi_{a,\zeta}(0,\ef_\delta(a))\mu(da)-\int_{[0,a(\delta))}\Phi_{a,\zeta}(0,\ef(a))\mu(da)\Bigl|&\leq c_\xi \mu([a(\delta),\delta))+M(\delta),
	\end{align*}
	Here the second term vanishes as $\delta \to 1^-$. The first term can be handled as follows.
	Note that $\mu([s,1))$ is a nonincreasing function, so $\lim_{s\to 1^-}\mu([s,1))$ exists. This implies that 
	\begin{align*}
	\lim_{\delta\to 1^-}\mu([a(\delta),\delta))=\lim_{\delta\to 1^-}\mu([a(\delta),1))-\lim_{\delta\to 1^-}\mu([\delta,1))=0.
	\end{align*}
	Hence,
	\begin{align*}
	\lim_{\delta\to 1^-}\Bigl|\int_{[0,1)}\Phi_{a,\zeta}(0,\ef_\delta(a))\mu(da)-\int_{[0,a(\delta))}\Phi_{a,\zeta}(0,\ef(a))\mu(da)\Bigl|=0
	\end{align*}
	and, starting with (\ref{ex:eq3}) for $\ef_\delta$,
	\begin{align*}
	\oP_\mu^{\ef_{\zeta_1}}(0,\zeta_1)\leq  \lim_{\delta \to 1^-}\oP_\mu^{\ef_\delta}(0,\zeta)=\oP_\mu^{\ef'}(0,\zeta)\leq \mathcal{P}_\mu^{\ef}(0,\zeta),
	\end{align*}
	where $\ef'$ is the restriction of $\ef$ on $[0,1).$ 
	This establishes \eqref{ex:eq3} for all $\ef\in V.$ Now from $(i)$, we see that $\zeta_1$ minimizes $$\inf_{\zeta\in \mathcal{M}_{0,1-q}}\oP_\mu^{\ef_\zeta}(0,\zeta)$$ and from $(iii)$, $\zeta_0=\zeta_1.$ This finishes the proof. 
\end{proof}

\section{Optimizing over the external field}\label{SecOptimizingEF}

In this section, we will prove the upper and lower bounds of Lemmas \ref{lem:TAP+} and \ref{lem:TAP-} on the limiting replicated free energy on the band. 
We will use the representation $\TAP(\mu)$ of \eqref{eq:TAPmu}, which by Theorem \ref{lem:TAPnew} $(i)$ is equivalent to $\nTAP(\mu)$.
The upper bound of Lemma \ref{lem:TAP+} will be straightforward to prove, by introducing an arbitrary continuous external field and then applying the Guerra upper bound. The lower bound contains the key step, where will need to use the optimal external field found in the last section (for which the Parisi measure has zero in the support).

\subsection{Proof of Lemma \ref{lem:TAP+}}

Notice that, since
\begin{equation*}
n\e \nTAP_{N,n}(m,\eps,\delta)\,\mbox{ is increasing in $\eps$ and $\delta$ and sub-additive in $n$,}
\end{equation*}
the upper limit in \eqref{eq:lemTAP+} is increasing in $\eps$ and $\delta$ and decreasing in $n$, so the infimum over $\eps,\delta,n$ can be replaced by the limit $\eps,\delta\downarrow 0$ and $n\uparrow\infty.$ Also, we can always choose $\eps=\eps_N$ and $\delta=\delta_N$ going to zero and $n=n_N$ going to infinity slowly enough so that
\begin{equation}
\inf_{\eps,\delta,n}\limsup_{N\to\infty}\e \nTAP_{N,n}(m,\eps,\delta)
=
\limsup_{N\to\infty}\e \nTAP_{N,n_N}(m,\eps_N,\delta_N).
\end{equation}
Using this representation and the equivalence $\nTAP(\mu)
=\TAP(\mu)$ in Theorem \ref{lem:TAPnew},  to prove the lemma we need to show that
\begin{equation}
\label{eq:rev3}
\limsup_{N\to\infty}\e \nTAP_{N,n_N}(m,\eps_N,\delta_N)\leq\TAP(\mu).
\end{equation}

First, consider the case $q=\int\! x^2\,\mu(dx)>0.$ Using the approximation in Lemma \ref{LemHtoHm} and \eqref{eq:TAPmAg2}, it is enough to prove that
$$
\limsup_{N\to\infty}\e F_{N,n_N}(m,\eps_N,\delta_N)\leq \TAP(\mu).
$$
For any $\ef\in C([0,1])$, let $h_i = \ef(m_i)$ be the corresponding external field. Since $\|h\|/\sqrt{N}\leq \|\ef\|_\infty<\infty$, by (\ref{eqFtoFh}), deterministically,
\begin{equation}
\lim_{N\to\infty}\bigl|F_{N,n_N}(m,\eps_N,\delta_N)-F_{N,n_N}^h(m,\eps_N,\delta_N)\bigr| = 0.
\label{eqFtoFhAg}
\end{equation}
Using that $F_{N,n_N}^h(m,\eps_N,\delta_N)\leq F_{N,1}^h(m,\eps_N)$, by Theorem \ref{thm1},
\begin{align*}
	&
	\limsup_{N\to\infty} \e F_{N,n_N}(m,\eps_N,\delta_N)
	\leq 
	\limsup_{N\to\infty} \e  F_{N,1}^h(m,\eps_N)
	\leq  \PP^\ef_\mu .
\end{align*}
Taking infimum over $\ef\in V$ yields the assertion.

In the case when $q=\int\! x^2\,\mu(dx)=0,$ we can use the approximation in (\ref{eq:TAPmAg22}) and, in this case, Theorem \ref{thm1ag2} implies the claim without the need to introduce any external field, because the Parisi formula in Theorem \ref{thm1ag2} equals $\TAP(\delta_0).$\qed

\subsection{Proof of Lemma \ref{lem:TAP-}}
Similarly to \eqref{eq:rev3}, to prove the lemma we need to show that
\begin{equation*}
	\liminf_{N\to\infty}\e \nTAP_{N,n_N}(m,\eps_N,\delta_N)\geq\TAP(\mu),
\end{equation*}
where $\eps_N,\delta_N$ go to zero and $n_N$ goes to infinity slowly enough.

Again, first, consider the case $q=\int\! x^2\,\mu(dx)>0.$ Using the approximation in Lemma \ref{LemHtoHm} and \eqref{eq:TAPmAg2}, it is enough to prove that
$$
\liminf_{N\to\infty}\e F_{N,n_N}(m,\eps_N,\delta_N)\geq \TAP(\mu).
$$
Consider the external field $h$ defined through
\begin{equation}
h_i=\ef_{\zeta_0}(m_i)=\Psi\bigl(m_i,\zeta_{0}\bigr)
\label{eqFPhag}
\end{equation}
where $\zeta_0$ is the minimizer found in Theorem \ref{lem:TAPnew}. Since $\ef$ is bounded on $[0,1-\eta]$ and all $m_i\in [0,1-\eta],$ by (\ref{eqFtoFh}),
$$
\liminf_{N\rightarrow\infty}\e F_{N,n_N}(m,\eps_N,\delta_N)=
\liminf_{N\rightarrow\infty}\e F_{N,n_N}^h(m,\eps_N,\delta_N).
$$
The Parisi formula in Theorem \ref{thm1} implies that
\begin{align*}
\lim_{N\rightarrow\infty}\e F_{N,1}^{h}(m,\varepsilon_N,\delta_N)= \inf_{\lambda,\zeta}\PP^{\ef_{\zeta_0}}_\mu(\lambda,\zeta)=\inf_{\lambda,\zeta}\oP^{\ef_{\zeta_0}}_\mu(\lambda,\zeta),
\end{align*}
where the second equality holds because $\supp(\mu)\subseteq [0,1-\eta]$ (so the functionals $\PP$ and $\oP$ coincide). By our choice of $\zeta_0$, Theorem \ref{lem:TAPnew} $(iv)$ implies that the right hand side equals $\oP_\mu^{\ef_{\zeta_0}}$, which equals to $\TAP(\mu)$ by Theorem \ref{lem:TAPnew} $(i)$. Furthermore, since zero is in the support of $\zeta_0$ (Theorem \ref{lem:TAPnew} $(v)$), one can argue that, for any fixed $n\geq 1$,
\begin{align}
\lim_{N\rightarrow\infty}\e F_{N,n}^{h}(m,\varepsilon_N,\delta_N)
=
\lim_{N\rightarrow\infty}\e F_{N,1}^{h}(m,\varepsilon_N,\delta_N),
\label{eqNoEcost}
\end{align}
which, obviously, will finish the proof. This follows from a standard approximation argument by generic models, exactly as in \cite{SubagFEL}, but, before we sketch it, let us notice that we are in the situation when $0<q=\int\! x^2\,\mu(dx)<1$ and $\int\! x(1-x)\,\mu(dx)>0,$ which implies that:
\begin{enumerate}
\item $(\lambda,\zeta)\to\PP^{\ef_{\zeta_0}}_\mu(\lambda,\zeta)$ is strictly convex (see e.g. (\ref{convexity}) below), 
\item by the equation (\ref{lem2eq1qp}) and (\ref{lem2eq1qp2}) in Lemma \ref{add:lem2}, the infimum $\inf_{\lambda, \zeta}\PP^{\ef_{\zeta_0}}_\mu(\lambda,\zeta)$ is achieved on $\lambda$ that is uniformly bounded, $|\lambda|\leq L$.
\end{enumerate}
For the specific model $\xi_q$ we are considering above, $\ef_{\zeta_0}$ was chosen in an optimal way, so that the minimizer is $(0,\zeta_0).$ However, we will now vary the model $\xi_q$ while keeping $\ef_{\zeta_0}$, so the two items above refer to this case. In particular, by continuity and compactness, these items imply that the minimizer (let us denote it by $(\lambda_{\xi_q},\zeta_{\xi_q})$) is unique and depends continuously on the model $\xi_q$ (see e.g. \cite[Corollary 4.2]{SKbonus}). The arguments in \cite[Section 3.7]{SKmodel} require no modifications to show that, for generic models on the narrow band, the distribution of the overlap converges to some $\zeta^*\in \mathcal{M}_{0,1-q}$ and the limit of the free energy, via the Aizenman-Sims-Starr cavity computation in Section \ref{sec:ParisiOnBand} above, is given by $\inf_\lambda \PP^{\ef_{\zeta_0}}_\mu(\lambda,\zeta^*).$ On the other hand, by the Parisi formula in Theorem \ref{thm1}, this limit equals 
$$
\inf_{\lambda, \zeta}\PP^{\ef_{\zeta_0}}_\mu(\lambda,\zeta)=\PP^{\ef_{\zeta_0}}_\mu(\lambda_{\xi_q},\zeta_{\xi_q}).
$$ 
By uniqueness of the minimizer, $\zeta^*= \zeta_{\xi_q}.$ Moreover, since this is the limiting distribution of the overlap, there can be no free energy cost of constraining the overlaps between $n$ replicas to some fixed value in the support of $\zeta_{\xi_q}.$ On the other hand, in our model above, the external field $\ef_{\zeta_0}$ was chosen in such a way that zero is in the support of the minimizer $\zeta_{0}$, so, when we approximate this model by generic models, by continuity of $\zeta_{\xi_q}$ in the model $\xi_q$, these generic models will have points in the support very close to zero. As a result, for our model above, there can be no free energy cost of constraining the overlaps to be near zero and (\ref{eqNoEcost}) must hold. For more details, see e.g. \cite[Lemma 4.8]{SKbonus}.

In the case when $q=\int\! x^2\,\mu(dx)=0,$ again, we can use the approximation in (\ref{eq:TAPmAg22}) and Theorem \ref{thm1ag2}. The argument here is exactly the same, except we do not need to introduce the external field, because zero is already in the support of the Parisi measure of the original model without external field.\qed

\section{TAP states are ancestral}\label{SecTAPAnc}

In this section we will prove Theorem \ref{Thm1label}, which will follow from the following zero-temperature formula from \cite{CPTAP17}, which is a generalization to soft spins of the zero-temperature result Jagannath-Sen \cite[Theorem 1.2]{JS17} for discrete spins, which itself was derived from the positive temperature formulas with general prior spin distributions \cite{Panchenko2005, Panchenko2015, panchenko2018}. 

Define a functional $\mathcal{P}_q$ on $\mathbb{R}\times \mathcal{M}_{0,q}$ by
\begin{align}\label{pfinfinity}
\mathcal{P}_q(\lambda,\gamma)=\Phi_{\gamma}^\lambda(0,0)-\frac{1}{2}\int_0^qs\xi''(s)\gamma(s) ds,
\end{align}
where, for a given $\lambda,$ $\Phi_{\gamma}^\lambda(s,x)$ is defined as the solution of
\begin{align}
\label{pde2ASTA}
\partial_s \Phi_{\gamma}^\lambda&=-\frac{\xi''(s)}{2}\Bigl(\partial_{xx}\Phi_{\gamma}^\lambda+\gamma(s)\bigl(\partial_x\Phi_{\gamma}^\lambda\bigr)^2\Bigr)
\end{align}
on $[0,q)\times\mathbb{R}$, with the boundary condition 
\begin{align}
\label{eqBoundary1}
\Phi_{\gamma}^\lambda(q,x):=\max_{a\in [-1,1]}\Bigl(a x+\lambda (a^2-q)
+\Lambda_{\zeta_*}(q,a)\Bigr),
\end{align}
where $\Lambda_{\zeta_*}(q,a)$ was defined in (\ref{eqCCPar}) (recall that it is bounded and continuous on $[-1,1]$). Then \cite[Theorem 5]{CPTAP17} implies the following.
\begin{thm} \label{thm1STA} For any $q\in [0,1],$ we have that
	\begin{align}
	\label{thm1:eq1STA}
\lim_{N\rightarrow\infty} \e \max_{\frac{1}{N}\|m\|^2=q}
	\frac{1}{N}\Bigl(
	H_N(m)
	+ \sum_{i=1}^{N}\Lambda_{\zeta_*}(q,m_i)
	\Bigr)
	=\inf_{(\lambda,\gamma)\in\mathbb{R}\times\mathcal{M}_{0,q}}\mathcal{P}_q(\lambda,\gamma).
	\end{align}
\end{thm}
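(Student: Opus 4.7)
The plan is to obtain Theorem \ref{thm1STA} as the $\beta \to \infty$ limit of a positive-temperature Parisi formula for a constrained \emph{soft-spin} mixed $p$-spin model. For $\beta>0$, $\lambda\in\Reals$, and the (bounded, continuous) potential $a\mapsto \Lambda_{\zeta_*}(q,a)$ on $[-1,1]$, introduce the auxiliary free energy
\begin{equation*}
F_{N,\beta}(\lambda):=\frac{1}{\beta N}\E\log\int_{[-1,1]^N}\exp\Bigl(\beta H_N(m)+\beta\lambda\bigl(\|m\|^2-Nq\bigr)+\beta\sum_{i=1}^N\Lambda_{\zeta_*}(q,m_i)\Bigr)\,dm,
\end{equation*}
where the Lagrange-multiplier term $\beta\lambda(\|m\|^2-Nq)$ softens the hard constraint $\|m\|^2=Nq$. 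Because $a\mapsto \Lambda_{\zeta_*}(q,a)+\lambda(a^2-q)$ is continuous and bounded on $[-1,1]$, this model falls within Panchenko's soft-spin Parisi framework (\cite{Panchenko2005,Panchenko2015,panchenko2018}): the limit $\lim_N F_{N,\beta}(\lambda)$ equals an infimum of a Parisi functional over discrete order parameters, whose boundary condition is
\begin{equation*}
\Psi^{\beta,\lambda}_\gamma(1,x)=\frac{1}{\beta}\log\int_{-1}^{1}e^{\beta(ax+\lambda(a^2-q)+\Lambda_{\zeta_*}(q,a))}\,da.
\end{equation*}

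The next step is the zero-temperature reduction, modelled on Auffinger--Chen \cite{ChenAuffGSE}. By Laplace's method, as $\beta\to\infty$ the boundary condition converges uniformly on compacts in $x$ to $\max_{a\in[-1,1]}(ax+\lambda(a^2-q)+\Lambda_{\zeta_*}(q,a))$, which is precisely $\Phi^\lambda_\gamma(q,x)$ of (\ref{eqBoundary1}). Simultaneously one applies the standard rescaling of the order parameter (replacing $\beta\gamma$ by $\gamma$ on $[0,q]$, with the remaining mass concentrating at the effective self-overlap $q$), so that the positive-temperature Parisi functional converges to $\mathcal{P}_q(\lambda,\gamma)$ from (\ref{pfinfinity}) and the admissible set of order parameters collapses to $\MM_{0,q}$. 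Tightness of near-minimizers $\gamma_\beta$, together with $d_1$-Lipschitz continuity of the Parisi functional in its order parameter (compare Lemma \ref{zetacont}), then yields
\begin{equation*}
\lim_{\beta\to\infty}\inf_{\gamma}\bigl\{\text{Parisi functional at }(\beta,\lambda,\gamma)\bigr\}=\inf_{\gamma\in\MM_{0,q}}\mathcal{P}_q(\lambda,\gamma).
\end{equation*}

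On the other hand, for fixed $N$ a direct Laplace argument gives
\begin{equation*}
\lim_{\beta\to\infty}F_{N,\beta}(\lambda)=\frac{1}{N}\E\max_{m\in[-1,1]^N}\Bigl(H_N(m)+\lambda(\|m\|^2-Nq)+\sum_{i=1}^N\Lambda_{\zeta_*}(q,m_i)\Bigr),
\end{equation*}
up to a $O(\beta^{-1}\log\beta)$ Gaussian correction from the volume of $[-1,1]^N$. Taking $\inf_\lambda$ on both sides reinstates the overlap constraint $\|m\|^2=Nq$ via strong duality: concavity of $\Lambda_{\zeta_*}(q,\cdot)$ combined with the quadratic penalty in $\lambda$ supplies the requisite convex structure after a routine convexification on each coordinate, and the resulting minimax can be exchanged because the Lagrangian is coercive in $\lambda$ (the quadratic term $\lambda(a^2-q)$ dominates the bounded penalty for $|\lambda|$ large). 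Combined with Borell--TIS concentration of the ground-state energy around its mean, this yields (\ref{thm1:eq1STA}). The main obstacle will be commuting the three limits $\beta\to\infty$, $N\to\infty$, and $\inf_\lambda$: one needs $\beta$-uniform regularity of the soft-spin Parisi functional and uniform compactness for the Lagrange multiplier, which can be obtained either by discretizing spins on an $N$-dependent grid as in Jagannath--Sen \cite{JS17}, or by working directly with Panchenko's synchronization-based formulation \cite{panchenko2018}; both routes bypass the degeneration of the Lebesgue reference measure under the $e^{\beta\cdot}$ scaling and land on the same variational limit $\inf_{(\lambda,\gamma)}\mathcal{P}_q(\lambda,\gamma)$.
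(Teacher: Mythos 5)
The paper's own ``proof'' of Theorem \ref{thm1STA} is a single sentence: the result is cited as Theorem 5 of \cite{CPTAP17}, itself a soft-spin generalization of Jagannath--Sen \cite{JS17}, derived in turn from the positive-temperature Parisi formulas with general priors in \cite{Panchenko2005,Panchenko2015,panchenko2018}. So the relevant question is whether your reconstruction of the cited result is sound. The architecture you lay out---positive-temperature Parisi formula for a soft-spin model with Lebesgue reference measure on $[-1,1]^N$, Auffinger--Chen-style $\beta\to\infty$ reduction with the rescaled order parameter and the Laplace collapse of the boundary condition to $\Phi_\gamma^\lambda(q,\cdot)$ of (\ref{eqBoundary1}), and a Lagrangian handling of the overlap constraint $\|m\|^2=Nq$---is indeed the route taken in that literature.

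The gap is the ``strong duality'' step. You assert that
\[
\inf_{\lambda\in\Reals}\,\E\max_{m\in[-1,1]^N}\Bigl(H_N(m)+\lambda(\|m\|^2-Nq)+\sum_{i\leq N}\Lambda_{\zeta_*}(q,m_i)\Bigr)
\]
equals the hard-constrained expectation, invoking concavity of $\Lambda_{\zeta_*}(q,\cdot)$ plus coercivity in $\lambda$. Neither delivers strong duality at finite $N$: $m\mapsto H_N(m)$ is a random non-concave polynomial, so the maximization in $m$ is non-convex, Sion's minimax theorem does not apply, and one only has the weak-duality inequality $\geq$ at each $N$. The cited proofs do not take this detour. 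In \cite{JS17} and its soft-spin extension \cite{CPTAP17}, the \emph{constrained} free energy is proved to converge to $\inf_{\lambda,\gamma}\mathcal{P}_q(\lambda,\gamma)$ directly: Guerra's interpolation yields the upper bound, and the Aizenman--Sims--Starr one-spin cavity computation (with the self-overlap constraint built into the cavity coordinate) produces the term $\lambda(a^2-q)$ in the boundary condition and the $\inf_\lambda$ as part of the Parisi functional itself, not as an outer Lagrangian relaxation. If you insist on your outer relaxation, the missing direction $\leq$ must be recovered \emph{after} taking $N\to\infty$, e.g.\ by differentiating $\lambda\mapsto\inf_\gamma\mathcal{P}_q(\lambda,\gamma)$ (which is convex in $\lambda$) and showing via an envelope argument that the optimal $\lambda^*$ forces the limiting self-overlap to be exactly $q$; a bare appeal to classical strong duality and coercivity does not close this gap.
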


\begin{proof}[Proof of Theorem \ref{Thm1label}]
Using this result, \eqref{eq:TAPzetastar}, Theorem \ref{thm:GenTAP}, and Gaussian concentration, in order to prove Theorem \ref{Thm1label}, it is enough to prove that
\begin{equation}
\inf_{(\lambda,\gamma)\in\mathbb{R}\times\mathcal{M}_{0,q}}\mathcal{P}_q(\lambda,\gamma)
-\frac{1}{2}\int_q^{1}\!s\xi''(s)\zeta_*(s)\,ds
\leq \PP(\zeta_*).
\end{equation}
We will take $\gamma=\zeta_*$ (restricted to $[0,q]$) and take $\lambda=0$. Then the function in the boundary condition (\ref{eqBoundary1}) is $a x+\Lambda_{\zeta_*}(q,a)$ and, since $\Phi_{\zeta_*}(q,x)$ is convex, the definition (\ref{eqCCPar}) implies by conjugation  that 
$$
\max_{a\in [-1,1]} \bigl(a x+\Lambda_{\zeta_*}(q,a)\bigr) = \Phi_{\zeta_*}(q,x).
$$
Since the PDE in (\ref{pde2ASTA}) with $\gamma=\zeta_*$ is the Parisi PDE for the original model, we get that 
$$
\mathcal{P}_q(0,\zeta_*)
=
\Phi_{\zeta_*}(0,0)-\frac{1}{2}\int_0^qs\xi''(s)\zeta_*(s) ds.
$$
Finally,
$$
\mathcal{P}_q(0,\zeta_*) -\frac{1}{2}\int_q^{1}\!s\xi''(s)\zeta_*(s)\,ds
=
\Phi_{\zeta_*}(0,0)-\frac{1}{2}\int_0^1\! s\xi''(s)\zeta_*(s) ds = \PP(\zeta_*),
$$
which finishes the proof.
\end{proof}

\section{Generalized TAP equations}\label{SecTAPGenEE}

In this section, we will prove the formula for the gradient of $\nTAP(\mu_m)$ in Theorem \ref{ThmGTElab} for all $m\in (-1,1)^N$. For $q\in [0,1]$, and $h\in\Reals^N$, define
\begin{equation}
\nTAP(m,q,\zeta,h):=
\frac{1}{N}\sum_{i=1}^{N}\bigl(\Phi_{\zeta}(q,h_i)-m_i h_i\bigr)
-\frac{1}{2}\int_q^{1}\!s\xi''(s)\zeta(s)\,ds.
\label{eqTAPfirstagGEE}
\end{equation}
If $\frac{1}{N}\|m\|^2=q$ then the definition of $\nTAP(\mu)$ in (\ref{eqTAPfirst}) implies that
\begin{equation}
\nTAP(m):=\nTAP(\mu_m) = 
\inf_{\zeta\in \mathcal{M}_{0,1},\,h\in\Reals^N} \nTAP(m,q,\zeta,h).
\label{eqTAPvariZH}
\end{equation}
If $m\in (-1,1)^N$, it is clear that we minimize over $h$ in some cube $[-L,L]^N$, where $L$ depends only on the largest value of $|m_i|$ (we will need this for compactness argument below). It is a standard fact that the functional $\nTAP(m,q,\zeta,h)$ is strictly convex in $h$ and convex in $\zeta$. The reason it is not strictly convex in $\zeta$ is because the functional depends only on the restriction of $\zeta$ to $[q,1]$. For this reason, let us make a convention that, for a given $q$, we minimize over $\zeta$ fixed to be $\zeta(s)=0$ for $s\in [0,q).$ Then the minimizer of the above functional for $m\in (-1,1)^N$ is unique and will be denoted by $\zeta_m, h_m,$ so that, for $q=\frac{1}{N}\|m\|^2,$
\begin{equation}
\nTAP(m)= \nTAP(m,q,\zeta_m,h_m).
\end{equation}
If $m$ converges to $m_0\in (-1,1)^N$ then, by the continuity of $\nTAP(\mu)$ proved in Theorem \ref{lem:TAPnew},
$$
\lim_{m\to m_0}\nTAP(m,q,\zeta_m,h_m) = \nTAP(m_0,q_0,\zeta_{m_0},h_{m_0}).
$$
Since $\mu_m\to \mu_{m_0}$, $q=\frac{1}{N}\|m\|^2\to q_0=\frac{1}{N}\|m_0\|^2$, and any subsequential limit of $\zeta_m$ is equal to zero on $[0,q_0)$ (by our convention above), the uniqueness of the minimizer implies that $\zeta_m\to \zeta_{m_0}$ and $h_m\to h_{m_0}.$ With this observation, in order to compute the gradient of $\nTAP(m)$, we will need two lemmas.
\begin{lem}
Consider a metric space $D$ and a function $f\colon (-\eps,\eps)\times D\to \Reals.$ Suppose that there exists a function $d\colon (-\eps,\eps)\to D$ such that 
\begin{equation}
f(t,d(t))=\inf_{d\in D}f(t,d),
\label{eqLemddtF}
\end{equation}
and suppose that $d(t)$ is continuous at $t=0$. Also, suppose that the right derivative $\partial_t^+ f(t,d)$ exists and is continuous at $(0,d(0))$. Then 
\begin{equation}
\frac{d^+}{dt}f(t,d(t))\Bigr|_{t=0}=\partial_t^+ f(0,d(0)).
\end{equation}
The same statement holds for left derivatives.
\end{lem}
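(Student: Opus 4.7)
My plan is to sandwich the difference quotient $(f(t,d(t))-f(0,d(0)))/t$ between two expressions using the minimality property \eqref{eqLemddtF} twice, first to compare $d(t)$ against $d(0)$ at time $t$, then to compare $d(0)$ against $d(t)$ at time $0$. Explicitly, from $f(t,d(t))\leq f(t,d(0))$ and $f(0,d(0))\leq f(0,d(t))$ I obtain the chain
\begin{equation*}
\frac{f(t,d(t))-f(0,d(t))}{t}\;\leq\;\frac{f(t,d(t))-f(0,d(0))}{t}\;\leq\;\frac{f(t,d(0))-f(0,d(0))}{t}
\end{equation*}
for every $t>0$, and the goal reduces to showing that both outer quotients tend to $\partial_t^+f(0,d(0))$ as $t\downarrow 0$.

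The upper bound is immediate from the definition of the right partial derivative: the rightmost quotient converges to $\partial_t^+ f(0,d(0))$ as $t\downarrow 0$, since the point $d(0)$ is held fixed.

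For the lower bound I would fix $\eta>0$ and combine the two continuity hypotheses. By continuity of $d$ at $0$ together with the joint continuity of $\partial_t^+f$ at $(0,d(0))$, there exists $\tau>0$ such that, for every $t\in[0,\tau]$ and every $s\in[0,t]$, the point $(s,d(t))$ lies in a neighbourhood where
\begin{equation*}
\bigl|\partial_t^+ f(s,d(t))-\partial_t^+ f(0,d(0))\bigr|<\eta.
\end{equation*}
Applying to $g(s):=f(s,d(t))$ the standard real-analysis fact that a continuous function on $[0,t]$ whose right Dini derivative satisfies $D^+g\geq m$ on $[0,t)$ obeys $g(t)-g(0)\geq mt$ (obtained, e.g., by showing that $s\mapsto g(s)-(m-\varepsilon)s$ attains its maximum at $s=t$), I deduce that
\begin{equation*}
\frac{f(t,d(t))-f(0,d(t))}{t}\geq \partial_t^+ f(0,d(0))-\eta,
\end{equation*}
and letting $\eta\downarrow 0$ completes the argument.

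The only delicate point is precisely this last step: one must extract an honest increment inequality from pointwise right derivatives, so one should verify that the implicit ingredients in the hypothesis (namely, that $\partial_t^+ f$ is defined on a neighbourhood of $(0,d(0))$, which forces $s\mapsto f(s,d(t))$ to be right-continuous and right-differentiable on a small interval, and that the uniform bound above holds on that interval) are enough to invoke the Dini derivative mean value inequality. The statement for left derivatives follows by applying the same argument to the function $(t,d)\mapsto f(-t,d)$, whose right derivative at $(0,d(0))$ is $-\partial_t^- f(0,d(0))$.
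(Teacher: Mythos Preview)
Your proposal is correct and follows essentially the same route as the paper: the same sandwich inequality from minimality, and the same mean-value-type control of the outer quotients via the right derivative near $(0,d(0))$. The paper's write-up is simply terser, bounding the two outer quotients directly by $\min_{s\in[0,\alpha]}\partial_t^+ f(s,d(\alpha))$ and $\max_{s\in[0,\alpha]}\partial_t^+ f(s,d(0))$ and then letting $\alpha\downarrow 0$, whereas you spell out the Dini-derivative mean value inequality and the $\eta$-neighbourhood argument explicitly.
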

\begin{proof}
Using (\ref{eqLemddtF}), for $\alpha>0$,
$$
\frac{f(\alpha,d(\alpha))-f(0,d(\alpha))}{\alpha}\leq\frac{f(\alpha,d(\alpha))-f(0,d(0))}{\alpha}\leq\frac{f(\alpha,d(0))-f(0,d(0))}{\alpha}
$$
and, therefore,
$$
\min_{s\in [0,\alpha]}\partial_t^+ f(s,d(\alpha))\leq\frac{f(\alpha,d(\alpha))-f(0,d(0))}{\alpha}\leq \max_{s\in [0,\alpha]} \partial_t^+ f(s,d(0)).
$$
Letting $\alpha\downarrow 0$ finishes the proof.
\end{proof}

As we discussed above, $\nTAP(m)$ is obtained by taking infimum over all $\zeta\in \mathcal{M}_{0,1}$ with $\zeta(s)=0$ on $[0,q)$ and $h\in \mathbb{R}^N.$ For any such $\zeta$ and $x\in \mathbb{R}$, define a stochastic process $u_{\zeta,x}$ on $[q,1]$ by 
$$
u_{\zeta,x}(s)=\partial_x\Phi_{\zeta}(s,X_{\zeta,x}(s)),
$$ 
where $(X_{\zeta,x}(s))_{q\leq s\leq 1}$ is the (strong) solution of the SDE
\begin{align*}
dX_{\zeta,x}(s)&=\zeta(s)\xi''(s)\partial_x\Phi_{\zeta}(s,X_{\zeta,x}(s))ds+\xi''(s)^{1/2}dW_s
\end{align*}
with the initial condition $X_{\zeta,x}(q)=x.$
\begin{lem}
	We have that
	\begin{align}\label{lem37:eq1}
	\partial_{xx}\Phi_{\zeta}(q,x)&=1-\int_{[q,1]}\! \e u_{\zeta,x}(l)^2\,d\zeta(l).
	\end{align}
\end{lem}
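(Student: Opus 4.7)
The plan is to obtain the identity by a direct Itô calculation along the diffusion $X_{\zeta,x}$, first for smooth $\zeta$ and then by approximation for general $\zeta\in\mathcal M_{0,1}$ (with the convention $\zeta\equiv 0$ on $[0,q)$). Both sides of \eqref{lem37:eq1} are $d_1$-continuous in $\zeta$ (the left side by standard stability of the Parisi PDE solution and its $x$-derivatives, the right side because $u_{\zeta,x}$ is bounded by $1$ and depends continuously on $\zeta$ via the SDE), so it will suffice to establish the identity on a dense set of smooth, strictly positive $\zeta$, where classical Itô applies and the PDE has a genuine $C^{\infty}$ solution.

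Fix smooth $\zeta$ and set $\phi(s):=\partial_{xx}\Phi_\zeta(s,X_{\zeta,x}(s))$, so that $\phi(q)=\partial_{xx}\Phi_\zeta(q,x)$ and, by the boundary condition $\Phi_\zeta(1,x)=\log 2\cosh x$, we have $\phi(1)=1-\tanh^2(X_{\zeta,x}(1))=1-u_{\zeta,x}(1)^2$. Differentiating \eqref{ParisiPDEOrig} once in $x$ and applying Itô to $u_{\zeta,x}(s)=\partial_x\Phi_\zeta(s,X_{\zeta,x}(s))$ along the SDE for $X_{\zeta,x}$, the $ds$-terms cancel exactly, giving
\[
du_{\zeta,x}(s)=\phi(s)\,\xi''(s)^{1/2}dW_s,
\]
so $u_{\zeta,x}$ is a martingale on $[q,1]$ with $d\langle u_{\zeta,x}\rangle_s=\phi(s)^2\xi''(s)\,ds$. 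Differentiating the PDE twice in $x$ and applying Itô to $\phi(s)$, the same cancellation leaves only the quadratic term, yielding
\[
d\phi(s)=-\zeta(s)\xi''(s)\phi(s)^2\,ds+(\text{martingale}).
\]
Taking expectations and writing $F(s):=\E u_{\zeta,x}(s)^2$, for which $F'(s)=\E\phi(s)^2\xi''(s)$ by the martingale property of $u_{\zeta,x}$, one obtains the ODE $\tfrac{d}{ds}\E\phi(s)=-\zeta(s)F'(s)$.

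Integrating from $q$ to $1$ and using $\E\phi(1)=1-F(1)$ gives
\[
\partial_{xx}\Phi_\zeta(q,x)=\E\phi(q)=1-F(1)+\int_q^1\zeta(s)\,dF(s).
\]
Since $F$ is absolutely continuous and $\zeta$ is a right-continuous CDF with $\zeta(1)=1$, Stieltjes integration by parts gives
\[
\int_q^1\zeta(s)\,dF(s)=F(1)-\zeta(q)F(q)-\int_{(q,1]}F(s)\,d\zeta(s),
\]
and the convention $\zeta\equiv 0$ on $[0,q)$ implies $\zeta(q)=\zeta(\{q\})$, so $\zeta(q)F(q)+\int_{(q,1]}F(s)\,d\zeta(s)=\int_{[q,1]}F(s)\,d\zeta(s)$. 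This collapses to \eqref{lem37:eq1}. The only nontrivial step is justifying the reduction to smooth $\zeta$: one approximates $\zeta$ by mollifications with $d_1$-distance tending to zero, and uses uniform Lipschitz bounds for $\Phi_\zeta$, $\partial_x\Phi_\zeta$ and $\partial_{xx}\Phi_\zeta$ in $\zeta$ (which come from the standard coupling arguments that proved $d_1$-Lipschitz continuity of the Parisi functional), together with an SDE stability estimate to pass $F(s)=\E u_{\zeta,x}(s)^2$ and the Stieltjes integral to the limit.
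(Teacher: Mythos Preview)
Your proof is correct and follows essentially the same route as the paper's: both derive the two key It\^o identities
\[
d\bigl(\E u_{\zeta,x}(s)^2\bigr)=\xi''(s)\,\E\bigl(\partial_{xx}\Phi_\zeta(s,X_{\zeta,x}(s))\bigr)^2\,ds,\qquad
d\E\phi(s)=-\zeta(s)\xi''(s)\,\E\phi(s)^2\,ds,
\]
combine them with the boundary relation $\partial_{xx}\Phi_\zeta(1,x)=1-(\partial_x\Phi_\zeta(1,x))^2$, and finish with an integration-by-parts/Fubini step to convert $\int_q^1\zeta(s)\,dF(s)$ into $\int_{[q,1]}F(l)\,d\zeta(l)$.

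The only noteworthy differences are cosmetic. The paper cites these two identities from \cite[Lemma~2]{AC15}, which already holds for general $\zeta$, and so avoids your smoothing/approximation step; and where you invoke Stieltjes integration by parts, the paper instead writes $\zeta(r)=\int_{[q,r]}d\zeta(l)$ and applies Fubini. Your version is more self-contained (you re-derive the It\^o identities rather than quote them), at the cost of the extra $d_1$-approximation argument; the paper's version is shorter because it leans on \cite{AC15}. Neither approach offers a real advantage over the other.
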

\begin{proof}
Recall from \cite[Lemma 2]{AC15} that there are two useful identities associated with  the process $X_{\zeta,x}$, namely, for any $q\leq s\leq s'\leq 1,$
	\begin{align}
	\e u^2_{\zeta,x}(s')-\e u^2_{\zeta,x}(s)&=\int_{s}^{s'}\! \xi''(r)\e\bigl(\partial_{xx}\Phi_{\zeta}(r,X_{\zeta,x}(r))\bigr)^2\,dr,
	\label{eqIden1opu}
	\end{align}
	and
	\begin{align}
	\partial_{xx}\Phi_{\zeta}(s',X_{\zeta,x}(s'))-\partial_{xx}\Phi_{\zeta}(s,X_{\zeta,x}(s))
	&=
	-\int_s^{s'}\! \xi''(r)\zeta(r)\bigl(\partial_{xx}\Phi_{\zeta}(r,X_{\zeta,x}(r))\bigr)^2\,dr
	\nonumber
	\\
	&+\int_{s}^{s'}\! \xi''(r)^{1/2}\partial_{xxx}\Phi_{\zeta}(r,X_{\zeta,x}(r))\,dW_r.
		\label{eqIden2opu}
	\end{align}
Using these and the Fubini theorem,
	\begin{align*}
	&\e\partial_{xx}\Phi_{\zeta}(1,X_{\zeta,x}(1))-\partial_{xx}\Phi_{\zeta}(q,x)\\
(\mbox{by (\ref{eqIden2opu})})\,\,	&=-\e\int_q^{1}\! \xi''(r)\zeta(r)\bigl(\partial_{xx}\Phi_{\zeta}(r,X_{\zeta,x}(r))\bigr)^2\,dr\\
	&=-\e\int_q^{1}\! \xi''(r)\bigl(\partial_{xx}\Phi_{\zeta}(r,X_{\zeta,x}(r))\bigr)^2\Bigl(\int_{[q,r]}\zeta (dl)\Bigr)\,dr\\
	&=-\e\int_{[q,1]}\int_{l}^1\! \xi''(r)\bigl(\partial_{xx}\Phi_{\zeta}(r,X_{\zeta,x}(r))\bigr)^2\,dr\,d\zeta(l)\\
(\mbox{by (\ref{eqIden1opu})})\,\,	&=-\int_{[q,1]}\!  \e\bigl(\partial_{x}\Phi_{\zeta}(1,X_{\zeta,x}(1))\bigr)^2-\e\bigl(\partial_{x}\Phi_{\zeta}(l,X_{\zeta,x}(l))\bigr)^2\,d\zeta(l)\\
	&=-\e\bigl(\partial_{x}\Phi_{\zeta}(1,X_{\zeta,x}(1))\bigr)^2+\int_{[q,1]}\e\bigl(\partial_{x}\Phi_{\zeta}(l,X_{\zeta,x}(l))\bigr)^2\,d\zeta(l).
	\end{align*}
	Noting that $\partial_{xx}\Phi_{\zeta}(1,x)=1-\tanh^2(x)=1-(\partial_x\Phi_{\zeta}(1,x))^2,$ we get
	\begin{align*}
	\e\partial_{xx}\Phi_{\zeta}(1,X_{\zeta,x}(1))&=1-\e\bigl(\partial_{x}\Phi_{\zeta}(1,X_{\zeta,x}(1))\bigr)^2.
	\end{align*}
	Combining these together completes our proof.
\end{proof}

\begin{proof}[Proof of Theorem \ref{ThmGTElab}]
Take $m_0\in (-1,1)^N$ and consider the path $m_t=m_0+tv$ for some $v\in \Reals^N$, which lies in $(-1,1)^N$ for $t\in (-\eps,\eps)$ for some small $\eps>0,$ and denote $q_t=\frac{1}{N}\|m_t\|^2.$  Let $D=\mathcal{M}_{0,1} \times \Reals^N$ and, with $d=(\zeta,h)$, let 
$
f(t,d) := \nTAP(m_t,q_t,\zeta,h).
$
Let $d(t):=(\zeta_t,h_t)=(\zeta_{m_t},h_{m_t})$ be the minimizers defined above, so that $\nTAP(m_t)=f(t,d(t)).$ Since $\nabla \nTAP(m_0)\cdot v$ is the derivative of $f(t,d(t))$ at $t=0$, we can apply the above lemma once its assumptions are verified. The continuity of $d(t)$ at $t=0$ follows from the discussion above. To compute $\partial_t^+ \nTAP(m_t,q_t,\zeta,h)$, we need to compute the partial derivatives of $\nTAP(m,q,\zeta,h)$ with respect to all $m_i$ and $q$, which are the only parameters that depend on $t$ for fixed $(\zeta,h)$. We will take the derivatives of $m_t,q_t$ in $t$ only at the end, using that, for $t=0,$ we have $(m,q,\zeta,h)=(m_0,q_0,\zeta_0,h_0)$. First, right and left derivatives in $q$ are equal to
$$
\partial_q^\pm \nTAP(m,q,\zeta,h) =
\frac{1}{N}\sum_{i=1}^{N}\partial_q^\pm\Phi_{\zeta}(q,h_i)
+\frac{1}{2}q\xi''(q)\zeta(q\pm0),
$$
where $\zeta(q\pm0)$ are the one-sided limits of $\zeta(q)$.
(Here, we consider both derivatives, because $q_t=\frac{1}{N}\|m_t\|^2$ may be increasing or decreasing with $t$.) Using the Parisi PDE (\ref{ParisiPDEOrig}) for the first term, we can rewrite
\begin{align*}
\partial_q^\pm \nTAP(m,q,\zeta,h) 
&=
-\frac{1}{N}\sum_{i=1}^{N}\frac{\xi''(q)}{2}\partial_{xx} \Phi_{\zeta}(q,h_i)
\\
&\quad\,\, +
\frac{1}{2}\xi''(q)\zeta(q\pm0)\Bigl(
q - \frac{1}{N}\sum_{i=1}^{N}\bigl(\partial_x \Phi_{\zeta}(q,h_i)\bigr)^2
\Bigr).
\end{align*}
The only possible discontinuity on the right hand side is in the c.d.f. $\zeta(q)$. However, at $t=0$,\begin{equation}
\partial_x \Phi_{\zeta}(q,h_i)\bigr|_{t=0}= \partial_x \Phi_{\zeta_0}(q_0,h_{0,i})=m_{0,i},
\label{eqpdxPimax}
\end{equation}
because $h_{0,i}$ is the minimizer and critical point of $\Phi_{\zeta_0}(q_0,h)-m_{0,i} h$ and, therefore,
$$
\frac{1}{N}\sum_{i=1}^{N}\bigl( \partial_x \Phi_{\zeta_0}(q_0,h_{0,i})\bigr)^2
=
\frac{1}{N}\sum_{i=1}^{N} m_{0,i}^2 = q_0.
$$
This means that the one-sides derivatives above are continuous at $(m_0,q_0,\zeta_0,h_0)$ and
\begin{align}
\partial_q^\pm \nTAP(m,q,\zeta,h)\bigr|_{t=0} 
&= 
-\frac{1}{N}\sum_{i=1}^{N}\frac{\xi''(q_0)}{2}\partial_{xx} \Phi_{\zeta_0}\bigl(q_0,\oPsi(q_0, m_{0,i},\zeta_0)\bigr),
\label{eqParDerq0}
\end{align}
since the minimizer $h_{0,i}=\oPsi(q_0, m_{0,i},\zeta_0)$ was defined in (\ref{eqDefPsiefO}).
The derivative $\nTAP(m,q,\zeta,h)$ in $m_i$ equals $-h_i$, which is continuous and
\begin{equation}
\partial_{m_i} \nTAP(m,q,\zeta,h)\bigr|_{t=0} =
-\frac{1}{N}h_{0,i}=-\frac{1}{N}\oPsi(q_0, m_{0,i},\zeta_0).
\end{equation}
If we denote
$$
C(m_0):=\xi''(q_0)\frac{1}{N}\sum_{i=1}^{N}
\partial_{xx} \Phi_{\zeta_0}\bigl(q_0,\oPsi(q_0, m_{0,i},\zeta_0)\bigr)
=\xi''(q_0)\frac{1}{N}\sum_{i=1}^{N}
\partial_{xx} \Phi_{\zeta_0}\bigl(q_0,h_{0,i}\bigr)
$$
and use that $\frac{d}{dt}q|_{t=0}=2m_0\cdot v$, combining the above we have that
\begin{align*}
\nabla \nTAP(m_0)\cdot v & = \frac{d}{dt} q\cdot \partial_q \nTAP(m,q,\zeta,h)\bigr|_{t=0}+ \sum_{i=1}^{N} \frac{d}{dt}m_i\cdot  \partial_{m_i} \nTAP(m,q,\zeta,h)\bigr|_{t=0}\\
& = -\frac{1}{N}\Bigl(\oPsi(q_0, m_{0,i},\zeta_0)+C(m_0)m_{0,i}
\Bigr)_{i\leq N}\cdot v
\end{align*}
for all $v\in\Reals^N.$ To complete the proof, it remains to verify that 
\begin{align}
C(m_0)&=\xi''(q_0)\! \int_{q_0}^1\zeta_{0}(s)\,ds.
\label{eqCm0}
\end{align}
For a fixed $m_0\in (-1,1)^N$ with $q_0=\frac{1}{N}\|m_0\|^2$, let us find the minimizers $\zeta_0$ and $h_0$ of the strictly convex variational problem (\ref{eqTAPvariZH}), with the above convention that we optimize over $\zeta$ fixed to be $\zeta(s)=0$ for $s\in [0,q).$ Then the directional derivative with respect to $\zeta$ at $\zeta_0$ in the direction of any other distributions must be non-negative. If, for a given $\zeta\in \mathcal{M}_{0,1}$, we consider the path $\zeta_b=\zeta_0+b(\zeta-\zeta_0)$ parametrized by $b\in [0,1],$ one can compute the directional derivative in a standard way (see \cite[Theorem 2]{C17} or \cite[Lemma 4.11]{SKbonus}) to get
\begin{align*}
\frac{d}{db}\nTAP(m_0,q_0,\zeta_b,h_0)\Big|_{b=0^+}&=\frac{1}{2}\int_q^1\! \xi''(s)\Bigl(\frac{1}{N}\sum_{i=1}^N\e u_{\zeta_0,h_{0,i}}(s)^2-s\Bigr)\bigl(\zeta(s)-\zeta_0(s)\bigr)\,ds\geq 0.
\end{align*}
Now, if we vary over all possible $\zeta$, the minimality of $\zeta_0$ implies that 
whenever $s\in [q,1]$ is in the support of $\zeta_0$, we must have that (see, e.g. \cite{C17,JT17})
$$
\frac{1}{N}\sum_{i=1}^N\e u_{\zeta_0,h_{0,i}}(s)^2=s.
$$
Plugging this into \eqref{lem37:eq1} with $\zeta=\zeta_0$ and initial condition $X_{\zeta,x}(q)=x=h_{0,i}$ and averaging over $i$,
\begin{align*}
\frac{1}{N}\sum_{i=1}^N\partial_{xx}\Phi_{\zeta_0}(q,h_{0,i})&=1-\int_{[q,1]}\! l\,d\zeta(l).
\end{align*}
Using that
\begin{align*}
\int_{[q,1]}l\,d\zeta(l)&=\int_{[q,1]}\Bigl(\int_q^l\! dw\Bigr)\,d\zeta(l)+q=\int_q^1\int_{[w,1]}\!d\zeta (l)\,dw+q
\\
&=\int_q^1\!(1-\zeta(w^-))\,dw+q=1-\int_q^1\!\zeta(w)\,dw
\end{align*}
finishes the proof of (\ref{eqCm0}).
\end{proof}

\section{Classical TAP correction}\label{SecTAPclass}

We proceed to establish Propositions \ref{add:prop2}  and \ref{add:prop4} and Corollary \ref{add:cor1}. 
\begin{proof}[Proof of Proposition \ref{add:prop2}]
	Assume that $(0,\delta_0)$ is the minimizer to $\oP_\mu^{\ef_{\RS}}.$ Let $\zeta\in \mathcal{M}_{0,1-q}$ be fixed. For $b\in [0,1],$ define $$
	\zeta_b=(1-b)\delta_0+b \zeta.$$ Then from the minimality of $\delta_0,$ one gets that (see Remark \ref{rmk2})
	\begin{align*}
	\frac{d}{db}\oP_\mu^{\ef_{\RS}}(0,\zeta_b)\Big|_{b=0^+}=\frac{1}{2}\int_0^{1-q}\xi_q''(s)
	(\gamma_\mu(s)-s)(\zeta(s)-\delta_0(s))ds\geq 0.
	\end{align*}
Next, by Fubini's theorem, write
	\begin{align}
	\begin{split}\label{add:eq9}
	0\leq &\int_0^{1-q}\xi_q''(s)
	(\gamma_\mu(s)-s)(\zeta (s)-\delta_0(s))ds\\
	&=\int_0^{1-q}\xi_q''(s)(\gamma_\mu(s)-s)\Bigl(\int_0^s(\zeta (dr)-\delta_0(dr))\Bigr)ds\\
	&=\int_0^{1-q}\int_{r}^{1-q}\xi_q''(s)(\gamma_\mu(s)-s)ds(\zeta (dr)-\delta_0(dr))\\
	&=\int_0^{1-q}\int_{r}^{1-q}\xi_q''(s)(\gamma_\mu(s)-s)ds\zeta (dr)
	-\int_0^{1-q}\xi_q''(s)(\gamma_\mu(s)-s)ds\\
	&=-\int_0^{1-q}\Gamma_\mu(r)\zeta (dr).
	\end{split}
	\end{align}
	Since this inequality holds for all $\zeta\in \mathcal{M}_{0,1-q}$, it follows that $\Gamma_{\mu}(r)\leq 0$ for all $r\in [0,1-q].$
	Conversely, if $\Gamma_\mu(r)\leq 0$ for all $0\leq r\leq 1-q,$ one can reverse the argument to get that $$
	\frac{d}{db}\oP_\mu^{\ef_{\RS}}(0,\zeta _b)\Big|_{b=0^+}\geq 0
	$$
	for all $\zeta\in \mathcal{M}_{0,1-q}.$ Since $\oP_\mu^{\ef_{\RS}}(0,\cdot)$ is a strictly convex functional (using the assumption $\mu\neq \delta_1$), this implies that $\delta_0$ is the unique minimizer of $\oP_\mu^{\ef_{\RS}}(0,\cdot)$. In order to show that $(\lambda_\mu^{\ef_{\RS}},\zeta_\mu^{\ef_{\RS}})=(0,\delta_0)$ we split our discussion into two cases: the support of $\mu$ contains a point in $(0,1)$ and $\mu$ is supported only on $\{0,1\}$ with $\mu(\{0\})>0.$ In the latter case, recall that we defined $\lambda_{\mu}^{\ef_{\RS}}=0$ and $\zeta_{\mu}^{\ef_{\RS}}$ to be the (unique) minimizer of $\oP_\mu^{\ef_{\RS}}(0,\cdot)$. Hence, $(\lambda_{\mu}^{\ef_{\RS}},\zeta_{\mu}^{\ef_{\RS}})=(0,\delta_0).$ In the former case, Remark \ref{rmk1} shows that there exists a unique minimizer $(\lambda_{\mu}^{\ef_{\RS}},\zeta_{\mu}^{\ef_{\RS}})$ of $\oP_\mu^{\ef_{\RS}}(\cdot,\cdot)$.
Since by definition ${\ef_{\RS}}$ satisfies $\Phi_{a,\delta_0}(0,{\ef_{\RS}}(a))=0$ for all $a\in [0,1)$, this implies that $\partial_\lambda\mathcal{P}_\mu^{\ef_{\RS}}(0,\delta_0)=0.$ This together with the fact that $\delta_0$ is the minimizer of $\oP_\mu^{\ef_{\RS}}(0,\cdot)$ implies that $(\lambda_{\mu}^{\ef_{\RS}},\zeta_{\mu}^{\ef_{\RS}})=(0,\delta_0).$ This establishes the equivalence conditions in the statement of Proposition \ref{add:prop2}.
	
	Finally, we compute $\oP_\mu^{\ef_{\RS}}(0,\delta_0)$ assuming that either of the conditions in the equivalence holds. Recall the explicit expression of $\Phi_{a,\delta_0}$ from \eqref{add:eq81} and the particular choice of $\ef_{\RS}$ from \eqref{add:eq8}. From these, it can be checked that
	\begin{align*}
	\Phi_{a,\delta_0}(0,\ef_{\RS}(a))&=\frac{1-a^2}{2}\xi_q'(1-q)-a\th^{-1}(a)+\log 2\ch(\th^{-1}(a))\\
	&=\frac{1-a^2}{2}\xi_q'(1-q)-I(a).
	\end{align*}
	From this,
	\begin{align*}
	\int_{[0,1)}\Phi_{a,\delta_0}(0,\ef_{\RS}(a))d\mu(a)&=\frac{1}{2}\xi_q'(1-q)\int_{[0,1)}(1-a^2)dd\mu(a)-\int_{[0,1)}I(a)d\mu(a)\\
	&=\frac{1}{2}\xi_q'(1-q)\int(1-a^2)da-\int I(a)d\mu(a)\\
	&=\frac{1}{2}\xi_q'(1-q)(1-q)-\int I(a)d\mu(a),
	\end{align*}
	where we have used that $I(a)$ is continuous at $1$ and $I(1)=0.$ In addition, using integration by parts
	\begin{align*}
	\int_0^{1-q}\xi_q''(s)sds&=\xi_q'(1-q)(1-q)-\xi_q(1-q).
	\end{align*}
	Therefore,
	\begin{align*}
	\oP_\mu^{\ef_{\RS}}(0,\delta_0)
	&=\int_{[0,1)}\Phi_{a,\delta_0}(0,\ef_{\RS}(a))d\mu(a)-\frac{1}{2}\int_0^{1-q}\xi_q''(s)sds\\
	&=\frac{1}{2}\xi_q'(1-q)(1-q)-\int I(a)d\mu(a)-\frac{1}{2}\xi_q'(1-q)(1-q)+\frac{1}{2}\xi_q(1-q)\\
	&=-\int  I(a)d\mu(a)+\frac{1}{2}\bigl(\xi(1)-\xi(q)-\xi'(q)(1-q)\bigr),
	\end{align*}
	where we used $\xi_q(1-q)=\xi(1)-\xi(q)-\xi'(q)(1-q).$
	This establishes the desired formula for $\oP_\mu^{\ef_{\RS}}(0,\delta_0).$ Finally, since $\mu\neq \delta_1$ and $(0,\delta_0)$ is the minimizer to $\oP_\mu^{\ef_{\RS}}$, it follows from Theorem \ref{lem:TAPnew} that $\TAP(\mu)=\oTAP(\mu)=\oP_\mu^{\ef_{\RS}}.$
\end{proof}

\begin{proof}[Proof of Corollary \ref{add:cor1}] For $m\in [-1,1]^N$ and $\mu\in \MM_{0,1},$ denote $q_m=\|m\|_2^2/N$ and $q=\int a^2d\mu(a).$ 
	Recall from \cite{CPTAP17} that if $q_{\EA}$ is the largest point in the support of the original Parisi measure of the Parisi formula for $F_N$, then almost surely
	\begin{align}\label{add:cor1:proof:eq2}
	\lim_{N\rightarrow\infty}F_N&=\lim_{\varepsilon\downarrow 0}\lim_{N\rightarrow\infty}\sup_{m\in [-1,1]^N:|q_m-q_{\EA}|\leq \varepsilon}\Bigl(\frac{H_N(m)}{N}-\int  I(a)d\mu_m(a)+C(q_m)\Bigr).
	\end{align}
	Also recall from Theorem \ref{thm:GenTAP} that
	\begin{align}\label{add:cor1:proof:eq6}
	\lim_{N\rightarrow\infty}F_N&=\lim_{\varepsilon\downarrow 0}\lim_{N\rightarrow\infty}\sup_{m\in [-1,1]^N:|q_m-q_{\EA}|\leq \varepsilon}\Bigl(\frac{H_N(m)}{N}+\oTAP(\mu_m)\Bigr).
	\end{align}
	Since
	\begin{align}\label{add:cor1:proof:eq7}
	\oTAP(\mu_m)&\leq \mybar{P}_{\mu_m}^{\ef_{\RS}}(0,\delta_0)=-\int  I(a)d\mu_m(a)+C(q_m),\,\,\forall m\in [-1,1]^N,
	\end{align}
	we see that
	\begin{align}\label{add:eq7}
	\lim_{\varepsilon\downarrow 0}\lim_{N\rightarrow\infty}\sup_{m\in [-1,1]^N:|q_m-q_{\EA}|\leq \varepsilon}\Bigl(-\int  I(a)d\mu_m(a)+C(q_m)-\oTAP(\mu_m)\Bigr)=0.
	\end{align}
	We claim that for any $\varepsilon_N\downarrow 0,$ 
	\begin{align*}
	\limsup_{N\rightarrow\infty}\sup_{m\in [-1,1]^N:|q_m-q_{\EA}|\leq \varepsilon_N}\sup_{0\leq s\leq 1-q_m}\Gamma_{\mu_m}(s)\leq  0.
	\end{align*}
	If this is not true, then there exists some $\delta>0$ and a sequence $m^N$ with $|q_{m^N}-q_{\EA}|\leq \varepsilon_N$ such that $\mu_{m^N}$ converges to certain $\mu$ weakly and 
	$$
	\sup_{0\leq s\leq 1-q_{m^N}}\Gamma_{\mu_{m^N}}(s)\geq \delta.
	$$
	From these, passing to the limit gives
	\begin{align*}
	\int a^2d\mu(a)=q_{\EA}\,\,\mbox{and}\,\,\sup_{0\leq s\leq 1-q_{\EA}}\Gamma_{\mu}(s)\geq \delta.
	\end{align*}
	Note that $q_{\EA}<1$ implies $\mu\neq \delta_1$. From this, the above display, and Proposition~\ref{add:prop2}, we arrive at 
	$$\oTAP(\mu)< \oP_\mu^{\ef_{\RS}}(0,\delta_0)=-\int  I(a)d\mu(a)+C(q).$$ Since these two sides are uniformly continuous functions of $\mu$, we see that 
	\begin{align*}
	\liminf_{N\rightarrow\infty}\Bigl(-\int  I(a)d\mu_m(a)+C(q_{m^N})-\oTAP(\mu_{m^N})\Bigr)>0,
	\end{align*}
	which contradicts \eqref{add:eq7}. This establish our claim. 
	
	Now from the above claim, for any $\eta>0$, there exists an $\varepsilon>0$ such that as long as $N$ is large enough, if $m\in [-1,1]^N$ satisfies $|q_m-q_{\EA}|\leq \varepsilon$, then
	\begin{align*}
	\sup_{0\leq s\leq 1-q_m}\Gamma_{\mu_m}(s)\leq \eta.
	\end{align*}
	From this, \eqref{add:cor1:proof:eq6}, and Theorem \ref{thm:GenTAP},
	\begin{align*}
	\lim_{N\rightarrow\infty}F_N&=\lim_{\eta\downarrow 0}\lim_{N\rightarrow\infty}\sup_{m\in [-1,1]^N:\sup_{0\leq s\leq 1-q_m}\Gamma_{\mu_m}(s)\leq \eta}
	\Bigl(\frac{H_N(m)}{N}+\oTAP(\mu_{\mu_m})\Bigr).
	\end{align*}
	Recall \eqref{add:cor1:proof:eq7}. Our proof will be completed if it is established that
	\begin{align}\label{add:cor1:proof:eq8}
	\lim_{\eta\downarrow 0}\lim_{N\rightarrow\infty}\sup_{m\in [-1,1]^N:\sup_{0\leq s\leq 1-q_m}\Gamma_{\mu_m}(s)\leq \eta}\Bigl(-\int  I(a)d\mu_m(a)+C(q_m)-\oTAP(\mu_m)\Bigr)=0.
	\end{align}
	The argument of proving this is essentially the same as the above claim. Assume on the contrary that there exist $\eta_N\downarrow 0$, $\delta>0$ and $m^N\in [-1,1]^N$ with $$
	\sup_{0\leq s\leq 1-q_{m^N}}\Gamma_{\mu_{m^N}}(s)\leq \eta_N$$ 
	such that $\mu_{m^N}$ weakly converges to some $\mu$ and
	\begin{align*}
	-\int  I(a)\mu_{m^N}(da)+C(q_{m^N})-\oTAP(\mu_{m^N})\geq \delta,\,\,\forall N\geq 1.
	\end{align*}
	From these, we see that by passing to the limit, $$
	\sup_{0\leq s\leq 1-q}\Gamma_{\mu}(s)\leq 0
	$$ and 
	\begin{align*}
	-\int  I(a)\mu (da)+C(q )-\oTAP(\mu )\geq \delta.
	\end{align*}
	If $\mu\neq \delta_1$, then these contradict Proposition \ref{add:prop2}. If $\mu=\delta_1,$ then $q=1$ and in this case, it can be clearly checked that $$
	\oTAP(\mu)=0=-\int I(a)d\mu(a)+C(q),$$
	which again contradict to the above inequality.  Hence \eqref{add:cor1:proof:eq8} must be valid.
\end{proof}

\begin{proof}[Proof of Proposition \ref{add:prop4}]
	Note that $\xi_q(s)=\beta^2s^2/2$ and 
	\begin{align*}
	\Gamma_\mu(s)=\beta^2\int_0^s(\gamma_\mu(s)-s)ds.
	\end{align*}
	Assume that $(\lambda_\mu^{\ef_{\RS}},\zeta_\mu^{\ef_{\RS}})=(0,\delta_0).$  From Proposition \ref{add:prop2}, it can be seen that $\Gamma_\mu(s)$ attains the global maximum at $0$. On the other hand, it can also be checked that $\frac{d}{ds}\Gamma_\mu(0)=\beta^2\gamma_\mu(0)=0$ by a direct computation. From these, it follows that the second derivative of $\Gamma_\mu$ at zero is not positive. Now, following the same computation as \cite[Proposition 3]{AC15.1}, this second derivative can be computed as
	$$\frac{d^2}{ds^2}\Gamma_\mu(0)=\beta^2\Bigl(\beta^2\int \E\partial_{xx}\Phi_{a,\delta_0}(0,\ef_{\RS}(a))^2d\mu(a)-1\Bigr)\leq 0.$$
	Consequently,
	\begin{align*}
	\beta^2\int \E\partial_{xx}\Phi_{a,\delta_0}(0,\ef_{\RS}(a))^2d\mu(a)\leq 1.
	\end{align*}
	Finally, since
	\begin{align*}
	\partial_{xx}\Phi_{a,\delta_0}(0,\ef_{\RS}(a))&=1-\th^2(\ef_{\RS}(a)-at^2(0))=1-a^2,\forall a\in [0,1),
	\end{align*}
	we arrive at the Plefka condition by plugging this equation into the above inequality.
\end{proof}


\section{Analytical results}\label{SecAnalytical}

This section is devoted to handling some basic properties of the effective field $\Psi(a,\zeta)$ and the Parisi functional $\oP_\mu^{\ef}(\lambda,\zeta).$

\subsection{Basic facts about $\Phi_{a,\zeta}$}\label{subphi}
Recall  that the PDE solutions $\Phi_{a,\zeta}$ and $\Phi_\zeta$ are connected by \eqref{add:eq2}. In order to state several useful properties of $\Phi_{a,\zeta}$ in the present paper, we first recall some well-known properties of the PDE solution $\Phi_{\zeta}.$ First of all, $\Phi_{\zeta}$ satisfies $\sup_{\zeta\in \mathcal{M}_{0,1}}\|\partial_{x^k}\Phi_{\zeta}\|_\infty<\infty$ for all $k\geq 1,2,3$. Second, $\Phi_{\zeta}$ can be written as a stochastic optimization problem. Third, for any $t\in [0,1]$, $(\zeta,x)\in \mathcal{M}_{0,1}\times\mathbb{R}\mapsto \partial_x^k\Phi_\zeta(t,x)$ for $k=0,1,2,3$ is Lipschitz and $(\zeta,x)\in \mathcal{M}_{0,1}\times\mathbb{R}\mapsto \Phi_\zeta(t,x)$ is strictly convex. Lastly, the directional derivative of $\Phi_{\zeta}(t,x)$ in $(\zeta,x)$ exists and admits an explicit formula in terms of the optimal process appearing in the stochastic control representation of $\Phi_\zeta$. See \cite{AC15.1,AC15,C17,JT16} for these results.
Due to the equation \eqref{add:eq2}, it can be checked immediately that the following statements are valid for any $\mu\in \MM_{0,1}$ with $q=\int a^2d\mu(a):$

\noindent{\bf (I) Regularity:} We have that
\begin{align}
\begin{split}\label{subphi:eq1}
\sup_{(a,\zeta,t,x)\in [0,1]\times \mathcal{M}_{0,1-q}\times [1-q]\times\mathbb{R}}&\Bigl\{|\partial_{x}\Phi_{a,\zeta}(t,x)|,|\partial_{xx}\Phi_{a,\zeta}(t,x)|,\\
&\quad   |\partial_{xx}\Phi_{a,\zeta}(t,x)|,|\partial_{ax}\Phi_{a,\zeta}(t,x)|\Bigr\}<\infty.
\end{split}
\end{align}

\noindent {\bf (II) Stochastic optimal control:}
The quantity $\Phi_{a,\zeta}(0,x)$ can be expressed as a stochastic optimal control problem, which states that
\begin{align}
\begin{split}\label{control}
\Phi_{a,\zeta}(0,x)&=\sup_{u}\Bigl(\e f\Bigl(a,x+\int_0^{1-q}\xi_q''(s)\zeta(s)u(s)ds+\int_0^{1-q}\xi_q''(s)^{1/2}dW_s\Bigr)\\
&\qquad\qquad\qquad-\frac{1}{2}\int_0^{1-q}\xi_q''(s)\zeta(s)\e u(s)^2ds\Bigr),
\end{split}
\end{align} 
where the supremum is over all progressively measurable processes 
$u=(u(s))_{0\leq s\leq 1-q}$ with respect to the standard Brownian motion $W=(W_s)_{0\leq s\leq 1-q}$ and with 
$$\sup_{s\in [0,1-q]}|u(s)|\leq 2.$$ Here the optimal process is attained by $u_{a,\zeta,x}(s)=\partial_x\Phi_{a,\zeta}(s,X_{a,\zeta,x}(s))$, where $X_{a,\zeta,x}$ is the solution to the following SDE with initial condition $X_{a,\zeta,x}(0)=x,$
\begin{align}\label{ex:eq2}
dX_{a,\zeta,x}(s)&=\xi_q''(s)\zeta(s)\partial_x\Phi_{a,\zeta}(s,X_{a,\zeta,x}(s))ds+\xi_q''(s)^{1/2}dW_s,\,\,\forall 0\leq s\leq 1-q.
\end{align}

\noindent {\bf (III) Lipschitz property:} For any $(m,n)\in \{0,1\}\times \{0,1,2\},$ $(a,\zeta,x)\in[0,1]\times \mathcal{M}_{0,1-q}\times\mathbb{R}\mapsto \partial_{a^mx^n}\Phi_{a,\zeta}(t,x)$ is Lipschitz in the sense that
\begin{align}
\begin{split}\label{ex:lip}
&\bigl|\partial_{a^mx^n}\Phi_{a,\zeta}(0,x)-\partial_{a^mx^n}\Phi_{a',\zeta'}(0,x')\bigr|\\
&\leq C\Bigl(|a-a'|+|x-x'|+\int_0^{1-q}|\zeta-\zeta'|ds\Bigr)
\end{split}
\end{align} 
for some universal constant $C>0$ depending only on $\xi.$

\noindent {\bf (IV) Convexity:} For any $\zeta_0,\zeta_1\in \mathcal{M}_{0,1-q}$ and $x_0,x_1\in \mathbb{R}$, define 
\begin{align}
\begin{split}\label{xb}
\zeta_b&=(1-b)\zeta_0+b\zeta_1,\\
x_b&=(1-b)x_0+bx_1
\end{split}
\end{align}
for $b\in [-1,1]$. For any $a\in [0,1]$, we have that
\begin{align}\label{convexity0}
\Phi_{a,\zeta_b}(0,x_b)&\leq (1-b)\Phi_{a,\zeta_0}(0,x_0)+b\Phi_{a,\zeta_1}(0,x_1).
\end{align}
Furthermore, whenever $(\zeta_0,x_0)\neq (\zeta_1,x_1)$ and $b\in (0,1)$, this inequality is strict,
\begin{align}\label{convexity}
\Phi_{a,\zeta_b}(0,x_b)&< (1-b)\Phi_{a,\zeta_0}(0,x_0)+b\Phi_{a,\zeta_1}(0,x_1).
\end{align}

\noindent {\bf (V) Directional derivative:} Recall the convex combination in \eqref{xb}. The derivative of $\Phi_{a,\cdot}(0,\cdot)$ exists and is equal to
\begin{align}
\begin{split}\label{diff}
\frac{d}{db}\Phi_{a,\zeta_b}(0,x_b)&=\frac{1}{2}\int_0^{1-q}\xi_q''(s)(\zeta_1(s)-\zeta_0(s))\e u_{a,\zeta_b,x_b}(s)^2ds\\
&\qquad+(x_1-x_0)\partial_x\Phi_{a,\zeta_b}(0,x_b)
\end{split}
\end{align}
for any $a\in [0,1]$ and $b\in (0,1),$ where $u_{a,\zeta_b,x_b}$ is the optimal process of \eqref{control} with $x=x_b$ and $\zeta=\zeta_b$. Furthermore, the right derivative of $\Phi_{a,\zeta_b}(0,x_b)$ also exists at $b=0$ and it is equal to the above formula with $b=0.$ 

\subsection{Basic properties of $\Psi(a,\zeta)$}\label{SubsectionPsi} 

Now we use the properties listed in the above subsection to study a number of key features of $\Psi(a,\zeta)$ defined in (\ref{eqDefPsief}).

\begin{lem}\label{ex:lem1}
	For any $q\in [0,1]$, $\Psi$ is continuous on $[0,1)\times \mathcal{M}_{0,1-q}.$ 
\end{lem}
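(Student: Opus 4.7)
The plan is a standard implicit-function-style continuity argument, using the strict convexity of $\Phi_{a,\zeta}(0,\cdot)$ from property (IV) to get uniqueness of the defining equation $\partial_x \Phi_{a,\zeta}(0,\Psi(a,\zeta))=0$, combined with the uniform Lipschitz estimate (III) and the uniform second-derivative bound (I) to pass to the limit. The case $q=1$ is trivial since then $\mathcal{M}_{0,1-q}=\{\delta_0\}$, so I would assume $q<1$ throughout.

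First I would fix $(a,\zeta)\in[0,1)\times\mathcal{M}_{0,1-q}$ and a sequence $(a_n,\zeta_n)\to(a,\zeta)$ with $d_1(\zeta_n,\zeta)\to 0$. Set $x_\infty=\Psi(a,\zeta)$, $x_n=\Psi(a_n,\zeta_n)$, and $h_n(x)=\partial_x\Phi_{a_n,\zeta_n}(0,x)$, $h(x)=\partial_x\Phi_{a,\zeta}(0,x)$. By property (IV), both $\Phi_{a,\zeta}(0,\cdot)$ and $\Phi_{a_n,\zeta_n}(0,\cdot)$ are strictly convex, so $h$ and each $h_n$ are strictly increasing. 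By the defining equation $h(x_\infty)=0$, strict monotonicity yields $h(x_\infty-1)<0<h(x_\infty+1)$.

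Next I would show that $x_n\in[x_\infty-1,x_\infty+1]$ for all large $n$. Property (III) gives
\begin{equation*}
|h_n(x_\infty\pm 1)-h(x_\infty\pm 1)|\leq C\bigl(|a_n-a|+d_1(\zeta_n,\zeta)\bigr)\to 0,
\end{equation*}
so for large $n$ we have $h_n(x_\infty-1)<0<h_n(x_\infty+1)$. Since $h_n$ is non-decreasing and $h_n(x_n)=0$, this forces $x_n\in[x_\infty-1,x_\infty+1]$.

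Finally, let $x^*$ be any subsequential limit of $(x_n)$, say $x_{n_k}\to x^*$. Property (I) makes each $h_n$ Lipschitz in $x$ with a constant uniform in $(a_n,\zeta_n)$, so
\begin{equation*}
|h(x^*)|=|h(x^*)-h_{n_k}(x_{n_k})|\leq |h(x^*)-h_{n_k}(x^*)|+|h_{n_k}(x^*)-h_{n_k}(x_{n_k})|\to 0,
\end{equation*}
using property (III) for the first term and the uniform Lipschitz bound for the second. Hence $h(x^*)=0$, and strict monotonicity of $h$ forces $x^*=x_\infty$. Since every subsequential limit of the bounded sequence $(x_n)$ agrees with $x_\infty$, we conclude $x_n\to x_\infty=\Psi(a,\zeta)$. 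The only mildly subtle step is the boundedness, which crucially relies on strict convexity to produce the sign change $h(x_\infty\pm1)\gtrless 0$ around the unique zero; without strict monotonicity one could not rule out subsequential blow-up of $x_n$.
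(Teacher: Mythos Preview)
The proposal is correct and follows essentially the same approach as the paper: both exploit the strict convexity of $\Phi_{a,\zeta}(0,\cdot)$ to guarantee uniqueness of the zero of $\partial_x\Phi_{a,\zeta}(0,\cdot)$, together with the Lipschitz property \eqref{ex:lip} of $\partial_x\Phi_{a,\zeta}$ in $(a,\zeta,x)$ to transfer information from $(a,\zeta)$ to nearby parameters. The paper phrases this as a direct $\eps$--$\delta$ argument (showing $|\partial_x\Phi_{a,\zeta}(0,x)|$ stays bounded away from zero outside a small interval around $\Psi(a_0,\zeta_0)$), while you use an equivalent sequential/compactness formulation.
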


\begin{proof}
	Let $a_0\in [0,1)$, $\zeta_0\in\mathcal{M}_{0,1-q}$ be fixed. By definition of $\Psi(a_0,\zeta_0)$, for any $\eps>0,$
	\begin{align*}
	\delta:=\min_{x\in I(\eps)^c}|\partial_x\Phi_{a_0,\zeta_0}(0,x)|>0,
	\end{align*}
	where $I(\eps)^c$ is the complement of an open interval defined by
	$$I(\eps)=(\Psi(a_0,\zeta_0)-\eps,\Psi(a_0,\zeta_0)+\eps).$$ From the  Lipschitz property \eqref{ex:lip} with $(m,n)=(0,1)$, we see that if $$|a-a_0|+\int_{0}^{1-q}|\zeta(s)-\zeta_0(s)|ds<\frac{\delta}{2C},$$
	then for any $x\in I(\eps)^c$, we have
	\begin{align*}
	|\partial_x\Phi_{a,\zeta}(0,x)|&\geq |\partial_x\Phi_{a_0,\zeta_0}(0,x)|-C\Bigl(|a-a_0|+\int_{0}^{1-q}|\zeta(s)-\zeta_0(s)|ds\Bigr)\geq\frac{\delta}{2}.
	\end{align*}
	This implies that
	\begin{align*}
	\min_{x\in I(\eps)^c}|\partial_x\Phi_{a,\zeta}(0,x)|\geq \frac{\delta}{2}.
	\end{align*}
	Since $\partial_x\Phi_{a,\zeta}(0,\Psi(a,\zeta))=0,$ it follows that $\Psi(a,\zeta)\in I(\eps)$. Hence, $\Psi(a,\zeta)$ is continuous on $[0,1)\times\mathcal{M}_{0,1-q}.$ 
\end{proof}

Recall the definition of the effective field $\ef_\zeta=\Psi(a,\zeta)$ from \eqref{add:eq12}.

\begin{lem}\label{add:lem}
	Let $q\in [0,1].$ For any $\zeta\in \mathcal{M}_{0,1-q}$, $\ef_\zeta$ is a well-defined strictly increasing function with $\ef_\zeta(0)=0$.
	In addition, for any $l\in (0,1),$
	\begin{align*}
	\sup_{(a,\zeta)\in [0,l]\times \mathcal{M}_{0,1-q}}\partial_a\ef_\zeta(a)<\infty.
	\end{align*}

\end{lem}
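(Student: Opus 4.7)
The plan is to reduce every question about $\ef_\zeta$ to one about the concave-conjugate map $\oPsi(q,\cdot,\tilde\zeta)$ from \eqref{eqDefPsiefO}, using the explicit link provided by Proposition~\ref{PropATO}. Given $\zeta\in\mathcal{M}_{0,1-q}$, let $\tilde\zeta\in\mathcal{M}_{0,1}$ denote its lift to $[q,1]$ (so that $\theta_q\tilde\zeta=\zeta$), and set $A:=\int_q^1\xi''(s)\tilde\zeta(s)\,ds$. Equation \eqref{add:eq2} then gives
\[
\Phi_{a,\zeta}(0,x)=\Phi_{\tilde\zeta}(q,x-aA)-ax+\frac{a^2}{2}A,
\]
so the defining equation $\partial_x\Phi_{a,\zeta}(0,\Psi(a,\zeta))=0$ becomes $\partial_x\Phi_{\tilde\zeta}(q,\,\Psi(a,\zeta)-aA)=a$, i.e.,
\[
\ef_\zeta(a)=\Psi(a,\zeta)=\oPsi(q,a,\tilde\zeta)+aA.
\]

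From this identity the three qualitative claims would be quickly read off. Well-definedness of $\oPsi(q,a,\tilde\zeta)$ on $a\in(-1,1)$ is immediate because $\partial_x\Phi_{\tilde\zeta}(q,\cdot)$ is a strictly increasing diffeomorphism onto $(-1,1)$. For $a=0$, the boundary datum $\Phi_{\tilde\zeta}(1,x)=\log 2+\log\ch x$ is even in $x$ and the Parisi PDE \eqref{ParisiPDEOrig} is invariant under $x\mapsto -x$, so by uniqueness $\Phi_{\tilde\zeta}(q,\cdot)$ is even and hence $\oPsi(q,0,\tilde\zeta)=0$, giving $\ef_\zeta(0)=0$. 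Implicit differentiation of $\partial_x\Phi_{\tilde\zeta}(q,\oPsi)=a$, combined with strict convexity of $\Phi_{\tilde\zeta}(q,\cdot)$, yields
\[
\partial_a\ef_\zeta(a)=A+\frac{1}{\partial_{xx}\Phi_{\tilde\zeta}\bigl(q,\oPsi(q,a,\tilde\zeta)\bigr)}>0,
\]
which establishes strict monotonicity.

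For the uniform upper bound on $[0,l]$, note that $A\leq \xi'(1)-\xi'(q)\leq\xi'(1)$ is already uniform, so the task is to show $\partial_{xx}\Phi_{\tilde\zeta}(q,\oPsi(q,a,\tilde\zeta))\geq c(l)>0$ uniformly in $\tilde\zeta$ and $a\in[0,l]$. I would proceed in two steps. First I would establish $|\oPsi(q,a,\tilde\zeta)|\leq M(l)$ uniformly: letting $X^*$ be the optimal trajectory from \eqref{control}--\eqref{ex:eq2} started at $x=\oPsi(q,a,\tilde\zeta)$, the martingale property of $u^*(s)=\partial_x\Phi_{\tilde\zeta}(s,X^*(s))$ gives $a=\e\,\th X^*(1)$; since the drift of $X^*(1)-x$ is bounded by $\xi'(1)-\xi'(q)$ (as $|u^*|\leq 1$) and the noise has variance $\xi'(1)-\xi'(q)$, a tail estimate forces $|x|\leq M(l)$ when $|a|\leq l<1$. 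Second, applying \eqref{lem37:eq1} to $\Phi_{\tilde\zeta}$ together with the monotonicity of $l\mapsto\e u^*(l)^2$ (quadratic variation of a bounded martingale) gives
\[
\partial_{xx}\Phi_{\tilde\zeta}(q,x)\geq 1-\e\,u^*(1)^2=\e\bigl(1-\th^2 X^*(1)\bigr),
\]
and the bound $|x|\leq M(l)$ together with the sub-Gaussian concentration of $X^*(1)$ around $x$ shows this expectation is bounded below by some $c(l)>0$. The main obstacle is this last sub-Gaussian step: one must produce tail bounds on $X^*$ uniform in $\tilde\zeta$ despite the drift depending nonlinearly on $\tilde\zeta$, and the argument necessarily breaks down as $l\uparrow 1$, consistent with $\th^{-1}(a)\to\infty$ in that limit.
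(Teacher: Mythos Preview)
Your proposal is correct. The reduction via Proposition~\ref{PropATO} and the implicit differentiation giving
\[
\partial_a\ef_\zeta(a)=A+\frac{1}{\partial_{xx}\Phi_{\tilde\zeta}\bigl(q,\oPsi(q,a,\tilde\zeta)\bigr)}
\]
are exactly the computation the paper carries out (cf.\ \eqref{add:eq6}--\eqref{add:eq-4}); the well-definedness, $\ef_\zeta(0)=0$, and strict monotonicity are obtained identically.

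For the uniform bound the paper takes a softer route than you. Rather than bounding $\oPsi$ by stochastic estimates and then invoking \eqref{lem37:eq1}, it simply uses that $\Psi$ is continuous on $[0,1)\times\mathcal{M}_{0,1-q}$ (Lemma~\ref{ex:lem1}), hence bounded on the compact set $[0,l]\times\mathcal{M}_{0,1-q}$; then the Lipschitz property \eqref{ex:lip} and strict positivity of $\partial_{xx}\Phi_{a,\zeta}(0,\cdot)$ on compact $x$-intervals yield the lower bound on the denominator by a pure compactness argument. Your route through \eqref{lem37:eq1} is more constructive and produces an explicit $c(l)$, which could be valuable if one ever needs quantitative control. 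The ``obstacle'' you flag is not real: since $|u^*|\leq 1$ and $\tilde\zeta\leq 1$, the drift in $X^*(1)-x$ is \emph{pathwise} bounded by $\xi'(1)-\xi'(q)$, so $|X^*(1)|\leq |x|+\xi'(1)+|G|$ with $G$ a centered Gaussian of variance at most $\xi'(1)$, uniformly in $\tilde\zeta$; no delicate sub-Gaussian analysis is required beyond this deterministic envelope.
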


\begin{proof}
	Let $q\in [0,1]$ be fixed.
	Recall from \eqref{add:eq2} that if $\zeta\in \mathcal{M}_{0,1-q}$ and $\zeta_0\in \mathcal{M}_{0,1}$ satisfy $\zeta(t)=\zeta_0(q+t)$ for $t\in [0,1-q],$ then for any $a\in [-1,1]$ and $x\in \mathbb{R}$,
	\begin{align}\label{add:eq6}
	\Phi_{a,\zeta}(0,x)&=\Phi_{\zeta}\Bigl(q,x-a\int_q^1\xi''(s)\zeta_0(s)ds\Bigr)-ax+\frac{a^2}{2}\int_q^1\xi''(s)\zeta_0(s)ds.
	\end{align}
	Since $\lim_{|x|\to \infty}\Phi_{\zeta_0}(q,x)=\infty$, we see that $\lim_{|x|\to \infty}\Phi_{a,\zeta}(0,x)=\infty$ for all $a\in [0,1)$ and $\zeta\in\mathcal{M}_{0,1-q}$.
	On the other hand, we also know that $\Phi_{a,\zeta}(0,\cdot)$ is a strictly convex function, by \eqref{convexity}. These imply that for any $a\in [0,1)$, $\Phi_{a,\zeta}(0,\cdot)$ has only one critical point, so $\ef_\zeta$ is well-defined. In particular, when $a=0,$ $\Phi_{a,\zeta}(0,\cdot)$ is an even function so that $\ef_\zeta(0)=0.$ 
	
	Next, we show that $\ef_\zeta$ is strictly increasing. First, note that $\partial_{xx}\Phi_{\zeta_0}(q,x)>0$ for all $x\in \mathbb{R}$ since $\Phi_{\zeta_0}(q,\cdot)$ is strictly convex. From this and \eqref{add:eq6}, for any $(a,\zeta,x)\in [0,1]\times \mathcal{M}_{0,1-q}\times \mathbb{R}$, 
	\begin{equation}
	\begin{split}\label{theaboveclaim}
	\partial_{ax}\Phi_{a,\zeta}(0,x)&=-\Bigl(\int_0^1\xi''(s)\zeta_0(s)ds \Bigr)\partial_{xx}\Phi_{\zeta_0}\Bigl(0,x-a\int_0^1\xi''(s)\zeta_0(s)ds\Bigr)<0, \\
	\partial_{xx}\Phi_{a,\zeta}(0,x)&=\partial_{xx}\Phi_{\zeta_0}\Bigl(0,x-a\int_0^1\xi''(s)\zeta_0(s)ds\Bigr)>0.
	\end{split}
	\end{equation}
	Consequently, a direct differentiation of $\partial_x\Phi_{a,\zeta}(0,\ef_\zeta(a))=0$ in $a\in [0,1)$ and using (\ref{theaboveclaim}) yield that
	\begin{align}\label{add:eq-4}
	\partial_a\ef_{\zeta}(a)&=-\frac{\partial_{ax}\Phi_{a,\zeta}(0,x)}{\partial_{xx}\Phi_{a,\zeta}(0,x)}\Big|_{x=\ef_{\zeta}(a)}> 0.
	\end{align}
	Hence, $\ef_{\zeta}$ is strictly increasing on $[0,1).$ 
	
	Finally, we prove the uniform upper bound for $\partial_a\ef_\zeta(a).$ Fix $l\in (0,1).$ By  Lemma \ref{ex:lem1}, $\ef_\zeta(a)\leq L$ for all $a\leq l$ and all $\zeta$, for some large enough $L$. Therefore,
	\begin{align*}
	\inf_{(a,\zeta)\in [0,l]\times \mathcal{M}_{0,1-q}}\partial_{xx}\Phi_{a,\zeta}(0,\ef_\zeta(a))
	\geq
	\inf_{(a,\zeta,x)\in [0,1]\times \mathcal{M}_{0,1-q}\times[0,L]}\partial_{xx}\Phi_{a,\zeta}(0,x)
	>0,
	\end{align*}
	by (\ref{theaboveclaim}), continuity \eqref{ex:lip} and compactness. On the other hand, from \eqref{subphi:eq1}, 
	\begin{align*}
	\sup_{(a,\zeta,x)\in [-1,1]\times \mathcal{M}_{0,1-q}\times\mathbb{R}}\bigl|\partial_{ax}\Phi_{a,\zeta}(0,x)\bigr|<\infty.
	\end{align*}
	From these inequalities and \eqref{add:eq-4}, $\sup_{(a,\zeta)\in [0,l]\times \mathcal{M}_{0,1-q}}\partial_a\ef_\zeta(a)<\infty$.
\end{proof}

\begin{lem}\label{ex:lem2} 
	There exist positive constants $c_1,c_1'\in \mathbb{R}$ and $c_2,c_2'>0$ such that
	\begin{align}\label{ex:lem2:eq1}
	c_1'+c_2'\tanh^{-1} (a)\leq \Psi(a,\zeta)\leq c_1+c_2\tanh^{-1}(a)
	\end{align}
	for all $a\in [0,1)$, $q\in [0,1]$ and $\zeta\in \mathcal{M}_{0,1-q}.$
\end{lem}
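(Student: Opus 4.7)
}

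The strategy is to reduce the lemma to a uniform exponential rate-of-approach estimate for $\partial_x \Phi_{\zeta_0}(q,\cdot)$ as $x \to \infty$, and then establish that estimate via the martingale representation of the Parisi PDE. First I would invoke the identity \eqref{add:eq2}: given $\zeta \in \mathcal{M}_{0,1-q}$, let $\zeta_0 \in \mathcal{M}_{0,1}$ be its shifted lift, i.e., $\zeta_0$ is concentrated on $[q,1]$ with $\zeta_0(q+t)=\zeta(t)$ for $t \in [0,1-q]$. Differentiating \eqref{add:eq2} in $x$ and evaluating at the critical point $x=\Psi(a,\zeta)$ gives $\partial_x \Phi_{\zeta_0}\bigl(q,\Psi(a,\zeta)-a\int_q^1 \xi''(s)\zeta_0(s)\,ds\bigr)=a$, and so by the definition \eqref{eqDefPsiefO} of $\oPsi$,
\begin{equation*}
\Psi(a,\zeta) = \oPsi(q,a,\zeta_0) + a\int_q^1 \xi''(s)\zeta_0(s)\,ds.
\end{equation*}
The second term is uniformly bounded by $\xi'(1)$, hence absorbable into $c_1,c_1'$. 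Since $\tanh^{-1}(a)=\tfrac12\log\tfrac{1}{1-a}+O(1)$ for $a\in[0,1)$, the claim on $\Psi$ reduces (via inversion of $\partial_x\Phi_{\zeta_0}(q,\oPsi)=a$) to the two-sided exponential estimate
\begin{equation*}
c\,e^{-2x} \;\leq\; 1 - \partial_x\Phi_{\zeta_0}(q,x) \;\leq\; C\,e^{-2x}, \qquad x\geq 0,
\end{equation*}
with constants $c,C>0$ depending only on $\xi$, uniform in $q\in[0,1]$ and $\zeta_0\in\mathcal{M}_{0,1}$.

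To prove this sandwich I would use the martingale identity
\begin{equation*}
\partial_x \Phi_{\zeta_0}(q,x) = \e\bigl[\tanh\bigl(X_{\zeta_0,x}(1)\bigr)\bigr],
\end{equation*}
obtained by applying Itô's formula to $s\mapsto\partial_x\Phi_{\zeta_0}(s,X_{\zeta_0,x}(s))$ along the analogue of the SDE \eqref{ex:eq2} on $[q,1]$, using the PDE \eqref{ParisiPDEOrig} and the terminal condition $\partial_x\Phi_{\zeta_0}(1,x)=\tanh x$. Writing $X_{\zeta_0,x}(1) = x + D + Z$ with drift $D=\int_q^1 \xi''(s)\zeta_0(s)\,\partial_x\Phi_{\zeta_0}(s,X(s))\,ds$ satisfying the pathwise bound $|D|\leq K:=\xi'(1)$ (since $\|\partial_x\Phi_{\zeta_0}\|_\infty\leq 1$) and Gaussian noise $Z=\int_q^1 \xi''(s)^{1/2}\,dW_s$ with $\operatorname{Var}(Z)=\xi'(1)-\xi'(q)\leq \xi'(1)$, the monotonicity of $\tanh$ gives the pathwise sandwich $\tanh(x-K+Z)\leq \tanh(x+D+Z)\leq \tanh(x+K+Z)$, and hence after taking expectations,
\begin{equation*}
\e\tanh(x-K+Z) \;\leq\; \partial_x\Phi_{\zeta_0}(q,x) \;\leq\; \e\tanh(x+K+Z).
\end{equation*}

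The exponential bounds then follow from elementary calculus on $\tanh$. Using $1-\tanh y \leq 2e^{-2y}$ and $\e e^{-2Z} = e^{2\operatorname{Var}(Z)} \leq e^{2\xi'(1)}$,
\begin{equation*}
1-\partial_x\Phi_{\zeta_0}(q,x) \;\leq\; \e[1-\tanh(x-K+Z)] \;\leq\; 2\,e^{-2x+2K+2\xi'(1)}.
\end{equation*}
For the opposite direction, the inequality $1-\tanh y \geq e^{-2y}$ for $y>0$ combined with the crude event $\p(Z\leq 0)\geq 1/2$ (valid for every centered Gaussian, including the degenerate case $q=1$ where $Z\equiv 0$) yields, for $x\geq 0$,
\begin{equation*}
1-\partial_x\Phi_{\zeta_0}(q,x) \;\geq\; \e\bigl[\mathbf{1}_{Z\leq 0}\bigl(1-\tanh(x+K+Z)\bigr)\bigr] \;\geq\; \tfrac12\bigl(1-\tanh(x+K)\bigr) \;\geq\; \tfrac12\,e^{-2(x+K)}.
\end{equation*}
Inverting the relation $\partial_x\Phi_{\zeta_0}(q,\oPsi)=a$ with these bounds gives $\oPsi(q,a,\zeta_0)=\tanh^{-1}(a)+O(1)$ uniformly, and the reduction in the first paragraph then yields the lemma. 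The main technical point to watch is uniformity in $q$ at the endpoint $q=1$, where $Z$ degenerates and the SDE becomes trivial; the soft bound $\p(Z\leq 0)\geq 1/2$ is tailored to sidestep the need for a nondegenerate variance, and the martingale identity reduces to the tautology $\partial_x\Phi_{\zeta_0}(1,x)=\tanh x$ in this case (matching the exact answer $\oPsi(1,a,\zeta_0)=\tanh^{-1}(a)$).
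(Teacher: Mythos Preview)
Your proof is correct and rests on the same core mechanism as the paper: the martingale/Feynman--Kac identity $\partial_x\Phi=\e\tanh(X(1))$ together with the pathwise sandwich $x-K+Z\leq X(1)\leq x+K+Z$ coming from the uniform bound on the drift. The paper works directly with $\Phi_{a,\zeta}$ on $[0,1-q]$ (where $|\partial_x\Phi_{a,\zeta}|\leq 2$, giving $|D|\leq 2\xi_q'(1-q)$), whereas you first reduce via \eqref{add:eq2} to the standard $\Phi_{\zeta_0}$, a mild simplification. The more visible difference is in the inversion step. The paper treats the two directions asymmetrically: the lower bound on $\Psi$ comes from Jensen's inequality applied to the concave $\tanh$ on $[0,\infty)$, while the upper bound goes through a truncation at a level $M=M(a)$, a Gaussian tail bound, and a L'H\^opital comparison of $\tanh^{-1}(a)$ with $\sqrt{-2\log(1-a)}$. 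Your route via the two-sided exponential sandwich $ce^{-2x}\leq 1-\partial_x\Phi_{\zeta_0}(q,x)\leq Ce^{-2x}$ handles both directions symmetrically and makes the inversion $\oPsi=\tfrac12\log\tfrac{1}{1-a}+O(1)=\tanh^{-1}(a)+O(1)$ immediate; the soft use of $\p(Z\leq 0)\geq 1/2$ to avoid any nondegeneracy issue at $q=1$ is a nice touch.
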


\begin{proof}
	Note that $f(t,x):=\partial_x\Phi_{a,\zeta}(t,x)$ satisfies the equation
	\begin{align*}
	\partial_tf&=-\frac{\xi_q''(t)}{2}\bigl(\partial_{xx}f+2\zeta(t) (\partial_xf)(\partial_x\Phi_{a,\zeta})\bigr)
	\end{align*}
	with $f(1-q,x)=-a+\tanh{x}.$ Using the Feyman-Kac formula, 	\begin{align}
	\partial_x\Phi_{a,\zeta}(0,x)&=-a+\e \tanh X_{a,\zeta,x}(1-q),
	\label{eqAbove14}
	\end{align}
	where $X_{a,\zeta,x}$ is defined through \eqref{ex:eq2}. Note that since $|u(s)|\leq 2$ and $|\zeta|\leq 1$, we have that $x+z_-\leq X_{a,\zeta,x}(1-q)\leq x+z_+,$	where
	\begin{align*}
	z_-&=-2\xi_q'(1-q)+\int_0^{1-q}\xi_q''(s)^{1/2}dW_s,\\
	z_+&=2\xi_q'(1-q)+\int_0^{1-q}\xi_q''(s)^{1/2}dW_s.
	\end{align*}
	Using that $\xi_q'(s)=\xi'(s+q)-\xi'(q)$ and $\xi_q''(s)=\xi''(s+q),$ we get	\begin{align*}
	z_-&\stackrel{d}{=}-2(\xi'(1)-\xi'(q))+\int_q^{1}\xi''(s)^{1/2}dW_s\stackrel{d}{=}-2\sigma^2(q)+\sigma(q)g,\\
	z_+&\stackrel{d}{=}2(\xi'(1)-\xi'(q))+\int_q^{1}\xi''(s)^{1/2}dW_s\stackrel{d}{=}2\sigma^2(q)+\sigma(q)g,
	\end{align*}	
	where $\sigma(q)=(\xi'(1)-\xi'(q))^{1/2}$ and $g\sim N(0,1)$. Since $\partial_x\Phi_{a,\zeta}(0,\Psi(a,\zeta))=0$, the equation (\ref{eqAbove14}) with $x=\Psi(a,\zeta)$ implies that	
	\begin{align}\label{add:eq-3}
	\e\tanh \bigl(\Psi(a,\zeta)+z_-)\leq a\leq \e\tanh \bigl(\Psi(a,\zeta)+z_+).
	\end{align}
	Note that $\tanh(x)$ is nondecreasing on $\mathbb{R}$ and is concave on $[0,\infty)$, so
	\begin{align*}
	a\leq \e\tanh \bigl(\Psi(a,\zeta)+z_+)\leq \e\tanh \bigl(\Psi(a,\zeta)+|z_+|)\leq \tanh \bigl(\Psi(a,\zeta)+\e |z_+|),
	\end{align*}
	which clearly gives the desired lower bound in \eqref{ex:lem2:eq1}.
	
	The upper bound of \eqref{ex:lem2:eq1} requires a bit more work. From the left-hand side of \eqref{add:eq-3}, for any $M>0,$
	\begin{align*}
	&\tanh(\Psi(a,\zeta)-2\sigma^2(0)-\sigma(0)M)\p(|g|\leq M)-\p(|g|>M)\\
	&\leq \e \tanh(\Psi(a,\zeta)-2\sigma^2(0)-\sigma(0)|g|)I(|g|\leq M)\\
	&\quad +\e \tanh(\Psi(a,\zeta)-2\sigma^2(0)-\sigma(0)|g|)I(|g|> M)\\
	&=\e \tanh(\Psi(a,\zeta)-2\sigma^2(0)-\sigma(0)|g|)\leq a.
	\end{align*}
	Hence,
	\begin{align}\label{ex:lem2:eq2} 
	\tanh(\Psi(a,\zeta)-2\sigma^2(0)-\sigma(0)M)&\leq \frac{a+\p(|g|>M)}{\p(|g|\leq M)}.
	\end{align}
	From now on, we choose
	$$
	M=\max(8/\sqrt{2\pi},\sqrt{-2\log (1-a)}\bigr).
	$$
	First, note that from L'H\^opital's rule,
	\begin{align*}
	\lim_{a\uparrow 1}\frac{\tanh^{-1}a}{\sqrt{-2\log (1-a)}}&=\lim_{a\uparrow 1}\frac{\frac{1}{1-a^2}}{\frac{1}{(1-a)\sqrt{-2\log(1-a)}}}=\lim_{a\uparrow 1}\frac{\sqrt{-2\log(1-a)}}{1+a}=\infty.
	\end{align*}
	This means that there exists a constant $c>0$ such that
	\begin{align}\label{ex:lem2:eq3} 
	c\tanh^{-1}(a)\geq \sqrt{-2\log (1-a)},\,\,\forall a\in [0,1).
	\end{align}
	Second, this choice of $M$ also implies that
	\begin{align*}
	\p(|g|>M)\leq \frac{2}{M\sqrt{2\pi}}e^{-M^2/2}\leq \frac{1-a}{4},
	\end{align*} 
	where the first inequality is the usual tail bound for the Gaussian random variable. From this, 
	$$
	\frac{a+\p(|g|>M)}{\p(|g|\leq M)}\leq \frac{1+3a}{3+a}<1
	$$
	so that from \eqref{ex:lem2:eq2} and \eqref{ex:lem2:eq3},
	\begin{align}
	\begin{split}\label{ex:lem2:eq4}
	\Psi(a,\zeta)&\leq \tanh^{-1}\Bigl(\frac{a+\p(|g|>M)}{\p(|g|\leq M)}\Bigr)+2\sigma^2(0)+\sigma(0)M\\
	&\leq \tanh^{-1}\Bigl(\frac{1+3a}{3+a}\Bigr)+2\sigma^2(0)+\sigma(0)M\\
	&\leq \tanh^{-1}\Bigl(\frac{1+3a}{3+a}\Bigr)+2\sigma^2(0)+\sigma(0)\Bigl(c\tanh^{-1}(a)+\frac{8}{\sqrt{2}}\Bigr).
	\end{split}
	\end{align}
	To finish our proof, it remains to control the first term. Note that for any $0\leq x\leq y<1,$ 
	\begin{align*}
	\tanh^{-1}(y)-\tanh^{-1}(x)&=\int_x^{y}\frac{1}{1-w^2}dw\\
	&\leq \int_x^{y}\frac{1}{1-w}dw=-\log\frac{1-y}{1-x}.
	\end{align*}
	In particular, if we take $x=a$ and $y=(1+3a)/(3+a),$ then $0\leq x\leq y<1$ and
	\begin{align*}
	\tanh^{-1}\Bigl(\frac{1+3a}{3+a}\Bigr)-\tanh^{-1}(a)&\leq -\log\frac{1-y}{1-x}\\
	&=-\log \frac{\frac{3+a-1-3a}{3+a}}{1-a}=-\log \frac{2}{3+a}\leq \log 2.
	\end{align*}
	This and \eqref{ex:lem2:eq4} together complete our proof.
\end{proof}

\subsection{Convexity and directional derivative of $\oP_\mu^{\ef}(\lambda,\zeta)$} 
Finally, we establish two key properties of the functional $(\ef,\lambda,\zeta) \to \oP_\mu^{v}(\lambda,\zeta)$. Let $\mu\in \MM_{0,1}$ be fixed and let $q=\int a^2\mu(da)$. For any $\zeta_0,\zeta_1\in \mathcal{M}_{0,1-q}$, $\ef_0,\ef_1\in \oV$, and $\lambda_0,\lambda_1\in \mathbb{R}$, denote, for $b\in [0,1],$
\begin{align*}
\zeta_b&=(1-b)\zeta_0+b\zeta_1,\\
\ef_b&=(1-b)\ef_0+b\ef_1,\\
\lambda_b&=(1-b)\lambda_0+b\lambda_1.
\end{align*}
First, we show that $(\ef,\lambda,\zeta) \to \oP_\mu^{v}(\lambda,\zeta)$ is convex.
\begin{lem}\label{lem29}
	We have that 
	\begin{align*}
	\oP_\mu^{\ef_b}(\lambda_b,\zeta_b)&\leq (1-b)\oP_\mu^{\ef_0}(\lambda_0,\zeta_0)+b\oP_\mu^{\ef_1}(\lambda_1,\zeta_1).
	\end{align*}
	Moreover, if $b\in (0,1)$, this inequality is strict if either $\zeta_0\neq \zeta_1$ or $\lambda_0a+\ef_0(a)\not\equiv \lambda_1a+\ef_1(a)$ on $\supp(\mu)\cap (0,1)$.
\end{lem}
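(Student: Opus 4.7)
The plan is to reduce everything to the convexity and strict convexity of $(\zeta,x)\mapsto \Phi_{a,\zeta}(0,x)$ recorded as property (IV) in Subsection~\ref{subphi}. The starting observation is that $\oP_\mu^{\ef}(\lambda,\zeta)$ splits into
\[
\oP_\mu^{\ef}(\lambda,\zeta) = \int_{[0,1)}\!\Phi_{a,\zeta}\bigl(0,\lambda a+\ef(a)\bigr)\,d\mu(a) - \frac{1}{2}\int_0^{1-q}\!s\,\xi_q''(s)\zeta(s)\,ds,
\]
where the second summand is linear in $\zeta$ and independent of $(\lambda,\ef)$. All convexity questions therefore reduce to the integrand. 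Setting $x_i(a):=\lambda_i a+\ef_i(a)$, the linearity of the convex combination gives $x_b(a)=\lambda_b a+\ef_b(a)$; property (IV) applied pointwise in $a$ then yields
\[
\Phi_{a,\zeta_b}\bigl(0,x_b(a)\bigr)\le (1-b)\Phi_{a,\zeta_0}\bigl(0,x_0(a)\bigr)+b\Phi_{a,\zeta_1}\bigl(0,x_1(a)\bigr),
\]
and integration against $\mu$ produces the first (non-strict) inequality immediately.

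For the strict inequality I would separate two cases. In Case~1 ($\zeta_0\neq\zeta_1$) the pair $(\zeta_0,x_0(a))$ already differs from $(\zeta_1,x_1(a))$ in the $\zeta$-coordinate for every $a$, so strict convexity from property~(IV) delivers a pointwise strict inequality. I would also note that $\zeta_0\neq\zeta_1$ forces $\mu\neq\delta_1$: otherwise $q=1$ and $\mathcal{M}_{0,1-q}=\{\delta_0\}$. Consequently $\mu([0,1))>0$ and integration preserves strictness.

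Case~2 assumes the two affine functions $a\mapsto \lambda_i a+\ef_i(a)$ disagree at some $a^\ast\in\supp(\mu)\cap(0,1)$. Then $x_0(a^\ast)\neq x_1(a^\ast)$, so even when $\zeta_0=\zeta_1$ the pair $(\zeta_b,x_0(a^\ast))\neq(\zeta_b,x_1(a^\ast))$ and property (IV) gives a strict pointwise inequality at $a^\ast$. The real work here is passing from this single-point strict inequality to a strict inequality for the $\mu$-integral; this is the step I expect to be the main obstacle. To carry it out I would use the continuity of $\ef_i\in\oV\subseteq C([0,1))$ together with the Lipschitz property~(III) to show that the nonnegative gap
\[
\Delta(a):=(1-b)\Phi_{a,\zeta_0}\bigl(0,x_0(a)\bigr)+b\Phi_{a,\zeta_1}\bigl(0,x_1(a)\bigr)-\Phi_{a,\zeta_b}\bigl(0,x_b(a)\bigr)
\]
is continuous on $[0,1)$ and strictly positive at $a^\ast$, so that $\Delta\ge \Delta(a^\ast)/2$ on an open neighborhood $U\ni a^\ast$. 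Since $a^\ast\in\supp(\mu)$ one has $\mu(U)>0$, and integration yields $\int\Delta\,d\mu>0$, completing the strict inequality.

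Apart from this continuity-and-support argument in Case~2, everything is routine bookkeeping on top of property~(IV), so I expect the proof to be short.
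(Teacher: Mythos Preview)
Your proposal is correct and follows exactly the paper's approach: the paper's proof is a single sentence, ``This follows immediately from \eqref{convexity0} and \eqref{convexity},'' i.e., property~(IV). You have simply filled in the details the paper leaves to the reader, including the continuity-and-support argument in Case~2, which is indeed the only place where a little care is needed.
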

\begin{proof}
	This follows immediately from \eqref{convexity0} and \eqref{convexity}.
\end{proof}

\begin{remark}\rm
	\label{rmk1}
	From this lemma and the Lipschitz property \eqref{ex:lip}, if the support of $\mu$ contains a point in $(0,1)$ then, for any fixed $\ef\in \oV$, the functional $(\lambda,\zeta)\to \oP_\mu^{\ef}(\lambda,\zeta)$ is a strictly convex and continuous on $\mathbb{R}\times\mathcal{M}_{0,1-q}$. This guarantees the existence of the unique minimizer $(\lambda_\mu^\ef,\zeta_\mu^\ef)$ in $\oP_\mu^{\ef}=\inf_{(\lambda,\zeta)\in \mathbb{R}\times \mathcal{M}_{0,1-q}}\oP_\mu^\ef(\lambda,\zeta).$
\end{remark}

Next, we show that the directional derivative of $(\ef,\zeta) \to \oP_\mu^{v}(0,\zeta)$ exists and write down an explicit expression under a certain assumption. Recall the optimal process $u_{a,\zeta,x}$ from \eqref{ex:eq2}.

\begin{lem}\label{lem:dd}
	Let $\zeta_0,\zeta_1\in \mathcal{M}_{0,1-q}$ and $\ef_0,\ef_1\in \oV$. Assume that  $\ef_0=\ef_1$ on $[\delta,1)$ for some $\delta\in (0,1).$ Then we have that
	\begin{align}
	\frac{d}{db}\oP_{\mu}^{\ef_b}(0,\zeta_b)\Bigl|_{b=0^+}
	&=\frac{1}{2}\int_0^{1-q}\xi_q''(s)(\zeta_1(s)-\zeta_0(s))\Bigl(\int_{[0,1)}\e u_{a,\zeta_0,\ef_0(a)}(s)^2\mu(da)-s\Bigr) ds
	\nonumber
	\\
	&\quad+\int_{[0,1)}(\ef_1(a)-\ef_0(a))\partial_x\Phi_{a,\zeta_b}(0,\ef_0(a))\mu(da).
	\label{lem:dd:eq1}
	\end{align}
\end{lem}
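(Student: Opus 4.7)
The plan is to split the functional as
\[
\oP_\mu^{\ef_b}(0,\zeta_b) = \int_{[0,1)} \Phi_{a,\zeta_b}(0,\ef_b(a))\,d\mu(a) \;-\; \frac{1}{2}\int_0^{1-q} s\,\xi_q''(s)\,\zeta_b(s)\,ds,
\]
differentiate each piece at $b=0^+$, and finally swap the $a$- and $s$-integrals by Fubini. The second piece is linear in $b$, and its derivative at $b=0^+$ is $-\frac{1}{2}\int_0^{1-q} s\,\xi_q''(s)(\zeta_1(s)-\zeta_0(s))\,ds$, contributing the $-s$ inside the bracket in \eqref{lem:dd:eq1}. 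For each fixed $a\in[0,1)$, the directional derivative formula (V) (i.e.~\eqref{diff}) applies to $b\mapsto \Phi_{a,\zeta_b}(0,\ef_b(a))$ with $x_0=\ef_0(a)$, $x_1=\ef_1(a)$, and gives the pointwise limit
\[
D(a):=\frac{1}{2}\int_0^{1-q}\!\xi_q''(s)(\zeta_1(s)-\zeta_0(s))\,\E u_{a,\zeta_0,\ef_0(a)}(s)^{2}\,ds+(\ef_1(a)-\ef_0(a))\,\partial_x\Phi_{a,\zeta_0}(0,\ef_0(a)).
\]

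The main obstacle, and essentially the only nontrivial point, is justifying the exchange of $\frac{d}{db}\big|_{b=0^+}$ and $\int\,d\mu$, since $\mu$ may have mass arbitrarily close to $a=1$ where the effective fields in $\oV$ are unbounded. I would use convexity of $b\mapsto \Phi_{a,\zeta_b}(0,\ef_b(a))$ (from Lemma~\ref{lem29} / \eqref{convexity0}) to sandwich the difference quotient for $b\in(0,1)$:
\[
D(a)\;\leq\;\frac{\Phi_{a,\zeta_b}(0,\ef_b(a))-\Phi_{a,\zeta_0}(0,\ef_0(a))}{b}\;\leq\;\Phi_{a,\zeta_1}(0,\ef_1(a))-\Phi_{a,\zeta_0}(0,\ef_0(a)).
\]
The upper envelope is bounded uniformly in $a\in[0,1)$ by $c_\xi$ thanks to \eqref{lem:Tapcont:proof:eq5}, so it is $\mu$-integrable. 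For the lower envelope $D(a)$, the first summand is uniformly bounded by $2\,\xi_q''(1)(1-q)$ since $|u_{a,\zeta_0,\ef_0(a)}|\leq 2$; and here is where the hypothesis $\ef_0\equiv\ef_1$ on $[\delta,1)$ is essential, because it forces $\ef_1-\ef_0\equiv 0$ on $[\delta,1)$ and hence the second summand in $D(a)$ vanishes there, while on $[0,\delta)$ the continuous functions $\ef_0,\ef_1$ are bounded and $|\partial_x\Phi_{a,\zeta_0}(0,\cdot)|\leq 1$ by \eqref{subphi:eq1}. Thus $D$ is bounded on $[0,1)$ and $\mu$-integrable, and dominated convergence gives
\[
\frac{d}{db}\int_{[0,1)}\Phi_{a,\zeta_b}(0,\ef_b(a))\,d\mu(a)\bigg|_{b=0^+} \;=\; \int_{[0,1)} D(a)\,d\mu(a).
\]

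Finally, since the kernel $\xi_q''(s)(\zeta_1(s)-\zeta_0(s))\,\E u_{a,\zeta_0,\ef_0(a)}(s)^2$ is bounded on $[0,1-q]\times[0,1)$ and both $ds$ and $d\mu(a)$ are finite measures, Fubini's theorem lets me swap the two integrals in the first half of $\int D\,d\mu$; combining the resulting expression with the derivative of the $-\tfrac{1}{2}\int s\xi_q''\zeta_b\,ds$ term folds the $-s$ into the bracket and produces \eqref{lem:dd:eq1} exactly.
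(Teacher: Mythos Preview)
Your proof is correct and follows essentially the same route as the paper's: differentiate pointwise in $a$ via the directional-derivative formula \eqref{diff}, use the hypothesis $\ef_0=\ef_1$ on $[\delta,1)$ to secure a dominating bound near $a=1$, apply dominated convergence, and finish with Fubini. The only cosmetic difference is that the paper directly bounds the $b$-derivative $\frac{d}{db}\Phi_{a,\zeta_b}(0,\ef_b(a))$ uniformly in $(a,b)$ and invokes bounded convergence, whereas you instead sandwich the difference quotient between $D(a)$ and the $b=1$ secant via convexity \eqref{convexity0}; both accomplish the same domination and neither is more general.
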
	

\begin{proof}
	Recall from \eqref{diff} that, for any $a\in [0,1)$ and $b\in [0,1),$
	\begin{align*}
	\begin{split}
	\frac{d}{db}\Phi_{a,\zeta_b}(0,\ef_b(a))&=\frac{1}{2}\int_0^{1-q}\xi_q''(s)\bigl(\zeta_1(s)-\zeta_0(s)\bigr)\e u_{a,\zeta_b,\ef_b(a)}(s)^2ds\\
	&\qquad+(\ef_1(a)-\ef_0(a))\partial_x\Phi_{a,\zeta_0}(0,\ef_b(a)),
	\end{split}
	\end{align*}
	where $u_{a,\zeta_b,\ef_b(a)}$ is the process in \eqref{control} with $x=\ef_b(a)$ and $\zeta=\zeta_b$. Furthermore, the right derivative of $\Phi_{a,\zeta_b}(0,\ef_b(a))$ also exists at $b=0$ and is equal to the right hand side of the above equation with $b=0.$ From the assumption $\ef_0=\ef_1$ on $[\delta,1),$ we have that from \eqref{subphi:eq1} and  \eqref{ex:lem2:eq1},
	\begin{align*}
	&\sup_{a\in [0,1),b\in [0,1]}\bigl|(\ef_1(a)-\ef_0(a))\partial_x\Phi_{a,\zeta_b}(0,\ef_b(a))\Bigr|<\infty.
	\end{align*}
	From this uniform upper bound and the bounded convergence theorem, it follows that
	\begin{align*}
	\frac{d}{db}\oP_{\mu}^{\ef_b}(0,\zeta_b)\Bigl|_{b=0^+}
	&=\lim_{b\downarrow 0}\int_{[0,1)}\frac{\Phi_{a,\zeta_b}(0,\ef_b(a))-\Phi_{a,\zeta_0}(0,\ef_0(a))}{b}\mu(da)\\
	&\qquad-\frac{1}{2}\int_0^{1-q}\xi_q''(s)s(\zeta_1(s)-\zeta_0(s))ds\\
	&=\frac{1}{2}\int_{[0,1)}\Bigl(\int_0^{1-q}\xi_q''(s)(\zeta_1(s)-\zeta_0(s))\e u_{a,\zeta_0,\ef_0(a)}(s)^2ds\Bigr)\mu(da)\\
	&\qquad+\int_{[0,1)}(\ef_1(a)-\ef_0(a))\partial_x\Phi_{a,\zeta_0}(0,\ef_0(a))\mu(da)\\
	&\qquad-\frac{1}{2}\int_0^{1-q}\xi_q''(s)s(\zeta_1(s)-\zeta_0(s))ds.
	\end{align*}
	Finally, combining the first and third equations together by using Fubini's theorem, the above uniform upper bound, and \eqref{ex:lem2:eq1} completes our proof.
\end{proof}

\begin{remark}\rm
	\label{rmk2}
	Suppose that $\zeta_0$ is a minimizer of the variational formula $\inf_{\zeta\in \mathcal{M}_{0,1-q}}\oP_\mu^{\ef_{\zeta_0}}(0,\cdot)$. In a standard manner as \cite[Proposition 1]{C17}, \cite[Proposition 1.1]{JT17}, or \cite[Lemma 4.13]{SKbonus}, by using Fubini's theorem and the above uniform upper bounds of $\partial_x\Phi_{a,\zeta}$ and $\partial_{xx}\Phi_{a,\zeta}$ in \eqref{subphi:eq1},  
	the directional derivative \eqref{lem:dd} and the minimality of $\zeta_0$ together yield that we must have
	\begin{align*}
	\int_{[0,1)}\e\bigl(\partial_x\Phi_{a,\zeta_0}(s,X_{a,\zeta_0,\ef_{\zeta_0}(a)}(s))\bigr)^2\mu(da)&=s,\\
	\xi_q''(s)\int_{[0,1)}\e\bigl(\partial_{xx}\Phi_{a,\zeta_0}(s,X_{a,\zeta_0,\ef_{\zeta_0}(a)}(s))\bigr)^2\mu(da)&\leq 1,
	\end{align*} 
	for any $s$ in the support of $\zeta_0$.
\end{remark}
 
\bibliographystyle{plain}
\bibliography{bibliography}

\begin{thebibliography}{10}

\bibitem{AS2}
M.~Aizenman, R.~Sims, and S.~L. Starr.
\newblock Mean-field spin glass models from the cavity-{ROS}t perspective.
\newblock In {\em Prospects in mathematical physics}, volume 437 of {\em
  Contemp. Math.}, pages 1--30. Amer. Math. Soc., Providence, RI, 2007.

\bibitem{ABAC}
A.~Auffinger, G.~Ben~Arous, and J.~{\v{C}}ern{\'y}.
\newblock Random matrices and complexity of spin glasses.
\newblock {\em Comm. Pure Appl. Math.}, 66(2):165--201, 2013.

\bibitem{AC15.1}
A.~Auffinger and W.-K. Chen.
\newblock On properties of {P}arisi measures.
\newblock {\em Probab. Theory Related Fields}, 161(3-4):817--850, 2015.

\bibitem{AC15}
A.~Auffinger and W.-K. Chen.
\newblock The {P}arisi formula has a unique minimizer.
\newblock {\em Comm. Math. Phys.}, 335(3):1429--1444, 2015.

\bibitem{ChenAuffGSE}
A.~Auffinger and W.-K. Chen.
\newblock Parisi formula for the ground state energy in the mixed $p$-spin
  model.
\newblock {\em Ann. Probab.}, 45(6B):4617--4631, 2017.

\bibitem{AC18}
A.~Auffinger and W.-K. Chen.
\newblock On concentration properties of disordered {H}amiltonians.
\newblock {\em Proc. Amer. Math. Soc.}, 146(4):1807--1815, 2018.

\bibitem{AufJag18}
A.~{Auffinger} and A.~{Jagannath}.
\newblock {On spin distributions for generic $p$-spin models}.
\newblock {\em Journal of Statistical Physics}, page 413, November 2018.

\bibitem{AJ19}
A.~Auffinger and A.~Jagannath.
\newblock Thouless-{A}nderson-{P}almer equations for generic {$p$}-spin
  glasses.
\newblock {\em Ann. Probab.}, 47(4):2230--2256, 2019.

\bibitem{BelKist}
D.~Belius and N.~Kistler.
\newblock The {TAP}-{P}lefka variational principle for the spherical {SK}
  model.
\newblock {\em Comm. Math. Phys.}, 367(3):991--1017, 2019.

\bibitem{BSZ}
G.~Ben~Arous, E.~Subag, and O.~Zeitouni.
\newblock Geometry and temperature chaos in mixed spherical spin glasses at low
  temperature -- the perturbative regime.
\newblock {\em Comm. Pure Appl. Math.}, To appear.
\newblock arXiv:1804.10573.

\bibitem{EB}
E.~Bolthausen.
\newblock An iterative construction of solutions of the {TAP} equations for the
  {S}herrington-{K}irkpatrick model.
\newblock {\em Comm. Math. Phys.}, 325(1):333--366, 2014.

\bibitem{chatt10}
S.~Chatterjee.
\newblock Spin glasses and {S}tein's method.
\newblock {\em Probab. Theory Related Fields}, 148(3-4):567--600, 2010.

\bibitem{ChatBook}
S.~Chatterjee.
\newblock {\em Superconcentration and related topics.}
\newblock Springer Monographs in Mathematics. Springer, Cham, 2014.

\bibitem{chen2013}
W.-K. Chen.
\newblock The {A}izenman-{S}ims-{S}tarr scheme and {P}arisi formula for mixed
  $p$-spin spherical models.
\newblock {\em Electron. J. Probab.}, 18:14 pp., 2013.

\bibitem{Chen13}
W.-K. Chen.
\newblock Chaos in the mixed even-spin models.
\newblock {\em Comm. Math. Phys.}, 328(3):867--901, 2014.

\bibitem{C17}
W.-K. Chen.
\newblock Variational representations for the {P}arisi functional and the
  two-dimensional {G}uerra-{T}alagrand bound.
\newblock {\em Ann. Probab.}, 45(6A):3929--3966, 2017.

\bibitem{WKGPM}
W.-K. Chen, D.~Gamarnik, D.~Panchenko, and M.~Rahman.
\newblock Suboptimality of local algorithms for a class of max-cut problems.
\newblock {\em Ann. Probab.}, 47(3):1587--1618, 2019.

\bibitem{CHL}
W.-K. Chen, M.~Handschy, and G.~Lerman.
\newblock On the energy landscape of the mixed even p-spin model.
\newblock {\em Probab. Theory Relat. Fields}, 171(1--2):53--95, 2018.

\bibitem{CPTAP17}
W.-K. Chen and D.~Panchenko.
\newblock On the {TAP} free energy in the mixed {$p$}-spin models.
\newblock {\em Comm. Math. Phys.}, 362(1):219--252, 2018.

\bibitem{CPSZT}
W.-K. Chen, D.~Panchenko, and E.~Subag.
\newblock The generalized {TAP} free energy {II}.
\newblock {\em arXiv:1903.01030}, 2019.

\bibitem{CLR03}
A.~Crisanti, L.~Leuzzi, and T.~Rizzo.
\newblock The complexity of the spherical $p$-spin spin glass model, revisited.
\newblock {\em The European Physical Journal B - Condensed Matter and Complex
  Systems}, 36(1):129--136, 2003.

\bibitem{CLR}
A.~Crisanti, L.~Leuzzi, and T.~Rizzo.
\newblock Complexity in mean-field spin-glass models: Ising $p$-spin.
\newblock {\em Phys. Rev. B}, 71:094202, Mar 2005.

\bibitem{CS95}
A.~Crisanti and H.-J. Sommers.
\newblock {T}houless-{A}nderson-{P}almer approach to the spherical $p$-spin
  spin glass model.
\newblock {\em J. Phys. I France}, 5(7):805--813, 1995.

\bibitem{GEZ}
J.~Ding, R.~Eldan, and A.~Zhai.
\newblock On multiple peaks and moderate deviations for the supremum of a
  gaussian field.
\newblock {\em Ann. Probab.}, 43(6):3468--3493, 2015.

\bibitem{EMS20}
A.~El~Alaoui, A.~Montanari, and M.~Sellke.
\newblock Optimization of mean-field spin glasses.
\newblock {\em arXiv:2001.00904}, 2020.

\bibitem{GJ19}
D.~Gamarnik and A.~Jagannath.
\newblock The overlap gap property and approximate message passing algorithms
  for p-spin models.
\newblock {\em Ann. Probab.}, To appear.
\newblock arXiv:1911.06943.

\bibitem{GJA20}
D.~Gamarnik, A.~Jagannath, and A.~S. Wein.
\newblock Low-degree hardness of random optimization problems.
\newblock {\em ArXiv e-prints}, 2020.
\newblock arXiv:2004.12063.

\bibitem{Guerra}
F.~Guerra.
\newblock Broken replica symmetry bounds in the mean field spin glass model.
\newblock {\em Comm. Math. Phys.}, 233(1):1--12, 2003.

\bibitem{GuerraToninelli}
F.~Guerra and F.~Toninelli.
\newblock The thermodynamic limit in mean field spin glass models.
\newblock {\em Commun. Math. Phys.}, 230(1):71--79, 2002.

\bibitem{Jagannath17}
A.~Jagannath.
\newblock Approximate ultrametricity for random measures and applications to
  spin glasses.
\newblock {\em Comm. Pure Appl. Math.}, 70(4):611--664, 2017.

\bibitem{JS17}
A.~{Jagannath} and S.~{Sen}.
\newblock {On the unbalanced cut problem and the generalized
  {S}herrington-{K}irkpatrick model}.
\newblock {\em ArXiv e-prints}, July 2017.

\bibitem{JT16}
A.~Jagannath and I.~Tobasco.
\newblock A dynamic programming approach to the {P}arisi functional.
\newblock {\em Proc. Amer. Math. Soc.}, 144(7):3135--3150, 2016.

\bibitem{JT17}
A.~Jagannath and I.~Tobasco.
\newblock Some properties of the phase diagram for mixed $p$-spin glasses.
\newblock {\em Probab. Theory Related Fields}, 167(3--4):615--672, 2017.

\bibitem{KKM+16}
Y.~Kabashima, F.~Krzakala, M.~M\'{e}zard, A.~Sakata, and L.~Zdeborov\'{a}.
\newblock Phase transitions and sample complexity in {B}ayes-optimal matrix
  factorization.
\newblock {\em IEEE Trans. Inform. Theory}, 62(7):4228--4265, 2016.

\bibitem{M1}
M.~M\'ezard, G.~Parisi, N.~Sourlas, G.~Toulouse, and M.A. Virasoro.
\newblock On the nature of the spin-glass phase.
\newblock {\em Phys. Rev. Lett.}, 52:1156, 1984.

\bibitem{M2}
M.~M\'ezard, G.~Parisi, N.~Sourlas, G.~Toulouse, and M.A. Virasoro.
\newblock Replica symmetry breaking and the nature of the spin-glass phase.
\newblock {\em J. de Physique}, 45:843, 1984.

\bibitem{MPV}
M.~M\'ezard, G.~Parisi, and M.~A. Virasoro.
\newblock {\em Spin glass theory and beyond}, volume~9 of {\em World Scientific
  Lecture Notes in Physics}.
\newblock World Scientific Publishing Co., Inc., Teaneck, NJ, 1987.

\bibitem{M3}
M.~M\'ezard and M.A. Virasoro.
\newblock The microstructure of ultrametricity.
\newblock {\em J. de Physique}, 46:1293--1307, 1985.

\bibitem{AM18}
A.~Montanari.
\newblock Optimization of the {S}herrington-{K}irkpatrick hamiltonian.
\newblock {\em arXiv:1812.10897}, 2018.

\bibitem{MR+16}
A.~Montanari and E.~Richard.
\newblock Non-negative principal component analysis: message passing algorithms
  and sharp asymptotics.
\newblock {\em IEEE Trans. Inform. Theory}, 62(3):1458--1484, 2016.

\bibitem{MV+18}
A.~Montanari and R.~Venkataramanan.
\newblock Estimation of law-rank matrices via approximate message passing.
\newblock {\em arXiv preprint arXiv:1711.01682}, 2017.

\bibitem{Panchenko2005}
D.~Panchenko.
\newblock {Free energy in the generalized {S}herrington--{K}irkpatrick mean
  field model}.
\newblock {\em Rev. Math. Phys.}, 17(7):793--857, 2005.

\bibitem{ultrametricity}
D.~Panchenko.
\newblock The {P}arisi ultrametricity conjecture.
\newblock {\em Ann. of Math. (2)}, 177(1):383--393, 2013.

\bibitem{SKmodel}
D.~Panchenko.
\newblock {\em The {S}herrington-{K}irkpatrick model}.
\newblock Springer Monographs in Mathematics. Springer, 2013.

\bibitem{Pan00}
D.~Panchenko.
\newblock The {P}arisi formula for mixed {$p$}-spin models.
\newblock {\em Ann. Probab.}, 42(3):946--958, 2014.

\bibitem{PMS15}
D.~Panchenko.
\newblock The free energy in a multi-species {S}herrington-{K}irkpatrick model.
\newblock {\em Ann. Probab.}, 43(6):3494--3513, 2015.

\bibitem{PChaosT}
D.~Panchenko.
\newblock Chaos in temperature in generic {$2p$}-spin models.
\newblock {\em Comm. Math. Phys.}, 346(2):703--739, 2016.

\bibitem{SKbonus}
D.~Panchenko.
\newblock {\em Bonus chapter to ``The {S}herrington-{K}irkpatrick model''}.
\newblock Available at author's website, 2018.

\bibitem{Panchenko2015}
D.~Panchenko.
\newblock Free energy in the mixed {$p$}-spin models with vector spins.
\newblock {\em Ann. Probab.}, 46(2):865--896, 2018.

\bibitem{panchenko2018}
D.~Panchenko.
\newblock Free energy in the {P}otts spin glass.
\newblock {\em Ann. Probab.}, 46(2):829--864, 03 2018.

\bibitem{Parisi79}
G.~Parisi.
\newblock Infinite number of order parameters for spin-glasses.
\newblock {\em Phys. Rev. Lett.}, 43:1754--1756, 1979.

\bibitem{Parisi80}
G.~Parisi.
\newblock A sequence of approximate solutions to the {S}-{K} model for spin
  glasses.
\newblock {\em J. Phys. A}, 13:L--115, 1980.

\bibitem{Plefka}
T.~Plefka.
\newblock Convergence condition of the {TAP} equation fo the infinite-ranged
  {I}sing spin glass model.
\newblock {\em J. Phys. A: Math. Gen.}, 15(6):1971--1978, 1982.

\bibitem{Rieger}
H.~Rieger.
\newblock The number of solutions of the {T}houless-{A}nderson-{P}almer
  equations for $p$-spin-interaction spin glasses.
\newblock {\em Phys. Rev. B}, 46:14655--14661, Dec 1992.

\bibitem{RPC}
D.~Ruelle.
\newblock A mathematical reformulation of {D}errida's {REM} and {GREM}.
\newblock {\em Communications in Mathematical Physics}, 108(2):225--239, June
  1987.

\bibitem{SK75}
D.~Sherrington and S.~Kirkpatrick.
\newblock Solvable model of a spin glass.
\newblock {\em Phys. Rev. Lett.}, 35:1792--1796, 1975.

\bibitem{Subag17}
E.~Subag.
\newblock The complexity of spherical {$p$}-spin models---{A} second moment
  approach.
\newblock {\em Ann. Probab.}, 45(5):3385--3450, 2017.

\bibitem{S17}
E.~Subag.
\newblock The geometry of the {G}ibbs measure of pure spherical spin glasses.
\newblock {\em Invent. Math.}, 210(1):135--209, 2017.

\bibitem{SubagFEL}
E.~Subag.
\newblock Free energy landscapes in spherical spin glasses.
\newblock {\em arXiv:1804.10576}, 2018.

\bibitem{ES18}
E.~Subag.
\newblock Following the ground-states of full-rsb spherical spin glasses.
\newblock {\em Comm. Pure Appl. Math.}, To appear.
\newblock arXiv:1812.04588.

\bibitem{SZ17}
E.~Subag and O.~Zeitouni.
\newblock The extremal process of critical points of the pure {$p$}-spin
  spherical spin glass model.
\newblock {\em Probab. Theory Related Fields}, 168(3-4):773--820, 2017.

\bibitem{Tal03}
M.~Talagrand.
\newblock The {P}arisi formula.
\newblock {\em Ann. of Math. (2)}, 163(1):221--263, 2006.

\bibitem{TPMeasure}
M.~Talagrand.
\newblock Parisi measures.
\newblock {\em J. Funct. Anal.}, 231(2):269--286, 2006.

\bibitem{Talagrand10}
M.~Talagrand.
\newblock Construction of pure states in mean field models for spin glasses.
\newblock {\em Probab. Theory Related Fields}, 148(3-4):601--643, 2010.

\bibitem{Talbook1}
M.~Talagrand.
\newblock {\em Mean field models for spin glasses. {V}olume {I}}, volume~54 of
  {\em Ergebnisse der Mathematik und ihrer Grenzgebiete. 3. Folge. A Series of
  Modern Surveys in Mathematics}.
\newblock Springer-Verlag, Berlin, 2011.

\bibitem{Talbook2}
M.~Talagrand.
\newblock {\em Mean field models for spin glasses. {V}olume {II}}, volume~55 of
  {\em Ergebnisse der Mathematik und ihrer Grenzgebiete. 3. Folge. A Series of
  Modern Surveys in Mathematics [Results in Mathematics and Related Areas. 3rd
  Series. A Series of Modern Surveys in Mathematics]}.
\newblock Springer, Heidelberg, 2011.
\newblock Advanced replica-symmetry and low temperature.

\bibitem{TAP}
D.~J. Thouless, P.~W. Anderson, and R.~G. Palmer.
\newblock Solution of `solvable model of a spin glass'.
\newblock {\em Physical Magazine}, 35(3):593--601, 1977.

\bibitem{ZK+16}
L.~Zdeborov\'a and F.~Krzakala.
\newblock Statistical physics of inference: thresholds and algorithms.
\newblock {\em Adv. Phys.}, 65(5):453--552, 2016.

\end{thebibliography}

\end{document}